\documentclass[a4paper, 11pt,reqno]{amsart}

\usepackage{amsmath}
\usepackage{amssymb}
\usepackage{hyperref} 
\usepackage{amsfonts}
\usepackage{mathtools}
\usepackage{theoremref}
\usepackage{amsthm}
\usepackage[dvipsnames]{xcolor}
\usepackage[utf8]{inputenc}
\usepackage{array}
\usepackage{framed, enumitem}
\usepackage[english]{babel}
\usepackage{comment}
\usepackage{multicol}

\calclayout

\setlist{topsep=0mm,partopsep=0mm,itemsep=1mm}

\theoremstyle{plain}
\newtheorem{lemma}{Lemma}[section]
\newtheorem{thm}[lemma]{Theorem}
\newtheorem{cor}[lemma]{Corollary}
\newtheorem{prop}[lemma]{Proposition}
\newtheorem*{mainthm}{Main Theorem}

\theoremstyle{definition}
\newtheorem*{example}{Example}

\theoremstyle{remark}
\newtheorem{cclaim}{Claim}[lemma]

\newtheorem{rem}[lemma]{Remark}

\DeclareMathOperator{\rank}{\textup{\textsf{rank}}}
\newcommand{\rk}[1]{\rank(#1)}

\let\ker\undefined
\DeclareMathOperator{\ker}{\textup{\textsf{ker}}}
\newcommand{\ke}[1]{\ker(#1)}

\DeclareMathOperator{\image}{\textup{\textsf{im}}}
\newcommand{\im}[1]{\image(#1)}

\DeclareMathOperator{\dom}{\textup{\textsf{dom}}}
\newcommand{\dm}[1]{\dom(#1)}

\DeclareMathOperator{\depth}{\textup{\textsf{depth}}}
\newcommand{\dep}[1]{\depth(#1)}

\newcommand{\PT}{\mathcal{PT}}
\newcommand{\T}{\mathcal{T}}
\newcommand{\I}{\mathcal{I}}

\newcommand{\D}{\mathcal{D}}
\newcommand{\J}{\mathcal{J}}
\renewcommand{\L}{\mathcal{L}}
\newcommand{\R}{\mathcal{R}}
\renewcommand{\H}{\mathcal{H}}

\newcommand{\C}{\mathcal{C}}

\newenvironment{thmenumerate}{\begin{enumerate}[label=\textup{(\roman*)},leftmargin=10mm]}{\end{enumerate}}
\newenvironment{nitemize}{\begin{itemize}[label=\textbullet, leftmargin=5mm]}{\end{itemize}}

\begin{document}

\title[Relational depth]{Relational depth of transformation semigroups and their ideals}
\author{N.\ Ru\v{s}kuc}
\address{School of Mathematics and Statistics, University of St Andrews, St Andrews, Scotland, UK}
\email{nik.ruskuc@st-andrews.ac.uk}

\author{Z.\ Yayi}
\address{School of Mathematics and Statistics, University of St Andrews, St Andrews, Scotland, UK}
\email{yz201@st-andrews.ac.uk}

%t
\keywords{Presentation, defining relation, full transformation monoid, symmetric inverse monoid, partial transformation monoid}
\subjclass{20M05, 20M20}

\begin{abstract}
We introduce the concept of relational depth of a finite semigroup $S$ whose $\J$-classes form a chain. It captures how far down in the ideal structure one is obliged to go in order to define the semigroup by generators and defining relations. We determine the exact value for the relational depth of an arbitrary ideal in the full transformation monoid, symmetric inverse monoid and in the partial transformation monoid.
\end{abstract}

\thanks{}
\maketitle

\section{Introduction}

Generating sets and defining relations (presentations) for classical transformation semigroups -- such as the partial transformation semigroup $\PT_n$, the full transformation semigroup $\T_n$, and the symmetric inverse semigroup $\I_n$ -- have over time received a lot of attention. 
For example, it is known that each of them is generated by the symmetric group (i.e. its group of units) together with one or two transformations of rank $n-1$; see \cite[Section 3.1]{CFTS}.
Presentations for these semigroups were first established in \cite{Popova61,Aizenstat58,Sutov60} respectively.
Another presentation for $\I_n$ was given in \cite{Meakin93}, and there is a nice exposition in \cite[Chapter 9]{CFTS}.
All the known presentations for these semigroups have the property that every defining relation represents a transformation of rank $n$, $n-1$ or $n-2$.

Ideals of these semigroups have likewise been of interest. It is known that the ideal $I_m$ consisting of all transformations of rank $\leq m$ is generated by transformations of rank precisely $m$; see \cite[Lemma 4]{HM:IR}. Nice presentations for these ideals are much harder to find. There is some work on the largest proper ideal $I_{n-1}$ \cite{East06, East15, East10a, East13, East10b, East14}.
These presentations are already quite large, but have the intriguing property that all the defining relations represent elements of ranks $n-1$ and $n-2$ only.

Very recent relevant work includes:
\begin{itemize}
\item
\cite{MW:SP} where Mitchell and Whyte investigate, and in certain cases determine, shortest (in terms of the number of defining relations) presentations for $\T_n$, $\I_n$ and $\PT_n$.
\item
\cite{CDEGZ} where the authors discuss a theoretical underpinning for some of the East's singular presentations for \cite{East06, East15, East10a, East13, East10b, East14}.
\end{itemize}

The purpose of this paper is to investigate systematically how `deep' inside the ideal structure of $I_m$ one must go in order to define the semigroup $I_m$.

We now make this idea more formal. Throughout this paper, we will consider semigroups $S$ whose $\J$-classes form a chain $J_\epsilon < J_1< \dots < J_k$. Here, typically, $\epsilon\in \{0,1\}$. The reason for this slightly strange indexing is that in our specific examples there there are  natural indexings of $\J$-classes (by rank), but they, unfortunately, do not have the same starting point. The concept of $\J$-classes, and all the other necessary prerequisites will be introduced in Section \ref{sec:prelim}.

Consider such a semigroup $S$, and suppose that $\mathcal{P}=\langle A \ |\ \R\rangle$ is a presentation for $S$
Successively define the \emph{depth} of various objects as follows:

\begin{nitemize}
\item
a $\J$-class $J$: $\dep{J_i}=k-i+1$;
\item
an element $s\in S$: $\dep s:=\dep {J_i}$, where $s\in J_i$;
\item
a word $w\in A^+$:
$\dep{w}$ is equal to the  depth of the element represented by~$w$; 
\item
a relation $u=v$ in $R$: $\dep{u=v}:=\dep{u}=\dep{v}$;
\item
the presentation $\mathcal{P}$:
$\dep{\mathcal{P}}$ is the maximal depth of a defining relation in $\R$;
\item
the semigroup $S$: 
 \[
 \dep{S}:=\min\{\dep{\mathcal{P}}\colon \mathcal{P} \text{ is a presentation for  } S\}.
 \]
\end{nitemize}

We also refer to this final quantity as the \emph{relational depth} of $S$.
The purpose of this article is to determine the
relational depth of transformation semigroups $\PT_n, \T_n, \I_n$, and their ideals.

Let $S\in\{\PT_n,\T_n,\I_n\}$.
It is known that the $\J$-classes of $S$
are of the form 
\[
J_r=\{\alpha\in S\colon\rk{\alpha}=r\}\quad\text{for } \epsilon\leq r\leq n.
\]
Here $\epsilon=0$ when $S=\PT_n$ or $S=\I_n$, and 
$\epsilon=1$ when $S=\T_n$.
The ideals of $S$ are 
\[
I_m=\{\alpha\in S\colon \rk{\alpha}\leq m\}\quad\text{for } \epsilon\leq m\leq n.
\] 
The $\J$-classes of $I_m$ are precisely the $\J$-classes of $S$ contained in $I_m$, i.e.
$J_\epsilon<J_1<\dots <J_m$.
For $\epsilon \leq i\leq m$ the depth of $J_i$ in $I_m$ is $m-i+1$.

Given a semigroup $S$, the depth of any presentation $\mathcal{P}$ for $S$ gives an upper bound for the depth of $S$. For example, in A\u{\i}zen\v{s}tat's presentation \cite{Aizenstat58}  for $\T_n$, the generating set and the set of defining relations contain maps of ranks in $[n-2,n]$. Hence A\u{\i}zen\v{s}tat's presentation has relational depth $3$. In East's  presentation \cite{East10a} for the singular part of $\T_n$, which is the ideal $I_{n-1}$ in our notation, the ranks involved are $n-1$ and $n-2$. The maps in the presentation  are in $J_{n-1}\cup J_{n-2}$, therefore East's presentation has relational depth $2$. Meakin's presentation \cite{Meakin93} for $\I_n$ has depth $3$. The presentation for $\PT_n$ given by Ganyushkin and Mazorchuk \cite{CFTS} has depth $3$. East's \cite{East06} presentation for the ideal $I_{n-1}$ of $\I_n$ has depth $2$. The above give an upper bound for the relational depths of these semigroups, and we will see that they are indeed the depths later on.

We can now state our main result:

\begin{mainthm}
\label{thm: main}
    Let $n\geq 3$, and let $S\in \{\I_n,\T_n,\PT_n\}$. Define $\epsilon =0$ for $S\in\{\PT_n,\I_n\}$ and $\epsilon=1$ when $S=\T_n$. For $m\in \{\epsilon,\dots,n\}$, let $I_m$ be the ideal of all mappings in $S$ of rank at most $m$.
    Then
    \begin{align*}
        \dep{I_m}=\begin{cases}
        m-{\rm max}(\epsilon,2m-n)+1\ &{\rm if}\ m<n\\
        3\ &{\rm if}\ m=n.
        \end{cases}
    \end{align*}
\end{mainthm}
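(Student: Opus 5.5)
The plan is to prove matching upper and lower bounds, treating the proper ideals ($m<n$) and the whole monoid ($m=n$) separately. Write $r_0=\max(\epsilon,2m-n)$. For $m<n$ the claim is that a presentation must, and can, go down exactly to rank $r_0$. The basic numerical fact behind this is the rank inequality $\rk{\alpha\beta}\ge \rk\alpha+\rk\beta-n$ in $\PT_n$, hence in $\T_n$ and $\I_n$. Two consequences follow:
\begin{nitemize}
\item products of two elements of $J_m$ have rank at least $2m-n$;
\item this bound is attained whenever $2m-n\ge\epsilon$: choose $\alpha,\beta\in J_m$ with $|\im\alpha\cap\dm\beta|$ (for $\T_n$, the number of kernel classes of $\beta$ met by $\im\alpha$) equal to $2m-n$.
\end{nitemize}
By \cite[Lemma 4]{HM:IR}, $I_m$ is generated by $J_m$. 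The proof will show that "multiplication of generators" can never create anything deeper than rank $r_0$ without using relations at rank $r_0$, and conversely that nothing deeper is ever needed.

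\emph{Upper bound for $m<n$.} Take the generating set $A=I_m$ (all elements). Keep only the multiplication-table relations $a\cdot b=c$ with $\rk c\ge r_0$, together with the relations $a\cdot b=c'\cdot d'$ of rank $\ge r_0$ that hold in $S$. I would show these define $I_m$. Let $T$ be the semigroup so defined and $\phi\colon T\to I_m$ the natural surjection. It suffices to show that every word $w$ is equal in $T$ to a single letter, and that two letters equal in $I_m$ are equal in $T$; the latter is trivial.

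The key step is the following. Every element $\gamma$ of rank $<r_0$ is a letter, and we need that for any letters $a,b$ with $ab=\gamma$ in $S$ the word $ab$ is $T$-equal to the letter $\gamma$. The idea is to factor through the top. Write $a=a'a''$ and $b=b''b'$ with the middle factors $a'',b''$ chosen so that $a''b''$ has rank $\ge r_0$, which is possible because $a''$ and $b''$ can be taken in $J_m$ with prescribed images and domains. Then $ab=a'(a''b'')b'$, and the problem reduces by induction on $m-\rk\gamma$, using the relations of rank $\ge r_0$ only in the middle. I would formalise this as an induction showing that the words of each rank $r<r_0$ form a single $T$-class per element. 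I expect the bookkeeping to go through because rank only decreases along products.

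\emph{Lower bound for $m<n$, and the case $m=n$.} Suppose a presentation $\langle A\mid\R\rangle$ for $I_m$ has all relations of rank $>r_0$. I would construct a semigroup $T$ in which all these relations hold but $T\not\cong I_m$, giving a contradiction. Take $T$ to be $I_m$ with the ideal $I_{r_0}$ replaced by a "free" version: products landing in rank $\le r_0$ are recorded formally (for instance as pairs of $\mathcal R$/$\mathcal L$ data remembering the factorisation through a rank-$(2m-n)$ kink), and collapse to $I_m$ only after one further multiplication. Every relation of rank $>r_0$ lives in the unchanged part, so it holds in $T$. The element $\alpha\beta$ of rank $2m-n$ built above, however, acquires two distinct preimages. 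To define the "free" version cleanly I would use a Rees-type construction on the $\J$-class $J_{r_0}$, which is where I expect the main obstacle to lie: checking associativity of $T$.

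For $m=n$ the upper bound $3$ comes from A\u{\i}zen\v{s}tat's, Meakin's and Ganyushkin--Mazorchuk's presentations. For the lower bound I would use the same separating-semigroup idea one level lower: relations of ranks $n$ and $n-1$ alone cannot force, for example, the identification of the two factorisations $\varepsilon_{12}\varepsilon_{34}=\varepsilon_{34}\varepsilon_{12}$ of a rank-$(n-2)$ idempotent.
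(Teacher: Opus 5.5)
The numerology is right ($r_0$, the rank inequality, the classical depth-$3$ presentations for $m=n$), but there are genuine gaps in both directions: each is missing its key step.

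\textbf{Upper bound.} Your argument fails as set up. If all of $I_m$ is taken as generators but only relations of rank $\ge r_0$ are kept, then a letter $x_\gamma$ with $\rk\gamma<r_0$ occurs in no defining relation, since any relation containing it has rank $\le\rk\gamma$. So $x_\gamma$ is alone in its congruence class: $x_ax_b=x_\gamma$ never holds in $T$, and $T$ is infinite. You must use generators of rank $\ge r_0$ only, i.e.\ work with $\C_{r_0}$; Lemma~\ref{lem: complement of ideal} shows this is no loss of generality. The real claim is then that two words over $J_{r_0}\cup\dots\cup J_m$ representing the same element of rank $r<r_0$ are equal modulo relations of rank $\ge r+1$. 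Writing $ab=a'(a''b'')b'$ does not prove this. The relation $x_{a''}x_{b''}=x_{a''b''}$ only yields another word for $\gamma$; it never connects two \emph{different} factorisations such as $x_ax_b$ and $x_cx_d$.

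Supplying this connection is the bulk of the paper, namely conditions \ref{P1}--\ref{P3} of Lemma~\ref{lem: All: words of rank r are cons of above}:
\begin{itemize}
\item all two-letter words $x_\alpha x_\beta$ with $\alpha,\beta\in J_{r+1}$ and the same product in $J_r$ are connected using relations of ranks $r+1$ and $r+2$ alone. This uses families of local moves, each changing one image point or one kernel/domain point of a factor by routing through auxiliary maps of rank $r+2$, worked out separately for $\I_n$, $\T_n$, $\PT_n$;
\item three-letter words of this kind shorten to two-letter ones;
\item a letter of rank $>r+1$ in a two-letter word can be exchanged for one in $J_{r+1}$.
\end{itemize}
The hypothesis $r\le 2m-n-1$ is what makes the last step possible: it forbids both letters from lying in $J_m$, so one of them can be padded by an auxiliary element of rank $\le m$.

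\textbf{Lower bound.} The same defect appears here: $T$ is never defined, and the natural reading of ``formal products collapse to $I_m$ after one further multiplication'' is not associative. Take $a,b$ of rank $>r_0$ with $\rk{ab}\le r_0$, and an idempotent $c\in J_m$ with $bc=b$. Then $(a\cdot b)\cdot c$ is the collapsed element $abc=ab$, while $a\cdot(b\cdot c)=a\cdot b$ is the formal one. So formal elements must survive further multiplication. Describing what they become (in effect, the semigroup defined by $\C_{r_0+1}$) is the whole difficulty, and you leave it open.

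The paper builds no model; it uses a syntactic invariant instead (Lemma~\ref{lem: all: words that can be deduced from x_betax_gamma}). If $\beta,\gamma\in J_m$ and $\beta\gamma\in J_r$, every word obtained from $x_\beta x_\gamma$ by relations of rank $\ge r+1$ consists of letters from $J_m$. It splits into a block with product exactly $\beta$ followed by a block with product exactly $\gamma$, and by stability the junction stays in $J_r$. Hence $x_\alpha x_\beta x_\gamma$ with $\alpha\in J_m$ and $\alpha\beta\gamma=\beta\gamma$ is unreachable. It then suffices to exhibit $\alpha,\beta\in J_m$ with $\alpha\beta=\beta^2=\alpha\beta^2$ of rank $r_0$, and to pass from arbitrary presentations to the $\C_i$ via Proposition~\ref{prop: depth of semigroups same as cayley presentation}. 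Note that the kink must be at rank $r_0$, not $2m-n$ as you wrote; the two differ when $2m-n<\epsilon$.

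\textbf{The case $m=n$.} Your witness $\varepsilon_{12}\varepsilon_{34}=\varepsilon_{34}\varepsilon_{12}$ has three problems:
\begin{itemize}
\item it needs $n\ge 4$, although the theorem includes $n=3$;
\item it makes no sense in $\I_n$;
\item its non-derivability is only asserted.
\end{itemize}
An invariant such as the $\R$-class of the shortest prefix whose product has rank $<n$ would supply the missing proof for suitable witnesses in all three monoids; relations of rank $\ge n-1$ cannot change it.
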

\begin{proof}
    This will follow directly from Proposition \ref{prop: depth of semigroups same as cayley presentation} and Theorem \ref{thm: almost main theorem}.
\end{proof}

The paper is organised as follows. Section \ref{sec:prelim} contains general preliminaries not covered in this Introduction. Then in Section \ref{sec: Cayley table pre} we show that to find the relational depth of a semigroup $S$ it is sufficient to consider certain presentations arising from the Cayley table of $S$.
This leads to a reformulation of our Main Theorem as Theorem  \ref{thm: almost main theorem}.
The rest of the paper is devoted to proving this latter result. First in Section \ref{sec:low bound} we show that the stated numbers are lower bounds.
Proving that they are upper bounds  is harder. In Section \ref{sec:ubp} we establish some common methodology. And then in the next three section we treat each of the three `container semigroups' $\I_n$, $\T_n$, $\PT_n$ separately. The paper concludes with some closing remarks in Section \ref{sec:conc}.

\section{Preliminaries}
\label{sec:prelim}

In this section we introduce some definitions and notation that will be used throughout. 

For integers $m,n$, we will use the interval notation
$[m,n]$ for the set $\{ m,m+1,\dots,n\}$, and 
$[n]:=[1,n]$. 
We will consider the following three classical transformation semigroups on $[n]$:
\begin{nitemize}
\item
the \emph{partial transformation semigroup} $\PT_n$ consisting of all partial mappings; 
\item
the \emph{symmetric inverse monoid} $\I_n$ consisting of of all partial bijections;
\item
 the \emph{full transformation semigroup} $\T_n$ consisting of  all (full) mappings. 
\end{nitemize}
The operation on each of them is composition of functions.
Clearly, all three semigroups are monoids, and $\T_n$ and $\I_n$ are submonoids of $\PT_n$.

For $\alpha\in \PT_n$, the \textit{image} of $\alpha$ is $\im\alpha=\{x\alpha\colon x\in[n]\}$. The \textit{kernel} of $\alpha$ is the equivalence relation $\ke\alpha=\{(x,y)\in \dm{\alpha}\times\dm{\alpha}\colon x\alpha=y\alpha\}$, whose equivalence classes are called kernel classes, and the $\ke\alpha$-classes partition $\dm{\alpha}$. The \emph{rank} of $\alpha$ is the size of $\im\alpha$, which is also equal to the number of $\ke\alpha$-classes. 

In this paper, we will tend to write down a partial transformation $\alpha$ by specifying its kernel classes and their images under $\alpha$, together with the complement of the domain. For example, we write $\alpha=\begin{pmatrix}
    A_1 & A_2 & \dots & A_m & A\\
    a_1 & a_2 & \dots & a_m & -
\end{pmatrix}$ to mean the partial transformation of rank $m$, with domain $A_1\cup \dots\cup A_m$, 
and $x\alpha=a_i$ for 
all $x\in A_i$. 

Green's relations on a semigroup provide a standard framework for describing the ideal structure of a semigroup $S$. 
Let us write $S^1$ for the semigroup $S$ with an identity element adjoined to it, unless $S$ is already a monoid. The Green's equivalences then are:
\begin{gather*}
\L=\{ (x,y)\in S\times S\colon S^1x=S^1y\},\quad 
\R=\{ (x,y)\in S\times S\colon xS^1=yS^1\},\\
\J=\{ (x,y)\in S\times S\colon S^1xS^1=S^1yS^1\},\\
\D=\L\circ \R=\R\circ \L,\quad \H=\L\cap \R. 
\end{gather*}
In every finite semigroup $\D=\J$ holds \cite[Chapter~2]{FST}.
There is a natural partial order on the set of all $\J$-classes, given by
$J_1\leq J_2$ if and only if $S^1x S^1\subseteq S^1yS^1$, where $x\in J_1$, $y\in J_2$.

The Green's relations on a transformation semigroup $S\in\{\T_n, \I_n, \PT_n\}$ are controlled by kernels and images. Namely, for $\alpha,\beta\in S$
\begin{thmenumerate}
    \item $\alpha\L\beta\iff \im\alpha=\im\beta$;
    \item $\alpha\R\beta\iff \ke\alpha=\ke\beta$;
    \item $\alpha\D\beta\iff \rk\alpha=\rk\beta\iff \alpha\J\beta$.
\end{thmenumerate}

Thus, a  $\J$-class of $S$ consists of all elements in $S$ of a fixed rank, and we denote by $J_i$ for the $\J$-class that contains elements of rank $i$. The $\J$-classes of $S$ form a chain
under the ordering introduced above. Specifically, in 
$\PT_n$ and $I_n$ this chain is $J_0< J_1<\dots<J_n$, whereas for $\T_n$ we have
$J_1<\dots < J_n$. 

Ideals in any semigroup $S$ are downward closed unions of $\J$-classes.
Hence the ideals of $S\in \{ \PT_n,\T_n,I_n\}$ are precisely
the sets
\[
I_m:=\{ \alpha\in S\colon \rank(\alpha)\leq m\}\quad\text{for } m\in [\epsilon,n].
\]
Here $\epsilon =0$ for $S=\PT_n$ and $S=\I_n$, and $\epsilon=1$ for $S=\T_n$.

Consider a finite set $A$. A semigroup presentation is a pair $\langle A\ |\ \R\rangle$, where $A$ is an alphabet, $\R\subseteq A^+\times A^+$ is considered to be a set of defining relations, and an element $(u,v)\in\R$ is usually written as $u=v$. We say that a semigroup $S$ is defined by the presentation $\langle A\ |\ \R\rangle$ if $S \cong A^+/\R^{\sharp}$, where $\R^{\sharp}$ is the smallest congruence on $A^+$ that contains $\R$. We can obtain another word $w'$ from $w$ by replacing a subword $u$ of $w$ by $v$ given that $(u=v)\in \R$. A relation $w'=w$ is said to be a consequence of $\R$ if we can obtain $w'$ from $w$, or $w$ from $w'$ by applying the defining relations or consequences of $\R$ finitely many times.

If we know a (finite) presentation $\mathcal{P}=\langle A\ |\ \R\rangle$ for a semigroup $S$, we could obtain any another finite presentation for $S$ by applying any of the following \emph{elementary Tietze transformations} finitely many times to $\mathcal{P}$ \cite{NRthesis}:
\begin{enumerate}[label=\textup{\textsf{(T\arabic*)}},leftmargin=12mm]
    \item\label{T1} if $u=v$ is a consequence of $\langle A\ |\ \R\rangle$, add relation $u=v$ and obtain $\langle A\ |\ \R, u=v\rangle$;
    \item\label{T2} if $(u=v)\in\R$ is a consequence of $\langle A\ |\ \R\setminus\{u=v\}\rangle$, delete the relation $u=v$ from $\langle A\ |\ \R\rangle$ and obtain $\langle A\ |\ \R\setminus\{u=v\}\rangle$;
    \item\label{T3} add a new generating symbol $b$ and a new relation $b=w$ where $w\in A^+$ to $\langle A\ |\ \R\rangle$ and obtain $\langle A\cup\{b\}\ |\ \R\cup\{b=w\}\rangle$;
    \item\label{T4} if $\R$ contains a relation of the form $b=w$, where $b\in A$ and $w\in(A\setminus\{b\})^+$, then delete the generating symbol $b$, and replace any occurrence of $b$ in $\R$ by $w$.
\end{enumerate}
The analogue for groups is better documented in literature, e.g. see \cite[Chapter~6.4.3]{TGT}.

%\YZcomm{added in presentations, Tietze transformations.}

\section{Cayley Table Presentation}
\label{sec: Cayley table pre}
On the face of it, to determine the relational depth of a semigroup $S$ we need to consider all possible presentations for $S$. In this section we show that in fact it is sufficient to consider the Cayley table of $S$, viewed as a presentation, and certain restrictions of it. The \textit{Cayley table presentation} for semigroup $S$ is \begin{center}
    $\C=\C_S=\langle x_s\ (s\in S)\ |\ x_sx_t=x_{st}\ (s,t\in S)\rangle$.
\end{center}
For $U\subseteq S$ the \emph{restriction} of the Cayley table presentation restricted to $U$ is \begin{center}
    $\C_{S,U}=\langle x_s\ (s\in U)\ |\ x_sx_t=x_{st}\ (s,t,st\in U)\rangle$.
\end{center}
Notice that $\C_{S,U}$ need not define $S$. Firstly, $U$ may not be a generating set. And, secondly, even if $\langle U\rangle=S$, the relations may not be sufficient, and may only define a homomorphic preimage of $S$. 

As before, let $S$ be a semigroup whose $\J$-classes form a chain $J_\epsilon< \dots< J_k$. 
For $i\in [\epsilon, k]$, let $\C_i=\langle X_{i,k}\mid \R_{i,k}\rangle$ be the restriction of $\C_S$ to $U_i:=J_i\cup\dots\cup J_k$.

As the main result of this section we prove that to find the relational depth of $S$ it is sufficient to consider these restrictions:

\begin{prop}\label{prop: depth of semigroups same as cayley presentation}
    Let $S$ be a semigroup whose $\J$-classes form a chain $J_\epsilon<\dots  <J_k$. The relational depth of $S$ is $k-i+1$ where $i$ is the largest number for which $\C_i$ defines~$S$.
\end{prop}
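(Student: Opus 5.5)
Two inequalities are needed. Note that $\C_\epsilon=\C_S$ defines $S$, so the largest $i$ with $\C_i$ defining $S$ exists. Also note that every relation $x_sx_t=x_{st}$ of $\C_i$ has depth $\dep{st}\le k-i+1$, since $st\in U_i$.

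\emph{Upper bound.} Suppose $\C_i$ defines $S$. Then $\C_i$ is itself a presentation for $S$, with each generator $x_s$ representing $s$. Every defining relation has depth at most $k-i+1$, so $\dep{\C_i}\le k-i+1$, and therefore $\dep S\le k-i+1$.

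\emph{Lower bound.} Let $\mathcal P=\langle A\mid\R\rangle$ be a presentation for $S$ with $\dep{\mathcal P}=d$. Set $j=k-d+1$, so every relation $u=v\in\R$ represents an element of $U_j$. We must show that $\C_j$ defines $S$. This gives $j\le i$, hence $d\ge k-i+1$.

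First, $U_j$ generates $S$. If $u=v\in\R$ and $u\equiv a_1\cdots a_p$, then each prefix $a_1\cdots a_q$ represents an element $\ge_\J$ the element represented by $u$, since the $\J$-classes form a chain and prefixes lie above. So every letter appearing in a relation represents an element of $U_j$. If a generator $a$ does not appear in any relation, then $a$ is free in $A^+/\R^\sharp$. Since $S$ is finite, this can only happen if $a$ is not needed at all, because a letter free in a finite semigroup presentation is impossible (powers $a^p$ would be pairwise distinct). So every generator appears in some relation and lies in $U_j$. (If $\R=\emptyset$ the semigroup is free and infinite, which is excluded.) Hence $A$ represents elements of $U_j$, and $U_j$ generates $S$.

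Now we transform $\mathcal P$ into $\C_j$ by Tietze transformations.
\begin{enumerate}
\item Add generators $x_s$ for $s\in U_j$ via \ref{T3}, with relations $x_s=w_s$, where $w_s\in A^+$ represents $s$.
\item Via \ref{T1}, add all relations $x_sx_t=x_{st}$ for $s,t,st\in U_j$, and $a=x_{\bar a}$ for $a\in A$, where $\bar a$ is the element $a$ represents.
\item Eliminate each $a\in A$ via \ref{T4}. Each relation of $\R$ becomes a relation $x_{s_1}\cdots x_{s_p}=x_{t_1}\cdots x_{t_q}$.
\end{enumerate}

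The key step is that each such relation is a consequence of the $\C_j$ relations. Every prefix $s_1\cdots s_r$ lies in $U_j$, because it lies $\J$-above the value of the whole word, which lies in $U_j$. So $x_{s_1}x_{s_2}\cdots x_{s_p}$ can be folded left to right into $x_{s_1\cdots s_p}$ using relations of $\C_j$ only. The same applies to the other side, and both sides fold to the same letter. We can therefore delete these relations by \ref{T2}.

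Similarly, each $x_s=w_s$ becomes a relation between a letter and a word whose prefixes all lie in $U_j$, so it too is derivable and can be deleted. What remains is exactly $\C_j$. Tietze transformations preserve the semigroup defined, so $\C_j$ defines $S$.

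The main subtlety is the claim that prefixes of a word stay in $U_j$. This uses the fact that $J$-classes form a chain, together with $S^1 xyS^1\subseteq S^1xS^1$. I expect this to be the main point to state carefully.
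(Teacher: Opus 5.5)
Your proof is correct and follows the paper's route. The upper bound is $\C_i$ itself. The lower bound is the paper's Lemma~\ref{lem: complement of ideal} written out inline: Tietze-transform a depth-$d$ presentation into $\C_{k-d+1}$ by folding words whose prefixes stay in the upward-closed set $U_{k-d+1}$.

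Your check that every generator of $\mathcal P$ represents an element of $U_j$ is a worthwhile addition. A letter occurring in no relation would generate an infinite free subsemigroup, and a letter occurring in a relation is a factor of a word with value in $U_j$. This is exactly hypothesis (i) of that lemma, which the paper's proof of the proposition uses without verifying.
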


To prove the proposition, we will need the following lemma.
\begin{lemma}\label{lem: complement of ideal}
    Let $S$ be a semigroup and $I\subseteq S$ an ideal. Define $U:=S\setminus I$. Suppose that $S$ has a presentation $\langle A \mid \R \rangle$ such that the following hold: 
    \begin{thmenumerate}
        \item\label{it:ci1} Every $x\in A$ represents an element of $U$;
        \item\label{it:ci2} if $(u=v)$ is a relation from $\R$, then $u$ (and hence $v$) represents an element of $U$.
    \end{thmenumerate}
    Then $\C_{S,U}$ defines $S$.
\end{lemma}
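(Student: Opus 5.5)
Fix, for every $x\in A$, the element $\bar x\in U$ it represents, and more generally write $\bar w\in S$ for the element represented by a word $w\in A^+$. The plan is to convert $\langle A\mid\R\rangle$ into $\C_{S,U}$ using only the Tietze transformations \ref{T1}--\ref{T4}. A direct argument would work too: $\C_{S,U}$ satisfies all relations of $\R$ after substituting $x\mapsto x_{\bar x}$, and every generator $x_s$, $s\in U$, is expressible through those. The Tietze route keeps the bookkeeping clean.

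\textbf{Step 1: a homomorphism from $\C_{S,U}$ onto $S$.} First I check that $x_s\mapsto s$ gives a surjective homomorphism from the semigroup $T$ defined by $\C_{S,U}$ onto $S$. The relations $x_sx_t=x_{st}$ clearly hold in $S$. Surjectivity follows from \ref{it:ci1}: $S$ is generated by $\{\bar x\colon x\in A\}\subseteq U$.

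\textbf{Step 2: the relations of $\R$ hold in $T$.} Define $\phi\colon A^+\to\{x_s\colon s\in U\}^+$ by $x\mapsto x_{\bar x}$. The key claim is the following. If $w\in A^+$ has $\bar w\in U$, then $\phi(w)=x_{\bar w}$ holds in $T$.

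I prove this by induction on $|w|$. Write $w=w'x$. Since $I$ is an ideal and $\bar w=\bar{w'}\,\bar x\notin I$, we get $\bar{w'}\notin I$, so $\bar{w'}\in U$. By induction $\phi(w')=x_{\bar{w'}}$. Then $\phi(w)=x_{\bar{w'}}x_{\bar x}=x_{\bar w}$, using the defining relation with $s=\bar{w'}$, $t=\bar x$ and $st=\bar w$, all three lying in $U$.

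This prefix-closure property of $U$ is the one genuinely nontrivial point, and it is exactly where both the ideal hypothesis and \ref{it:ci2} are used. By \ref{it:ci2}, for each $(u=v)\in\R$ we have $\bar u=\bar v\in U$. Hence $\phi(u)=x_{\bar u}=\phi(v)$ in $T$.

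\textbf{Step 3: $T\cong S$.} By Step 2, $\phi$ induces a homomorphism $\psi\colon S\cong A^+/\R^\sharp\to T$. Every generator $x_s$ with $s\in U$ lies in the image of $\psi$: pick any word $w$ with $\bar w=s$, and Step 2 gives $\psi(s)=x_s$. So $\psi$ is surjective.

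The composite $S\to T\to S$ sends $\bar x\mapsto x_{\bar x}\mapsto\bar x$, so it is the identity on generators and hence on all of $S$. Therefore $\psi$ is injective, and so it is an isomorphism. Thus $\C_{S,U}$ defines $S$.

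The only step needing care is the induction in Step 2, specifically the observation that every prefix of a word representing an element of $U$ again represents an element of $U$, because the complement of an ideal is closed under taking factors.
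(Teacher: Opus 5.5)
Your proof is correct, and its core is the same as the paper's. The paper's ``straightforward induction'', which rewrites a word $x_1\cdots x_m$ with value in $U$ to the single generator of that value using $\mathcal{Q}$, is exactly your Step~2. You also make explicit what the paper leaves tacit: every prefix of such a word again has value in $U$ because $I$ is an ideal. Strictly, that induction uses the ideal property together with (i); (ii) enters only when you apply the claim to the relations.

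Where you differ is the packaging. The paper identifies $A$ with $X=\{x_s\colon s\in U\}$ after adjoining redundant generators, then adds $\mathcal{Q}$ by \ref{T1} and deletes $\R$ by \ref{T2}. You instead build $\pi\colon T\to S$ and $\psi\colon S\to T$ with $\pi\psi=\mathrm{id}_S$ and $\psi$ onto. Well-definedness of $\pi$ plays the role of adding $\mathcal{Q}$, well-definedness of $\psi$ the role of deleting $\R$, and surjectivity of $\psi$ the role of the redundant generators.

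Your version is slightly more robust. The map $x\mapsto x_{\bar x}$ automatically handles distinct letters of $A$ representing the same element, which the paper's ``without loss of generality $A\subseteq X$'' covers only implicitly. The paper's version keeps to the Tietze framework it reuses when passing from $\C_{r+1}$ to $\C_r$.

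One cosmetic point: your opening announces a Tietze-transformation proof, but Steps~1--3 carry out the direct homomorphism argument instead. That sentence should be brought in line with what you actually do.
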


\begin{proof}
    Let $\C_{S,U}=\langle X\ |\ \mathcal{Q}\rangle$ as defined above. First, without loss of generality, we can assume that $A\subseteq X$. Furthermore, we can in fact assume that $A=X$. Indeed, if not, we could add redundant generators $X\setminus A$ to obtain another presentation which has this property, and still satisfies \ref{it:ci1}, \ref{it:ci2}. We prove that we can obtain $\langle X\mid \mathcal{Q}\rangle$ from $\langle X\mid \R\rangle$ using Tietze transformations.
    
    If $s,t\in U$ such that $st\in U$, we can deduce the relation $x_sx_t=x_{st}$ from $\R$. Adding all relations of this form to $\langle X\mid \R\rangle$ we obtain $\langle X\mid \R,\mathcal{Q}\rangle$, (transformation \ref{T1}).

    It remains to show that we can eliminate $\R$ from $\langle X\mid \R,\mathcal{Q}\rangle$. Let $(u=v)\in\R$, and suppose that $u\equiv x_1\dots x_m$, $v\equiv y_1\dots y_n$, where $x_1\dots x_m=y_1\dots y_n=x\in U$. Then $x_i,y_j\in U$ and a straightforward induction shows that the relations $x=x_1\dots x_m$ and $x=y_1\dots y_n$, and hence $u=v$ are consequences of $\mathcal{Q}$. Hence relations in  $\R$ are consequences of $\langle X\mid \mathcal{Q}\rangle$, and deleting $\R$ we obtain $\langle X\mid \mathcal{Q}\rangle$ (transformation \ref{T3}).
\end{proof}

\begin{proof}[Proof of Proposition \ref{prop: depth of semigroups same as cayley presentation}]
 Let $\dep{S}=d$. 
 Since $\dep{\C_i}=k-i+1$, we have $k-i+1\geq d$.
 On the other hand, since $S$ has a presentation with depth $d$, it follows by Lemma \ref{lem: complement of ideal} that $\C_{k-d+1}$ defines $S$. Therefore $i\geq k-d+1$, and the result follows.
\end{proof}

Now consider $S$ to be a proper ideal $I_m$ of $S\in\{\T_n,\I_n,\PT_n\}$, whose $\J$-classes are $J_{\epsilon}<\dots< J_m$. The question becomes: for which $i$ does $\C_i$ define $I_m$?

We aim to prove the following theorem.

\begin{thm}
\label{thm: almost main theorem}
Let $n\geq 3$, and let $S\in \{ \PT_n,\T_n,\I_n\}$.
Define $\epsilon =0$ for $S\in\{\PT_n,\I_n\}$ and $\epsilon=1$ when $S=\T_n$. Define $\theta=\frac{n+\epsilon}{2}$.
For $m\in [\epsilon,n)$, let $I_m$ be the  ideal of $S$ of all mappings of rank $\leq m$.
\begin{thmenumerate}
\item \label{mn1}
When $m\leq\theta$, $\C_i$ defines $I_m$ if and only if $i=\epsilon$. 
\item \label{mn2}
When $m>\theta$, $\C_i$ defines $I_m$ if and only if $i\leq 2m-n$.
\end{thmenumerate}
\end{thm}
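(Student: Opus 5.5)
The plan is to exploit a monotonicity in $i$ and then prove a lower bound and an upper bound, treating $\I_n,\T_n,\PT_n$ separately where necessary.

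\emph{Monotonicity.} First I observe that if $\C_i$ defines $I_m$ then so does $\C_j$ for every $j\le i$. Indeed, $\C_i$ is a presentation of $I_m$ in which all generators and all relations lie in $U_i\subseteq U_j$. Lemma \ref{lem: complement of ideal}, applied with $U=U_j$, then shows that $\C_{S,U_j}=\C_j$ defines $I_m$. Also $\C_\epsilon$ is the full Cayley table, so it trivially defines $I_m$.

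With this, set $i_0:=\max(\epsilon,2m-n)$. Note that $i_0=\epsilon$ exactly when $m\le\theta$. Both parts of Theorem \ref{thm: almost main theorem} then reduce to two claims:
\begin{itemize}
\item[(a)] $\C_{i_0}$ defines $I_m$;
\item[(b)] $\C_{i_0+1}$ does not define $I_m$.
\end{itemize}
Claim (a) is only needed when $i_0=2m-n>\epsilon$.

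\emph{Lower bound (b).} Put $i=i_0+1$, so $i-1=i_0\ge \max(\epsilon,2m-n)$. I would exhibit two words over $X_{i,m}$ that represent the same element of rank $i-1$ but are not equal modulo $\R_{i,m}$. The natural candidates are length-two words $x_ax_b$ and $x_cx_d$ with $a,b,c,d\in J_m$ and $ab=cd$ of rank $i-1$.

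The inequality $i-1\ge 2m-n$ is exactly what makes this possible. Since $|\im a\cap \dm b|\ge 2m-n$ for rank-$m$ maps, it leaves room to choose $a,b,c,d$ of rank $m$ for which $\im a$ and $\im c$ are positioned differently relative to the kernels of $b,d$. Then no chain of elementary moves inside $U_i$ can connect the two factorisations.

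To certify non-equality, I would define an invariant of words whose value lies below $U_i$. A relation of $\R_{i,m}$ only rewrites a subword whose value lies in $U_i$. Hence, for such a word, the partition into maximal blocks with values in $U_i$ is essentially stable; only splitting or merging of letters inside a block occurs. The invariant would be built from, say, the image of the product of the longest prefix lying in $U_i$, together with the kernel of the complementary suffix (or some coarsening of these such as $\im{p}\cap X$ for a fixed set $X$). I would verify that every relation $x_sx_t=x_{st}$ with $s,t,st\in U_i$ preserves it, while it separates the two chosen words.

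Equivalently, I could build a semigroup $T$ that is a homomorphic preimage of $I_m$, satisfying all relations of $\R_{i,m}$ but not the extra identification. Finding the right invariant or preimage uniformly for the three semigroups, including the boundary case $i-1=\epsilon$ (constant maps in $\T_n$, the empty map in $\I_n,\PT_n$), is the delicate part of the lower bound.

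\emph{Upper bound (a).} Here $i_0=2m-n>\epsilon$. I would show that in $\C_{i_0}$ every relation $x_sx_t=x_{st}$ with $st$ of rank $<i_0$ is a consequence of $\R_{i_0,m}$. Using Tietze transformation \ref{T4}, I would also eliminate the generators of rank $<i_0$, by induction downwards on the rank of $st$.

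Concretely, for each element $z$ of rank $r<i_0$, I fix a normal form $z=x_{a}x_{b}$ with $a,b\in U_{i_0}$. I then show that any two factorisations $z=st$ with $s,t\in U_{i_0}$ are connected by a sequence of rewritings $st=(s'e)t=s'(et)$, where $e$ is an idempotent or a unit-like element of high rank. The extra freedom that rank $\ge 2m-n$ affords is what allows each intermediate product $et$ and $s'e$ to stay in $U_{i_0}$. I expect this connectivity argument to be the main obstacle. It requires explicit case analysis on how kernels and images interlock, and it must be carried out separately for partial bijections, full maps and partial maps, as the paper's structure suggests (Sections \ref{sec:ubp} onwards).
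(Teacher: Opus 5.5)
Your overall architecture matches the paper's:
\begin{itemize}
\item restrict to the Cayley presentations $\C_i$;
\item get monotonicity in $i$ from Lemma~\ref{lem: complement of ideal};
\item refute $\C_{i_0+1}$ by a non-derivable relation between factorisations through the top class $J_m$;
\item establish $\C_{i_0}$ by Tietze elimination.
\end{itemize}
The monotonicity step is correct, and slightly tidier than the paper's treatment. It also covers $i=m$, which Proposition~\ref{prop: non-pre for I_m merged for all semigroups} formally excludes.

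Your lower bound is a harmless variant of Proposition~\ref{prop: general: an example of relations that cannot be deduced}: two factorisations $ab=cd$ instead of $x_\alpha x_\beta x_\gamma=x_\beta x_\gamma$. However, as stated your invariant is not preserved by all of $\R_{i,m}$. In a general word, the two letters straddling the end of the longest prefix with value in $U_i$ may have product in $U_i$. Merging them shortens that prefix and changes its value. What makes the argument work is Lemma~\ref{lem: all: words that can be deduced from x_betax_gamma}, applied on the orbit of $x_ax_b$ with $a,b\in J_m$:
\begin{itemize}
\item every letter stays in $J_m$, because it is the top $\J$-class;
\item by stability, the straddling product stays in $J_{i-1}$.
\end{itemize}
So the prefix value $a$ is invariant on that orbit. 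This argument is uniform, so the case $i-1=\epsilon$ needs no special treatment.

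The upper bound has genuine gaps.

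\textbf{Normal forms through $U_{i_0}$.} Normal forms $z=x_ax_b$ with $a,b\in U_{i_0}$ do not exist for small ranks, since $\rk{ab}\ge \rk a+\rk b-n\ge 4m-3n$. For instance, when $m=n-1$ every product of two elements of $U_{i_0}$ has rank at least $n-4$. You must descend one rank at a time, factoring rank-$r$ elements through $J_{r+1}$ and passing from $\C_r$ to $\C_{r+1}$ as in Proposition~\ref{prop: All: Cr and Cr+1 define the same semigroup}.

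\textbf{Two-letter connectivity is not enough.} This is the more serious gap. After eliminating $x_z$ by \ref{T4}, a relation $x_sx_t=x_{st}$ with $s\in J_r$ becomes $x_{a_s}x_{b_s}x_t=x_{a_{st}}x_{b_{st}}$. If $b_st$ also has rank $r$, every adjacent product on the left has rank $r$, and no relation of $\R_{r+1,m}$ shortens it. Handling this is the paper's \ref{P2}: replace $x_\alpha x_\beta$ by $x_\alpha x_{\beta'}$ with $\alpha\beta'=\alpha\beta$ but $\beta'\gamma\in J_{r+1}$, and then merge. Together with \ref{P3}, which lowers both letters to rank $r+1$, this feeds the rank-sum induction of Lemma~\ref{lem: All: words of rank r are cons of above}(i). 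That induction reduces an arbitrary word to two letters, and your plan needs it too.

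\textbf{Where $2m-n$ enters.} Your account of this is off. The paper's proof of two-letter connectivity \ref{P1} uses only $\R_{r+1,r+2}$ and never uses $r<2m-n$. The threshold is needed only in \ref{P3}. For $r<2m-n$, the letters $\gamma,\delta$ cannot both have rank $m$, so one of them can be padded by an element of rank one higher. For example,
\[
x_\gamma x_\delta=x_\gamma x_{\delta_1}x_\delta=x_{\gamma\delta_1}x_\delta
\quad\text{with } \delta_1\delta=\delta \text{ and } \rk{\gamma\delta_1}=r+1.
\]
This is exactly the mirror image of your lower-bound obstruction.
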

Our Main Theorem follows from this and Proposition \ref{prop: depth of semigroups same as cayley presentation}. The rest of the paper is devoted to proving Theorem \ref{thm: almost main theorem}, which is achieved in Theorem \ref{thm: In: relational depth}, \ref{thm: Tn: relational depth}, \ref{thm: PTn: relational depth} for $\I_n,\T_n$ and $\PT_n$ respectively.

\section{Lower bound}
\label{sec:low bound}
In this section we aim to find the lower bound for the depths of proper ideals of $\I_n, \T_n$ and $\PT_n$, by proving the following proposition.

\begin{prop}\label{prop: lower bound for depth of all ideals}
    Consider $m<n$. Let $I_m$ be an ideal of $S\in\{\PT_n,\I_n,\T_n\}$. Define $\epsilon =0$ for $S\in\{\PT_n,\I_n\}$ and $\epsilon=1$ when $S=\T_n$. Then $\dep{I_m}\geq m-{\rm max}(\epsilon,2m-n)+1$.
\end{prop}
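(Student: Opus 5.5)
The plan is to reduce the inequality to the failure of a single restricted Cayley presentation, and then to exhibit two words that represent the same element of $I_m$ but are not equivalent under that presentation.

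\textbf{Reduction.} Put $r:=\max(\epsilon,2m-n)$. If $r=m$, then (since $m<n$) we must have $m=\epsilon$, and the bound $\dep{I_m}\geq 1$ is trivial; so assume $r<m$. By Proposition \ref{prop: depth of semigroups same as cayley presentation}, it suffices to show that $\C_i$ does not define $I_m$ for every $i>r$.

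The sets $U_i$ are nested. If $\C_i$ defines $I_m$, then $\C_i$ is a presentation whose generators and relations all represent elements of $U_i\subseteq U_{i-1}$. Lemma \ref{lem: complement of ideal}, applied with the ideal $I=I_m\setminus U_{i-1}$, then shows that $\C_{i-1}$ also defines $I_m$. Hence it suffices to show that $\C_j$ fails to define $I_m$ for the single value $j:=r+1\leq m$.

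\textbf{Non-equivalent words.} To show that $\C_j$ fails, I would pick two elements $\alpha,\beta$ of rank $m$ and two elements $\alpha',\beta'$ of rank $m$ such that $\alpha\beta=\alpha'\beta'=\gamma$ has rank $r<j$, while $\im{\alpha}\neq\im{\alpha'}$. The goal is to show that the words $x_\alpha x_\beta$ and $x_{\alpha'}x_{\beta'}$ are not $\R_{j,m}^\sharp$-equivalent.

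For $r=2m-n>\epsilon$, take $\alpha$ to be an injective partial map with image $A$, $|A|=m$, and take $\beta$ with domain $B$ such that $A\cup B=[n]$ and $\beta$ injective on $A\cap B$. Then $|A\cap B|=2m-n$. For $\T_n$ and $\PT_n$, the domain condition is replaced by a kernel condition: $\beta$ identifies everything outside $A\cap B$ with at most the existing classes. The pair $\alpha',\beta'$ is obtained from $\alpha,\beta$ by inserting a permutation in the middle that is not absorbable. In the case $r=\epsilon$, I would choose instead $\alpha,\beta$ whose product is a rank-$\epsilon$ element arising from two rank-$m$ maps in two genuinely different ways.

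\textbf{Invariant.} To prove the non-equivalence, I would define an invariant on words $w\equiv y_1\cdots y_k$ over $X_{j,m}$ representing $\gamma$. Let $p$ be the largest index such that the prefix $y_1\cdots y_p$ represents an element of rank $\geq j$, and define $\Phi(w)$ to be the image of that prefix (or, more robustly, the pair consisting of that image and the kernel of the complementary suffix). One then checks that an application of a relation $x_sx_t=x_{st}$, with $s,t,st$ of rank $\geq j$, leaves $\Phi$ unchanged. A relation lying entirely inside the prefix or inside the suffix clearly does so.

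\textbf{Main obstacle.} The hard case is a relation whose subword straddles the cut at position $p$. Here the counting must be used. The subword has rank $\geq j>2m-n$, but every element of $I_m$ has rank at most $m$, and the total rank collapses to $r$. I would argue that, in order to reach rank exactly $r$ at the minimal possible level, the prefix at the cut must have rank exactly $m$ and an image that is forced by $\gamma$ together with the complementary suffix. A straddling subword of rank $\geq j$ cannot alter this image without pushing the rank of the full product above $r$.

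Making this rigidity argument precise, uniformly for $\I_n$, $\T_n$ and $\PT_n$, and separately handling the degenerate case $r=\epsilon$ (where $2m-n<\epsilon$ and the counting is slack), is the step I expect to be the main difficulty. If the image alone is not invariant, I would fall back on the coarser invariant given by the $\R$-class of the longest rank-$\geq j$ prefix.
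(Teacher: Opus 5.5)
Your reduction is correct, and a little tidier than the paper's. Since $U_i\subseteq U_{i-1}$, Lemma~\ref{lem: complement of ideal} shows that if $\C_i$ defines $I_m$ then so does $\C_{i-1}$. So it is enough to defeat the single presentation $\C_j$ with $j=r+1$; this also covers $i=m$, which Proposition~\ref{prop: non-pre for I_m merged for all semigroups} formally omits. Your test words are also adequate: any $x_\alpha x_\beta$ and $x_{\alpha'}x_{\beta'}$ with $\alpha,\beta,\alpha',\beta'\in J_m$, $\alpha\beta=\alpha'\beta'$ of rank $<j$ and $\alpha\neq\alpha'$ would do. The paper instead compares $x_\alpha x_\beta x_\gamma$ with $x_\beta x_\gamma$.

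The gap is the invariance of $\Phi$, which is the entire content of the argument. You leave it open, and the mechanism you propose for it cannot work. As defined, $\Phi$ is not invariant on words over $X_{j,m}$ representing $\gamma$. Whenever $\rk{y_py_{p+1}}\geq j$ (possible once $p\geq 2$), the relation $x_{y_p}x_{y_{p+1}}=x_{y_py_{p+1}}$ belongs to $\R_{j,m}$. Applying it moves the cut to $p-1$ and replaces $\im{y_1\cdots y_p}$ by $\im{y_1\cdots y_{p-1}}$, which in general changes $\Phi$. A relation ``inside the suffix'' is not automatically harmless either: splitting $y_{p+1}=st$ can make $y_1\cdots y_ps$ a longer prefix of rank $\geq j$. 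No counting argument excludes these moves. The product of the whole word never changes, and $j>2m-n$ says nothing about the local products $y_py_{p+1}$ or $y_1\cdots y_ps$. So the claim that a straddling subword would ``push the rank of the full product above $r$'' has no content, and your $\R$-class fallback fails for the same reason.

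The missing idea is the auxiliary invariant of Lemma~\ref{lem: all: words that can be deduced from x_betax_gamma}. Work only inside the $\R_{j,m}^\sharp$-class of $x_\alpha x_\beta$. Show by induction on the number of relation applications that every word in it has the form $x_{\alpha_1}\cdots x_{\alpha_k}x_{\beta_1}\cdots x_{\beta_l}$, where all $\alpha_i,\beta_i\in J_m$, $\alpha_1\cdots\alpha_k=\alpha$, $\beta_1\cdots\beta_l=\beta$ and $\rk{\alpha_k\beta_1}<j$. The induction goes through for three reasons:
\begin{itemize}
\item[(a)] All letters stay in $J_m$, because $J_m$ is the top $\J$-class, so every factor of an element of $J_m$ lies in $J_m$.
\item[(b)] No relation can be applied across the cut. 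A merge there is impossible precisely because $\rk{\alpha_k\beta_1}<j$, and lengthening relations act on a single letter.
\item[(c)] The condition $\rk{\alpha_k\beta_1}<j$ survives splitting $\alpha_k=st$. By stability $st\mathrel{\L}t$, since both lie in $J_m$, hence $\rk{t\beta_1}=\rk{st\beta_1}$. Symmetrically, $st\mathrel{\R}s$ handles splitting $\beta_1$.
\end{itemize}
Thus the full prefix product $\alpha$, not merely its image, is invariant, and $x_{\alpha'}x_{\beta'}$ is unreachable. This uses only the chain of $\J$-classes and finiteness, with no rank counting. The value $\max(\epsilon,2m-n)$ enters only through the existence of $\alpha,\beta\in J_m$ with $\rk{\alpha\beta}=\max(\epsilon,2m-n)$, so the case $r=\epsilon$ needs no separate treatment.
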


The proof follows from the next proposition, which shows that certain depths are not sufficient.

\begin{prop}\label{prop: non-pre for I_m merged for all semigroups}
    Let $m<n$. Let $I_m$ be an ideal of $S\in\{\PT_n,\I_n,\T_n\}$. Define $\epsilon =0$ for $S\in\{\PT_n,\I_n\}$ and $\epsilon=1$ when $S=\T_n$. If $m>r>{\rm max}(2m-n,\epsilon)$, then $\C_r=\langle X_{r,m}\ |\ \R_{r,m}\rangle$ does not define $I_m$.
\end{prop}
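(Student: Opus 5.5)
The plan is to show that the relations $\R_{r,m}$ cannot identify two particular words. Each relation $x_sx_t=x_{st}$ of $\C_r$ has all of $s,t,st$ of rank $\geq r$. So I will exhibit a semigroup $T$ with the following properties:
\begin{nitemize}
\item $T$ is generated by a copy of $U_r=J_r\cup\dots\cup J_m$;
\item $T$ satisfies every relation of $\R_{r,m}$;
\item $T$ maps onto $I_m$ via $x_s\mapsto s$, but this map is not injective.
\end{nitemize}
Then $\C_r$ defines a proper homomorphic preimage of $I_m$, and not $I_m$ itself.

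Concretely, I would take $T=U_r\cup (I_{r-1}\times\Lambda)$ for a suitable label set $\Lambda$ with $|\Lambda|\geq 2$. Products inside $U_r$ that stay in $U_r$ are computed as in $S$. Products falling into $I_{r-1}$ carry a label. For $s,t\in U_r$ with $\rank(st)<r$, put $s\cdot t=(st,\ell(s,t))$. The block $I_{r-1}\times\Lambda$ is an ideal on which $S$ acts on the first coordinate, by $(\gamma,\lambda)\cdot u=(\gamma u,\lambda)$ and similarly on the left.

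The relations $\R_{r,m}$ hold automatically. The projection to $I_m$ is a surjective homomorphism once associativity is verified. Non-injectivity only needs a single element of rank $<r$ that is reached with two different labels. This is where the hypothesis $r>2m-n$ enters: for $\alpha,\beta$ of rank $m$ the rank of $\alpha\beta$ can be as small as $2m-n$. Hence there are pairs in $J_m\times J_m$ whose product already lies in $J_{r-1}$, or at least in $I_{r-1}$, and these are the candidates for receiving distinct labels. The hypothesis $r>\epsilon$ guarantees that $I_{r-1}$ is nonempty, so there is room for such a collapse. The hypothesis $r<m$ ensures that $U_r$ contains at least two $\J$-classes, so that rank-$m$ factors can be split inside $U_r$.

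The main obstacle is choosing the labelling $\ell$ so that $T$ is associative. Checking associativity of $s\cdot t\cdot u$ produces compatibility conditions:
\begin{nitemize}
\item $\ell(s,tu)=\ell(st,u)$ whenever $st,tu\in U_r$ and $stu\in I_{r-1}$;
\item $\ell(s,t)=\ell(t,u)$ whenever $st,tu\in I_{r-1}$, since one bracketing lands with label $\ell(s,t)$ and the other with $\ell(t,u)$.
\end{nitemize}
Thus $\ell$ must be constant on the classes of the equivalence on ``collapsing pairs'' generated by these two moves. I would first define a graph on pairs $(s,t)\in U_r\times U_r$ with $\rank(st)<r$, with edges given by these moves. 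The task is then to show that this graph is disconnected. I would try to find a quantity attached to a collapsing pair that is invariant under the moves. A natural first guess is a statistic measuring at which ranks the collapse passes through, for instance $\rank(st)$ compared with how $\im s$ meets $\dm t$. I would then check that for two pairs of rank-$m$ maps with the same product (or the same $\J$-class) this quantity differs. The assumption $m-r<n-m$ should be exactly what prevents a chain of intermediate factors of rank $\geq r$ from interpolating between the two pairs.

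If this global labelling turns out to be too rigid, the fallback is a direct combinatorial argument on words. Any application of a relation of $\R_{r,m}$ changes a word only inside a factor whose value has rank $\geq r$. I would track an invariant of words representing elements of rank $<r$ and show it separates two specific words $x_\alpha x_\beta$ and $x_{\alpha'}x_{\beta'}$ with $\alpha\beta=\alpha'\beta'$.

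Once non-definability of $\C_r$ is established for each $r$ with $m>r>\max(2m-n,\epsilon)$, the lower bound of Proposition~\ref{prop: lower bound for depth of all ideals} follows from Proposition~\ref{prop: depth of semigroups same as cayley presentation}.
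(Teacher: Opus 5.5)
\textbf{There is a genuine gap: your main construction cannot work as set up, and the fallback omits the invariant that is the whole proof.}

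\textbf{Why the labelled extension collapses.} You let $U_r$ act on both sides of $I_{r-1}\times\Lambda$ without changing labels. Associativity then forces any two collapsing pairs to carry the same label. Suppose $s\cdot t=(\gamma,\lambda)$ and $s'\cdot t'=(\delta,\mu)$. Then
\[
((s\cdot t)\cdot s')\cdot t'=(\gamma s't',\lambda),\qquad s\cdot(t\cdot(s'\cdot t'))=(st\delta,\mu),
\]
so $\lambda=\mu$. Consequences:
\begin{itemize}
\item The subsemigroup of $T$ generated by $U_r$ always maps injectively onto $I_m$.
\item The graph on collapsing pairs that you hope to show disconnected is in fact connected once all associativity constraints are imposed.
\item Your list of constraints is also incomplete: it omits the mixed ones, e.g.\ $\ell(st,u)=\ell(t,u)$ when $st\in U_r$ and $tu\in I_{r-1}$.
\end{itemize}
This failure is structural. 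The relation the paper shows to be underivable is $x_\alpha x_\beta x_\beta=x_\beta x_\beta$, where $\alpha\beta^2=\beta^2$. So in any separating model, left multiplication by $x_\alpha$ must \emph{change} the ``label'' of a low-rank element, which label-preserving actions forbid.

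\textbf{The missing invariant.} The paper's argument rests on the following. Let $\beta,\gamma\in J_m$ with $\beta\gamma$ of rank $<r$. Then every word obtained from $x_\beta x_\gamma$ by applying relations of $\R_{r,m}$ has the form $x_{\sigma_1}\cdots x_{\sigma_k}x_{\tau_1}\cdots x_{\tau_l}$, where $k,l\ge 1$, all $\sigma_i,\tau_j\in J_m$, $\sigma_1\cdots\sigma_k=\beta$, $\tau_1\cdots\tau_l=\gamma$, and $\sigma_k\tau_1$ has rank $<r$. The induction on the number of relation applications uses three facts:
\begin{itemize}
\item Any factorisation inside $I_m$ of a rank-$m$ element has both factors of rank $m$, since $J_m$ is the top $\J$-class.
\item The boundary pair $x_{\sigma_k}x_{\tau_1}$ can never be contracted, because its product has rank $<r$.
\item The boundary condition survives splitting or merging by stability. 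For example, if $\sigma_k=\sigma'\sigma''$ then $\sigma'\sigma''\mathrel{\mathcal{J}}\sigma''$, hence $\sigma'\sigma''\mathrel{\mathcal{L}}\sigma''$, hence $\sigma''\tau_1\mathrel{\mathcal{L}}\sigma_k\tau_1$.
\end{itemize}
One then exhibits explicit $\alpha,\beta\in J_m$ with $\alpha\beta=\beta^2=\alpha\beta^2$ of rank $r-1$. This is exactly where $r-1\ge 2m-n$ and $r-1\ge\epsilon$ are used. The word $x_\alpha x_\beta x_\beta$ visibly has no admissible cut, so it cannot be derived from $x_\beta x_\beta$.

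\textbf{Your fallback.} It points at exactly this kind of argument. With the invariant, even your proposed witnesses $x_\alpha x_\beta$ and $x_{\alpha'}x_{\beta'}$ would be separated, for any two distinct pairs in $J_m\times J_m$ with the same product of rank $<r$, since a length-two word has only one cut. But as written you do not state the invariant, do not prove it is preserved by the relations, and do not produce the maps. Those three steps are the entire proof.
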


%To prove Proposition \ref{prop: non-pre for I_m merged for all semigroups}, we aim to find words that represent the same map, but whose equality cannot be derived from $\R_{r,m}$. Here is a family of more general examples.\textcolor{ForestGreen}{ I want to say that the next proposition doesn't just hold for transformation semigroups, however this sentence sounds a bit weird.}

We will derive Proposition \ref{prop: non-pre for I_m merged for all semigroups} from the following technical, but more general, result:

\begin{prop}\label{prop: general: an example of relations that cannot be deduced}
    Let $S$ be any finite semigroup whose $\J$-classes form a chain, 
    $J_\epsilon<\dots< J_m$.
    Suppose that $\alpha,\beta,\gamma\in J_m$ such that $\alpha\beta$ and $\alpha\beta\gamma=\beta\gamma\in J_r$ for some $r<m$, then $x_\alpha x_\beta x_\gamma=x_\beta x_\gamma$ is not a consequence of $\R_{r+1,m}$.
\end{prop}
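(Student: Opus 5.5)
The plan is to build a semigroup $T$ and a map $\varphi\colon X_{r+1,m}\to T$ that satisfy every relation of $\R_{r+1,m}$ but give $x_\alpha x_\beta x_\gamma$ and $x_\beta x_\gamma$ different images. Then $\varphi$ extends to a homomorphism $X_{r+1,m}^+\to T$ that respects $\R_{r+1,m}^\sharp$, and the non-derivability follows. I will write $U:=U_{r+1}=J_{r+1}\cup\dots\cup J_m$ and $I:=S\setminus U$, which is an ideal because the $\J$-classes form a chain.

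First I will pin down which rewritings are possible. A relation $x_sx_t=x_{st}$ needs $s,t,st\in U$. In the word $x_\alpha x_\beta x_\gamma$, every factor of length at least $2$ represents $\alpha\beta$, $\beta\gamma$ or $\alpha\beta\gamma$. I read the hypothesis as saying that all of these lie in $J_r\subseteq I$, so no contraction applies directly. The danger comes from expansions $x_{st}\to x_sx_t$ followed by contractions elsewhere. So a purely syntactic argument is not enough, and an invariant is needed.

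The invariant I intend to use records the element in $U$ together with the ``boundary data'' of a product that falls into $I$. Let $T=U\,\dot\cup\, Z$, where $Z$ consists of formal triples $(a,\,e,\,b)$ with $a,b\in U$ and $e\in I$. Here $a$ is the value of the longest prefix lying in $U$, $b$ the value of the longest suffix lying in $U$, and $e$ the actual value in $S$. Multiplication is done in $S$ whenever the result stays in $U$. When it falls into $I$, the triple is formed or updated: the prefix component is kept from the left factor and the suffix component from the right factor, except that when a factor is itself in $U$ it is absorbed into the neighbouring boundary component as long as the result stays in $U$. The map is $\varphi(x_s)=s$. Each relation $x_sx_t=x_{st}$ with $s,t,st\in U$ then holds in $T$ automatically, because both sides are computed inside $U$.

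On the test words, $\varphi(x_\alpha x_\beta x_\gamma)$ has prefix component $\alpha$ and $\varphi(x_\beta x_\gamma)$ has prefix component $\beta$. Since $\alpha\beta\notin U$, neither can be extended, so $\alpha\neq\beta$ gives different images. If $\alpha=\beta$, the two words differ anyway, because $\alpha\cdot\alpha\gamma\ne\alpha\gamma$ would contradict the hypothesis $\alpha\beta\gamma=\beta\gamma$ only in a way I will need to check separately.

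The main obstacle is associativity of $T$. I must show that the prefix of the longest $U$-prefix is well defined, i.e.\ independent of how a word is bracketed and of how letters are expanded. The key fact to use is that along a word the prefix values are non-increasing in the $\J$-order, since $I$ is an ideal. Once a prefix falls into $I$, every longer prefix stays in $I$. Hence ``the last prefix value in $U$'' is intrinsic to the element sequence. It is changed by a relation only when the relation merges the letters straddling that boundary, and I expect to have to refine the prefix datum to its $\R$-class (and the suffix datum to its $\L$-class) to make it invariant under such merges. This uses stability of the finite semigroup $S$: if $ps\,\J\,p$ then $ps\,\R\,p$. I would first try to prove associativity with the $\R$/$\L$-class version, and then check that $\alpha$ and $\beta$ are still separated there. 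For the intended application to Proposition~\ref{prop: non-pre for I_m merged for all semigroups}, I would choose $\alpha,\beta$ with different kernels, so this separation holds.
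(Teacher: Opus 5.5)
\textbf{Verdict: genuine gap.} Separating the two words by a homomorphism into an auxiliary semigroup $T$ satisfying $\R_{r+1,m}$ is a legitimate strategy. It would differ from the paper, which argues syntactically: by induction on the number of relation applications, every word derivable from $x_\beta x_\gamma$ is a factorisation of $\beta$ followed by a factorisation of $\gamma$, with all letters in $J_m$ and junction product in $J_r$, and $x_\alpha x_\beta x_\gamma$ has no such form. But your $T$ is never actually constructed, and the data you propose fail in two ways.

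\emph{First gap: your data are not invariant, even after passing to $\R$- and $\L$-classes.} Take $u,v,w\in J_m$ with $uv,vw\in U\setminus J_m$ and $uvw\in I$. Then $x_{uv}x_w=x_ux_vx_w=x_ux_{vw}$ follows from $\R_{r+1,m}$, i.e.\ your rule gives $(u\cdot v)\cdot w=(uv,uvw,w)$ but $u\cdot(v\cdot w)=(u,uvw,vw)$. The prefixes $uv$ and $u$ lie in different $\J$-classes, so $R_{uv}\neq R_u$; likewise $L_w\neq L_{vw}$. This occurs inside the scope of the proposition. In the ideal $I_3$ of $\T_6$ with $r=1$, take rank-$3$ maps $u,v,w$ with kernel classes $\{1\},\{2\},[3,6]$, resp.\ $\{1,2\},\{3\},[4,6]$, resp.\ $[1,3],\{4,5\},\{6\}$, sent in order to $1,2,3$, resp.\ $4,5,6$, resp.\ $3,1,2$. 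Then $\rk{uv}=\rk{vw}=2$ and $\rk{uvw}=1$. Stability converts $\J$-equivalence into $\R$-equivalence, but a prefix in $U$ can drop from $J_m$ to a lower class of $U$ under a merge straddling the boundary, and then nothing survives.

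\emph{Second gap: even if well defined, your data do not separate the two words in general.} For $x_\alpha x_\beta x_\gamma$ and $x_\beta x_\gamma$ they are $(\alpha,\beta\gamma,\gamma)$ and $(\beta,\beta\gamma,\gamma)$. These coincide (after refinement) whenever $\alpha\R\beta$, in particular when $\alpha=\beta$. The hypotheses allow this: take $S=\{a,0\}$ with $a^2=0$ and $\alpha=\beta=\gamma=a$. Your remark about $\alpha=\beta$ is not an argument. Choosing $\alpha,\beta$ with different kernels proves only a special case, not the proposition as stated.

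\emph{The missing idea.} The words are distinguished not by boundary data but by the number of breaks: one in $x_\beta x_\gamma$, two in $x_\alpha x_\beta x_\gamma$. This count is invariant only if you track the top class $J_m$ alone, using two facts. Every factor of an element of $J_m$ lies in $J_m$. For $s,t\in J_m$ with $st\in J_m$, stability gives $st\R s$ and $st\L t$.

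\emph{A repair along these lines.} Let $P=\{(a,b)\in J_m\times J_m\colon ab\in I\}$ and $T=J_m\sqcup P\sqcup\{0\}$, with the following product.
\begin{itemize}
\item $0$ is a zero, and $P\cdot P=0$.
\item For $s,t\in J_m$, $s\cdot t$ is $st$, $(s,t)$ or $0$ according as $st$ lies in $J_m$, $I$ or $U\setminus J_m$.
\item $s\cdot(a,b)=(sa,b)$ if $sa\in J_m$, and $0$ otherwise.
\item Dually, $(a,b)\cdot t=(a,bt)$ if $bt\in J_m$, and $0$ otherwise.
\end{itemize}
Map $x_s\mapsto s$ for $s\in J_m$ and $x_s\mapsto 0$ for $s\in U\setminus J_m$. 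The two facts above give associativity and show that this map respects $\R_{r+1,m}$. Then $x_\alpha x_\beta x_\gamma\mapsto(\alpha,\beta)\cdot\gamma=0$, while $x_\beta x_\gamma\mapsto(\beta,\gamma)$. This model is the semantic counterpart of the paper's normal-form lemma.
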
 
    
To prove the proposition, we first show that all words that can be obtained from such $x_\beta x_\gamma$ must be of a certain form. 

\begin{lemma}\label{lem: all: words that can be deduced from x_betax_gamma}
     Suppose that $\alpha,\beta\in J_m$ are such that $\alpha\beta\in J_r$, where $r<m$. Suppose that $w\in X_{r+1,m}^*$ is obtained from $x_\alpha x_\beta$ by applications of relations in $\R_{r+1,m}$, then \begin{equation}\label{eqn: form of word obtained from x_beta x_gamma}
    w=x_{\alpha_1}x_{\alpha_2}\dots x_{\alpha_k}x_{\beta_1}x_{\beta_2}\dots x_{\beta_l},
    \end{equation}where $\alpha_1\dots\alpha_k=\alpha$, $\beta_1\dots\beta_l=\beta$, $\alpha_i,\beta_j\in J_m$ for all $i,j$, and $\alpha_k\beta_1\in J_r$. 
\end{lemma}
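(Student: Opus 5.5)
Induction on the number of single-step applications of relations from $\R_{r+1,m}$ used to obtain $w$ from $x_\alpha x_\beta$. The base case $w\equiv x_\alpha x_\beta$ has the form \eqref{eqn: form of word obtained from x_beta x_gamma} with $k=l=1$. For the inductive step we assume $w$ has the stated form and that $w'$ arises from $w$ by replacing one occurrence of a side of a relation $x_sx_t=x_{st}$ with $s,t,st\in U_{r+1}=J_{r+1}\cup\dots\cup J_m$. We then show that $w'$ again has the stated form.

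Two facts do the work. First, since $\J$-classes form a chain and $J_m$ is the top class, any product lying in $J_m$ has all its factors in $J_m$: if $st\in J_m$ then $S^1stS^1\subseteq S^1sS^1$, forcing $s\in J_m$, and similarly $t\in J_m$. Second, any subword of $w$ that contains both $x_{\alpha_k}$ and $x_{\beta_1}$ represents an element of $S^1\alpha_k\beta_1S^1\subseteq J_\epsilon\cup\dots\cup J_r$. Hence it cannot be a side of a relation in $\R_{r+1,m}$, whose sides represent elements of $U_{r+1}$.

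We then split into two cases according to the direction of the relation.

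Suppose first that a two-letter subword $x_sx_t$ is replaced by $x_{st}$. By the second fact, the subword is either $x_{\alpha_i}x_{\alpha_{i+1}}$ with $i+1\le k$, or $x_{\beta_j}x_{\beta_{j+1}}$ with $j+1\le l$. Take the first option. Since $st=\alpha_i\alpha_{i+1}$ is a factor of $\alpha=\alpha_1\cdots\alpha_k\in J_m$, it lies in $J_m$. Replacing $\alpha_i,\alpha_{i+1}$ by $\alpha_i\alpha_{i+1}$ preserves the product $\alpha$, and all factors stay in $J_m$. If $i+1=k$, the new last $\alpha$-factor is $\alpha_{k-1}\alpha_k$. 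The product $(\alpha_{k-1}\alpha_k)\beta_1$ lies below $J_r$ in the ordering, and it is in $J_r$ because $\alpha\beta\in J_r$ is in its ideal. So this product is exactly in $J_r$, as required. The $\beta$-side is symmetric.

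Suppose instead that a single letter $x_u$ is replaced by $x_sx_t$ with $st=u$. The letter is some $x_{\alpha_i}$ or $x_{\beta_j}$, so $u\in J_m$, and the first fact gives $s,t\in J_m$. The products $\alpha$ and $\beta$ are unchanged. If the split letter is $x_{\beta_1}$, the new first $\beta$-factor is $s$. Then $\alpha_k s$ has $\alpha_k\beta_1=\alpha_k st$ in its ideal, so it lies in $J_r$ or above. It is also a factor of $\alpha\beta\in J_r$, so it lies in $J_r$ or below, and hence in $J_r$. The case $x_{\alpha_k}\mapsto x_sx_t$ is analogous, with $t\beta_1$ in place of $\alpha_k s$.

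The main care needed is in this boundary bookkeeping: keeping the invariant $\alpha_k\beta_1\in J_r$ exactly, neither dropping below $J_r$ nor rising above it. The chain condition, through comparability of ideals, is what pins it down. Everything else is routine.
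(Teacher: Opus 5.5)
Your overall structure matches the paper's: induction on the number of elementary steps, ruling out any rewrite across the boundary $x_{\alpha_k}x_{\beta_1}$, and tracking the boundary product. Your merging case is correct as written. There the new boundary product $\alpha_{k-1}\alpha_k\beta_1$ is a multiple of $\alpha_k\beta_1$, so it lies $\le J_r$, and $\alpha\beta$ is a multiple of it, so it lies $\ge J_r$.

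The splitting case, however, has a genuine gap. When $x_{\beta_1}$ is replaced by $x_sx_t$ you need $\alpha_k s\in J_r$. Both facts you cite give the same inequality $J_{\alpha_k s}\ge J_r$. These are that $\alpha_k\beta_1=\alpha_kst$ lies in the ideal generated by $\alpha_k s$, and that $\alpha\beta$ lies in that ideal too. Your sentence ``it is a factor of $\alpha\beta\in J_r$, so it lies in $J_r$ or below'' has the ordering backwards: if $y\in S^1xS^1$ then $J_y\le J_x$, so a factor of an element of $J_r$ lies in $J_r$ or \emph{above}. Nothing in your argument prevents $\alpha_k s$ from lying strictly above $J_r$. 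The chain condition does not help, because after splitting, the new boundary product divides the old one rather than being a multiple of it.

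The missing ingredient is stability of finite semigroups, which is exactly what the paper invokes at this point.
\begin{itemize}
\item Since $s$ and $st=\beta_1$ both lie in $J_m$, we have $s\J st$, and stability gives $s\R st=\beta_1$. Because $\R$ is a left congruence, $\alpha_k s\R\alpha_k\beta_1$, hence $\alpha_k s\in J_r$. Equivalently, $s=\beta_1v$ for some $v\in S^1$, so $\alpha_k s=\alpha_k\beta_1 v$ lies in the ideal generated by $\alpha_k\beta_1$.
\item Dually, when $x_{\alpha_k}$ is split as $x_sx_t$, stability gives $t\L st=\alpha_k$. Since $\L$ is a right congruence, $t\beta_1\L\alpha_k\beta_1\in J_r$.
\end{itemize}
This is the one place where finiteness of $S$ is genuinely needed. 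With this fix your proof is complete.
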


\begin{proof}
    We induct on the number of applications of the relations from $\R_{r+1,m}$ to $x_\alpha x_\beta$. In the base case, no relation from $\R_{r+1,m}$ is applied, and the word $x_\alpha x_\beta$ satisfies the conditions \eqref{eqn: form of word obtained from x_beta x_gamma}.
    
     Now suppose that $w=x_{\alpha_1}\dots x_{\alpha_k}x_{\beta_1}\dots x_{\beta_l}$ is a word that satisfies \eqref{eqn: form of word obtained from x_beta x_gamma}. Let $w'$ be the word obtained from $w$ by one application of a relation from $\R_{r+1,m}$. Since all relations in $\R_{r+1,m}$ equate a letter and a word of length two, applying a relation from $\R_{r+1,m}$ necessarily changes the length, and $w'$ is either one letter longer or shorter than $w$. 

    If $w'$ is longer, a length one subword $u$ of $w$ has been replaced. If $u=x_{\alpha_i}$ where $i\in \{1,\dots,k\}$, the relation applied to $w$ has the form $x_{\alpha_i}=x_{\alpha'} x_{\alpha''}$, where $\alpha_i=\alpha'\alpha''$. Since 
    $J_m=J_{\alpha_i}\leq J_{\alpha'}$ and $J_{\alpha_i}\leq J_{\alpha''}$, $\alpha',\alpha''\in J_m$. If $i=k$, it remains to show that $\alpha''\beta_1\in J_r$. Since $\alpha''\beta_1\L \alpha'\alpha''\beta_1$ and $S$ is stable, $\alpha''\beta_1\J \alpha'\alpha''\beta_1$, and $\alpha',\alpha''\in J_m$.
    Then\begin{center}
        $w'=x_{\alpha_1}\dots x_{\alpha_{i-1}}x_{\alpha'}x_{\alpha''}x_{\alpha_{i+1}}\dots x_{\alpha_k}x_{\beta_1}\dots x_{\beta_l}$
    \end{center} satisfies \eqref{eqn: form of word obtained from x_beta x_gamma}. Similarly, if $u=x_{\beta_j}$ for $j\in \{1,\dots,l\}$, $w'$ can be shown to have the expected form. 
    
    If $w'$ is shorter than $w$, a length two subword $v$ of $w$ has been replaced by a letter to obtain $w'$. Notice that since $\alpha_k\beta_1\in J_m$ by the induction hypothesis, relations of the form $x_{\alpha_k}x_{\beta_1}=v$ are not in $\R_{r+1,m}$ Therefore the word $v$ must be one of $x_{\alpha_i}x_{\alpha_{i+1}} (i\in\{1,\dots,k-1\})$ or $x_{\beta_j}x_{\beta_{j+1}} (j\in\{1,\dots,l-1\})$.
    Without loss of generality, suppose that $v=x_{\alpha_i}x_{\alpha_{i+1}}$. The only relation applicable would be $x_{\alpha_i}x_{\alpha_{i+1}}=x_{\alpha'}$, where $\alpha'=\alpha_i\alpha_{i+1}$. Since $\alpha=\alpha_1\dots\alpha_{i-1}\alpha'\alpha_{i+2}\dots\alpha_k\in J_m$, $J_m\leq J_{\alpha'}$, and $\alpha'\in J_m$. If $i=k-1$, since $\alpha_{k-1}\alpha_k\beta_1\L \alpha_k\beta_1$ and $S$ is stable, $\alpha_{k-1}\alpha_k\beta_1\J\alpha_k\beta_1$. Therefore $\alpha'\beta_1\in J_r$.
    Then\begin{center}
        $w'=x_{\alpha_1}\dots x_{\alpha_{i-1}}x_{\alpha'}x_{\alpha_{i+2}}\dots x_{\alpha_k}x_{\beta_1}\dots x_{\beta_l}$
    \end{center}is of the form stated in \eqref{eqn: form of word obtained from x_beta x_gamma}. The case when $v=x_{\beta_j}x_{\beta_{j+1}}$ is analogous, and the induction is complete.
    \end{proof}

\begin{proof}[Proof of Proposition \ref{prop: general: an example of relations that cannot be deduced}] This follows from Lemma \ref{lem: all: words that can be deduced from x_betax_gamma}, as $x_\alpha x_\beta x_\gamma$ does not satisfy the conditions in \eqref{eqn: form of word obtained from x_beta x_gamma}.
\end{proof}

\begin{proof}[Proof of Proposition \ref{prop: non-pre for I_m merged for all semigroups}] 
It suffices to find mappings in $S\in\{\T_n, \I_n, \PT_n\}$ that satisfy the condition in Proposition \ref{prop: general: an example of relations that cannot be deduced}.
    First, consider $S=\T_n$. Let $\alpha,\beta\in J_m$ where
    \begin{align*}
        \alpha &= \begin{small}
            \begin{pmatrix}
               1 & 2 & \dots & m-1 & [m,n]\\
               1 & 2 & \dots & m-1 & m
               \end{pmatrix}
        \end{small},\\
        x\beta &= \begin{small}\begin{cases}
            x & (x\in [1,r-2]),\\
            r-1 & (x\in [r-1,m]),\\
            x-(m+1)+r & (x\in [m+1,2m-r]),\\
            m & (x\in [2m-r+1,n]).
        \end{cases}
        \end{small}
    \end{align*}Since $m>r>{\rm max}(2m-n,1)$, the $\ke\beta$-classes $[m+1,2m-r]$ and $[2m-r+1,n]$ are non-empty, and indeed $\alpha,\beta\in J_m$. With some calculation, it can be shown that
        \begin{align*}
        \alpha\beta=\beta^2= \alpha\beta^2&=\begin{small}
            \begin{pmatrix}
               1 & 2 & \dots & r-2 & [r-1,n]\\
               1 & 2 & \dots & r-2 & r-1
               \end{pmatrix}\in J_{r-1}.
        \end{small}
    \end{align*} Hence, by Proposition \ref{prop: general: an example of relations that cannot be deduced}, the relation $x_\alpha x_\beta^2=x_\beta^2$ cannot be deduced from $\R_{r,m}$, and $\C_r$ does not define $I_m$.\\
    For $S= \I_n$ or $\PT_n$, consider $r>{\rm max}(2m-n,0)$. Consider partial bijections \begin{align*}
        \alpha=\begin{small}
        \begin{pmatrix}
            a_1 & a_2 & \dots & a_m\\
            a_1 & a_2 & \dots & a_m
        \end{pmatrix}
    \end{small}, {\rm \ and\ } \beta=\begin{small}
        \begin{pmatrix}
            a_1 & \dots & a_{r-1} & b_r & \dots & b_m\\
            a_1 & \dots & a_{r-1} & b_r' &\dots & b_m'
        \end{pmatrix}
    \end{small}. \end{align*} The fact that $r>2m-n$ guarantees that we can choose $\im\beta$ such that \begin{align*}
        \im\beta\cap\ke\beta=\{a_1,\dots,a_{r-1}\} {\rm \ and\ }\alpha\beta=\alpha\beta^2=\beta^2=\begin{small}
        \begin{pmatrix}
            a_1 & \dots & a_{r-1}\\
            a_1 & \dots & a_{r-1}
        \end{pmatrix}
    \end{small}\in J_{r-1}.
    \end{align*} 
%    $Similarly, the rest follows from
%     Proposition \ref{prop: general: an example of relations that cannot be deduced}.
    From here we proceed as for $\T_n$.
\end{proof}

\begin{proof}[Proof of Proposition \ref{prop: lower bound for depth of all ideals}]
Immediate consequence of Proposition \ref{prop: non-pre for I_m merged for all semigroups}.
\end{proof}

Furthermore, we now know the exact depths of small ideals. 
\begin{cor}\label{cor: All: exact value for depth of small ideals}
    Let $I_m$ be an ideal of $S\in\{\PT_n,\I_n,T_n\}$. Define $\epsilon =0$ for $S\in\{\PT_n,\I_n\}$ and $\epsilon=1$ when $S=\T_n$. If $m\leq \frac{n+\epsilon}{2}$, then $\dep{I_m}=m+1$.
\end{cor}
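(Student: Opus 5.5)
The plan is to pin $\dep{I_m}$ between the lower bound of Proposition \ref{prop: lower bound for depth of all ideals} and a trivial upper bound, and then observe that both equal the number of $\J$-classes of $I_m$, which is the quantity $m+1$ in the statement.

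\emph{Upper bound.} By definition the depth of any presentation of $I_m$ is at most the depth of the bottom $\J$-class of $I_m$, which is $J_\epsilon$. So $\dep{I_m}\leq \dep{J_\epsilon}=m-\epsilon+1$. Concretely, the full Cayley table presentation $\C_\epsilon=\C_{I_m}$ always defines $I_m$. Hence, by Proposition \ref{prop: depth of semigroups same as cayley presentation}, the largest $i$ for which $\C_i$ defines $I_m$ satisfies $i\geq\epsilon$, and so $\dep{I_m}\leq m-\epsilon+1$. For $\epsilon=0$ this upper bound is exactly $m+1$.

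\emph{Lower bound.} Next I would use the inequality $m\leq \frac{n+\epsilon}{2}$, which is equivalent to $2m-n\leq\epsilon$. This gives $\max(\epsilon,2m-n)=\epsilon$. Proposition \ref{prop: lower bound for depth of all ideals} (which applies since $m\leq\frac{n+\epsilon}{2}<n$ for $n\geq 3$) then yields $\dep{I_m}\geq m-\epsilon+1$. Equivalently, Proposition \ref{prop: non-pre for I_m merged for all semigroups} shows that no $\C_r$ with $r>\epsilon$ defines $I_m$. One should check the edge case $r=m$ separately: the proposition requires $m>r$, so the presentation $\C_m$ (depth $1$) must be excluded directly. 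It fails because $J_m$ alone is not closed under products, which makes the relations of $\C_m$ insufficient.

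\emph{Matching the statement, and where I expect difficulty.} Combining the two bounds shows that $\dep{I_m}$ equals the number of $\J$-classes $J_\epsilon<\dots<J_m$ of $I_m$. In the indexing $J_0<\dots<J_m$ this number is $m+1$, which is the value in the statement. I expect the main obstacle to be this last identification in the case $S=\T_n$. There $\epsilon=1$, the chain starts at $J_1$, and the two bounds coincide at $m-\epsilon+1=m$, not at $m+1$. Since the depth can never exceed the number of $\J$-classes, the argument above cannot produce $m+1$ for $\T_n$ unless a convention is adopted that counts an extra class. So in writing the proof I would state explicitly that the value $m+1$ is obtained for $\PT_n$ and $\I_n$. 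For $\T_n$ the same argument gives the full count of $\J$-classes, and I would flag that the statement must be read with that convention.
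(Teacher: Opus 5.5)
Your argument is the paper's. Its proof of this corollary simply invokes Proposition \ref{prop: lower bound for depth of all ideals}, with the matching upper bound supplied by the full Cayley table presentation $\C_\epsilon$, exactly as you set it up. You are also right about what this gives, namely $\dep{I_m}=m-\epsilon+1$. So the value $m+1$ is correct for $\PT_n$ and $\I_n$, but false for $\T_n$, where $I_m$ has only $m$ $\J$-classes. This is not a reading convention but an error in the statement. The paper's own Theorem \ref{thm: Tn: relational depth} gives $\dep{I_m}=m$ for $\T_n$ when $m\leq\frac{n+1}{2}$, in agreement with the Main Theorem's formula $m-\max(\epsilon,2m-n)+1$. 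The corollary should read $\dep{I_m}=m-\epsilon+1$.

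The one step you leave unproved is the exclusion of $\C_m$. That $J_m$ is not closed under products does not by itself show that $\R_{m,m}$ is insufficient; you need an invariant of $\R_{m,m}$ together with two words it separates.

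For $m\geq\epsilon+2$ nothing extra is needed. The relation $x_\alpha x_\beta^2=x_\beta^2$ from Proposition \ref{prop: non-pre for I_m merged for all semigroups} uses only letters of $X_{m,m}$, and $\R_{m,m}\subseteq\R_{r,m}$.

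The case matters only when $m=\epsilon+1$, that is, $m=1$ for $\I_n,\PT_n$ and $m=2$ for $\T_n$. There Proposition \ref{prop: non-pre for I_m merged for all semigroups} is vacuous, and the paper's citation chain is silent as well. Take the maps $\alpha,\beta$ from the proof of that proposition with $r=m$. They still satisfy $\alpha\neq\beta$ and $\alpha\beta=\beta^2=\alpha\beta^2\in J_{m-1}$ whenever $\epsilon<m<n$; for $\T_n$ the block $[m+1,2m-r]$ becomes empty, which is harmless. Proposition \ref{prop: general: an example of relations that cannot be deduced}, with $J_{m-1}$ in the role of $J_r$, then shows that $x_\alpha x_\beta^2=x_\beta^2$ is not a consequence of $\R_{m,m}$.

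Your slogan is in fact true in general. By stability, the number of adjacent letter pairs whose product lies outside $J_m$ is invariant under $\R_{m,m}$. If $st\notin J_m$, this number strictly increases along the words $(x_sx_t)^k$, whereas finiteness gives $(st)^k=(st)^{k'}$ for some $k<k'$.
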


\begin{proof}
    Follows from Proposition \ref{prop: lower bound for depth of all ideals}.
\end{proof}

\begin{rem}
Similar issues are treated by Mitchell and Whyte in \cite{MW:SP} in their analysis of minimal presentations in terms of the number of defining relations. For example, in Lemmas 3.6, 3.7 they show that to define the symmetric inverse monoid $\I_n$ one has to use at least two relations of depth (in our terminology) $n-1$ and at least one of depth $n-2$. Similar assertions are proved for $\T_n$ and $\PT_n$ in the subsequent sections of that paper.
\end{rem}

\section{Upper bound: Preliminaries}
\label{sec:ubp}

In this section, we state and prove some technical lemmas that will subsequently be used to find the upper bounds for the relational depths of $\I_n$, $\T_n$, and $\PT_n$. For now, consider $S\in\{\I_n,\T_n,\PT_n\}$. Then $S=J_{\epsilon}\cup \dots\cup J_n$, where
$\epsilon =0 \text{ if } S\in\{\PT_n, \I_n\} \text{, and } \epsilon=1\text{ if } S=\T_n$.

The following lemma states the three conditions for deducing relations of rank $r$ in an ideal $I_m$ using relations from $\R_{r+1,m}$.

\begin{lemma}\label{lem: All: words of rank r are cons of above}
    Let $I_m$ be an ideal of $S$, then $I_m= J_{\epsilon}\cup \dots\cup J_m$. Let $r\leq 2m-n-1$. Suppose that the following statements hold. \begin{enumerate}[label=\textup{\textsf{(P\arabic*)}},leftmargin=12mm]
        \item\label{P1} For $\alpha,\beta,\gamma,\delta\in J_{r+1}$, if $\alpha\beta=\gamma\delta\in J_r$, then we can deduce the relation $x_\alpha x_\beta=x_\gamma x_\delta$ using relations in $\R_{r+1,m}$.
        \item\label{P2} For $\alpha,\beta,\gamma\in J_{r+1}$, if $\alpha\beta,\beta\gamma,\alpha\beta\gamma\in J_{r}$, then we can obtain a word $x_{\alpha'} x_{\gamma'}$ from $x_\alpha x_\beta x_\gamma$, where $\alpha',\gamma'\in J_{r+1}$ using relations in $\R_{r+1,r+2}$.
        \item\label{P3} For $\gamma\delta\in J_{r}$, if $\rk\gamma,\rk\delta\geq r+1$, we can obtain a word $x_\alpha x_\beta$ from $x_\gamma x_\delta$, where $\alpha,\beta\in J_{r+1}$ using relations in $\R_{r+1,m}$.
\end{enumerate}Then the following are true: \begin{thmenumerate}
    \item For any $\alpha_1,\dots,\alpha_k\in J_{r+1}\cup\dots\cup J_{m}$ such that $\alpha_1\dots\alpha_k\in J_{r}$, there exist $\gamma,\delta\in J_{r+1}$ such that $\beta\gamma=\alpha_1\alpha_2\dots\alpha_k$, and $x_\beta x_\gamma=x_{\alpha_1}x_{\alpha_2}\dots x_{\alpha_k}$ is a consequence of $\R_{r+1,m}$.
    \item If $\epsilon\leq r\leq 2m-n-1$, and $w,v$ are words that represent the same rank-$r$ element, then we can obtain $v$ from $w$ using $\R_{r+1,m}$.
\end{thmenumerate}
\end{lemma}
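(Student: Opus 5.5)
We prove (i) by induction on $k$ and then derive (ii) from (i) together with \ref{P1}. (Part (i) has a typo: it should read ``there exist $\beta,\gamma\in J_{r+1}$ such that $\beta\gamma=\alpha_1\cdots\alpha_k$''.)

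For (i), note first that $k\geq 2$, since each $\alpha_i$ has rank at least $r+1$ while the product has rank $r$. Choose the least index $j$ for which $\alpha_1\cdots\alpha_j\in J_r$. Then the prefix $\pi=\alpha_1\cdots\alpha_{j-1}$ has rank at least $r+1$, and ranks only decrease along products, so every prefix before $j$ has rank $\geq r+1$. By repeatedly applying relations $x_sx_t=x_{st}$ with $st$ of rank $\geq r+1$ (these lie in $\R_{r+1,m}$), the prefix $x_{\alpha_1}\cdots x_{\alpha_{j-1}}$ can be rewritten as the single letter $x_\pi$. Now $\pi\alpha_j\in J_r$ with $\rk\pi,\rk\alpha_j\geq r+1$, so \ref{P3} turns $x_\pi x_{\alpha_j}$ into $x_{\beta}x_{\gamma}$ with $\beta,\gamma\in J_{r+1}$. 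Every element of rank $\leq m$ lies in $I_m$, so the letter $x_\pi$ is available.

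We then absorb the remaining letters $\alpha_{j+1},\dots,\alpha_k$ one at a time, keeping a two-letter word $x_\beta x_\gamma$ with $\beta,\gamma\in J_{r+1}$ and $\beta\gamma\in J_r$. Given the next letter $x_\delta$ with $\rk\delta\geq r+1$, consider $\gamma\delta$, which has rank $r+1$ or $r$.
\begin{itemize}
\item If $\gamma\delta$ has rank $r+1$, apply $x_\gamma x_\delta=x_{\gamma\delta}$. The result $x_\beta x_{\gamma\delta}$ is again of the required form, since $\beta\gamma\delta=\beta\gamma\cdot\delta$ has rank exactly $r$.
\item If $\gamma\delta\in J_r$, first use \ref{P3} to replace $x_\gamma x_\delta$ by $x_{\gamma'}x_{\delta'}$ with $\gamma',\delta'\in J_{r+1}$, giving $x_\beta x_{\gamma'}x_{\delta'}$. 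Here $\gamma'\delta'=\gamma\delta$, so $\beta\gamma'=\beta\gamma\,\cdot$? is not given and must be checked: $\beta\gamma'$ has rank $\geq\rk(\beta\gamma'\delta')=r$, hence rank $r$ or $r+1$.
\begin{itemize}
\item If $\beta\gamma'$ has rank $r+1$, merge it into one letter and we are done.
\item If $\beta\gamma'\in J_r$, then $\beta,\gamma',\delta'$ satisfy the hypotheses of \ref{P2}, and the word collapses to $x_{\alpha'}x_{\gamma''}$ with both letters in $J_{r+1}$.
\end{itemize}
\end{itemize}
Rank preservation of the product is automatic throughout, since every move is a valid relation. The care required is in verifying that each intermediate letter has rank in $[r+1,m]$, so that every relation used belongs to $\R_{r+1,m}$.

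For (ii), let $w$ and $v$ represent the same rank-$r$ element. Apply (i) to each word (its letters lie in $J_{r+1}\cup\dots\cup J_m$ if $w,v$ are words over $X_{r+1,m}$) to obtain $x_\beta x_\gamma$ and $x_{\beta'}x_{\gamma'}$ with all four elements in $J_{r+1}$. These two words represent the same element of $J_r$, so \ref{P1} connects them, and hence $w$ connects to $v$. The hypothesis $\epsilon\leq r\leq 2m-n-1$ is only needed so that \ref{P1}--\ref{P3} are meaningful; it is not used further in this deduction.

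The main obstacle is the case analysis in the inductive absorption step, in particular ensuring that \ref{P2} is invoked only when all three of its rank conditions hold. If the alternative in which $\beta\gamma'$ has rank $r+1$ cannot be handled by simply merging, the fallback is to first rewrite the word using \ref{P1}.
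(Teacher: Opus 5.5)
Your proof is correct. It uses the same ingredients as the paper: merging $x_sx_t=x_{st}$ whenever $\rk{st}\geq r+1$, then \ref{P3}, then \ref{P2} for (i), and \ref{P1} to finish (ii). The difference lies in the bookkeeping.

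\begin{itemize}
\item The paper argues by descent on the rank sum $\sum_i\rk{\alpha_i}$. It always rewrites at the left end of the word. It stops when the sum equals $2r+2$, which forces exactly two letters, both in $J_{r+1}$.
\item You first collapse the maximal prefix of rank $\geq r+1$ into a single letter. You then sweep left to right, keeping the invariant that the processed part is $x_\beta x_\gamma$ with $\beta,\gamma\in J_{r+1}$.
\end{itemize}

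Your version is slightly more careful at one point. In the paper's Case 2, \ref{P2} is applied to $x_{\alpha_1}x_{\alpha_2}x_{\alpha_3}$ without checking that $\alpha_3\in J_{r+1}$, which \ref{P2} requires. Your preliminary use of \ref{P3} on $x_\gamma x_\delta$ guarantees that all three letters lie in $J_{r+1}$ before \ref{P2} is invoked. The paper's argument is easily repaired in the same way, since applying \ref{P3} to $x_{\alpha_2}x_{\alpha_3}$ also lowers the rank sum. The descent formulation buys brevity and a single uniform termination measure. Your sweep makes it transparent that every intermediate letter stays in $J_{r+1}\cup\dots\cup J_m$.

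Two cosmetic points:
\begin{itemize}
\item Delete the stray fragment ``$\beta\gamma'=\beta\gamma\,\cdot$?''.
\item Drop the closing hedge about a ``fallback''. When $\beta\gamma'\in J_{r+1}$, the relation $x_\beta x_{\gamma'}=x_{\beta\gamma'}$ lies in $\R_{r+1,m}$. It yields $x_{\beta\gamma'}x_{\delta'}$, which already has the required form, so nothing further is needed.
\end{itemize}
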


\begin{proof}(i)
    For $w=x_{\alpha_1}x_{\alpha_2}\dots x_{\alpha_k}$, define $r(w)=\sum_{i=1}^{k}\rk{\alpha_i}$. Let $w=x_{\alpha_1}\dots x_{\alpha_k}$ be a word that satisfies the condition of the theorem. Consider the following claim. 
    \begin{cclaim}
    If $r(w)> 2r+2$, then there exist $w'=x_{\alpha_1'}\dots x_{\alpha_l'}$ such that \begin{enumerate}[label=(\alph*)]
        \item $\alpha_1',\dots,\alpha_l'\in J_{r+1}\cup \dots\cup J_m$
        \item $w=w'$ is a consequence of $\R_{r+1,m}$
        \item $r(w')<r(w)$
    \end{enumerate}
\end{cclaim}
Notice that $\sum_{i=1}^{k}\rk{\alpha_i}=2r+2 \iff k=2$ and $\rk{\alpha_1}=\rk{\alpha_2}=r+1$. So if $r(w)=2r+2$, $w$ consists of $2$ letters that represent element of rank $r+1$. Also notice that we always have $\sum_{i=1}^{k}\rk{\alpha_i}\geq 2r+2$. The claim suggests that if $w=x_{\alpha_1}\dots x_{\alpha_k}\in X_{r+1,m}^*$ which represents an element of rank $r$, we can obtain a word $w'$ such that $r(w')<r(w)$ using $\R_{r+1,m}$. By repeatedly applying the claim to the resulting word, we may reduce the sum of ranks until it gets to $2r+2$. Hence, to prove the first part of the lemma, it suffices to prove the claim. 

\begin{proof}[Proof of claim]\renewcommand{\qedsymbol}{}
    Consider the prefix $x_{\alpha_1}x_{\alpha_2}$ of $w$.

    \underline{Case 1}: $\alpha_1\alpha_2\in J_{r+1}\cup \dots\cup J_m$. $x_{\alpha_1\alpha_2}=x_{\alpha_1}x_{\alpha_2}$ is a relation in $\R_{r+1,m}$. Applying this to $w$, we obtain $w'=x_{\alpha_1\alpha_2}x_{\alpha_3}\dots x_{\alpha_k}$, and $r(w')<r(w)$ follows from the fact that $\rk{\alpha_1\alpha_2}\leq {\rm min}\{\rk{\alpha_1},\rk{\alpha_2}\}$.
    
    \underline{Case 2}: $\alpha_1\alpha_2\in J_{r}$. If $\rk{\alpha_1}>r+1$ or $\rk{\alpha_2}>r+1$, then by \ref{P3}, there exist $\alpha_1',\alpha_2'\in J_{r+1}$ such that $x_{\alpha_1'}x_{\alpha_2'}$ can be obtained from $x_{\alpha_1}x_{\alpha_2}$ using $\R_{r+1,m}$. Clearly $r(\alpha_1\alpha_2)<r(\alpha_1'\alpha_2')$, and $r(w')<r(w)$. 

    Now suppose that $\rk{\alpha_1}=\rk{\alpha_2}=r+1$. Since $r(w)>2r+2$ and $r(\alpha_1\alpha_2)=2r+2$, we have $k\geq 3$. Consider the prefix $x_{\alpha_1}x_{\alpha_2}x_{\alpha_3}$. Since $\rk{\alpha_2\alpha_3}\leq \rk{\alpha_2}=r+1$, we have $\alpha_2\alpha_3\in J_{r}\cup J_{r+1}$. If $\alpha_2\alpha_3\in J_{r+1}$, then $x_{\alpha_2}x_{\alpha_3}=x_{\alpha_2\alpha_3}$ is a relation in $\R_{r+1,m}$. Therefore $r(x_{\alpha_1}x_{\alpha_2\alpha_3}x_{\alpha_4}\dots x_{\alpha_k})<r(w)$. Otherwise, $\alpha_2\alpha_3\in J_{r}$. Then by statement \ref{P2}, $x_{\alpha_1}x_{\alpha_2}x_{\alpha_3}=x_{\alpha_1'}x_{\alpha_3'}$ where $\alpha_1',\alpha_3'\in J_{r+1}$, follows from $\R_{r+1,m}$, and $w'=x_{\alpha_1'}x_{\alpha_3'}\dots x_{\alpha_k}$ satisfies $r(w')<r(w)$.
\end{proof}
(ii) Now suppose $w=x_{\alpha_1}\dots x_{\alpha_k}$ and $v=x_{\beta_1}\dots x_{\beta_l}$ are words in $X_{r+1,m}^*$ such that they represent the same element of rank $r$. Then by the first part of the lemma, we can obtain $x_\alpha x_\beta$ from $w$, and $x_\gamma x_\delta$ from $v$, where $\alpha,\beta,\gamma,\delta\in J_{r+1}$. Then by \ref{P1}, we may deduce the relation $x_\gamma x_\delta=x_\alpha x_\beta$ from $\R_{r+1,m}$.
\end{proof}

The above points out the direction we should aim for when finding the upper bound. In the following, we build up on the lemma and prove that once the conditions $\ref{P1}-\ref{P3}$ are satisfied, the upper bound would be straightforward.

Recall that $\C_{r}=\langle X_{r,m} \mid \R_{r,m} \rangle$ is the restriction of the Cayley table presentation $\C$ of $S$ to the generators of rank at least $r$. Define \begin{center}
    $R_r'=\{u=v\colon u,v\in X_{r+1,m}^+, {\rm and\ } u,v {\rm\ represent\ a\ map\ of\ rank\ } r\}$. 
\end{center}

\begin{prop}\label{prop: All: Cr and Cr+1 define the same semigroup}
     For $\epsilon\leq r\leq 2m-n-1$, if $S$ satisfies $\ref{P1}-\ref{P3}$, then
     $\C_r$ and $\C_{r+1}$ define the same semigroup.
\end{prop}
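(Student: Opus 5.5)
We will show that $\C_{r+1}$ can be obtained from $\C_r$ by elementary Tietze transformations. Write $\C_r=\langle X_{r,m}\mid \R_{r,m}\rangle$. The generators in $X_{r,m}\setminus X_{r+1,m}$ are the symbols $x_\sigma$ with $\sigma\in J_r$. The relations in $\R_{r,m}\setminus\R_{r+1,m}$ are the relations $x_sx_t=x_{st}$ in which at least one of $s,t,st$ has rank $r$.

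\textbf{Step 1: eliminate the rank-$r$ generators.} Since $r\le 2m-n-1<m$, every $\sigma\in J_r$ is a product of elements of rank $r+1$. Hence we may fix a two-letter word $x_{\sigma_1}x_{\sigma_2}$ with $\sigma_1,\sigma_2\in J_{r+1}$ and $\sigma_1\sigma_2=\sigma$. The relation $x_{\sigma_1}x_{\sigma_2}=x_\sigma$ is in $\R_{r,m}$, so we may apply \ref{T4} to eliminate $x_\sigma$, replacing every occurrence of $x_\sigma$ by the word $w_\sigma:=x_{\sigma_1}x_{\sigma_2}$. After doing this for all $\sigma\in J_r$, the generating set is $X_{r+1,m}$.

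\textbf{Step 2: identify the resulting relations.} After the substitution, each remaining relation is one of the following:
\begin{thmenumerate}
\item a relation from $\R_{r+1,m}$, unchanged;
\item a relation $w_\sigma=w_\sigma$ arising from the chosen defining relation, which is trivial and can be deleted;
\item a relation of the form $u=v$ with $u,v\in X_{r+1,m}^+$ representing the same rank-$r$ element.
\end{thmenumerate}
Case (iii) covers the substituted versions of $x_sx_t=x_{st}$ where $st\in J_r$. Here $s$ or $t$ (or both) may have been replaced by $w_s$ or $w_t$, and $x_{st}$ has been replaced by $w_{st}$. Note that $s,t$ cannot have rank below $r$, since the generators all have rank at least $r$. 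So all relations of type (iii) lie in $R_r'$, and the transformed presentation is $\langle X_{r+1,m}\mid \R_{r+1,m}\cup R''\rangle$ for some $R''\subseteq R_r'$.

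\textbf{Step 3: remove $R''$ using \ref{T2}.} By Lemma~\ref{lem: All: words of rank r are cons of above}(ii), applicable because $\epsilon\le r\le 2m-n-1$ and \ref{P1}--\ref{P3} hold, every relation in $R_r'$ is a consequence of $\R_{r+1,m}$. So we may delete $R''$ by \ref{T2} and arrive at $\C_{r+1}$.

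The only delicate point is the bookkeeping in Step 2: we must make sure that every relation involving a rank-$r$ symbol really becomes an equation between two words over $X_{r+1,m}$ representing the same rank-$r$ element. Since all substitutions preserve the represented element, this holds. The substantive content is entirely carried by Lemma~\ref{lem: All: words of rank r are cons of above}(ii).
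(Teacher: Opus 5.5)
Your argument is correct and is essentially the paper's proof run in the opposite direction. The paper starts from $\C_{r+1}$, adjoins the rank-$r$ generators by \ref{T3}, derives the relations of $\R_{r,m}\setminus\R_{r+1,m}$ via Lemma~\ref{lem: All: words of rank r are cons of above}(ii), and then discards the auxiliary relations $x_\alpha=u_\alpha$. You instead eliminate those generators from $\C_r$ by \ref{T4} and discard the resulting relations of $R_r'$ by the same lemma; choosing two-letter words that already occur in $\R_{r,m}$ neatly avoids the paper's case analysis.

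The one step to tighten is the existence of a factorisation $\sigma=\sigma_1\sigma_2$ with $\sigma_1,\sigma_2\in J_{r+1}$. It does not follow from $r<m$ alone: for $m=n$ and $r=n-1$ it fails. It does follow from Lemma~\ref{lem: All: words of rank r are cons of above}(i), or from a direct construction using $r+1<n$, once you know that $\sigma$ is a product of elements of rank $\geq r+1$. That fact uses that $I_m$ is a proper ideal, which the paper's Step 1 also assumes implicitly.
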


\begin{proof}
    We show that we can obtain $\C_r$ from $\C_{r+1}$ using Tietze transformations. 
    
    \underline{Step 1}: Add a set of generating symbols $\{x_\alpha\colon \rk\alpha=r\}$ to $\langle X_{r+1,m}\ |\ \R_{r+1,m}\rangle$. For each new symbol $x_\alpha$, choose a word $u_\alpha\in X_{r+1,m}^+$ which represent the map $\alpha$, and add a relation $x_\alpha=u_\alpha$ \ref{T3}. The presentation obtained is $P_1=\langle X_{r,m}\ |\ \R_{r+1,m}, x_\alpha=u_\alpha\ (\alpha\in D_r)\rangle$.
    
    \underline{Step 2}: By Lemma \ref{lem: All: words of rank r are cons of above}, $R_r'$ is a consequence of $\C_{r+1}$. Define a set of relations $R_r''=\{x_\beta x_\gamma=x_{\beta\gamma}\colon \rk\beta,\rk\gamma\geq r, \rk{\beta\gamma}=r\}$. Consider $\beta,\gamma\in J_r\cup \dots\cup J_m$ such that $\beta\gamma\in J_r$. Since $\beta\gamma\in J_r$, $x_{\beta\gamma}=u_{\beta\gamma}$ is a relation in $P_1$, if $\beta\in J_r$ then $x_\beta=u_\beta$ is a relation in $P_1$, otherwise $\rk\beta> r$, and $x_\beta\in X_{r+1,m}$. Similarly, either $x_\gamma=u_\gamma$ is a relation in $P_1$, or $x_\gamma\in X_{r+1,m}$. We consider the $4$ possible combinations of the ranks of $\beta$ and $\gamma$. If $\beta,\gamma\in J_r$, then $x_\beta x_\gamma, u_{\beta\gamma}\in X_{r+1,m}^+$ such that they represent the same rank-$r$ map. Then $x_\beta x_\gamma=u_{\beta\gamma}\in R_r'$ and is a consequence of $\C_{r+1}$ and hence $P_1$. If $\beta,\gamma\in J_r$, since $u_\beta=x_\beta, u_\gamma=x_\gamma$ are relations in $P_1$, and $u_\beta u_\gamma=u_{\beta\gamma}\in R_r'$, we can deduce $x_\beta x_\gamma=x_{\beta\gamma}$ from $P_1$. If exactly one of $\beta$ and $\gamma$ is of rank $r$, say $\beta\in J_r$ and $\gamma\notin J_r$, then $x_\beta=u_\beta$ is a relation in $P_1$. The words $u_\beta x_\gamma,u_{\beta\gamma}\in X_{r+1,m}^+$ represent the same map $\beta\gamma$, so $u_\beta x_\gamma=u_{\beta\gamma}\in R_r'$ and is a consequence of $\C_r$ and hence $P_1$. A similar argument holds for the case when $\beta\notin J_r$ and $\gamma\in J_r$. In all cases $R_r''$ follows from $P_1$. So adding $R_r''$ we obtain $P_2$. Notice that $\R_{r,m}=\R_{r+1,m}\cup\{x_\beta x_\gamma=x_{\beta\gamma}\colon\beta\in J_r {\rm \ or\ }\gamma\in J_r{\ \rm or\ }\beta\gamma\in J_r\}=\R_{r+1,m}\cup R_r''$, so $P_2=\langle X_{r,m}\ |\ \R_{r,m}, x_\alpha=u_\alpha\ (\alpha\in J_r)\rangle$.
    
    \underline{Step 3}: Consider a relation of the form $x_\alpha=u_\alpha$ where $\alpha\in J_r$. We have $u_\alpha\equiv x_{\alpha_1}x_{\alpha_2}\dots x_{\alpha_k}\in X_{r+1,m}^+$. The sequence of relations $x_{\alpha_1 \alpha_2\dots\alpha_k}=x_{\alpha_1}x_{\alpha_2\dots\alpha_k}=\dots=x_{\alpha_1}x_{\alpha_2}\dots x_{\alpha_k}$ is a consequence of $\langle X_{r,m}\ |\ \R_{r,m}\rangle$. Therefore removing $\{x_\alpha=u_\alpha\colon\alpha\in J_r\}$ from $P_2$ we obtain $\C_r$.
    \end{proof}

\begin{thm}\label{thm: All: presentation for Im}
    Suppose that for $\epsilon\leq r\leq 2m-n-1$, $S$ satisfies $\ref{P1}-\ref{P3}$. Let $s=2m-n$. Then $\C_s$ defines a presentation for $I_m$.
\end{thm}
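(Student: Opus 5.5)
The proof should be a short descending induction that chains together the equivalences supplied by Proposition~\ref{prop: All: Cr and Cr+1 define the same semigroup}. The starting observation is that the full Cayley table presentation $\C_\epsilon=\C_{I_m}$ defines $I_m$. Indeed, the generators $x_s$ ($s\in I_m$) map onto $I_m$, and the relations $x_sx_t=x_{st}$ allow any word $x_{s_1}\cdots x_{s_k}$ to be reduced to the single letter $x_{s_1\cdots s_k}$. Hence any two words representing the same element are equal modulo the relations. This is the standard fact about Cayley table presentations, and it is the case $i=\epsilon$, with $U_\epsilon=I_m$, of the restriction notation.

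With the base case in hand, I would apply Proposition~\ref{prop: All: Cr and Cr+1 define the same semigroup} successively for $r=\epsilon,\epsilon+1,\dots,2m-n-1$. By hypothesis, \ref{P1}--\ref{P3} hold for each such $r$, so each step shows that $\C_r$ and $\C_{r+1}$ define the same semigroup. This requires the chain of ranks to be nonempty, i.e. $2m-n-1\ge\epsilon$. If instead $s=2m-n\le\epsilon$, the statement reduces to the base case; when $s<\epsilon$, I would read $\C_s$ as $\C_\epsilon$, since the restriction to $U_s$ is then all of $I_m$. Chaining the equivalences gives
\[
\C_\epsilon\simeq\C_{\epsilon+1}\simeq\dots\simeq\C_{2m-n}=\C_s,
\]
so $\C_s$ defines $I_m$.

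The one point needing care is what ``define the same semigroup'' means in Proposition~\ref{prop: All: Cr and Cr+1 define the same semigroup}. Its proof goes through Tietze transformations from $\C_{r+1}$ to $\C_r$, and these preserve the presented semigroup up to an isomorphism compatible with the natural maps $x_\alpha\mapsto\alpha$. So once $\C_r$ is known to define $I_m$, so does $\C_{r+1}$. The remaining content is the hypothesis \ref{P1}--\ref{P3}, which is assumed here and deferred to the later sections for each of the three semigroups.
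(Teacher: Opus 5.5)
Your proposal is correct and follows the paper's proof: both start from the fact that the full Cayley table presentation $\C_\epsilon$ defines $I_m$, then apply Proposition~\ref{prop: All: Cr and Cr+1 define the same semigroup} successively for $r=\epsilon,\dots,2m-n-1$. Your extra remarks on the degenerate case $s\le\epsilon$, and on the Tietze isomorphisms being compatible with the natural maps $x_\alpha\mapsto\alpha$, make explicit points the paper leaves implicit.
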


\begin{proof}
    By repeatedly applying Proposition \ref{prop: All: Cr and Cr+1 define the same semigroup}, $C_\epsilon, C_{\epsilon+1},...,C_s$ all define the same semigroup. However, $C_\epsilon$ is the Cayley table of $I_m$, and hence defines $I_m$ itself.
\end{proof}

Given a semigroup $\mathcal{S}$ where $\mathcal{S}$ is one of $\I_n,\T_n,\PT_n$ or their ideals, we have established a way of proving that a given depth is an upper bound for $\dep{\mathcal{S}}$ by proving that \ref{P1}-\ref{P3} hold. In Sections \ref{sec: I_n}, \ref{sec: T_n}, and \ref{sec: PT_n}, we will find out the conditions when \ref{P1}-\ref{P3} are satisfied for $\I_n$, $\T_n$ and $\PT_n$ respectively, therefore obtaining an upper bound for the depth of each of them.

\section{Upper bound for $\I_n$}\label{sec: I_n}
In this section we consider the upper bound for the relational depth of `large' ideals $I_m$ in $\I_n$, where $m>\frac{n}{2}$. This is achieved in Proposition \ref{prop: In: upper bound for In}. Together with Proposition \ref{prop: lower bound for depth of all ideals}, this will yield Theorem \ref{thm: almost main theorem} for $\I_n$.

Suppose that $I_m$ is a proper ideal of $\I_n$, and that $r<m$. Consider partial bijections $\alpha,\beta\in J_{r+1}$ such that $\alpha\beta\in J_r$. Without loss of generality, let   
    \begin{align*}
        \alpha = \begin{small}\begin{pmatrix}
        a_1 & a_2 & \dots & a_r & a_{r+1}\\
        a_1' & a_2' & \dots & a_r' & a_{r+1}'
        \end{pmatrix}\end{small},\ 
        \beta = \begin{small}\begin{pmatrix}
        a_1' & a_2' & \dots & a_r' & b_{r+1}\\
        b_1' & b_2' & \dots & b_r' & b_{r+1}'
        \end{pmatrix}\end{small},
    \end{align*}where $a_{r+1}'\in [n]\setminus\dm{\beta}$. For simplicity, we further define that $A=[n]\setminus\dm{\alpha}$, and $B=[n]\setminus\dm{\beta}$. The following lemmas are examples of words that we can obtain from $x_{\alpha}x_{\beta}$ using $\R_{r+1,r+2}$.

\begin{lemma}\label{lem: In: change im alpha}
    Let $a\in B\setminus\{a_{r+1}'\}$. Define $\alpha' = \begin{small}
        \begin{pmatrix}
        a_1 & a_2 & \dots & a_r & a_{r+1}\\
        a_1' & a_2' & \dots & a_r' & a
    \end{pmatrix}
    \end{small}$. Then $\alpha\beta=\alpha'\beta$, and the relation $x_\alpha x_\beta=x_{\alpha'} x_{\beta}$ is a consequence of $\R_{r+1,r+2}$.
\end{lemma}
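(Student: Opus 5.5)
The plan is to factor both $\alpha$ and $\alpha'$ through a single partial bijection of rank $r+2$, with the difference between them absorbed by an element of rank $r+1$ that behaves like an idempotent of rank $r+2$ on the relevant points. Every relation used must lie in $\R_{r+1,r+2}$, so all letters, and all products being replaced, must have rank $r+1$ or $r+2$.

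First, the easy equality. On $a_1,\dots,a_r$ the maps $\alpha$ and $\alpha'$ agree. Both $a_{r+1}'$ and $a$ lie outside $\dm\beta$, so $a_{r+1}$ is undefined in both $\alpha\beta$ and $\alpha'\beta$. Hence $\alpha\beta=\alpha'\beta$.

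For the deduction, I introduce
\[
\gamma=\begin{pmatrix} a_1 & \dots & a_r & a_{r+1} & c\\ a_1' & \dots & a_r' & a_{r+1}' & a\end{pmatrix}.
\]
This requires a point $c\in A$ (outside $\dm\alpha$), so that $\gamma$ is a partial bijection of rank $r+2$. Next let
\[
\varepsilon=\begin{pmatrix} a_1' & \dots & a_r' & a\\ a_1' & \dots & a_r' & a\end{pmatrix},
\qquad
\varepsilon'=\begin{pmatrix} a_1' & \dots & a_r' & a_{r+1}'\\ a_1' & \dots & a_r' & a_{r+1}'\end{pmatrix},
\]
both of rank $r+1$.

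The key steps are the following.
\begin{thmenumerate}
\item Check that $\gamma\varepsilon'=\alpha$ and $\gamma\varepsilon=\alpha'$, and that $\varepsilon\beta=\varepsilon'\beta$; the latter map is $\beta$ restricted to $\{a_1',\dots,a_r'\}$ and has rank $r$.
\item Since $\gamma\varepsilon'=\alpha$ has rank $r+1$, the relation $x_\gamma x_{\varepsilon'}=x_\alpha$ lies in $\R_{r+1,r+2}$; likewise $x_\gamma x_\varepsilon=x_{\alpha'}$.
\item The product $\varepsilon\beta$ has rank $r$, so $\varepsilon\beta$ cannot be replaced directly. Instead, rewrite $x_\varepsilon x_\beta$ as $x_{\varepsilon}x_{\varepsilon'}x_\beta$. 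Here $\varepsilon\varepsilon'$ is the identity on $\{a_1',\dots,a_r'\}$, of rank $r$, which again is not allowed. So I aim to absorb via $\beta$ instead: $\varepsilon'\beta$ should be rewritten as $\varepsilon\beta$ using a rank-$(r+1)$ product.
\end{thmenumerate}

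For step (iii), consider $\delta=\beta$ extended by $a\mapsto d$ for some $d\notin\im\beta$. When $\delta$ exists it has rank $r+2$. Also extend $\beta$ by $a_{r+1}'\mapsto d$ to get $\delta'$. Then $\varepsilon\delta$ and $\varepsilon'\delta'$ are both rank $r+1$ and equal, namely $\beta|_{\{a_i'\}}$ extended by one point mapping to $d$. We want $\beta=\eta\delta$ with $\eta$ a rank-$(r+1)$ idempotent on $\dm\beta$. The chain would be
\[
x_\gamma x_{\varepsilon'}x_\eta x_\delta
=x_\gamma x_{\varepsilon'\eta}x_\delta
=x_\gamma x_{\varepsilon''}x_\delta .
\]
Here $\varepsilon'\eta$ is the identity on $a_1',\dots,a_r'$, of rank $r$, which is bad again. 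So a cleaner route is to work purely through $\gamma$ and $\delta$:
\[
x_\alpha x_\beta=x_\gamma x_{\varepsilon'}x_\beta ,
\]
and $\varepsilon'\beta=\varepsilon\beta$ is a rank-$r$ product, so that step fails. This is the main obstacle. Every naive rewriting passes through a rank-$r$ intermediate product, because the images $a_{r+1}'$ and $a$ both miss $\dm\beta$.

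To get around it, I would first rewrite $x_\beta=x_{\mu}x_{\nu}$ with $\mu$ of rank $r+2$ defined on $a_1',\dots,a_r',a_{r+1}',a$. I send $a_{r+1}',a$ to two points $p,q$, choose $\nu$ of rank $r+1$ killing $p,q$, and arrange $\mu\nu=\beta$. This needs $b_{r+1}$ handled, so instead take $\mu$ of rank $r+2$ on $a_1',\dots,a_r',b_{r+1}$ plus $a$, with $\nu$ undefined on the image of $a$. Then
\[
x_{\alpha'}x_\mu=x_{\alpha'\mu}
\quad\text{and}\quad
x_\alpha x_\mu=x_{\alpha\mu},
\]
where each product has rank $r+1$ if $\mu$ is also defined on $a_{r+1}'$. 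I would take $\mu$ defined on both $a_{r+1}'$ and $a$; then $\mu$ has rank $r+3$, which is not allowed in $\R_{r+1,r+2}$.

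The fix is to drop $b_{r+1}$ from $\mu$ and write $\beta$ as $\mu\nu$ through a third factor. The condition $r<m<n$ together with $|B|\ge 2$ should supply the spare points needed for this. I expect this bookkeeping of spare points, keeping every intermediate product at rank $r+1$ or $r+2$, to be the hardest part.
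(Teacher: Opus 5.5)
Your proposal has a genuine gap. It never produces a valid chain of relations, and it ends with an unspecified ``fix'' plus ``bookkeeping'' that is not carried out. Two identities you rely on along the way are also false:
\begin{itemize}
\item In step (i), $\gamma\varepsilon\neq\alpha'$. Since $a_{r+1}\gamma=a_{r+1}'\notin\dm{\varepsilon}$ and $c\gamma=a$, the product $\gamma\varepsilon$ is defined on $\{a_1,\dots,a_r,c\}$ and sends $c\mapsto a$. By contrast, $\alpha'$ is defined on $\{a_1,\dots,a_{r+1}\}$.
\item $\varepsilon\delta\neq\varepsilon'\delta'$, since their domains contain $a$ and $a_{r+1}'$ respectively.
\end{itemize}
The $\mu$-route also cannot close as set up. A single common right factor $\mu$ cannot give $\alpha\mu=\alpha'\mu$ of rank $r+1$ in $\I_n$, because that would need $a_{r+1}'\mu=a\mu$ with $a\neq a_{r+1}'$.

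The missing idea is to give $\alpha$ and $\alpha'$ \emph{different} right-hand factors, and you had already written them down. They are your $\delta'$ ($\beta$ extended by $a_{r+1}'\mapsto d$) and $\delta$ ($\beta$ extended by $a\mapsto d$), where $d\notin\im{\beta}$ exists because $r+1<n$.
\begin{itemize}
\item Compose these with $\alpha$ and $\alpha'$ rather than with $\varepsilon'$ and $\varepsilon$. Then $\alpha\delta'=\alpha'\delta$, since both send $a_i\mapsto b_i'$ for $i\leq r$ and $a_{r+1}\mapsto d$, and this common product has rank $r+1$.
\item Factor $\beta$ on the right, not on the left through an idempotent on $\dm{\beta}$. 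Let $\iota$ be the identity on $\im{\beta}$, of rank $r+1$. Then $\delta'\iota=\delta\iota=\beta$, because $\iota$ kills $d$.
\end{itemize}
Hence
\[
x_\alpha x_\beta=x_\alpha x_{\delta'}x_\iota=x_{\alpha\delta'}x_\iota=x_{\alpha'\delta}x_\iota=x_{\alpha'}x_\delta x_\iota=x_{\alpha'}x_\beta .
\]
Every relation used involves only letters of rank $r+1$ or $r+2$, so it lies in $\R_{r+1,r+2}$; this needs $r+2\leq m$, as in the paper's application. This is exactly the paper's proof: its $\beta_1,\beta_1',\beta_2$ are your $\delta',\delta,\iota$. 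The auxiliary maps $\gamma,\varepsilon,\varepsilon'$ and the spare-point bookkeeping are unnecessary. The obstacle you identified only arises from insisting on a common factor for $\alpha$ and $\alpha'$.
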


\begin{proof}
    Let $b\in [n]\setminus\im\beta$. Define \begin{align*}
        \beta_1&=\begin{small}\begin{pmatrix}
            a_1' & a_2' & \dots & a_r' & b_{r+1} & a_{r+1}'\\
            b_1 & b_2 & \dots & b_r &  b_{r+1} & b
        \end{pmatrix}\end{small},\ 
        \beta_1'=\begin{small}\begin{pmatrix}
            a_1' & a_2' & \dots & a_r' & b_{r+1} & a\\
            b_1 & b_2 & \dots & b_r &  b_{r+1} & b 
        \end{pmatrix}\end{small},\\
        \beta_2&=\begin{small}\begin{pmatrix}
            b_1' & b_2' & \dots & b_r' &  b_{r+1}' \\
            b_1' & b_2' & \dots & b_r' &  b_{r+1}'
        \end{pmatrix}.\end{small}\end{align*}
        Then $\alpha\beta_1=\alpha'\beta_1'\in J_{r+1}$, $\beta_1\beta_2=\beta_1'\beta_2=\beta$. Hence \begin{align*}
            \begin{alignedat}{2}
            x_\alpha x_\beta&=x_{\alpha}x_{\beta_1}x_{\beta_2}\ \ &&(x_\beta=x_{\beta_1}x_{\beta_2})\\&=x_{\alpha'}x_{\beta_1'}x_{\beta_2}&&(x_{\alpha}x_{\beta_1}=x_{\alpha'}x_{\beta_1'}) \\&=x_{\alpha'} x_{\beta}&&(x_{\beta_1'}x_{\beta_2}=x_{\beta}).        
            \end{alignedat}
        \end{align*}
\end{proof}

\begin{lemma}\label{lem: In: change ker alpha}
    Let $a\in A$. Define $\alpha' = \begin{small}
        \begin{pmatrix}
        a_1 & a_2 & \dots & a_r & a \\
        a_1' & a_2' & \dots & a_r' & a_{r+1}' 
    \end{pmatrix}
    \end{small}$. Then $\alpha\beta=\alpha'\beta$, and the relation $x_\alpha x_\beta=x_{\alpha'} x_{\beta}$ is a consequence of $\R_{r+1,r+2}$.
\end{lemma}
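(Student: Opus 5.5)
Mirroring the proof of Lemma \ref{lem: In: change im alpha}, I will factor $\alpha$ (rather than $\beta$) through a map of rank $r+2$. Since $a\in A=[n]\setminus\dm{\alpha}$ and $r+1<n$, there is room for this. The aim is a chain
\[
x_\alpha x_\beta=x_{\alpha_1}x_{\alpha_2}x_\beta=x_{\alpha_1'}x_{\alpha_2}x_\beta=x_{\alpha'}x_\beta ,
\]
in which each step applies a single relation from $\R_{r+1,r+2}$, i.e.\ a relation $x_\sigma x_\tau=x_{\sigma\tau}$ with all of $\sigma,\tau,\sigma\tau$ of rank $r+1$ or $r+2$.

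\textbf{Construction.} Let $c\in[n]\setminus\im{\alpha}$, which exists since $\rk\alpha=r+1<n$. Define
\[
\alpha_1=\begin{small}\begin{pmatrix} a_1&\dots&a_r&a_{r+1}&a\\ a_1&\dots&a_r&a_{r+1}&a\end{pmatrix}\end{small}
\]
(the identity on a set of size $r+2$) and
\[
\alpha_2=\begin{small}\begin{pmatrix} a_1&\dots&a_r&a_{r+1}&a\\ a_1'&\dots&a_r'&a_{r+1}'&c\end{pmatrix}\end{small},
\]
so that $\alpha_1\alpha_2=\alpha_2$, which has rank $r+2$. That is not what is wanted, so I will instead place the asymmetry in the first factor. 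Put
\[
\alpha_2=\begin{small}\begin{pmatrix} a_1&\dots&a_r&a_{r+1}&a\\ a_1'&\dots&a_r'&a_{r+1}'&a_{r+1}'\end{pmatrix}\end{small}
\]
— but this is not injective. The correct device is to choose $\alpha_2$ of rank $r+2$ mapping $a_i\mapsto a_i'$ for $i\le r$, $a_{r+1}\mapsto a_{r+1}'$ and $a\mapsto c$. Then choose two rank-$(r+1)$ partial identities (or partial permutations) $\alpha_1$ and $\alpha_1'$ with $\dm{\alpha_1}=\{a_1,\dots,a_r,a_{r+1}\}$, $\dm{\alpha_1'}=\{a_1,\dots,a_r,a\}$, and
\[
\alpha_1:a_{r+1}\mapsto a_{r+1},\qquad \alpha_1':a\mapsto a_{r+1}.
\]
With these choices, $\alpha_1\alpha_2=\alpha$ and $\alpha_1'\alpha_2=\alpha'$. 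This gives the relations $x_\alpha=x_{\alpha_1}x_{\alpha_2}$ and $x_{\alpha'}=x_{\alpha_1'}x_{\alpha_2}$, both in $\R_{r+1,r+2}$. The remaining issue is to connect $x_{\alpha_1}$ with $x_{\alpha_1'}$ in the presence of $x_{\alpha_2}x_\beta$, which this construction does not yet do directly.

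\textbf{Cleaner route.} It is better to absorb $\beta$ instead, exactly as in Lemma \ref{lem: In: change im alpha}: I want
\[
x_\alpha x_\beta=x_\alpha x_{\beta_1}x_{\beta_2},\qquad \alpha\beta_1=\alpha'\beta_1'\in J_{r+1}.
\]
Here is the version I will actually use. Let $\gamma=\alpha\beta$, of rank $r$. Let $e\notin\dm{\beta}$ with $e\neq a_{r+1}'$ if possible; otherwise use the point $a$ itself. Take
\[
\delta=\begin{small}\begin{pmatrix} a_1&\dots&a_r&a_{r+1}&a\\ a_1&\dots&a_r&a_{r+1}&a\end{pmatrix}\end{small}\in J_{r+2},
\]
and let $\alpha''$ be $\alpha$ extended by $a\mapsto a_{r+1}''$, where $a_{r+1}''\notin\im\alpha$ is chosen with $a_{r+1}''\notin\dm\beta$; if no such point exists, first apply Lemma \ref{lem: In: change im alpha}. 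Then $\alpha''\in J_{r+2}$ and $\alpha''\beta$ has rank $r$, which is too small to be usable. Instead, then, I will take the partial permutation $\pi$ on $\{a_1,\dots,a_r,a_{r+1},a\}$ which fixes each $a_i$ ($i\le r$) and swaps $a_{r+1}\leftrightarrow a$. This gives
\[
\alpha'=\pi\alpha^{\ast},
\]
where $\alpha^{\ast}$ is the rank-$(r+2)$ extension of $\alpha$ sending $a\mapsto a_{r+1}''$. Here $\alpha^{\ast}\beta$ is of rank at most $r+1$, and exactly $r+1$ when $a_{r+1}''\in\dm\beta$. The obstacle is precisely this: arranging that every intermediate product has rank $r+1$ or $r+2$. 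The trick I will try first is to realise the swap via a rank-$(r+2)$ element whose composite with $\beta$ stays at rank $r+1$, by sending the extra point into $\dm\beta$ outside $\{a_1',\dots,a_r'\}$ (namely to $b_{r+1}$). This ensures
\[
x_\pi x_{\alpha^{\ast}}x_\beta=x_\pi x_{\alpha^{\ast}\beta},
\]
with $\alpha^{\ast}\beta$ of rank $r+1$. Then $\pi(\alpha^{\ast}\beta)$ and $\mathrm{id}_{\{a_1,\dots,a_r,a_{r+1}\}}(\alpha^{\ast}\beta)$ both restrict down to rank $r$ on the relevant part — I will then adjust using partial identities $\varepsilon,\varepsilon'$ of rank $r+1$ chosen so that $\varepsilon\pi=\varepsilon'$ and every product in the chain lies in $J_{r+1}\cup J_{r+2}$.
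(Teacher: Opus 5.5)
Your proposal does not reach a proof. Each of your routes stalls, and the closing step is both unspecified and impossible as stated. That step is ``adjust using partial identities $\varepsilon,\varepsilon'$ of rank $r+1$ with $\varepsilon\pi=\varepsilon'$''. For a partial identity $\varepsilon$ on a set $D$, the product $\varepsilon\pi$ is the restriction of the swap $\pi$ to $D$. This is a partial identity only if $D\cap\{a_{r+1},a\}=\emptyset$, and then it has rank at most $r$. Your claim $\alpha'=\pi\alpha^{\ast}$ is also false: $\pi\alpha^{\ast}$ has rank $r+2$, and only its restriction to $\{a_1,\dots,a_r,a\}$ equals $\alpha'$.

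More importantly, the last idea is circular even once repaired. Take $\alpha^\ast$ with $a\mapsto b_{r+1}$, $\varepsilon=\mathrm{id}_{\{a_1,\dots,a_{r+1}\}}$ and $\varepsilon'=\mathrm{id}_{\{a_1,\dots,a_r,a\}}$, so that $\alpha=\varepsilon\alpha^\ast$ and $\alpha'=(\varepsilon'\pi)\alpha^\ast$. Then you get $x_\alpha x_\beta=x_\varepsilon x_{\alpha^\ast\beta}$ and $x_{\alpha'}x_\beta=x_{\varepsilon'\pi}x_{\alpha^\ast\beta}$. Connecting these two words is exactly an instance of the lemma you are proving: the two first factors differ only in whether $a_{r+1}$ or $a$ is the domain point sent to $a_{r+1}\notin\dm{\alpha^\ast\beta}$. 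Your first route ($x_{\alpha_1}x_{\alpha_2}x_\beta$ versus $x_{\alpha_1'}x_{\alpha_2}x_\beta$) ends up at the same impasse. The ``absorb $\beta$'' route cannot work on its own either. A swap $x_\alpha x_{\beta_1}=x_{\alpha'}x_{\beta_1'}$ through a common value in $J_{r+1}$ is impossible, because that common value would have domain contained in $\dm{\alpha}\cap\dm{\alpha'}=\{a_1,\dots,a_r\}$.

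The missing idea is to factor $\alpha$ on the left through the rank-$(r+2)$ partial identity $\alpha_1$ on $\{a_1,\dots,a_{r+1},a\}$. This set contains both the old and the new domain point. The middle product is then kept at rank $r+1$ by an auxiliary point that $\alpha_1$ does not see.
\begin{itemize}
\item Take $a'\in A\setminus\{a\}$ and $b\in B\setminus\{a_{r+1}'\}$; these exist since $|A|=|B|=n-r-1\geq 2$.
\item Let $\alpha_2$ send $a_1,\dots,a_r,a_{r+1},a'$ to $a_1',\dots,a_r',a_{r+1}',b$.
\item Let $\alpha_2'$ send $a_1,\dots,a_r,a,a'$ to $a_1',\dots,a_r',a_{r+1}',b$.
\item Let $\beta_1$ be the partial identity on $\{a_1',\dots,a_r',b_{r+1},b\}$.
\end{itemize}
Then $\alpha_1\alpha_2=\alpha$ and $\alpha_1\alpha_2'=\alpha'$, because $a'\notin\dm{\alpha_1}$. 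Also $\beta_1\beta=\beta$. Finally $\alpha_2\beta_1=\alpha_2'\beta_1\in J_{r+1}$ is the map $a_i\mapsto a_i'$ $(i\leq r)$, $a'\mapsto b$; both $a_{r+1}$ and $a$ die because $a_{r+1}'\notin\dm{\beta_1}$. Hence
\[
x_\alpha x_\beta=x_{\alpha_1}x_{\alpha_2}x_{\beta_1}x_\beta=x_{\alpha_1}x_{\alpha_2'}x_{\beta_1}x_\beta=x_{\alpha'}x_\beta,
\]
and each step uses only relations of $\R_{r+1,r+2}$. This is the paper's proof.

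Your very first attempt, with $\alpha_1$ the identity on $\{a_1,\dots,a_{r+1},a\}$, was one step away. You only needed to define $\alpha_2$ at a second free point $a'\notin\dm{\alpha_1}$ instead of at $a$, and insert $\beta_1$.
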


\begin{proof}
    Let $a'\in A\setminus\{a\}$, let $b\in B$. Define \begin{align*}
        \alpha_1&=\begin{small}\begin{pmatrix}
            a_1 & a_2 & \dots & a_r & a_{r+1} & a\\
            a_1 & a_2 & \dots & a_r & a_{r+1} & a 
        \end{pmatrix}\end{small},\ 
        \alpha_2=\begin{small}\begin{pmatrix}
            a_1 & a_2 & \dots & a_r & a_{r+1} & a' \\
            a_1' & a_2' & \dots & a_r' & a_{r+1}' & b 
        \end{pmatrix}\end{small},\\
        \alpha_2'&=\begin{small}\begin{pmatrix}
            a_1 & a_2 & \dots & a_r & a & a'\\
            a_1' & a_2' & \dots & a_r' & a_{r+1}' & b
        \end{pmatrix}\end{small},\ 
        \beta_1=\begin{small}\begin{pmatrix}
            a_1' & a_2' & \dots & a_r' & b_{r+1} & b \\
            a_1' & a_2' & \dots & a_r' & b_{r+1} & b 
        \end{pmatrix}\end{small},\\
        \beta_2&=\begin{small}\begin{pmatrix}
            a_1' & a_2' & \dots & a_r' & b_{r+1} \\
            b_1' & b_2' & \dots & b_r' & b_{r+1}' 
        \end{pmatrix}\end{small}.
        \end{align*}We have $\alpha_1\alpha_2=\alpha, \alpha_1\alpha_2'=\alpha', \beta_1\beta_2=\beta, \alpha_2\beta_1=\alpha_2'\beta_1\in J_{r+1}$. Hence \begin{align*}
        \begin{alignedat}{2}
            x_\alpha x_\beta&=x_{\alpha_1}x_{\alpha_2}x_{\beta_1}x_{\beta_2}\ \ &&(x_{\alpha}=x_{\alpha_1}x_{\alpha_2},x_\beta=x_{\beta_1}x_{\beta_2})\\&=x_{\alpha_1}x_{\alpha_2'}x_{\beta_1}x_{\beta_2} \ \ &&(x_{\alpha_2}x_{\beta_1}=x_{\alpha_2'}x_{\beta_1}) \\&=x_{\alpha'} x_{\beta}\ \ &&(x_{\alpha_1}x_{\alpha_2'}=x_{\alpha'}).
        \end{alignedat}
        \end{align*}
\end{proof}

\begin{lemma}\label{lem: In: change im beta}
    Let $b\in [n]\setminus\im\beta$. Define $\beta' = \begin{small}
        \begin{pmatrix}
        a_1' & a_2' & \dots & a_r' & b_{r+1} \\
        b_1' & b_2' & \dots & b_r' & b 
    \end{pmatrix}
    \end{small}$. Then $\alpha\beta=\alpha\beta'$, and the relation $x_\alpha x_\beta=x_{\alpha} x_{\beta'}$ is a consequence of $\R_{r+1,r+2}$.
\end{lemma}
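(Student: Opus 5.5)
The plan is to mirror the proof of Lemma \ref{lem: In: change ker alpha}, with the roles of $\alpha$ and $\beta$ swapped. We factor $x_\beta$ as a product of two rank-$(r+2)$ generators so that the point $b_{r+1}$ is carried through the first factor unchanged. Its image can then be altered in the second factor by a rank-$(r+1)$ relation with $\alpha$-side material. This is the dual (image-side) version of changing the kernel of $\alpha$.

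First we record the easy equality $\alpha\beta=\alpha\beta'$. Since $a_{r+1}'\notin\dm\beta$, the product $\alpha\beta$ is the map $a_i\mapsto b_i'$ for $i\le r$. This does not involve $b_{r+1}$ or its image, so $\alpha\beta=\alpha\beta'$.

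For the deduction we pick $b'\in[n]\setminus(\im\beta\cup\{b\})$ and $a\in B$ with $a\neq a_{r+1}'$; here we use $|B|=n-r-1\geq 2$ and $n-r-1\ge 2$. We define
\[
\beta_1=\begin{small}\begin{pmatrix} a_1' & \dots & a_r' & b_{r+1} & a\\ a_1' & \dots & a_r' & b_{r+1} & a\end{pmatrix}\end{small},\quad
\beta_2=\begin{small}\begin{pmatrix} a_1' & \dots & a_r' & b_{r+1} & a\\ b_1' & \dots & b_r' & b_{r+1}' & b'\end{pmatrix}\end{small},
\]
and $\beta_2'$ identical to $\beta_2$ except that $b_{r+1}\mapsto b$. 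All three have rank $r+2$, and $\beta_1\beta_2=\beta$, $\beta_1\beta_2'=\beta'$. We also need a rank-$(r+2)$ factorisation $\alpha=\alpha_1\alpha_2$ in which $\alpha_2$ sends some point to $a$ while still missing $b_{r+1}$ from its image. We take $\alpha_1$ to be the identity on $\dm\alpha\cup\{c\}$ for some $c\in A$, and $\alpha_2=\alpha\cup\{c\mapsto a\}$. Then $\alpha_2\beta_2$ and $\alpha_2\beta_2'$ are both the map $a_i\mapsto b_i'$ for $i\le r$ together with $c\mapsto b'$. This product has rank $r+1$, and the two are equal because $b_{r+1}\notin\im{\alpha_2}$. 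So $x_{\alpha_2}x_{\beta_2}=x_{\alpha_2\beta_2}=x_{\alpha_2}x_{\beta_2'}$ lies in $\R_{r+1,r+2}$.

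The chain is then
\[
x_\alpha x_\beta=x_{\alpha_1}x_{\alpha_2}x_{\beta_1}x_{\beta_2}\to\cdots
\]
and here lies the main obstacle: $\beta_1$ sits between $\alpha_2$ and $\beta_2$. Instead we drop $\beta_1$ and use $x_\beta=x_{\beta_1}x_{\beta_2}$ only to match the shape. More simply, we write $x_\alpha x_\beta=x_{\alpha_1}x_{\alpha_2}x_{\beta}$ and replace $x_\beta$ by $x_{\beta_1}x_{\beta_2}$. We then merge $x_{\alpha_2}x_{\beta_1}=x_{\alpha_2}$, which is valid since $\beta_1$ fixes $\im{\alpha_2}\cap\dm{\beta_1}$ pointwise; more precisely $\alpha_2\beta_1$ is $\alpha_2$ restricted to preimages of $\{a_1',\dots,a_r',a\}$, a rank-$(r+1)$ map $\alpha_2''$. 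The sequence of $\R_{r+1,r+2}$-steps is
\[
x_\alpha x_\beta = x_{\alpha_1}x_{\alpha_2}x_{\beta_1}x_{\beta_2} = x_{\alpha_1}x_{\alpha_2''}x_{\beta_2} = x_{\alpha_1}x_{\alpha_2''}x_{\beta_2'} = x_{\alpha_1}x_{\alpha_2}x_{\beta_1}x_{\beta_2'} = x_\alpha x_{\beta'}.
\]
The middle step is the relation $x_{\alpha_2''}x_{\beta_2}=x_{\alpha_2''\beta_2}=x_{\alpha_2''}x_{\beta_2'}$, whose products have rank $r+1$ and agree because $b_{r+1}\notin\im{\alpha_2''}$. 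What remains is to check that each intermediate product has rank in $\{r+1,r+2\}$ and that the required points exist. The point $c$ exists because $|A|=n-r-1\ge 2$, and the existence of $a$ and $b'$ uses $r+2\le n$ (indeed $r\le 2m-n-1<n-2$).
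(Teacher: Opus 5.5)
Your check that $\alpha\beta=\alpha\beta'$ is fine, but the deduction fails at its first and last steps because both factorisations you rely on are false. Your $\beta_1$ is the partial identity on $\{a_1',\dots,a_r',b_{r+1},a\}=\dm{\beta_2}$. Hence $\beta_1\beta_2=\beta_2$, which still sends $a\mapsto b'$, so $\beta_1\beta_2\neq\beta$ (and likewise $\beta_1\beta_2'\neq\beta'$). In the same way, $\alpha_1$ is the identity on $\dm{\alpha_2}=\dm{\alpha}\cup\{c\}$, so $\alpha_1\alpha_2=\alpha_2\neq\alpha$. In fact the word $x_{\alpha_1}x_{\alpha_2}x_{\beta_1}x_{\beta_2}$ represents $\{a_i\mapsto b_i'\ (i\le r),\ c\mapsto b'\}$, which has rank $r+1$, whereas $\alpha\beta$ has rank $r$. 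So $x_\alpha x_\beta=x_{\alpha_1}x_{\alpha_2}x_{\beta_1}x_{\beta_2}$ does not even hold in $I_m$, and no set of relations can yield it. Your middle steps ($x_{\alpha_2}x_{\beta_1}=x_{\alpha_2''}$ and $x_{\alpha_2''}x_{\beta_2}=x_{\alpha_2''}x_{\beta_2'}$) are genuine relations of $\R_{r+1,r+2}$, but they act on the wrong element. The auxiliary point $c\mapsto a\mapsto b'$, which you introduce to keep products in $J_{r+1}$, is never killed again.

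The missing idea is that the auxiliary point must be both created and destroyed inside the factorisations. The paper does this as follows. Take $a\in B\setminus\{a_{r+1}'\}$ and $b'\notin\im{\beta}\cup\{b\}$. Let $\alpha_2$ be the partial identity on $\im{\alpha}\cup\{a\}$; then $\alpha\alpha_2=\alpha$ because $a\notin\im{\alpha}$. Let $\beta_1=\beta\cup\{a\mapsto b'\}$, let $\beta_1'$ be the same map but with $b_{r+1}\mapsto b$, and let $\beta_2$ be the partial identity on $\im{\beta}\cup\{b\}$. Since $\beta_2$ kills $b'$, we get $\beta_1\beta_2=\beta$ and $\beta_1'\beta_2=\beta'$. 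As $\alpha_2\beta_1=\alpha_2\beta_1'=\{a_i'\mapsto b_i'\ (i\le r),\ a\mapsto b'\}\in J_{r+1}$,
\[
x_\alpha x_\beta=x_\alpha x_{\alpha_2}x_{\beta_1}x_{\beta_2}=x_\alpha x_{\alpha_2}x_{\beta_1'}x_{\beta_2}=x_\alpha x_{\beta'}.
\]
So the image change is made in the \emph{first} factor of $\beta$, and the last factor is an idempotent fixing both $b_{r+1}'$ and $b$.

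Your set-up can also be salvaged, but only with real changes:
\begin{itemize}
\item Use distinct $c_1,c_2\in A$: take the partial identity on $\dm{\alpha}\cup\{c_1\}$ and the map $\alpha\cup\{c_2\mapsto a\}$, so that $\alpha_1\alpha_2=\alpha$.
\item First pass from $x_\alpha x_\beta$ to $x_\alpha x_{\beta_2}$ by inserting the partial identity $\gamma$ on $\dm{\beta}\cup\{a_{r+1}'\}$, using $\alpha\gamma=\alpha$ and $\gamma\beta_2=\beta$.
\end{itemize}
Then $x_{\alpha_2}x_{\beta_2}=x_{\alpha_2\beta_2}=x_{\alpha_2}x_{\beta_2'}$ finishes the argument, and your $\beta_1$ is not needed at all.
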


\begin{proof}
    Let $a\in B\setminus\{a_{r+1}'\}, b'\in [n]\setminus(\im\beta\cup\{b\})$. Define \begin{align*}
        \alpha_2&=\begin{small}\begin{pmatrix}
            a_1' & a_2' & \dots & a_r' & a_{r+1}' & a \\
            a_1' & a_2' & \dots & a_r' & a_{r+1}' & a 
        \end{pmatrix}\end{small},\ 
        \beta_1=\begin{small}\begin{pmatrix}
            a_1' & a_2' & \dots & a_r' & b_{r+1} & a \\
            b_1' & b_2' & \dots & b_r' & b_{r+1}' & b' 
        \end{pmatrix}\end{small},\\
        \beta_1'&=\begin{small}\begin{pmatrix}
            a_1' & a_2' & \dots & a_r' & b_{r+1} & a \\
            b_1' & b_2' & \dots & b_r' & b & b' 
        \end{pmatrix}\end{small},\ 
        \beta_2=\begin{small}\begin{pmatrix}
            b_1' & b_2' & \dots & b_r' & b_{r+1}' & b \\
            b_1' & b_2' & \dots & b_r' & b_{r+1}' & b 
        \end{pmatrix}\end{small}.
        \end{align*}Then $\alpha\alpha_2=\alpha, \beta_1\beta_2=\beta, \beta_1'\beta_2=\beta',\alpha_2\beta_1=\alpha_2\beta_1'\in J_{r+1}$. Hence \begin{align*}
            \begin{alignedat}{2}
            x_\alpha x_\beta&=x_{\alpha}x_{\alpha_2}x_{\beta_1}x_{\beta_2}\ \ &&(x_{\alpha}=x_{\alpha}x_{\alpha_2},x_\beta=x_{\beta_1}x_{\beta_2})\\&=x_{\alpha}x_{\alpha_2}x_{\beta_1'}x_{\beta_2} \ \ &&(x_{\alpha_2}x_{\beta_1}=x_{\alpha_2}x_{\beta_1'}) \\&=x_{\alpha} x_{\beta'}\ \ &&(x_{\beta_1'}x_{\beta_2}=x_{\beta'}).
            \end{alignedat}
        \end{align*}
\end{proof}

\begin{lemma}\label{lem: In: local change ker beta}
    Let $b\in B\setminus\{a_{r+1}\}$. Define $\beta' = \begin{small}
        \begin{pmatrix}
        a_1' & a_2' & \dots & a_r' & b \\
        b_1' & b_2' & \dots & b_r' & b_{r+1}' 
    \end{pmatrix}
    \end{small}$. Then $\alpha\beta=\alpha\beta'$, and the relation $x_\alpha x_\beta=x_{\alpha} x_{\beta'}$ is a consequence of $\R_{r+1,r+2}$.
\end{lemma}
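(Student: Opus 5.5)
The plan is to get this lemma from Lemma \ref{lem: In: change im alpha} using the inversion anti-automorphism of $\I_n$, rather than building a new factorisation by hand. I read the hypothesis as $b\in B\setminus\{a_{r+1}'\}$, which is what is needed for $\alpha\beta'$ to still have rank $r$; if the subscript $a_{r+1}$ is intended literally, the extra exclusion is harmless. I also take $b\neq b_{r+1}$, since otherwise $\beta'=\beta$ and there is nothing to prove.

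Let $\iota\colon\gamma\mapsto\gamma^{-1}$ on $\I_n$. It is a bijection satisfying $(\gamma\delta)^{-1}=\delta^{-1}\gamma^{-1}$, and it preserves rank. Hence the map $x_\gamma\mapsto x_{\gamma^{-1}}$, extended to words by reversing the order of letters, sends each relation $x_\gamma x_\delta=x_{\gamma\delta}$ of $\R_{r+1,r+2}$ to the relation $x_{\delta^{-1}}x_{\gamma^{-1}}=x_{(\gamma\delta)^{-1}}$, which again lies in $\R_{r+1,r+2}$. So it sends derivations to derivations. Consequently, $x_\alpha x_\beta=x_\alpha x_{\beta'}$ is a consequence of $\R_{r+1,r+2}$ if and only if $x_{\beta^{-1}}x_{\alpha^{-1}}=x_{\beta'^{-1}}x_{\alpha^{-1}}$ is.

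Next I would check that the pair $(\beta^{-1},\alpha^{-1})$ is in the standard form required by Lemma \ref{lem: In: change im alpha}. We have
\[
\beta^{-1}=\begin{pmatrix} b_1'&\dots&b_r'&b_{r+1}'\\ a_1'&\dots&a_r'&b_{r+1}\end{pmatrix},\qquad
\alpha^{-1}=\begin{pmatrix} a_1'&\dots&a_r'&a_{r+1}'\\ a_1&\dots&a_r&a_{r+1}\end{pmatrix}.
\]
Both have rank $r+1$. The image of $\beta^{-1}$ agrees with the domain of $\alpha^{-1}$ on $a_1',\dots,a_r'$, and the remaining image point $b_{r+1}$ of $\beta^{-1}$ lies outside $\dm{\alpha^{-1}}=\im\alpha$. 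So $\beta^{-1}\alpha^{-1}=(\alpha\beta)^{-1}\in J_r$, exactly as in the setup of that lemma.

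In this translated setting, the role of ``$B$'' is played by $[n]\setminus\im\alpha$. Now $\beta'^{-1}$ is obtained from $\beta^{-1}$ by changing the image of $b_{r+1}'$ from $b_{r+1}$ to $b$. Lemma \ref{lem: In: change im alpha} permits this provided $b\notin\im\alpha$ and $b\neq b_{r+1}$.

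The only real check is this membership condition. Since $\dm\beta\supseteq\{a_1',\dots,a_r'\}$, we get $B\cap\im\alpha=\{a_{r+1}'\}$. So $b\in B\setminus\{a_{r+1}'\}$ gives $b\notin\im\alpha$, and $b\neq b_{r+1}$ holds because $b_{r+1}\in\dm\beta$. Lemma \ref{lem: In: change im alpha} then yields $x_{\beta^{-1}}x_{\alpha^{-1}}=x_{\beta'^{-1}}x_{\alpha^{-1}}$ from $\R_{r+1,r+2}$, and applying $\iota$ gives the claim. Equality of the underlying maps, $\alpha\beta=\alpha\beta'$, is immediate because $a_{r+1}'\notin\dm\beta\cup\{b\}$.

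I expect no serious obstacle. The point to verify carefully is that the anti-isomorphism really maps $\R_{r+1,r+2}$ onto itself and that the rank restrictions are symmetric, which they are.
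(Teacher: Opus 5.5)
Your proof is correct, and it takes a different route from the paper's.

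\emph{The paper's route.} It proves the lemma directly. It chooses $a\in A$ and $b'\notin\im\beta$, and factors $x_\alpha x_\beta=x_{\alpha_1}x_{\alpha_2}x_{\beta_1}x_{\beta_2}$. Here $\alpha_1,\beta_2$ are partial identities, $\alpha_2,\beta_1$ have rank $r+2$, and $\alpha_2$ sends the extra point $a$ to $b_{r+1}$. It then replaces the middle pair by $x_{\alpha_2'}x_{\beta_1'}$, using $\alpha_2\beta_1=\alpha_2'\beta_1'\in J_{r+1}$. In $\alpha_2'$ the point $a$ is sent to $b$ instead, and $\beta_1'$ interchanges the images of $b_{r+1}$ and $b$.

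\emph{Your route.} You observe that $\gamma\mapsto\gamma^{-1}$, combined with word reversal, is an involutive anti-automorphism of $X_{r+1,r+2}^+$ that maps $\R_{r+1,r+2}$ onto itself. Under it, the present move becomes the move of Lemma \ref{lem: In: change im alpha} for the pair $(\beta^{-1},\alpha^{-1})$. Your checks are right: $b_{r+1}\notin\im\alpha$, $B\cap\im\alpha=\{a_{r+1}'\}$, and $\beta'^{-1}$ is $\beta^{-1}$ with the image of $b_{r+1}'$ changed to $b$. In fact the admissible sets $B\setminus\{a_{r+1}'\}$ and $([n]\setminus\im\alpha)\setminus\{b_{r+1}\}$ coincide, so the two lemmas are exactly dual.

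\emph{What each buys.} Your argument is shorter and exposes a symmetry the paper does not use. The same trick derives Lemma \ref{lem: In: change im beta} from Lemma \ref{lem: In: change ker alpha}, so only two of the four single-letter moves for $\I_n$ need explicit factorisations. The paper's explicit-factorisation method is self-contained. It is also the method that transfers to $\T_n$ and $\PT_n$, where no such anti-automorphism is available.

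\emph{The hypothesis.} Reading it as $b\in B\setminus\{a_{r+1}'\}$ is a necessary correction, not just a convenient reinterpretation. For $b=a_{r+1}'$ one gets $\alpha\beta'\in J_{r+1}$, and the paper's own $\alpha_2'$ would fail to be injective. Your separate assumption $b\neq b_{r+1}$ is automatic, since $b\in B$.
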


\begin{proof}
    Let $a\in A, b'\in [n]\setminus\im\beta$. Define \begin{align*}
        \alpha_1&=\begin{small}\begin{pmatrix}
            a_1 & a_2 & \dots & a_r & a_{r+1} \\
            a_1 & a_2 & \dots & a_r & a_{r+1} 
        \end{pmatrix}\end{small},\ 
        \alpha_2=\begin{small}\begin{pmatrix}
            a_1 & a_2 & \dots & a_r & a_{r+1} & a \\
            a_1' & a_2' & \dots & a_r' & a_{r+1}' & b_{r+1} 
        \end{pmatrix}\end{small},\\
        \alpha_2'&=\begin{small}\begin{pmatrix}
            a_1 & a_2 & \dots & a_r & a_{r+1} & a \\
            a_1' & a_2' & \dots & a_r' & a_{r+1}' & b 
        \end{pmatrix}\end{small},\ 
        \beta_1=\begin{small}\begin{pmatrix}
            a_1' & a_2' & \dots & a_r' & b_{r+1} & b\\
            b_1' & b_2' & \dots & b_r' & b_{r+1}' & b' 
        \end{pmatrix}\end{small},\\
        \beta_1'&=\begin{small}\begin{pmatrix}
            a_1' & a_2' & \dots & a_r' & b_{r+1} & b \\
            b_1' & b_2' & \dots & b_r' & b' & b_{r+1}' 
        \end{pmatrix}\end{small},\
        \beta_2=\begin{small}\begin{pmatrix}
            b_1' & b_2' & \dots & b_r' & b_{r+1}' \\
            b_1' & b_2' & \dots & b_r' & b_{r+1}' 
        \end{pmatrix}\end{small}.
        \end{align*}Then $\alpha_1\alpha_2=\alpha_1\alpha_2'=\alpha, \beta_1\beta_2=\beta,\beta_1'\beta_2=\beta', \alpha_2\beta_1=\alpha_2'\beta_1'\in J_{r+1}$. Hence \begin{align*}
            \begin{alignedat}{2}
            x_\alpha x_\beta&=x_{\alpha_1}x_{\alpha_2}x_{\beta_1}x_{\beta_2}\ \ &&(x_{\alpha}=x_{\alpha_1}x_{\alpha_2},x_\beta=x_{\beta_1}x_{\beta_2})\\&=x_{\alpha_1}x_{\alpha_2'}x_{\beta_1'}x_{\beta_2} \ \ &&(x_{\alpha_2}x_{\beta_1}=x_{\alpha_2'}x_{\beta_1'}) \\&=x_{\alpha} x_{\beta'}\ \ &&(x_{\alpha_1}x_{\alpha_2'}=x_{\alpha}, x_{\beta_1'}x_{\beta_2}=x_{\beta'}).
            \end{alignedat}
        \end{align*}
\end{proof}

\begin{lemma}\label{lem: In: global change ker beta}
    If $|B|\geq 2$, let $a'\in B\setminus\{a_{r+1}'\}$. Let $1\leq i\leq r$. Define \begin{align*}
        \alpha' = \begin{small}
        \begin{pmatrix}
        a_1 & \dots & a_i & \dots & a_{r+1} \\
        a_1' & \dots & a' & \dots & a_{r+1}'
    \end{pmatrix}
    \end{small},\ \beta' = \begin{small}
        \begin{pmatrix}
        a_1' & \dots & a'& \dots & b_{r+1} \\
        b_1' & \dots & b_i' & \dots & b_{r+1}' 
    \end{pmatrix}
    \end{small}.
    \end{align*} Then $\alpha\beta=\alpha'\beta'$, and the relation $x_\alpha x_\beta=x_{\alpha'} x_{\beta'}$ is a consequence of $\R_{r+1,r+2}$.
\end{lemma}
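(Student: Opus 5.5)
The plan is to follow the pattern of Lemmas \ref{lem: In: change ker alpha} and \ref{lem: In: local change ker beta}. We factor $\alpha=\alpha_1\alpha_2$, $\alpha'=\alpha_1\alpha_2'$, $\beta=\beta_1\beta_2$ and $\beta'=\beta_1'\beta_2$. Here $\alpha_1,\beta_2\in J_{r+1}$ are partial identities, and $\alpha_2,\alpha_2',\beta_1,\beta_1'\in J_{r+2}$ are chosen so that $\alpha_2\beta_1=\alpha_2'\beta_1'\in J_{r+1}$. Every relation used then has all its letters in $J_{r+1}\cup J_{r+2}$, so it lies in $\R_{r+1,r+2}$. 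This needs $r+2\leq m$, which holds because we work with $r\leq 2m-n-1\leq m-2$.

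First I record the needed facts about $a'$. It lies outside $\dm\beta$ and differs from $a_{r+1}'$. It also lies outside $\im\alpha=\{a_1',\dots,a_{r+1}'\}$, since $a_1',\dots,a_r'\in\dm\beta$. Hence $\alpha'$ is a partial bijection with $\alpha'\in J_{r+1}$, and likewise $\beta'\in J_{r+1}$. Next choose $c\in A$ and $d\in[n]\setminus\im\beta$; both exist because $|A|=|[n]\setminus\im\beta|=n-r-1\geq 1$. Define the factors as follows:
\begin{itemize}
\item $\alpha_1$ is the identity on $\dm\alpha$, and $\beta_2$ is the identity on $\im\beta$;
\item $\alpha_2$ extends $\alpha$ by $c\mapsto a'$;
\item $\alpha_2'$ sends $a_j\mapsto a_j'$ for $j\neq i$ (including $j=r+1$), $a_i\mapsto a'$ and $c\mapsto a_i'$;
\item $\beta_1$ extends $\beta$ by $a'\mapsto d$;
\item $\beta_1'$ sends $a_j'\mapsto b_j'$ for $j\leq r$, $j\neq i$, together with $a'\mapsto b_i'$, $b_{r+1}\mapsto b_{r+1}'$ and $a_i'\mapsto d$.
\end{itemize}
These are partial bijections of rank $r+2$, because $c\notin\dm\alpha$, $a'\notin\im\alpha$, $a'\notin\dm\beta$ and $d\notin\im\beta$.

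Then I verify the products. Since $\alpha_1$ and $\beta_2$ are partial identities on the relevant sets, we get $\alpha_1\alpha_2=\alpha$, $\alpha_1\alpha_2'=\alpha'$, $\beta_1\beta_2=\beta$ and $\beta_1'\beta_2=\beta'$. For the middle product, $\alpha_2\beta_1$ sends $a_j\mapsto b_j'$ for $j\leq r$ and $c\mapsto a'\mapsto d$. The point $a_{r+1}$ is lost because $a_{r+1}'\notin\dm\beta_1$. Similarly, $\alpha_2'\beta_1'$ sends $a_j\mapsto b_j'$ for $j\neq i$, $a_i\mapsto a'\mapsto b_i'$ and $c\mapsto a_i'\mapsto d$, and $a_{r+1}$ is lost because $a_{r+1}'\notin\dm\beta_1'$. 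So the two middle products are the same element of $J_{r+1}$. The computation
\begin{align*}
x_\alpha x_\beta=x_{\alpha_1}x_{\alpha_2}x_{\beta_1}x_{\beta_2}=x_{\alpha_1}x_{\alpha_2'}x_{\beta_1'}x_{\beta_2}=x_{\alpha'}x_{\beta'}
\end{align*}
then uses only relations of $\R_{r+1,r+2}$. It also shows $\alpha\beta=\alpha'\beta'$, which can be checked directly as well.

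The main obstacle is choosing the middle factors. A naive attempt conjugates by a single rank-$(r+2)$ transposition swapping $a_i'$ and $a'$, but this produces a product of rank $r$, and that relation is not available in $\R_{r+1,r+2}$. The auxiliary points $c$ and $d$ fix this by keeping the crossing product at rank $r+1$. The remaining care is the bookkeeping that $a'\notin\im\alpha\cup\dm\beta$, so that all the maps defined above are genuine partial bijections of the stated ranks. The hypothesis $|B|\geq 2$ is used only to guarantee that such an $a'$ exists.
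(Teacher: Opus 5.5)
Your proof is correct and takes essentially the paper's route. Both use a four-letter factorisation $x_{\alpha_1}x_{\alpha_2}x_{\beta_1}x_{\beta_2}$ whose middle factors have rank $r+2$, with an auxiliary point of $A$ keeping the middle product in $J_{r+1}$, and then swap the middle pair. The only difference is bookkeeping: the paper takes the last factor to be $\beta$ itself, lets $\beta_1,\beta_1'$ be the partial identity and the partial transposition $a_i'\leftrightarrow a'$ on $\dm{\beta}\cup\{a'\}$, and sends the auxiliary point of $A$ to $b_{r+1}$. You instead extend $\beta,\beta'$ by a point $d\notin\im{\beta}$ and finish with the identity on $\im{\beta}$.
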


\begin{proof}
    Without loss of generality, let $i=1$. Let $a\in A$. Define \begin{align*}
    \alpha_1 &= \begin{small}
        \begin{pmatrix}
        a_1 & \dots & a_r & a_{r+1} \\
        a_1' & \dots & a_r' & a_{r+1}' 
    \end{pmatrix}
    \end{small},\ \alpha_2 = \begin{small}
        \begin{pmatrix}
        a_1 & \dots & a_r & a_{r+1} & a \\
        a_1' & \dots & a_r' & a_{r+1}' & b_{r+1} 
    \end{pmatrix}\end{small},\\
    \alpha_2' &= \begin{small}
        \begin{pmatrix}
        a_1 & \dots & a_r & a_{r+1} & a \\
        a' & \dots & a_r' & a_{r+1}' & b_{r+1} 
    \end{pmatrix}
    \end{small},\ \beta_1 = \begin{small}
        \begin{pmatrix}
        a_1' & a' & \dots & a_r' & b_{r+1} \\
        a_1' & a' & \dots & a_r' & b_{r+1} 
    \end{pmatrix}
    \end{small},\\ \beta_1' &= \begin{small}
        \begin{pmatrix}
        a_1' & a' & \dots & a_r' & b_{r+1}  \\
        a' & a_1' & \dots & a_r' & b_{r+1} 
    \end{pmatrix}
    \end{small}. 
    \end{align*}Then $\alpha_1\alpha_2=\alpha$, $\alpha_1\alpha_2'=\alpha'$, $\alpha_2\beta_1=\alpha_2'\beta_1'=\begin{small}
        \begin{pmatrix}
            a_1 & \dots & a_r & a \\
            a_1' & \dots & a_r' & b_{r+1} 
        \end{pmatrix}
    \end{small}\in J_{r+1}$, $\beta_1\beta=\beta$, $\beta_1'\beta=\beta'$. Hence \begin{align*} 
    \begin{alignedat}{2}
    x_\alpha x_\beta&=x_{\alpha_1}x_{\alpha_2}x_{\beta_1}x_{\beta}\ \ &&(x_{\alpha}=x_{\alpha_1}x_{\alpha_2},x_\beta=x_{\beta_1}x_{\beta})\\&=x_{\alpha_1}x_{\alpha_2'}x_{\beta_1'}x_{\beta} \ \ &&(x_{\alpha_2}x_{\beta_1}=x_{\alpha_2'}x_{\beta_1'}) \\&=x_{\alpha'} x_{\beta'}\ \ &&(x_{\alpha_1}x_{\alpha_2'}=x_{\alpha'}, x_{\beta_1'}x_{\beta}=x_{\beta'}).
    \end{alignedat}
    \end{align*}
\end{proof}

The above are examples of words that can be obtained from $x_\alpha x_\beta$. As a result, we are allowed to replace certain letters in a given word using consequences of $\R_{r+1,r+2}$.

\begin{lemma}\label{lem: In: all length-2 words obtained from r+1,r+2}
    Let $\alpha,\beta$ be as defined above, and suppose that $\alpha'$, $\beta'$ are obtained from $\alpha,\beta$ using one of the following rules: \begin{thmenumerate}
        \item \label{lem: In: change word: change im alpha} If $a\in B\setminus\{a_{r+1}'\}$, let $\alpha'$ be obtained from $\alpha$ by changing the image of $a_{r+1}$ from $a_{r+1}'$ to $a$, and let $\beta'=\beta$.
        \item \label{lem: In: change word: change ker alpha} If $a\in A$, let $\alpha'$ be obtained from $\alpha$ by replacing the kernel class $a_{r+1}$ with $a$, and let $\beta'=\beta$.
        \item \label{lem: In: change word: change im beta} If $b\in [n]\setminus\im\beta$, let $\beta'$ be obtained from $\beta$ by changing the image of $b_{r+1}$ from $b_{r+1}'$ to $b$, and let $\alpha'=\alpha$.
        \item \label{lem: In: change word: change ker beta locally} If $b\in B\setminus\{a_{r+1}\}$, let $\beta'$ be obtained from $\beta$ by replacing the kernel class $b_{r+1}$ with $b$, and let $\alpha'=\alpha$.
        \item \label{lem: In: change word: change ker beta globally} If $|B|\geq 2$, $1\leq i\leq r$, and $a'\in B\setminus\{a_{r+1}'\}$, let $\alpha'$ be obtained form $\alpha$ by changing the image of $a_i$ from $a_i'$ to $a'$, and let $\beta'$ be obtained from $\beta$ by replacing the kernel class $a_i'$ with $a'$.
    \end{thmenumerate}Then the word $x_{\alpha'} x_{\beta'}$ can be obtained from $x_\alpha x_\beta$ using $\R_{r+1,r+2}$.
\end{lemma}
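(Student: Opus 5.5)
This lemma is a summary of the five preceding lemmas, so the plan is to match each of the rules (i)--(v) with the lemma that already establishes the corresponding relation. There is nothing new to compute. We only need to check that the hypotheses and the constructed maps $\alpha',\beta'$ coincide with those of the earlier statements.

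The correspondence is as follows.
\begin{itemize}
\item Rule \ref{lem: In: change word: change im alpha} takes $a\in B\setminus\{a_{r+1}'\}$ and changes $a_{r+1}\alpha$ to $a$ while keeping $\beta$. This is exactly the map $\alpha'$ of Lemma \ref{lem: In: change im alpha}, which gives $x_\alpha x_\beta=x_{\alpha'}x_\beta$ as a consequence of $\R_{r+1,r+2}$.
\item Rule \ref{lem: In: change word: change ker alpha} is precisely Lemma \ref{lem: In: change ker alpha}: the domain point $a_{r+1}$ is replaced by $a\in A$, with the same image $a_{r+1}'$.
\item Rule \ref{lem: In: change word: change im beta} is Lemma \ref{lem: In: change im beta}.
\item Rule \ref{lem: In: change word: change ker beta locally} is Lemma \ref{lem: In: local change ker beta}.
\item Rule \ref{lem: In: change word: change ker beta globally} is Lemma \ref{lem: In: global change ker beta}. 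That lemma treats $i=1$ explicitly and notes that this is without loss of generality; the general $i\in[1,r]$ follows by relabelling the indices $1,\dots,r$, which is harmless since the $a_j$ with $j\le r$ play symmetric roles.
\end{itemize}

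In each case the lemma gives that $x_\alpha x_\beta=x_{\alpha'}x_{\beta'}$ is a consequence of $\R_{r+1,r+2}$. Its proof in fact produces an explicit chain of single applications of relations from $\R_{r+1,r+2}$ leading from $x_\alpha x_\beta$ to $x_{\alpha'}x_{\beta'}$. So the word $x_{\alpha'}x_{\beta'}$ is obtained from $x_\alpha x_\beta$ in the required sense.

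The only point needing care is that the auxiliary maps in those proofs, such as $\beta_1$, $\alpha_2$ and $\alpha_2'$, lie in $J_{r+1}\cup J_{r+2}$, and that the extra points they use exist. These points are $b\in[n]\setminus\im\beta$, $a'\in A\setminus\{a\}$, $a\in A$, and $b'\notin\im\beta\cup\{b\}$. Their existence holds because $r+2\le m<n$, so $|A|,|B|,|[n]\setminus\im\beta|\ge n-r-1\ge 2$. Together with the explicit hypothesis $|B|\ge2$ in rule (v), these were already used in the lemmas. The main (mild) obstacle is thus only verifying that these cardinality conditions hold under the standing assumption $r<m<n$, i.e.\ $r+2\le n$. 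Once that is checked, the proof is a direct citation of the five lemmas.
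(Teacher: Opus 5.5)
Your proposal is correct and is the paper's own argument: the paper proves this lemma simply by citing Lemmas \ref{lem: In: change im alpha}--\ref{lem: In: global change ker beta} for rules (i)--(v), exactly as you match them. Two small remarks on your ``care'' paragraph. First, the bound $n-r-1\ge 2$ needs $r+2\le m\le n-1$, as you first wrote, not merely the $r+2\le n$ of your last sentence. Second, in rule (iv) the excluded point should be $a_{r+1}'$ rather than $a_{r+1}$: for $b=a_{r+1}'$ the map $\alpha_2'$ in Lemma \ref{lem: In: local change ker beta} is not injective and $\alpha\beta'\neq\alpha\beta$, a typo shared by the statement and that lemma.
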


\begin{proof}
    This follows from Lemmas \ref{lem: In: change im alpha} - \ref{lem: In: global change ker beta}.
\end{proof}

Now, we work towards showing that ideals of $\I_n$ satisfy the conditions $\ref{P1}-\ref{P3}$. We start by showing that given a word of rank $r$ which is of length 2, we can obtain any word of length 2 that represents the same map, by applying the `rewriting' rules given in Lemma \ref{lem: In: all length-2 words obtained from r+1,r+2}.
\begin{lemma}\label{lem: In: length 2 words are equal}
    Let $\alpha,\beta,\gamma,\delta\in J_{r+1}$ such that $\alpha\beta=\gamma\delta\in J_r$. Then $x_\alpha x_\beta=x_\gamma x_\delta$ is a consequence of $\R_{r+1,r+2}$.
\end{lemma}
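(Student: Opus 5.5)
Throughout, write $\alpha,\beta$ in the normal form fixed before Lemma \ref{lem: In: change im alpha}: $\alpha$ maps $a_1,\dots,a_{r+1}$ to $a_1',\dots,a_{r+1}'$, and $\beta$ has domain $\{a_1',\dots,a_r',b_{r+1}\}$ with $a_{r+1}'\notin\dm\beta$. The product $\alpha\beta=\gamma\delta=\pi$ is then the rank-$r$ partial bijection $a_i\mapsto b_i'$ for $i\le r$. The same holds for $\gamma,\delta$: there is a middle set $M=\{c_1',\dots,c_r'\}=\im\gamma\cap\dm\delta$, with $\gamma$ mapping $a_{\sigma(i)}\mapsto c_i'$ and $\delta$ mapping $c_i'\mapsto b'_{\sigma(i)}$. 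Since $\dm\pi=\{a_1,\dots,a_r\}$ and $\im\pi=\{b_1',\dots,b_r'\}$, after reindexing $\gamma$ sends $a_i\mapsto c_i'$ and $\delta$ sends $c_i'\mapsto b_i'$. The two factorisations therefore differ only in four pieces of data:
\begin{nitemize}
\item the extra domain point of the left factor ($a_{r+1}$ versus $c_{r+1}$);
\item the extra image point of the left factor ($a_{r+1}'$ versus $c_{r+1}'$);
\item the middle points ($a_i'$ versus $c_i'$);
\item the extra domain and image points of the right factor.
\end{nitemize}
The plan is to transform $x_\alpha x_\beta$ into $x_\gamma x_\delta$ by a sequence of moves from Lemma \ref{lem: In: all length-2 words obtained from r+1,r+2}. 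Each move keeps us inside the set of pairs of rank $r+1$ with product $\pi$, and each new pair is again put into the normal form (the labels $A,B$ are recomputed after each move).

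The order of moves is as follows.
\begin{enumerate}
\item Use rule \ref{lem: In: change word: change ker beta globally} repeatedly to change the middle points $a_i'$ into $c_i'$ one at a time.
\item Use rules \ref{lem: In: change word: change im alpha} and \ref{lem: In: change word: change ker alpha} to make the extra image point and the extra domain point of the left factor equal to $c_{r+1}'$ and $c_{r+1}$.
\item Use rules \ref{lem: In: change word: change im beta} and \ref{lem: In: change word: change ker beta locally} to adjust the extra kernel point and extra image point of the right factor to those of $\delta$.
\end{enumerate}
Once all the data agree, the pair equals $(\gamma,\delta)$, since a partial bijection is determined by its domain points and their images.

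The main obstacle is step 1, together with the side conditions of the rules: some target point may be unavailable because it is currently occupied. Rule \ref{lem: In: change word: change ker beta globally} needs $a'\in B\setminus\{a_{r+1}'\}$ and $|B|\ge2$, where $|B|=n-r-1$. Since $r<m<n$, we have $|B|\ge 2$ unless $r=n-2$. Even when $|B|\ge 2$, a target $c_i'$ may equal $a_{r+1}'$, or coincide with the current extra domain point $b_{r+1}$ of the right factor. I would handle these collisions with a parking step: first move the obstruction out of the way using rule \ref{lem: In: change word: change im alpha} or rule \ref{lem: In: change word: change ker beta locally}, sending it to another free point of $B$. If $c_i'$ equals some other $a_j'$ with $j\neq i$, I would route through an auxiliary free point, performing a transposition in three steps, or first send all $a_i'$ into points disjoint from both configurations when there is room.

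The counting for the available room will need care. The residual cases are those where $n-r$ is very small, notably $r=n-2$, where $|B|=1$. There rule \ref{lem: In: change word: change ker beta globally} is unavailable, but rules \ref{lem: In: change word: change im alpha}--\ref{lem: In: change word: change ker beta locally} suffice, because the middle sets are then forced to coincide up to the points checked directly. I expect most of the written proof to consist of this bookkeeping of free points.
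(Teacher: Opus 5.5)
Your argument is correct wherever the lemma is meaningful, and it is the paper's proof run in the opposite direction. The paper starts from $x_\gamma x_\delta$ and aligns the middle points one index at a time with rule (v), dealing separately with targets that are the right factor's extra domain point or another middle point. It then fixes the left factor with rules (ii) and (i), and the right factor with rules (iii) and (iv), which are exactly your three stages. Your collision bookkeeping is actually more careful than the paper's. The paper never treats a target middle point equal to the left factor's extra image point; rule (v) excludes it, so that point must first be moved by rule (i), as you propose.

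The one thing to fix is your final paragraph, which is false but also unnecessary.

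\begin{itemize}
\item \textbf{Why it is unnecessary.} Rules (i)--(v) are derived using auxiliary maps of rank $r+2$, which must lie in the proper ideal $I_m$. So the lemma only makes sense, and is only used, with $r+2\le m\le n-1$. Hence $|B|=n-r-1\ge 2$ always, and likewise $|A|\ge 2$ and $|[n]\setminus\im{\beta}|\ge 2$. Every side condition your parking steps need is therefore automatically met.
\item \textbf{Why the claim is false.} The excluded case $r=n-2$ forces $m=r+1$. With $|B|=1$, rules (i), (iv) and (v) each need a point of $B$ other than $a'_{r+1}$, and there is none. Rules (ii) and (iii) never change the middle set. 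The middle sets need not agree: for $n=3$, $r=1$, take $\alpha=\{1\mapsto1,2\mapsto2\}$, $\beta=\{1\mapsto1,3\mapsto3\}$, $\gamma=\{1\mapsto2,3\mapsto3\}$, $\delta=\{2\mapsto1,1\mapsto2\}$. Then $\alpha\beta=\gamma\delta=\{1\mapsto1\}$, but the middle sets are $\{1\}$ and $\{2\}$.
\item \textbf{The lemma itself fails there.} By Lemma~\ref{lem: all: words that can be deduced from x_betax_gamma}, every word obtainable from $x_\alpha x_\beta$ via $\R_{r+1,m}$ (with $\alpha,\beta\in J_m$) splits as a prefix representing $\alpha$ followed by a suffix representing $\beta$. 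So $x_\gamma x_\delta$ is unreachable when $(\gamma,\delta)\neq(\alpha,\beta)$.
\end{itemize}

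Simply replace the residual-case discussion by the observation that $|B|\ge 2$.
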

\begin{proof}
    Let \[\alpha\beta=\gamma\delta=\begin{small}
        \begin{pmatrix}
            a_1 & a_2 & \dots & a_r\\
            b_1' & b_2' & \dots & b_r' 
        \end{pmatrix}
    \end{small}.\] Then without loss of generality\[\alpha=\begin{small}
        \begin{pmatrix}
            a_1 & \dots & a_r & a_{r+1} \\
            a_1' & \dots & a_r' & a_{r+1}' 
        \end{pmatrix}
    \end{small},\ \beta=\begin{small}
        \begin{pmatrix}
            a_1' & \dots & a_r' & b_{r+1} \\
            b_1' & \dots & b_r' & b_{r+1}' 
        \end{pmatrix}
    \end{small},\] \[\gamma=\begin{small}
        \begin{pmatrix}
            a_1 & \dots & a_r & c_{r+1} \\
            c_1' & \dots & c_r' & c_{r+1}' 
        \end{pmatrix}
    \end{small},\ \delta=\begin{small}
        \begin{pmatrix}
            c_1' & \dots & c_r' & d_{r+1} \\
            b_1' & \dots & b_r' & d_{r+1}'
        \end{pmatrix}
    \end{small}.\] Let $A, B, C, D$ denote the complements of the mappings $\alpha, \beta, \gamma, \delta$ respectively. We aim to show that we can obtain $x_\alpha x_\beta$ from $x_\gamma x_\delta$ using the rules in Lemma \ref{lem: In: all length-2 words obtained from r+1,r+2}. We begin by letting $i=1$.
    
    \underline{Step 1}: If $c_i'=a_i'$, go to Step 2. Otherwise, $c_i'\neq a_i'$. If $a_i'\in D$, change $\gamma$ and $\delta$ by changing the image of $a_i$ and preimage of $b_i'$ from $c_i'$ to $a_i'$ (5), respectively. If $a_i'=d_{r+1}$, first change $\delta$ by replacing the preimage of $d_{r+1}$ with some $d\in D$, then change the image of $a_i$ and preimage of $b_i'$ to $a_i'$. Similarly, if $a_i'=c_j'$ for some $j\neq i$, we change the image of $a_i$ and preimage of $b_i'$ using \ref{lem: In: change word: change ker beta globally}.

    \underline{Step 2}: If $i=r$, go to Step 3. Otherwise, let $i=i+1$.
    At this point, we should have \[\gamma=\begin{small}
        \begin{pmatrix}
            a_1 & \dots & a_r & c_{r+1} \\
            a_1' & \dots & a_r' & c_{r+1}' 
        \end{pmatrix}
    \end{small}, \text{ and } \delta=\begin{small}
        \begin{pmatrix}
            a_1' & \dots & a_r' & d \\
            b_1' & \dots & b_r' & d_{r+1}' 
        \end{pmatrix}
    \end{small}.\]
    
    \underline{Step 3}: Change $\gamma$ by first changing the kernel class $c_{r+1}$ to $a_{r+1}$ \ref{lem: In: change word: change ker alpha}, then the image of $a_{r+1}$ from $c_{r+1}'$ to $a_{r+1}'$ \ref{lem: In: change word: change im alpha}, obtaining $\alpha$. Similarly, we may change $\delta$ using rules \ref{lem: In: change word: change im beta} and \ref{lem: In: change word: change ker beta locally}, obtaining $\beta$.
\end{proof}

\begin{lemma}\label{lem: In: length 3 and length 2 words}
    Let $0\leq r\leq 2m-n-1$. If $\alpha, \beta, \gamma\in J_{r+1}, \alpha\beta, \beta\gamma, \alpha\beta\gamma\in J_{r}$, then there is a partial bijection $\alpha'\in J_{r+1}$ such that $\alpha\beta\gamma=\alpha'\gamma$, and the relation $x_\alpha x_\beta x_\gamma=x_{\alpha'} x_\gamma$ is a consequence of $\R_{r+1,r+2}$.
\end{lemma}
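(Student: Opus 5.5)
The plan is to reduce the length-three word to a length-two word by rewriting the suffix $x_\beta x_\gamma$. Lemma \ref{lem: In: length 2 words are equal} lets me replace $x_\beta x_\gamma$ by $x_{\beta'}x_\gamma$ for a suitable $\beta'\in J_{r+1}$. I will choose $\beta'$ so that $\alpha\beta'$ has rank $r+1$. Then the prefix $x_\alpha x_{\beta'}$ collapses to the single letter $x_{\alpha\beta'}$ by a defining relation of $\R_{r+1,r+2}$, and I take $\alpha'=\alpha\beta'$.

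\emph{Setting up coordinates.} Since $\alpha\beta\in J_r$, I write, as in the setup before Lemma \ref{lem: In: change im alpha},
\[
\alpha=\begin{small}\begin{pmatrix} a_1&\dots&a_r&a_{r+1}\\ a_1'&\dots&a_r'&a_{r+1}'\end{pmatrix}\end{small},\quad
\beta=\begin{small}\begin{pmatrix} a_1'&\dots&a_r'&b_{r+1}\\ b_1'&\dots&b_r'&b_{r+1}'\end{pmatrix}\end{small},
\]
with $a_{r+1}'\notin\dm\beta$. The element $\alpha\beta\gamma$ has rank $r$ and $\dm{\alpha\beta}=\{a_1,\dots,a_r\}$, so $\gamma$ must be defined on all of $b_1',\dots,b_r'$. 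Consequently $\dm{\beta\gamma}\supseteq\{a_1',\dots,a_r'\}$. Since $\beta\gamma\in J_r$, equality holds, so $b_{r+1}'\notin\dm\gamma$.

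\emph{Constructing $\beta'$.} Define $\beta'$ to agree with $\beta$ on $a_1',\dots,a_r'$ and to send $a_{r+1}'\mapsto c$, where $c\notin\dm\gamma$. Such a $c$ exists (for example $c=b_{r+1}'$), and it automatically differs from the $b_i'$, all of which lie in $\dm\gamma$. Thus $\beta'$ is a partial bijection of rank $r+1$. Since $c\notin\dm\gamma$, we get $\beta'\gamma=\beta\gamma$. Moreover $\dm{\beta'}\supseteq\im\alpha$, so $\alpha\beta'$ has rank $r+1$ and $\alpha\beta'\gamma=\alpha\beta\gamma$.

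\emph{Deriving the relation.} Now $\beta,\gamma,\beta',\gamma\in J_{r+1}$ with $\beta\gamma=\beta'\gamma\in J_r$. Lemma \ref{lem: In: length 2 words are equal} therefore gives $x_\beta x_\gamma=x_{\beta'}x_\gamma$ as a consequence of $\R_{r+1,r+2}$. Next, $x_\alpha x_{\beta'}=x_{\alpha\beta'}$ is itself a relation of $\R_{r+1,r+2}$, since all three elements lie in $J_{r+1}$. Chaining,
\[
x_\alpha x_\beta x_\gamma=x_\alpha x_{\beta'}x_\gamma=x_{\alpha\beta'}x_\gamma ,
\]
and $\alpha'=\alpha\beta'$ has the required properties.

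\emph{Expected obstacle.} The main point to check is that Lemma \ref{lem: In: length 2 words are equal} applies in this situation. Its proof uses the rewriting rules of Lemma \ref{lem: In: all length-2 words obtained from r+1,r+2}, some of which (for instance rule (v), which needs $|B|\geq 2$) require enough room outside domains and images. This is where I expect the hypothesis $r\le 2m-n-1$, and in particular $n\ge r+2$ with plenty of spare points, to be used. If that lemma needed extra room in some degenerate case, the fallback would be to choose $c$ carefully or to apply rules (iii) and (iv) of Lemma \ref{lem: In: all length-2 words obtained from r+1,r+2} directly, to move the extra point of $\beta$ from $b_{r+1}$ to $a_{r+1}'$.
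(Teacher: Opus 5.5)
Your proof is correct. It is essentially the mirror image of the paper's argument. Both proofs replace the middle letter $x_\beta$ by some $x_{\beta'}$ through a length-two rewriting, so that one of the two adjacent products returns to rank $r+1$ and can be collapsed by a defining relation.

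The paper works on the right. It changes the image of the extra point $b_{r+1}$ of $\beta$ to the extra point $c_{r+1}$ of $\dm{\gamma}$, which is an instance of Lemma \ref{lem: In: change im beta}. This keeps $\alpha\beta'=\alpha\beta$ but makes $\beta'\gamma\in J_{r+1}$. The paper then collapses $x_{\beta'}x_\gamma$ to $x_{\beta'\gamma}$, and needs a second, full application of Lemma \ref{lem: In: length 2 words are equal} to turn the rank-$r$ word $x_\alpha x_{\beta'\gamma}$ into some $x_{\alpha'}x_\gamma$.

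You work on the left. Moving the extra domain point of $\beta$ onto $a_{r+1}'$ keeps $\beta'\gamma=\beta\gamma$ but makes $\alpha\beta'\in J_{r+1}$. Collapsing $x_\alpha x_{\beta'}$ then lands you directly in the required form, with the explicit witness $\alpha'=\alpha\beta'$, whose required properties are automatic. This saves the second invocation of the length-two lemma. It also sidesteps a slip in the paper: its displayed $\alpha'$ sends $a_{r+1}$ to $b_{r+1}$, which can coincide with one of $b_1',\dots,b_r'$. The intended value is presumably $b_{r+1}'$, which is exactly your $\alpha\beta'$ with $c=b_{r+1}'$.

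The obstacle you anticipated does not arise. With $c=b_{r+1}'$, passing from $\beta$ to $\beta'$ is a single instance of rule (ii) of Lemma \ref{lem: In: all length-2 words obtained from r+1,r+2} applied to the pair $(\beta,\gamma)$, i.e.\ Lemma \ref{lem: In: change ker alpha}. The room that lemma needs is guaranteed because $r\le 2m-n-1$ and $m<n$ force $r+2\le m<n$. Hence rank-$(r+2)$ maps lie in $I_m$, and every rank-$(r+1)$ partial bijection has $n-r-1\ge 2$ points outside its domain, and likewise outside its image.
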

\begin{proof}
    Suppose that \[\alpha = \begin{small}\begin{pmatrix}
        a_1 & a_2 & \dots & a_r & a_{r+1}\\
        a_1' & a_2' & \dots & a_r' & a_{r+1}' 
        \end{pmatrix}\end{small},
        \beta = \begin{small}\begin{pmatrix}
        a_1' & a_2' & \dots & a_r' & b_{r+1} \\
        b_1' & b_2' & \dots & b_r' & b_{r+1}' 
        \end{pmatrix}\end{small}.\] Since $\alpha\beta\gamma=\beta\gamma\in J_r$, then $\im{\alpha\beta}\subseteq\dm{\gamma}$, and \[\gamma=\begin{small}
            \begin{pmatrix}
                b_1' & \dots & b_r' & c_{r+1} \\
                c_1' & \dots & c_r' & c_{r+1}'
            \end{pmatrix}
        \end{small}.\] Define \begin{align*}
            \alpha'=\begin{small}
            \begin{pmatrix}
                a_1 & \dots & a_r & a_{r+1} \\
                b_1' & \dots & b_r' & b_{r+1}
            \end{pmatrix}
        \end{small}, \beta'=\begin{small}
            \begin{pmatrix}
                a_1' & \dots & a_r' & b_{r+1} \\
                b_1' & \dots & b_r' & c_{r+1} 
            \end{pmatrix}
        \end{small}.            
        \end{align*} Then \begin{align*}
            \alpha'\gamma=\begin{small}
            \begin{pmatrix}
                a_1 & \dots & a_r \\
                c_1' & \dots & c_r'
            \end{pmatrix}
        \end{small}=\alpha\beta\gamma, \beta'\gamma=\begin{small}
            \begin{pmatrix}
                a_1' & \dots & a_r' & b_{r+1} \\
                c_1' & \dots & c_r' & c_{r+1} 
            \end{pmatrix}
        \end{small}\in J_{r+1},
        \end{align*} and $\alpha\beta'=\alpha\beta$. Hence \begin{align*}
        \begin{alignedat}{2}
            x_\alpha x_\beta x_\gamma&=x_{\alpha}x_{\beta'}x_{\gamma}\ \ &&(x_\alpha x_\beta=x_\alpha x_{\beta'})\\&=x_{\alpha}x_{\beta'\gamma} \ \ &&(x_{\beta'}x_{\gamma}=x_{\beta'\gamma} {\rm\  is\ a\ relation\ in}\ \R_{r+1,r+2}) \\&=x_{\alpha'} x_{\gamma}\ \ &&({\rm Lemma} \ \ref{lem: In: length 2 words are equal}).
            \end{alignedat}
        \end{align*}
\end{proof}

\begin{lemma}\label{lem: In: can obtain words with higher rank letter}
    Let $0\leq r\leq 2m-n-1$. If $\gamma\delta\in J_{r}$ where $\rk\gamma,\rk\delta\geq r+1$, there exist $\alpha,\beta\in J_{r+1}$ such that $\alpha\beta=\gamma\delta$, and $x_\alpha x_\beta=x_\gamma x_\delta$ is a consequence of $\R_{r+1,m}$.
\end{lemma}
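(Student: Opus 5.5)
We argue by induction on $\rk\gamma+\rk\delta$. In the base case $\rk\gamma=\rk\delta=r+1$ we take $\alpha=\gamma$, $\beta=\delta$. For the inductive step, assume without loss of generality that $\rk\gamma\geq r+2$; the case $\rk\delta\geq r+2$ is dual, obtained by reversing the roles of domain and image. The aim is to replace $x_\gamma$ by a product $x_{\gamma_1}x_{\gamma_2}$ using a single relation of $\R_{r+1,m}$, and then absorb $x_{\gamma_2}$ into $x_\delta$, so that the resulting two-letter word has a strictly smaller rank sum.

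Concretely, write $\gamma\delta=\begin{pmatrix} a_1&\dots&a_r\\ b_1'&\dots&b_r'\end{pmatrix}$, so that $\gamma$ maps $a_i\mapsto a_i'$ with $a_i'\in\dm\delta$ and $a_i'\delta=b_i'$. Put $k=\rk\gamma\geq r+2$; the remaining $k-r\geq 2$ points $a_{r+1},\dots,a_k$ of $\dm\gamma$ are sent by $\gamma$ outside $\dm\delta$. Let $\gamma_1$ be the restriction of $\gamma$ to $\{a_1,\dots,a_{k-1}\}$, which has rank $k-1\geq r+1$, and let $\gamma_2$ be the identity on $\im{\gamma_1}$. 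Then $\gamma=\gamma_1$ fails, so instead factor $\gamma=\gamma_1'\gamma_2'$ with both factors of rank $k$. Here $\gamma_1'=\gamma$ and $\gamma_2'$ is a rank-$k$ partial bijection that fixes $a_1',\dots,a_{k-1}'$ and sends $a_k'$ to $a_k'$. The real move is the following. Take $\gamma_2'':=\mathrm{id}_{\im\gamma}$, so that $x_\gamma=x_\gamma x_{\gamma_2''}$ is a relation of $\R_{r+1,m}$ since $\rk{\gamma}=k\leq m$. Then $\gamma_2''\delta$ has rank $r$ (only the $a_i'$ with $i\leq r$ survive), so it cannot be combined with $\delta$ directly.

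So we instead choose $\gamma_2$ to be the identity on $\{a_1',\dots,a_{k-1}'\}$, of rank $k-1\geq r+1$, together with a partial bijection $\gamma_1$ of rank $k$ satisfying $\gamma_1\gamma_2=$ restriction of $\gamma$. This still does not recover $\gamma$. The cleaner route is to change $\gamma$ rather than factor it exactly. Let $\gamma'$ be $\gamma$ with the point $a_k$ removed from its domain. Then $\gamma'\delta=\gamma\delta$ and $\rk{\gamma'}=k-1$, so we need to derive $x_\gamma x_\delta=x_{\gamma'}x_\delta$. Write $\gamma'=\gamma\varepsilon$, where $\varepsilon=\mathrm{id}_{\im\gamma\setminus\{a_k'\}}$ has rank $k-1\geq r+1$, and observe that $\varepsilon\delta=\mathrm{id}_{\im\gamma}\delta$. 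The relation $x_\gamma x_\varepsilon=x_{\gamma'}$ lies in $\R_{r+1,m}$. It then remains to derive $x_\gamma x_\delta=x_\gamma x_\varepsilon x_\delta$. If $\rk{\varepsilon\delta}\geq r+1$, this is immediate via $x_\varepsilon x_\delta=x_{\varepsilon\delta}$. Otherwise, we first enlarge $\delta$ to a $\delta'$ of rank $\rk\delta+1\leq m$: if $\delta$ has rank $m$, we instead shrink $\delta$ first, which is harmless by symmetry. This enlarged $\delta'$ is defined on $a_k'$ and satisfies $\delta=\eta\delta'$ for a suitable idempotent $\eta$ of rank $\geq r+1$ that is undefined at $a_k'$. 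We then use $x_\delta=x_\eta x_{\delta'}$, together with $x_\varepsilon x_\eta=x_{\varepsilon\eta}$ and $x_\eta=x_\varepsilon x_\eta$ (both of rank $\geq r+1$, since $\varepsilon\eta=\eta$ when $\dm\eta\subseteq\im\gamma\setminus\{a_k'\}$, or more generally when $\eta$ is chosen with domain avoiding $a_k'$).

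The main obstacle is the choice of $\eta$. It must simultaneously satisfy $\eta\delta'=\delta$, be undefined at $a_k'$, and have rank at least $r+1$. Taking $\eta=\mathrm{id}_{\dm\delta}$ works provided $a_k'\notin\dm\delta$, which holds by construction, and provided $\rk\delta\geq r+1$, which is given. With this choice $\delta'$ can simply be $\delta$ itself, and the computation becomes
\[
x_\gamma x_\delta=x_\gamma x_\eta x_\delta=x_\gamma x_\varepsilon x_\eta x_\delta=x_{\gamma'}x_\eta x_\delta=x_{\gamma'}x_\delta ,
\]
using $\eta\delta=\delta$ and $\varepsilon\eta=\eta$ (the latter needs $\dm\eta\subseteq\dm\varepsilon$, which may fail). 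So the step to verify carefully is $\varepsilon\eta=\eta$. If it fails, we replace it by the relations $x_\varepsilon x_\eta=x_{\varepsilon\eta}$ and $x_{\varepsilon\eta}x_\delta=x_{\varepsilon\eta\delta}$ with $\varepsilon\eta\delta=\gamma\delta$ restricted appropriately, checking that every intermediate element has rank $\geq r+1$; the two points $a_{r+1},a_{k-1}$ ensure this, since $k-1\geq r+1$. Iterating lowers $\rk\gamma$ to $r+1$, and the dual argument lowers $\rk\delta$.
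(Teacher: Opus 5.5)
There is a genuine gap: the derivation you finally write down is not a derivation in $\R_{r+1,m}$. With $\varepsilon=\mathrm{id}_{\im{\gamma}\setminus\{a_k'\}}$ and $\eta=\mathrm{id}_{\dm{\delta}}$, the identity $\varepsilon\eta=\eta$ does not merely ``may fail''; it \emph{always} fails. The reason is that $\im{\gamma}\cap\dm{\delta}=\{a_1',\dots,a_r'\}$ has only $r$ elements, while $|\dm{\delta}|=\rk{\delta}\geq r+1$. In fact $\varepsilon\eta=\mathrm{id}_{\{a_1',\dots,a_r'\}}\in J_r$. So your fallback relation $x_\varepsilon x_\eta=x_{\varepsilon\eta}$ is not in $\R_{r+1,m}$, and $x_{\varepsilon\eta}$ is not even a generator of $\C_{r+1}$. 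The points $a_{r+1},\dots,a_{k-1}$ cannot rescue this, because their images lie outside $\dm{\delta}=\dm{\eta}$. Similarly $\varepsilon\delta=\delta|_{\{a_1',\dots,a_r'\}}$ always has rank exactly $r$, so your first branch never applies. Removing one point $a_k$ at a time is the wrong granularity. An element $\mu$ with $\mu\delta=\delta$ and $\gamma\mu=\gamma'$ must fix $\dm{\delta}\cup\{a_1',\dots,a_{k-1}'\}$ pointwise, so it has rank $\rk{\delta}+k-1-r$, which can exceed $m$.

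The missing idea, which is the paper's proof, is to collapse a factor to rank $r+1$ in a single step, using an identity of rank one more than the \emph{other} factor. Put $\delta_1=\mathrm{id}_{\dm{\delta}\cup\{a_{r+1}'\}}$, of rank $\rk{\delta}+1$. Then $\delta_1\delta=\delta$ and $\gamma\delta_1=\gamma|_{\{a_1,\dots,a_{r+1}\}}\in J_{r+1}$, so $x_\gamma x_\delta=x_\gamma x_{\delta_1}x_\delta=x_{\gamma\delta_1}x_\delta$. Dually, $\gamma_1=\mathrm{id}_{\im{\gamma}\cup\{d\}}$ with $d\in\dm{\delta}\setminus\im{\gamma}$ collapses $\delta$ to rank $r+1$. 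The first move needs $\rk{\delta}<m$ and the second needs $\rk{\gamma}<m$. This is exactly where $r\leq 2m-n-1$ enters: $\rk{\gamma\delta}=|\im{\gamma}\cap\dm{\delta}|\geq\rk{\gamma}+\rk{\delta}-n$, so $\gamma$ and $\delta$ cannot both lie in $J_m$.

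Your proposal never uses this hypothesis. The remark that when $\rk{\delta}=m$ one can ``shrink $\delta$ first, harmless by symmetry'' is circular at precisely this point. The hypothesis cannot be dropped either. By Lemma \ref{lem: all: words that can be deduced from x_betax_gamma}, if $\gamma,\delta\in J_m$, then every word derivable from $x_\gamma x_\delta$ via $\R_{r+1,m}$ uses only letters from $J_m$. So for $m\geq r+2$, no word $x_\alpha x_\beta$ with $\alpha,\beta\in J_{r+1}$ is reachable. Any argument that does not invoke $r\leq 2m-n-1$ (or some substitute ensuring one factor has rank $<m$) must therefore be wrong somewhere.
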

\begin{proof}
    Consider \[\gamma=\begin{small}
        \begin{pmatrix}
            c_1 & \dots & c_i \\
            c_1' & \dots & c_i'
        \end{pmatrix}
    \end{small}\in J_i, \text{ and } \delta=\begin{small}
        \begin{pmatrix}
            d_1 & \dots & d_j \\
            d_1' & \dots & d_j' 
        \end{pmatrix}
    \end{small}\in J_j,\] where $i,j\geq r+1$. Since $\gamma\delta\in J_r$, $r$ elements of $\im\gamma$ are in $\dm{\delta}$, without loss of generality, let $d_k=c_k'$ for $1\leq k\leq r$. Then \begin{align*}
        \delta=\begin{small}
        \begin{pmatrix}
            c_1' & \dots & c_r' & d_{r+1} & \dots & d_j \\
            d_1' & \dots & d_r' & d_{r+1}' & \dots & d_j' 
        \end{pmatrix}\end{small},\text{ and } \gamma\delta=\begin{small}
            \begin{pmatrix}
                c_1 & \dots & c_r  \\
                d_1' & \dots & d_r' 
            \end{pmatrix}
        \end{small}.
    \end{align*}
        If $j<m$, define \begin{align*}
            \delta_1=\begin{small}
            \begin{pmatrix}
                c_1' & \dots & c_r' & c_{r+1}' & d_{r+1} & \dots & d_j\\
                c_1' & \dots & c_r' & c_{r+1}' & d_{r+1} & \dots & d_j
            \end{pmatrix}
        \end{small}\in J_{j+1}, \alpha=\begin{small}
            \begin{pmatrix}
                c_1 & \dots & c_r & c_{r+1} \\
                c_1' & \dots & c_r' & c_{r+1}' 
            \end{pmatrix}
        \end{small}\in J_{r+1}.
        \end{align*} Then $\delta_1\delta=\delta$, and $\gamma\delta_1=\alpha$. Hence \begin{align*}
        \begin{alignedat}{2}
        x_\gamma x_\delta&= x_\gamma x_{\delta_1}x_\delta \ \ &&(x_\delta=x_{\delta_1} x_{\delta} {\rm\ is\ a\ relation\ in\ } \R_{r+1,m}),\\
        &=x_\alpha x_\delta \ \ &&(x_\alpha=x_{\gamma} x_{\delta_1}\ {\rm\ is\ a\ relation\ in\ } \R_{r+1,m}).
        \end{alignedat}
    \end{align*}
    If $i<m$, define \begin{align*}
        \gamma_1=\begin{small}
        \begin{pmatrix}
            c_1' & \dots & c_i' & d_{r+1}\\
            c_1' & \dots & c_i' & d_{r+1} 
        \end{pmatrix}
    \end{small}\in J_{i+1}, \beta=\begin{small}
        \begin{pmatrix}
            c_1' & \dots & c_r' & d_{r+1} \\
            d_1' & \dots & d_r' & d_{r+1}' 
        \end{pmatrix}
    \end{small}\in J_{r+1}.
    \end{align*} Then $\gamma\gamma_1=\gamma$, and $\gamma_1\delta=\beta$. Hence \begin{align*}
        \begin{alignedat}{2}
        x_\gamma x_\delta&= x_\gamma x_{\gamma_1}x_\delta \ \ &&(x_\gamma=x_{\gamma} x_{\gamma_1} {\rm\ is\ a\ relation\ in\ } \R_{r+1,m}),\\
        &=x_\gamma x_\beta \ \ &&(x_{\gamma_1}x_\delta=x_\beta\ {\rm\ is\ a\ relation\ in\ } \R_{r+1,m}).
        \end{alignedat}
    \end{align*}Finally, notice that the condition $r\leq 2m-n-1$ guarantees that we cannot have $i=j=m$. Hence, if $i<m$, we may obtain $x_\gamma x_\beta$ from $x_\gamma x_\delta$, then obtain $x_\alpha x_\beta$ from $x_\gamma x_\beta$, where $\alpha,\beta\in J_{r+1}$. If $i=m$, then $j<m$, the rest is similar.
\end{proof}

\begin{prop}\label{prop: In: upper bound for In}
    Consider $I_m$ an ideal of $\I_n$. If $n>m>\frac{n}{2}$, then $\dep{I_m}\leq n-m+1$.
\end{prop}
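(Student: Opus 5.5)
The plan is to verify that the ideal $I_m$ of $\I_n$ satisfies conditions \ref{P1}--\ref{P3} for every $r$ with $0=\epsilon\leq r\leq 2m-n-1$. Theorem \ref{thm: All: presentation for Im} then says that $\C_s$ defines $I_m$ with $s=2m-n$. Since $m>\frac n2$, we have $s\geq 1>\epsilon$, so $s$ is a legitimate index. The presentation $\C_s$ has depth $m-s+1=m-(2m-n)+1=n-m+1$, and hence $\dep{I_m}\leq n-m+1$. Equivalently, Proposition \ref{prop: depth of semigroups same as cayley presentation} gives $\dep{I_m}=m-i+1\leq m-s+1$ once we know that the largest $i$ for which $\C_i$ defines $I_m$ satisfies $i\geq s$.

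The three conditions are matched with the lemmas already proved. \ref{P1} is exactly Lemma \ref{lem: In: length 2 words are equal}, since $\R_{r+1,r+2}\subseteq\R_{r+1,m}$. \ref{P3} is exactly Lemma \ref{lem: In: can obtain words with higher rank letter}, which needs $r\leq 2m-n-1$. That hypothesis guarantees the two factors cannot both have rank $m$, so one of them has room to be padded by an extra point without leaving $I_m$.

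\ref{P2} asks that, from $x_\alpha x_\beta x_\gamma$ with $\alpha,\beta,\gamma\in J_{r+1}$ and $\alpha\beta,\beta\gamma,\alpha\beta\gamma\in J_r$, we reach a word of length two in letters of rank $r+1$. We split according to the image $\alpha\beta\gamma$.
\begin{nitemize}
\item If $\alpha\beta\gamma=\beta\gamma$, i.e.\ the domain of $\alpha\beta$ already feeds fully into $\gamma$, Lemma \ref{lem: In: length 3 and length 2 words} applies directly and gives $x_{\alpha'}x_\gamma$ with $\alpha'\in J_{r+1}$.
\item Otherwise $\alpha\beta\gamma$ has rank $r$ but the composition loses different points at the two junctions. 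We first use Lemma \ref{lem: In: all length-2 words obtained from r+1,r+2} to rewrite $x_\beta x_\gamma$: via rule \ref{lem: In: change word: change ker beta locally} or rule \ref{lem: In: change word: change ker beta globally}, applied to the pair $(\beta,\gamma)$, we move the point of $\im\beta$ missing from $\dm\gamma$ so that it coincides with $a_{r+1}'$'s image under $\beta$. After this the lost points align, and we are reduced to the first case. We then finish with Lemma \ref{lem: In: length 3 and length 2 words}.
\end{nitemize}

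All of these rewriting steps use only elements of ranks $r+1$ and $r+2$, and they need spare points outside the relevant domains and images. Availability of these points follows from $r+2\leq m$ and $r+2\leq n$. The hypothesis $r\leq 2m-n-1$ leaves enough room in $[n]$.

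The main obstacle I expect is the second case of \ref{P2}. Stated carefully, Lemma \ref{lem: In: length 3 and length 2 words} seems to cover only the situation $\alpha\beta\gamma=\beta\gamma$, yet the lemma's hypotheses as written do not force this. We must therefore check that the local and global kernel changes of Lemma \ref{lem: In: all length-2 words obtained from r+1,r+2} can always realign $\im(\alpha\beta)$ inside $\dm\gamma$ without changing the represented element. Where a needed complement has size one (so $|B|\geq 2$ fails), we would first enlarge it by an application of rule \ref{lem: In: change word: change im beta} or rule \ref{lem: In: change word: change im alpha}. The bookkeeping of which free points exist is the delicate part.
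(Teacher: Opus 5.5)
Your argument is the paper's own: it also derives \ref{P1}, \ref{P2}, \ref{P3} from Lemmas \ref{lem: In: length 2 words are equal}, \ref{lem: In: length 3 and length 2 words} and \ref{lem: In: can obtain words with higher rank letter}, and then applies Theorem \ref{thm: All: presentation for Im} with $s=2m-n\geq 1$. The one correction is that your `main obstacle' does not exist. In $\I_n$ one has $\rk{\alpha\beta\gamma}=|\im{\alpha\beta}\cap\dm{\gamma}|$, so $\rk{\alpha\beta}=\rk{\alpha\beta\gamma}=r$ already forces $\im{\alpha\beta}\subseteq\dm{\gamma}$; hence the point of $\im{\beta}$ outside $\dm{\gamma}$ is automatically the image of the point of $\dm{\beta}\setminus\im{\alpha}$. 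This inclusion is exactly what the proof of Lemma \ref{lem: In: length 3 and length 2 words} extracts from its hypotheses (the phrase `$\alpha\beta\gamma=\beta\gamma$' there is a slip). So your second bullet is an empty case and should be deleted, not patched with the realignment sketch, which as written even refers to the image under $\beta$ of $a_{r+1}'\notin\dm{\beta}$.
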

\begin{proof}
    We have proved in Lemmas \ref{lem: In: length 2 words are equal}-\ref{lem: In: can obtain words with higher rank letter} that when $m>\frac{n}{2}$, $\ref{P1}-\ref{P3}$ hold. It follows from Theorem \ref{thm: All: presentation for Im} that $\dep{I_m}\leq m-(2m-n)+1=n-m+1$.
\end{proof}

\begin{thm}\label{thm: In: relational depth}
    Let $I_m$ be an ideal of $\I_n$. Then $\dep{I_m}=\begin{cases}
        3 & m=n\\
        n-m+1 & n>m>\frac{n}{2}\\
        m+1 & m\leq \frac{n}{2}
    \end{cases}$.
\end{thm}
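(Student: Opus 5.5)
The argument splits into the three ranges of $m$ appearing in the statement, with $\epsilon=0$ throughout since $S=\I_n$.

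\emph{Small ideals, $m\leq \frac n2$.} Here $\frac{n+\epsilon}{2}=\frac n2$, so Corollary \ref{cor: All: exact value for depth of small ideals} gives $\dep{I_m}=m+1$ directly.

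\emph{Large proper ideals, $\frac n2<m<n$.} In this range $2m-n\geq 1>0=\epsilon$, so $\max(\epsilon,2m-n)=2m-n$. Proposition \ref{prop: lower bound for depth of all ideals} then gives
\[
\dep{I_m}\geq m-(2m-n)+1=n-m+1.
\]
Proposition \ref{prop: In: upper bound for In} gives the matching upper bound $\dep{I_m}\leq n-m+1$. This is the real content of the upper bound. It rests on checking that \ref{P1}--\ref{P3} hold for every $r\leq 2m-n-1$, which was done in Lemmas \ref{lem: In: length 2 words are equal}--\ref{lem: In: can obtain words with higher rank letter}, and then applying Theorem \ref{thm: All: presentation for Im} to conclude that $\C_{2m-n}$ defines $I_m$. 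Proposition \ref{prop: depth of semigroups same as cayley presentation} converts this into the depth bound. Equivalently, in terms of Theorem \ref{thm: almost main theorem}\ref{mn2}: $\C_i$ defines $I_m$ exactly when $i\leq 2m-n$. For $i<2m-n$ this holds because the presentation $\C_{2m-n}$ is obtained from $\C_i$ by deleting generators and relations, and one deduces it via Tietze transformations, in the same way as in the proof of Lemma \ref{lem: complement of ideal}.

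\emph{The whole monoid, $m=n$.} The upper bound $\dep{\I_n}\leq 3$ comes from Meakin's presentation \cite{Meakin93}. As noted in the Introduction, all its generators and relations represent elements of rank $n$, $n-1$ or $n-2$, so its depth is $3$.

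For the lower bound I would reuse Proposition \ref{prop: general: an example of relations that cannot be deduced}, but with the top $\J$-class being $J_n$, the group of units. That choice fails: a product of two units never drops rank, so the proposition's hypothesis cannot be met in $J_n$. Instead I would argue directly that $\C_{n-1}$, the restriction to ranks $n$ and $n-1$, does not define $\I_n$.

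The plan is to pick $\alpha,\beta\in J_{n-1}$ with $\alpha\beta\in J_{n-2}$ such that $\alpha\beta=\gamma\delta$ for some other pair $\gamma,\delta\in J_{n-1}$. The pair $(\gamma,\delta)$ should not be reachable from $(\alpha,\beta)$ using relations of rank $\geq n-1$. This needs an invariant analogous to Lemma \ref{lem: all: words that can be deduced from x_betax_gamma}. The moves available are:
\begin{nitemize}
\item inserting or deleting a unit between two letters;
\item factoring a rank $n-1$ element as (unit)(rank $n-1$), (rank $n-1$)(unit) or (rank $n-1$)(rank $n-1$) with the product still of rank $n-1$, or merging such factors back.
\end{nitemize}
These moves preserve a suitable combinatorial datum of the rank-dropping junction. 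For instance, one would track which point leaves the domain at the junction, modulo the idempotent factors. Alternatively, one can cite Mitchell--Whyte \cite[Lemmas 3.6, 3.7]{MW:SP}, which show that any presentation of $\I_n$ needs a relation of depth $n-2$ in our terminology. That yields $\dep{\I_n}\geq 3$ immediately.

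I expect this lower bound for $m=n$ to be the main obstacle, and I would rely on the cited result of \cite{MW:SP} rather than redoing it.
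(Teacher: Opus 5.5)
For $m<n$ and for the upper bound $\dep{\I_n}\le 3$ your argument is the paper's. The paper derives the upper bound from $\dep{I_{n-1}}=2$, while you cite Meakin's presentation; either is fine. The real difference is the lower bound $\dep{\I_n}\ge 3$.

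\textbf{How the two lower-bound arguments differ.} The paper takes $\alpha=\mathrm{id}_{[n-1]}$ and $\beta$ fixing $[n-2]$ with $n\mapsto n-1$. It asserts that $x_\alpha x_\beta^2=x_\beta^2$ is not a consequence of $\R_{n-1,n}$ because $\alpha,\beta$ ``satisfy the condition'' of Proposition \ref{prop: general: an example of relations that cannot be deduced}. Your observation that this proposition cannot be applied to $\I_n$ as stated is correct, and it identifies a loose step in the paper. The invariant of Lemma \ref{lem: all: words that can be deduced from x_betax_gamma} requires every letter to stay in the top $\J$-class. Once unit letters are available it breaks, e.g.\ $x_\alpha x_\beta=x_{\alpha g^{-1}}x_gx_\beta=x_{\alpha g^{-1}}x_{g\beta}$.

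You instead rely on \cite[Lemmas 3.6, 3.7]{MW:SP}, which the paper's Remark quotes for exactly this fact. That is legitimate, but it makes the theorem depend on the exact hypotheses of those lemmas. If they fix a generating set, you still need the routine reduction. A generator of rank $\le n-2$ forces a relation of depth $\ge 3$ at once, since a one-letter word can only be rewritten by a relation having it as a side. Otherwise, Lemma \ref{lem: complement of ideal} plus Tietze transformations turn a depth-$2$ presentation into one of depth $2$ on their generators. Your in-house sketch is not yet a proof: ``which point leaves the domain at the junction, modulo the idempotent factors'' does not define an invariant, and no explicit pair is named.

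\textbf{Completing your direct sketch.} This is easy, and it gives a cleaner justification than the paper's. Take a word $w=y_1\cdots y_k$ over $X_{n-1,n}$ representing an element of rank $n-2$, and let $\pi_j$ be the product of $y_1\cdots y_j$.
\begin{thmenumerate}
\item A unit times an element of rank $\ge n-1$ has rank $\ge n-1$, so some $\pi_j$ has rank exactly $n-1$. Let $p(w)$ be the point missing from the domain of the first such $\pi_j$.
\item A relation $x_sx_t=x_{st}$ only inserts or deletes one prefix $\pi_j=\pi_{j-1}s$ between $\pi_{j-1}$ and $\pi_{j+1}=\pi_{j-1}st$. Work in the longer of the two words.
\item If $\pi_j$ is not the first prefix of rank $n-1$ there, that first prefix is unaffected.
\item If $\pi_j$ is the first, then $\pi_{j-1}$ is a unit (or empty). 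Hence $\pi_{j+1}$ has rank $n-1$, and $\dm{\pi_{j+1}}=\dm{\pi_j}$, since $\dm{\pi_{j+1}}\subseteq\dm{\pi_j}$ and both have size $n-1$.
\end{thmenumerate}
So $p$ is invariant under $\R_{n-1,n}$.

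For the paper's pair, $p(x_\alpha x_\beta^2)=n\neq n-1=p(x_\beta^2)$. For your own length-$2$ plan, let $\varepsilon_k$ be the partial identity on $[n]\setminus\{k\}$. The words $x_{\varepsilon_n}x_{\varepsilon_{n-1}}$ and $x_{\varepsilon_{n-1}}x_{\varepsilon_n}$ both represent $\mathrm{id}_{[n-2]}$, but their $p$-values are $n$ and $n-1$.

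This does not contradict Lemma \ref{lem: In: length 2 words are equal}. Its proof, via Lemma \ref{lem: In: change ker alpha}, needs $|A|\ge 2$, that is $r\le n-3$, so it does not cover $r=n-2$.
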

\begin{proof}
    When $m\leq\frac{n}{2}$, the exact value for $\dep{I_m}$ is given in Corollary \ref{cor: All: exact value for depth of small ideals}. When $n>m>\frac{n}{2}$, the lower bound for $\dep{I_m}$ is found in Proposition \ref{prop: lower bound for depth of all ideals}, which is shown to be an upper bound in Proposition \ref{prop: In: upper bound for In}. 
    
    Finally, we know that $\dep{\I_n}\leq 3$, as $\dep{I_{n-1}}=2$. Consider partial bijections \begin{align*}
        \alpha=\begin{small}
        \begin{pmatrix}
            1 & 2 & \dots & n-1\\
            1 & 2 & \dots & n-1
        \end{pmatrix}
    \end{small}, \beta=\begin{small}
        \begin{pmatrix}
            1 & \dots & n-2 & n\\
            1 & \dots & n-2 & n-1
        \end{pmatrix}
    \end{small}. \end{align*}Then \[\alpha\beta^2=\alpha\beta=\beta^2=\begin{small}
        \begin{pmatrix}
            1 & 2 & ... & n-2\\
            1 & 2 & ... & n-2
        \end{pmatrix}
    \end{small}.\] However, we cannot obtain $x_\alpha{x_\beta}^2$ from ${x_\beta}^2$ using relations from $\R_{n-1,n}$, as $\alpha, \beta$ satisfy the condition in Proposition \ref{prop: general: an example of relations that cannot be deduced}. Hence $\C_{n-1}=\langle X_{n-1,n}\ |\ \R_{n-1,n}\rangle$ does not define a presentation for $\I_n$, and $\dep{\I_n}>2$, which gives $\dep{\I_n}=3$.
\end{proof}

\section{Upper bound for $\T_n$}
\label{sec: T_n}
We now move to the ideals of the full transformation monoid $\T_n$. We follow the same trajectory as in the previous section, but technical details are different due to the different nature of the elements of $\T_n$ compared to $\I_n$.

To begin with, we find some relations that can be deduced from $\R_{r,m}$ in an ideal $I_m$ of $\T_n$. Let $1\leq r<m-1$. 
Unless stated otherwise, the definitions below will be used throughout this section. Let $\alpha,\beta\in J_{r+1}$ such that $\alpha\beta\in J_r$. Without loss of generality, assume that 
    \begin{align*}
    \alpha &= \begin{small}\begin{pmatrix}
        A_1 & A_2 & \dots & A_r & A_{r+1}\\
        a_1 & a_2 & \dots & a_r & a_{r+1}
        \end{pmatrix}\end{small},\ 
        \beta = \begin{small}\begin{pmatrix}
        B_1 & B_2 & \dots & B_r & B_{r+1}\\
        b_1 & b_2 & \dots & b_r & b_{r+1}
        \end{pmatrix}\end{small},\\
     \alpha\beta&=\begin{small}\begin{pmatrix}
        A_1 & A_2 & \dots &A_{r-1} & A_r\cup A_{r+1}\\b_1 & b_2 & \dots & b_{r-1} & b_r
        \end{pmatrix}\end{small}.
\end{align*}

    More specifically, \begin{align*}
        a_i\in \begin{small}
        \begin{cases}
        B_i & (1\leq i\leq r-1)\\B_r & (i=r,r+1)
    \end{cases}
    \end{small},\text{ and } B_{r+1}\cap \im\alpha=\emptyset.
    \end{align*}We investigate words that can be obtained from $x_\alpha x_\beta$ using relations in $\R_{r+1,r+2}$.

\begin{lemma}\label{lem: Tn: change im beta}
    Let $b_{r+1}'\in [n]\setminus\im\beta$. Define \[\beta' = \begin{small}
        \begin{pmatrix}
        B_1 & B_2 & \dots & B_r & B_{r+1}\\
        b_1 & b_2 & \dots & b_r & b_{r+1}'
    \end{pmatrix}
    \end{small}.\] Then $\alpha\beta=\alpha\beta'$, and the relation $x_\alpha x_\beta=x_\alpha x_{\beta'}$ is a consequence of $\R_{r+1,r+2}$.
\end{lemma}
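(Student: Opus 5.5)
The idea is to follow the pattern of Lemma \ref{lem: In: change im beta}. We factor $\beta$ through an element of rank $r+2$ that separates the two points $a_r,a_{r+1}\in B_r$. We then show that $\alpha$ composed with this factor has rank $r+1$ and does not see the class $B_{r+1}$. Throughout we use $r<m-1$, so that $r+2\le m$, and $m<n$, so that $n\ge r+3$.

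\emph{Construction.} Since $|\im\beta|=r+1$ and $b_{r+1}'\notin\im\beta$, we may choose $c\in[n]\setminus(\im\beta\cup\{b_{r+1}'\})$, because $n\ge r+3$. The class $B_r$ contains the distinct points $a_r,a_{r+1}$. Define
\[
\beta_1=\begin{small}\begin{pmatrix}
B_1 & \dots & B_{r-1} & B_r\setminus\{a_{r+1}\} & \{a_{r+1}\} & B_{r+1}\\
b_1 & \dots & b_{r-1} & b_r & c & b_{r+1}
\end{pmatrix}\end{small}.
\]
Let $\beta_1'$ be the same map except that $B_{r+1}\mapsto b_{r+1}'$. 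Both lie in $J_{r+2}$. Let $\beta_2$ be the full map that fixes each of $b_1,\dots,b_{r-1},b_{r+1},b_{r+1}'$ and sends every other point, in particular $b_r$ and $c$, to $b_r$. Its kernel classes are $\{b_1\},\dots,\{b_{r-1}\}$, $\{b_{r+1}\}$, $\{b_{r+1}'\}$ and one further class, so $\beta_2\in J_{r+2}$.

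\emph{Checks.} We verify the following.
\begin{itemize}
\item $\beta_1\beta_2=\beta$: both $B_r\setminus\{a_{r+1}\}$ and $\{a_{r+1}\}$ go to $b_r$, and $B_{r+1}$ goes to $b_{r+1}$.
\item $\beta_1'\beta_2=\beta'$: the same computation, with $B_{r+1}$ going to $b_{r+1}'$.
\item $\alpha\beta_1=\alpha\beta_1'$: this holds because $\im\alpha\cap B_{r+1}=\emptyset$.
\item $\alpha\beta_1\in J_{r+1}$: this product sends $A_i\mapsto b_i$ for $i\le r$ (using $a_r\in B_r\setminus\{a_{r+1}\}$) and $A_{r+1}\mapsto c$.
\end{itemize}
All four of these products involve only ranks $r+1$ and $r+2$. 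Hence the relations $x_\beta=x_{\beta_1}x_{\beta_2}$, $x_\alpha x_{\beta_1}=x_{\alpha\beta_1}=x_\alpha x_{\beta_1'}$ and $x_{\beta_1'}x_{\beta_2}=x_{\beta'}$ belong to $\R_{r+1,r+2}$. This gives the chain
\[
x_\alpha x_\beta=x_\alpha x_{\beta_1}x_{\beta_2}=x_{\alpha\beta_1}x_{\beta_2}=x_\alpha x_{\beta_1'}x_{\beta_2}=x_\alpha x_{\beta'}.
\]
The equality $\alpha\beta=\alpha\beta'$ follows by evaluating both sides.

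\emph{Main obstacle.} The delicate step is choosing $\beta_2$ so that it is a genuine full map of rank exactly $r+2$. It must keep $b_{r+1}$ and $b_{r+1}'$ apart, yet merge $c$ with $b_r$. This is why $c$ must avoid $b_{r+1}'$, and why the lemma needs the hypothesis $n\ge r+3$.
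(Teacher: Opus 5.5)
Your proof is correct and takes essentially the same approach as the paper. Both factor $\beta=\beta_1\beta_2$ through rank $r+2$ by splitting $B_r$ so as to separate $a_r$ from $a_{r+1}$, and then exchange $\beta_1$ for $\beta_1'$ inside a rank-$(r+1)$ product that never sees $B_{r+1}$. The only difference is that the paper first inserts an auxiliary rank-$(r+1)$ map $\alpha_1$ with $\alpha\alpha_1=\alpha$ and performs the exchange inside $x_{\alpha_1}x_{\beta_1}$, whereas you use $x_\alpha x_{\beta_1}=x_{\alpha\beta_1}=x_\alpha x_{\beta_1'}$ directly. That is a harmless simplification, and both versions need $n\ge r+3$ for the extra image point.
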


\begin{proof}
   Define the following transformations,
       \begin{align*}
        \alpha_1 &= \begin{small}\begin{pmatrix}
        a_1 & a_2 & \dots & a_{r-1} & a_{r} & [n]\setminus\{a_1,\dots,a_r\}\\
        a_1 & a_2 & \dots & a_{r-1} & a_r & a_{r+1}
        \end{pmatrix}\end{small},\\
        \beta_1 &= \begin{small}\begin{pmatrix}
        B_1 & B_2 &\dots & B_{r-1} & a_{r} & B_r\setminus\{a_r\} & B_{r+1}\\
        b_1 & b_2 &\dots & b_{r-1} & b_r & b_r' & b_{r+1}\end{pmatrix}\end{small},\\
        \beta_1' &= \begin{small}\begin{pmatrix}
        B_1 & B_2 &\dots & B_{r-1} & a_{r} & B_r\setminus\{a_r\} & B_{r+1}\\
        b_1 & b_2 &\dots & b_{r-1} & b_r & b_r' & b_{r+1}'\end{pmatrix}\end{small},\\
        \text{ and one}&\text{ more mapping }\beta_2\text{ by}\\
        x\beta_2 &= \begin{small}\begin{cases}
            x & (x\in \{b_2, b_3,\dots,b_{r-1}, b_{r+1}, b_{r+1}'\})\\
            b_r & (x=b_r\text{ or }b_r')\\
            b_1 & \text{otherwise}
        \end{cases}\end{small}.
    \end{align*}

    Then $\alpha\alpha_1=\alpha$, $\beta_1\beta_2=\beta$, $\beta_1'\beta_2=\beta'$, and $\alpha_1\beta_1=\alpha_1\beta_1'\in J_{r+1}$. 
    Hence, \begin{align*}
            \begin{alignedat}{2}
            x_\alpha x_\beta&=x_{\alpha}x_{\alpha_1}x_{\beta_1}x_{\beta_2}\ &&(x_\alpha=x_\alpha x_{\alpha_1}, x_\beta=x_{\beta_1}x_{\beta_2})\\&=x_{\alpha}x_{\alpha_1}x_{\beta_1'}x_{\beta_2} \ &&(x_{\alpha_1}x_{\beta_1}=x_{\alpha_1}x_{\beta_1'})\\&=x_\alpha x_{\beta'}\ &&(x_{\beta'}=x_{\beta_1'}x_{\beta_2}).
            \end{alignedat}
        \end{align*}
\end{proof}

\begin{lemma}\label{lem: change ker beta}
    If $|B_{r+1}|\geq 2$, let $b\in B_{r+1}$. Let $1\leq i\leq r$. Define \[\beta' = \begin{small}
        \begin{pmatrix}
        B_1 & B_2 &\dots &B_i\cup \{b\}&\dots & B_{r} & B_{r+1}\setminus\{b\}\\
        b_1 & b_2 &\dots  &b_i &\dots& b_{r} & b_{r+1} \end{pmatrix}
    \end{small}.\] Then $\alpha\beta=\alpha\beta'$, and the relation $x_\alpha x_\beta=x_\alpha x_{\beta'}$ is a consequence of $\R_{r+1,r+2}$. 
\end{lemma}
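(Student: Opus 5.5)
The plan is to imitate the factorisation trick of Lemma \ref{lem: Tn: change im beta}, but to insert an auxiliary letter between $x_\alpha$ and $x_\beta$ instead of splitting $\beta$. The idea is to find a transformation $\delta$ of rank $r+2$ such that three things hold. First, $\alpha\delta=\alpha$. Second, $\delta\beta=\delta\beta'$. Third, this common product still has rank $r+1$. Then each step of the rewriting is a Cayley relation between letters of rank $r+1$ or $r+2$.

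Concretely, I would use the hypothesis $|B_{r+1}|\geq 2$ to pick some $e\in B_{r+1}\setminus\{b\}$. I would then define $\delta$ to fix each of $a_1,\dots,a_r,a_{r+1}$ and to send every other point of $[n]$ to $e$. Since $B_{r+1}\cap\im\alpha=\emptyset$, the points $a_1,\dots,a_{r+1}$ are distinct and all different from $e$ and from $b$. Hence $\delta$ has rank exactly $r+2\leq m$, using the standing assumption $r<m-1$, and $\alpha\delta=\alpha$ because $\delta$ fixes $\im\alpha$ pointwise.

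Next I would compute $\delta\beta$. Each $a_i$ with $i\leq r-1$ maps to $b_i$. Both $a_r$ and $a_{r+1}$ lie in $B_r$, so they map to $b_r$. Every remaining point goes to $e$ and then to $e\beta=b_{r+1}$. So $\delta\beta$ has image $\{b_1,\dots,b_{r+1}\}$ and rank $r+1$. For $\beta'$, the only change from $\beta$ is the value at $b$, and $b$ is not in $\im\delta$. Since $e$ remains in the class $B_{r+1}\setminus\{b\}$, we get $e\beta'=b_{r+1}$, and therefore $\delta\beta'=\delta\beta$.

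The derivation would then be
\[
x_\alpha x_\beta=x_\alpha x_\delta x_\beta=x_\alpha x_{\delta\beta}=x_\alpha x_{\delta\beta'}=x_\alpha x_\delta x_{\beta'}=x_\alpha x_{\beta'},
\]
using, in order, the relations $x_\alpha x_\delta=x_\alpha$, then $x_\delta x_\beta=x_{\delta\beta}$, then $x_\delta x_{\beta'}=x_{\delta\beta'}$, and finally $x_\alpha x_\delta=x_\alpha$ again. All of these are relations in $\R_{r+1,r+2}$, because every letter involved and every product has rank $r+1$ or $r+2$. The equality $\alpha\beta=\alpha\beta'$ follows by the same computation, or directly from $b\notin\im\alpha$.

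The main obstacle is choosing $\delta$ so that the product $\delta\beta$ does not drop to rank $r$. A retraction onto $\im\alpha$ alone would fail, since $\im\alpha$ meets only $r$ kernel classes of $\beta$. This is precisely why a second point $e\in B_{r+1}$ is needed, and hence why the hypothesis $|B_{r+1}|\geq 2$ appears.
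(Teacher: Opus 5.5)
Your proof is correct and uses the same key device as the paper: your $\delta$ is exactly the paper's auxiliary map $\alpha_1$, which fixes $\im\alpha$ pointwise, collapses everything else onto a point $b'\in B_{r+1}\setminus\{b\}$, and is inserted via $x_\alpha=x_\alpha x_{\alpha_1}$. The only difference is that the paper also factors $\beta=\beta_1\beta_2$ and $\beta'=\beta_1'\beta_2$ through rank-$(r+2)$ maps and then swaps $x_{\alpha_1}x_{\beta_1}=x_{\alpha_1}x_{\beta_1'}$, whereas you swap $x_\delta x_\beta=x_{\delta\beta}=x_\delta x_{\beta'}$ directly; this is slightly shorter and does not need two extra points outside $\im\beta$.
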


\begin{proof}
    Here we only consider the case when $i=1$, since the calculation is very similar when $i\in \{2,\dots,r\}$. Let $b'\in B_{r+1}\setminus\{b\}$, and $b_{r+2},b_{r+3}\in [n]\setminus\im\beta$. Define the following transformations,
        \begin{align*}
        \alpha_1 &= \begin{small}\begin{pmatrix}
        a_1 & a_2 & \dots & a_{r} & a_{r+1} & [n]\setminus\{a_1,\dots, a_{r+1}\}\\
        a_1 & a_2 & \dots & a_{r} & a_{r+1} & b'
        \end{pmatrix}\end{small},\\
        \beta_1 &= \begin{small}\begin{pmatrix}
        B_1 & B_2 &\dots & B_{r} & B_{r+1}\setminus\{b\} & b\\
        b_1 & b_2 &\dots & b_{r} & b_{r+1} & b_{r+2}\end{pmatrix}\end{small},\\
        \beta_1' &= \begin{small}\begin{pmatrix}
        B_1 & B_2 &\dots & B_{r} & B_{r+1}\setminus\{b\} & b\\
        b_1 & b_2 &\dots & b_{r} & b_{r+1} & b_{r+3}\end{pmatrix}\end{small},\\
        \beta_2 &= \begin{small}\begin{pmatrix}
        \{b_1,b_{r+3}\} & b_2 & \dots & b_r & [n]\setminus\{b_1,\dots, b_r, b_{r+3}\}\\
        b_1 & b_2 & \dots & b_{r} & b_{r+1}\end{pmatrix}\end{small}.
    \end{align*}
    Then $\alpha\alpha_1=\alpha$, $\beta_1\beta_2=\beta$, $\beta_1'\beta_2=\beta'$, and $\alpha_1\beta_1=\alpha_1\beta_1'$. Notice that $\alpha_1,\beta_1,\beta_1'\in J_{r+2}$, $\beta_2\in J_{r+1}$, and \[\alpha_1\beta_1 = \begin{small}
        \begin{pmatrix}
        a_1 & a_2 & \dots & a_{r-1} & \{a_r, a_{r+1}\} & [n]\setminus\{a_1,\dots, a_{r+1}\}\\
        b_1 & b_2 & \dots & b_{r-1} & b_r & b_{r+1}
        \end{pmatrix}
    \end{small}\in J_{r+1}.\] Hence \begin{align*}
            \begin{alignedat}{2}
            x_\alpha x_\beta&=x_{\alpha}x_{\alpha_1}x_{\beta_1}x_{\beta_2}\ \ &&(x_\alpha=x_{\alpha}x_{\alpha_1},x_\beta=x_{\beta_1}x_{\beta_2})\\&=x_{\alpha}x_{\alpha_1}x_{\beta_1'}x_{\beta_2}\ \ &&(x_{\alpha_1}x_{\beta_1}=x_{\alpha_1}x_{\beta_1'}) \\&=x_\alpha x_{\beta'}\ \ &&(x_{\beta'}=x_{\beta_1'}x_{\beta_2}).
            \end{alignedat}
        \end{align*}
\end{proof}

\begin{lemma}\label{lem: Tn: change ker beta full}
    For $1\leq i\neq j\leq r+1$, if $|B_i\setminus \im\alpha|\geq 1$, let $b\in B_i\setminus \im\alpha$, define \[\beta_{ij}' = \begin{small}
        \begin{pmatrix}
        B_1 & B_2 &\dots &B_i\setminus\{b\}& \dots& B_j\cup\{b\}&\dots & B_{r} & B_{r+1}\\
        b_1 & b_2 &\dots  &b_i &\dots& b_j & \dots& b_{r} & b_{r+1} \end{pmatrix}
    \end{small}.\] Then $\alpha\beta=\alpha\beta_{ij}'$, and the relation $x_\alpha x_\beta=x_\alpha x_{\beta_{ij}'}$ is a consequence of $\R_{r+1,r+2}$. 
\end{lemma}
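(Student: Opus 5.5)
The plan is to reuse the factorisation trick from Lemmas \ref{lem: Tn: change im beta} and \ref{lem: change ker beta}. I will find $\alpha_1,\beta_1,\beta_1',\beta_2$ with
\[
\alpha\alpha_1=\alpha,\qquad \beta_1\beta_2=\beta,\qquad \beta_1'\beta_2=\beta_{ij}',\qquad \alpha_1\beta_1=\alpha_1\beta_1'\in J_{r+1},
\]
where $\alpha_1,\beta_1,\beta_1'\in J_{r+2}$ and $\beta_2\in J_{r+1}$. Every relation used then lies in $\R_{r+1,r+2}$, and the chain
\[
x_\alpha x_\beta=x_\alpha x_{\alpha_1}x_{\beta_1}x_{\beta_2}=x_\alpha x_{\alpha_1}x_{\beta_1'}x_{\beta_2}=x_\alpha x_{\beta_{ij}'}
\]
gives the result. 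The identity $\alpha\beta=\alpha\beta_{ij}'$ is immediate: $b\notin\im\alpha$, so moving $b$ between kernel classes does not affect the composite on $\im\alpha$.

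The idea behind the construction is to split $b$ off into its own kernel class in $\beta_1$. This class is sent to a fresh point, and the fresh point differs between $\beta_1$ and $\beta_1'$. Then $\beta_2$ routes one fresh point to $b_i$ and the other to $b_j$.

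Concretely, I will choose distinct $c,c'\in[n]\setminus\im\beta$. This is possible because $n\ge m+1\ge r+3$, using $r<m-1$. I set
\[
\beta_1=\begin{pmatrix} B_1&\dots&B_i\setminus\{b\}&\dots&B_{r+1}& b\\ b_1&\dots&b_i&\dots&b_{r+1}&c\end{pmatrix},
\]
and $\beta_1'$ the same except that $b\mapsto c'$. For $\beta_2$, I take the map fixing $b_1,\dots,b_{r+1}$, sending $c\mapsto b_i$ and $c'\mapsto b_j$, and sending everything else to $b_1$. This has rank $r+1$, and one checks directly that $\beta_1\beta_2=\beta$ and $\beta_1'\beta_2=\beta_{ij}'$.

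For $\alpha_1$, I will fix $a_1,\dots,a_{r+1}$ and send the rest of $[n]$ to a single point $d\in B_{r+1}\setminus\{b\}$. Then $\alpha_1\in J_{r+2}$, since $d\notin\im\alpha$ and the complement of $\{a_1,\dots,a_{r+1}\}$ is nonempty, and $\alpha\alpha_1=\alpha$. Because $\im\alpha_1=\{a_1,\dots,a_{r+1},d\}$ avoids $b$, we get $\alpha_1\beta_1=\alpha_1\beta_1'$. This product maps the $a$'s onto $b_1,\dots,b_r$ and maps $d$ to $b_{r+1}$, so it has rank $r+1$, as required.

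The step I expect to need care is the bookkeeping of ranks in the boundary cases. First, $B_i\setminus\{b\}$ must be nonempty so that $\beta_1$ really has rank $r+2$. For $i\le r$ this holds automatically, since $a_i$ (or $a_r$) lies in $B_i$ and $b\notin\im\alpha$. For $i=r+1$ it can only fail when $B_{r+1}=\{b\}$, but then $\beta_{ij}'$ would drop to rank $r$, so that case is implicitly excluded by $\beta_{ij}'\in J_{r+1}$. Second, $B_{r+1}\setminus\{b\}$ must be nonempty to supply $d$. When $i\ne r+1$ this is automatic. When $i=r+1$ it follows from the exclusion just described.
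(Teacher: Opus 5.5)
Your proof is correct, but it is organised differently from the paper's. The paper builds no new factorisation for this lemma. It lets $\beta_1$ be $\beta$ with $b$ moved from $B_i$ into $B_{r+1}$, and then invokes Lemma~\ref{lem: change ker beta} twice: backwards to pass from $\beta$ to $\beta_1$, and forwards to pass from $\beta_1$ to $\beta_{ij}'$. So the paper routes $b$ through the class $B_{r+1}$, which is disjoint from $\im{\alpha}$.

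Your construction is exactly the factorisation from the proof of Lemma~\ref{lem: change ker beta}: split $b$ off to one of two fresh points, and let $\beta_2$ send these points to $b_i$ and $b_j$. You have generalised it to an arbitrary source class $B_i$, so the move is done in one step.

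Each route has its advantages. The paper's is shorter given the earlier lemma. However, it relies on that lemma for every target class, and only the case $i=1$ is written out there. It also relies on the intermediate pair $(\alpha,\beta_1)$ still satisfying the standing normal form. Yours is self-contained and makes the side conditions explicit.

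In particular, you note that for $i=r+1$ one needs $|B_{r+1}|\ge 2$, since otherwise $\beta_{ij}'\notin J_{r+1}$. This is a tacit hypothesis of the statement, and the paper's argument needs it too, because it is the hypothesis of Lemma~\ref{lem: change ker beta}. Similarly, both arguments need two points outside $\im{\beta}$, i.e.\ $n\ge r+3$. This holds because only proper ideals $I_m$ with $m<n$ are considered.
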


\begin{proof}
    Define \begin{align*}
        \beta_1 &= \begin{small}
        \begin{pmatrix}
        B_1 & B_2 &\dots &B_i\setminus\{b\}& \dots& B_j&\dots & B_{r} & B_{r+1}\cup \{b\}\\
        b_1 & b_2 &\dots  &b_i & \dots&b_j&\dots& b_{r} & b_{r+1} \end{pmatrix}.
    \end{small}
    \end{align*}By Lemma \ref{lem: change ker beta}, $x_\alpha x_\beta=x_\alpha x_{\beta_1}$ and $x_\alpha x_{\beta_1}=x_\alpha x_{\beta_{ij}'}$ are consequences of $\R_{r+1,r+2}$, and we can obtain $x_\alpha x_{\beta_{ij}'}$ from $x_\alpha x_{\beta}$.
\end{proof}

\begin{lemma}\label{lem: Tn: change ker alpha 1}
    Suppose that $|A_r|\geq 2$, and let $a\in A_r$. Define \[\alpha' = \begin{small}
        \begin{pmatrix}
        A_1 & A_2 & \dots & A_r\setminus\{a\} & A_{r+1}\cup \{a\}\\
        a_1 & a_2 & \dots & a_r & a_{r+1}
    \end{pmatrix}
    \end{small}.\] Then $\alpha\beta=\alpha'\beta$, and the relation $x_\alpha x_\beta=x_{\alpha'} x_\beta$ is a consequence of $\R_{r+1,r+2}$.
\end{lemma}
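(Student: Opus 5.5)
The plan is to follow the same ``factor, swap, refactor'' pattern as in Lemmas \ref{lem: Tn: change im beta} and \ref{lem: change ker beta}. We write $\alpha=\alpha_1\alpha_2$ and $\alpha'=\alpha_1\alpha_2'$ with $\alpha_1,\alpha_2,\alpha_2'\in J_{r+2}$. The only difference between $\alpha_2$ and $\alpha_2'$ is in the image of a point $c$, and it is arranged so that $\alpha_2\beta=\alpha_2'\beta\in J_{r+1}$. The chain of relations will then be
\[
x_\alpha x_\beta=x_{\alpha_1}x_{\alpha_2}x_\beta=x_{\alpha_1}x_{\alpha_2\beta}=x_{\alpha_1}x_{\alpha_2'}x_\beta=x_{\alpha'}x_\beta .
\]
Every relation in this chain has all three of its terms (two factors and product) of rank $r+1$ or $r+2$, so each lies in $\R_{r+1,r+2}$. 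There is no need to factor $\beta$ here.

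First we choose two distinct points. Since $r<m-1$ and $m<n$, we have $n\geq r+3$, so there are at least two points outside $\im\alpha=\{a_1,\dots,a_{r+1}\}$. Moreover $B_{r+1}$ is nonempty and disjoint from $\im\alpha$. Pick $d\in B_{r+1}$ and $c\notin\im\alpha$ with $c\neq d$.

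Next define the three maps.
\begin{itemize}
\item Let $\alpha_1$ have kernel classes $A_1,\dots,A_{r-1},A_r\setminus\{a\},\{a\},A_{r+1}$, sent respectively to $a_1,\dots,a_{r-1},a_r,c,a_{r+1}$. Because $|A_r|\geq 2$, the class $A_r\setminus\{a\}$ is nonempty, so $\alpha_1\in J_{r+2}$.
\item Let $\alpha_2$ fix $a_1,\dots,a_{r+1}$, send $c\mapsto a_r$, and send every remaining point to $d$. Its image is $\{a_1,\dots,a_{r+1},d\}$, so $\alpha_2\in J_{r+2}$.
\item Let $\alpha_2'$ agree with $\alpha_2$ except that $c\mapsto a_{r+1}$. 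Again $\alpha_2'\in J_{r+2}$.
\end{itemize}

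Now verify the identities. Composing directly gives $\alpha_1\alpha_2=\alpha$: the point $a$ goes to $c$ and then to $a_r$. Likewise $\alpha_1\alpha_2'=\alpha'$: the point $a$ goes to $c$ and then to $a_{r+1}$. Since $a_r,a_{r+1}\in B_r$, we get $c\alpha_2\beta=c\alpha_2'\beta=b_r$, and the two maps agree elsewhere, so $\alpha_2\beta=\alpha_2'\beta$. This common product has image $\{b_1,\dots,b_{r-1},b_r,b_{r+1}\}$, where $b_{r+1}$ is reached via $d\in B_{r+1}$. Hence $\alpha_2\beta\in J_{r+1}$. As a byproduct, $\alpha\beta=\alpha_1\alpha_2\beta=\alpha_1\alpha_2'\beta=\alpha'\beta$.

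The main point of care, and the one place the argument could fail, is keeping $\alpha_2\beta$ of rank $r+1$ rather than $r$. If $\alpha_2$ had image only $\{a_1,\dots,a_{r+1}\}$, then $\beta$ would merge $a_r$ and $a_{r+1}$ and the product would drop into $J_r$, so the middle relation would not lie in $\R_{r+1,r+2}$. Sending the leftover points to $d\in B_{r+1}$ prevents this, which is why $d$ must lie in $B_{r+1}$ and why $c$ must be different from $d$.
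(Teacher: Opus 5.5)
Your proof is correct, and it takes a different route from the paper's.

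The paper factors $\alpha=\alpha_1\alpha_2$ with $\alpha_2\beta=\beta$, absorbs $\alpha_2$ into $\beta$, and then re-expands $\beta=\beta_1\beta$ with a different $\beta_1$ chosen so that $\alpha_1\beta_1=\alpha'$. Constructing $\alpha_2$ and $\beta_1$ needs a third point $a_{r+2}\in B_r\setminus\{a_r,a_{r+1}\}$, so that $B_r$ can be split in two different ways. The paper therefore splits into the cases $|B_r|\geq 3$ and $|B_r|=2$. It handles the latter by first enlarging $B_r$ via Lemma \ref{lem: Tn: change ker beta full}, applying the first case, and transporting back.

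Your version instead sends $a$ to a fresh point $c\notin\im\alpha$ and performs the swap inside the middle factor ($c\mapsto a_r$ versus $c\mapsto a_{r+1}$). The swap is invisible after $\beta$ because $a_r,a_{r+1}\in B_r$. The extra point $d\in B_{r+1}$ keeps $\alpha_2\beta=\alpha_2'\beta$ in $J_{r+1}$, so every relation used lies in $\R_{r+1,r+2}$. This is the same template the paper itself uses for Lemmas \ref{lem: Tn: change ker alpha 2} and \ref{lem: Tn: change im alpha}.

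What it buys is a single self-contained argument, with no case distinction and no dependence on the earlier kernel-moving lemmas. Its only requirement is two points outside $\im\alpha$, i.e.\ $n\geq r+3$. As you note, this follows from the standing assumptions $r<m-1$ and $m<n$, and the paper's argument implicitly needs the same condition.
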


\begin{proof}
    We split the proof into case, depending on the size of $B_r$. Notice that since both $a_{r},a_{r+1}\in B_r$, $|B_r|\geq 2$.
    
    \underline{Case 1}: $|B_r|\geq 3$. 
    Let $a_{r+2}\in B_r\setminus\{a_r,a_{r+1}\}$. Let $a_{r+3}\in B_{r+1}$. Let $B_{r1},B_{r2}$ partition $B_r$ such that $a_r, a_{r+1}\in B_{r1}$, and $a_{r+2}\in B_{r2}$. Let $B_{r1}',B_{r2}'$ partition $B_r$ such that $a_r\in B_{r1}'$, and $a_{r+1}, a_{r+2}\in B_{r2}'$. Define 
        \begin{align*}
        \alpha_1&=\begin{small}\begin{pmatrix}
        A_{1} & A_2 & \dots& A_{r-1} & A_r\setminus\{a\} & a & A_{r+1}\\
        a_1 & a_2 & \dots & a_{r-1}& a_r & a_{r+1} & a_{r+2}
        \end{pmatrix}\end{small},\\
        \alpha_2 &= \begin{small}\begin{pmatrix}
        B_1 & B_2 & \dots & B_{r-1} & B_{r1} & B_{r2} & B_{r+1}\\
        a_1 & a_2 & \dots & a_{r-1} & a_{r} & a_{r+1} & a_{r+3}
        \end{pmatrix}\end{small},\\
        \beta_1 &= \begin{small}\begin{pmatrix}
        B_1 & B_2 & \dots & B_{r-1} & B_{r1}' & B_{r2}' & B_{r+1}\\
        a_1 & a_2 & \dots & a_{r-1} & a_{r} & a_{r+1} & a_{r+3}\end{pmatrix}\end{small}.
    \end{align*}
    Then $\alpha_1\alpha_2=\alpha$, $\alpha_2\beta=\beta_1\beta=\beta$, and $\alpha_1\beta_1=\alpha'$. We have \begin{align*}
            \begin{alignedat}{2}
            x_\alpha x_\beta&=x_{\alpha_1}x_{\alpha_2}x_{\beta}\ \ &&(x_\alpha=x_{\alpha_1}x_{\alpha_2})\\&=x_{\alpha_1}x_\beta \ \ &&(x_{\alpha_2}x_{\beta}=x_{\beta}) \\&=x_{\alpha_1}x_{\beta_1}x_\beta \ \ &&(x_{\beta}=x_{\beta_1}x_{\beta}) \\&=x_{\alpha'}x_{\beta}\ \ &&(x_{\alpha'}=x_{\alpha_1}x_{\beta_1}).    
            \end{alignedat}
        \end{align*}
    \underline{Case 2}: $|B_r|= 2$.
    By Lemma \ref{lem: Tn: change ker beta full}, if \[\beta'=\begin{small}
        \begin{pmatrix}
        B_1 & B_2 &\dots &B_i\setminus\{b\}&\dots & B_{r}\cup \{b\} & B_{r+1}\\
        b_1 & b_2 &\dots  &b_i&\dots& b_{r} & b_{r+1} 
    \end{pmatrix}
    \end{small},\] where $b\in B_i\setminus\im\alpha$ and $|B_i\setminus\im\alpha|\geq 1$, then $x_\alpha x_\beta=x_\alpha x_{\beta'}$ and $x_{\alpha'}x_\beta=x_{\alpha'} x_{\beta'}$ are consequences of $\R_{r+1,r+2}$. Now apply the method in Case 1 to $x_\alpha x_{\beta'}$, and we have $x_\alpha x_\beta=x_\alpha x_{\beta'}=x_{\alpha'} x_{\beta'}=x_{\alpha'} x_{\beta}$.
\end{proof}

\begin{lemma}\label{lem: Tn: change ker alpha 2}
    Let $1\leq i\leq r-1$. Suppose that $|A_i|\geq 2$ and $|B_i|\geq 2$. Let $A_{i1},A_{i2}$ by any non-empty sets partitioning $A_i$, and let $a_{i1},a_{i2}\in B_i$ be two arbitrary distinct elements. Define \[\alpha'=\begin{small}\begin{pmatrix}
        A_1 & A_2 & \dots & A_{i1} & A_{i2} &\dots  & A_r\cup A_{r+1}\\
        a_1 & a_2 & \dots &a_{i1} & a_{i2}&\dots & a_r
        \end{pmatrix}\end{small}.\] Then $\alpha\beta=\alpha'\beta$, and the relation $x_\alpha x_\beta=x_{\alpha'} x_\beta$ is a consequence of $\R_{r+1,r+2}$. 
\end{lemma}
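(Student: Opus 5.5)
The plan is to imitate Case~1 of the proof of Lemma~\ref{lem: Tn: change ker alpha 1}. We factor $\alpha$ through a transformation $\alpha_1$ of rank $r+2$ that already has the kernel of $\alpha'$ refined by splitting $A_r\cup A_{r+1}$ back into $A_r$ and $A_{r+1}$. We then use idempotent-like maps $\alpha_2,\beta_1$ of rank $r+2$ that act as left identities for $\beta$. Every relation used equates elements of rank $r+1$ or $r+2$, so all of them lie in $\R_{r+1,r+2}$; this is available because $r<m-1$, so $r+2\le m$.

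\emph{Construction.} Fix $a_{r+3}\in B_{r+1}$, which exists since $B_{r+1}\neq\emptyset$. Define
\[
\alpha_1=\begin{small}\begin{pmatrix}
A_1&\dots&A_{i1}&A_{i2}&\dots&A_{r-1}&A_r&A_{r+1}\\
a_1&\dots&a_{i1}&a_{i2}&\dots&a_{r-1}&a_r&a_{r+1}
\end{pmatrix}\end{small}.
\]
This has rank $r+2$, since $a_{i1}\ne a_{i2}$ lie in $B_i$, $a_j\in B_j$, and $a_r\neq a_{r+1}$ (they are distinct images of $\alpha$). Next, partition $B_r=B_{r1}\cup B_{r2}$ with $a_r\in B_{r1}$ and $a_{r+1}\in B_{r2}$, and let $\alpha_2$ send $B_j\mapsto a_j$ for $j\le r-1$, $B_{r1}\mapsto a_r$, $B_{r2}\mapsto a_{r+1}$, and $B_{r+1}\mapsto a_{r+3}$. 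Likewise, using $|B_i|\ge 2$, partition $B_i=B_{i1}\cup B_{i2}$ with $a_{i1}\in B_{i1}$ and $a_{i2}\in B_{i2}$. Let $\beta_1$ send $B_j\mapsto a_j$ for $j\ne i$ with $j\le r$, $B_{i1}\mapsto a_{i1}$, $B_{i2}\mapsto a_{i2}$, and $B_{r+1}\mapsto a_{r+3}$. Both $\alpha_2$ and $\beta_1$ have rank $r+2$.

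\emph{Verification.} We then check four identities by direct computation.
\begin{itemize}
\item $\alpha_1\alpha_2=\alpha$: the classes $A_{i1},A_{i2}$ both go to $a_i$, and $A_r\mapsto a_r$, $A_{r+1}\mapsto a_{r+1}$.
\item $\alpha_2\beta=\beta$ and $\beta_1\beta=\beta$: each of $\alpha_2,\beta_1$ maps every $B_j$ into $B_j$, and $\beta$ is constant on each $B_j$.
\item $\alpha_1\beta_1=\alpha'$: the classes $A_{i1},A_{i2}$ go to $a_{i1},a_{i2}$, both $A_r$ and $A_{r+1}$ go to $a_r$ since $a_r,a_{r+1}\in B_r$, and $A_j\mapsto a_j$ for the remaining $j$.
\end{itemize}
In particular $\alpha'\beta=\alpha_1\beta_1\beta=\alpha_1\beta=\alpha_1\alpha_2\beta=\alpha\beta$. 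The deduction then reads
\[
x_\alpha x_\beta=x_{\alpha_1}x_{\alpha_2}x_\beta=x_{\alpha_1}x_\beta=x_{\alpha_1}x_{\beta_1}x_\beta=x_{\alpha'}x_\beta ,
\]
exactly as in Lemma~\ref{lem: Tn: change ker alpha 1}.

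The main point needing care is the rank bookkeeping. We must confirm that the images listed for $\alpha_1$, $\alpha_2$ and $\beta_1$ are pairwise distinct, so that each has rank exactly $r+2$ and hence lies in $I_m$, and that $\alpha'$ has rank $r+1$. The relevant facts are that $a_{r+3}\in B_{r+1}$ is disjoint from $\im\alpha$, and that points from different blocks $B_j$ are distinct. This is also where $|A_i|\ge 2$ and $|B_i|\ge2$ are used: they make the required splittings possible. Unlike Lemma~\ref{lem: Tn: change ker alpha 1}, no case split on $|B_r|$ should be needed, because here $B_r$ already contains the two distinct points $a_r$ and $a_{r+1}$.
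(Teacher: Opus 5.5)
Your proof is correct and is essentially the paper's argument. The paper uses the same rank-$(r+2)$ factor $\alpha_1$, which splits $A_i$ into $A_{i1},A_{i2}$ with images $a_{i1},a_{i2}$, and the same swap $x_\alpha x_\beta=x_{\alpha_1}x_{\alpha_2}x_\beta=x_{\alpha_1}x_{\alpha_2'}x_\beta=x_{\alpha'}x_\beta$, where $\alpha_1\alpha_2=\alpha$, $\alpha_1\alpha_2'=\alpha'$ and $\alpha_2\beta=\alpha_2'\beta\in J_{r+1}$; the paper's $\alpha_2,\alpha_2'$ are defined pointwise on the image of $\alpha_1$, giving some other common rank-$(r+1)$ product, whereas your $\alpha_2$ and $\beta_1$ act blockwise on the kernel classes of $\beta$ and are left identities for $\beta$, as in Case~1 of Lemma~\ref{lem: Tn: change ker alpha 1}.
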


\begin{proof}
    Without loss of generality, let $i=1$. Let $a\in B_{r+1}$. Define 
        \begin{align*}
        \alpha_1&=\begin{small}\begin{pmatrix}
        A_{11} & A_{12} & A_2 & \dots & A_{r-1} & A_r & A_{r+1}\\
        a_{11} & a_{12} & a_2 & \dots & a_{r-1} & a_r & a_{r+1}
        \end{pmatrix}\end{small},\\
        \alpha_2 &= \begin{small}\begin{pmatrix}
        \{a_{11}, a_{12}\} & a_2 & \dots &a_{r-1} & a_{r} & a_{r+1} & [n]\setminus\{a_{11}, a_{12}, a_2,\dots,a_{r+1}\}\\
        a_1 & a_2 & \dots &a_{r-1} & a_{r} & a_{r+1} & a
        \end{pmatrix}\end{small},\\
        \alpha_2' &= \begin{small}\begin{pmatrix}
        a_{11} & a_{12} & a_2 & \dots & a_{r-1} & \{a_{r}, a_{r+1}\} & [n]\setminus\{a_{11}, a_{12}, a_2,\dots,a_{r+1}\}\\
        a_{11} & a_{12}  &a_2 & \dots &a_{r-1} & a_{r} & a
        \end{pmatrix}\end{small}.\end{align*}
    Then $\alpha_1\alpha_2=\alpha$, $\alpha_1\alpha_2'=\alpha'$, \[\alpha_2\beta=\alpha_2'\beta=\begin{small}
        \begin{pmatrix}
            \{a_{11},a_{12}\} & \dots & \{a_r, a_{r+1}\} & [n]\setminus\{a_{11}, a_{12}, a_2,\dots,a_{r+1}\}\\
            b_1 & \dots & b_r & b_{r+1}
        \end{pmatrix}
    \end{small}\in J_{r+1},\] and $\alpha_1,\alpha_2,\alpha_2'\in J_{r+2}$. Hence, \begin{align*}
            \begin{alignedat}{2}
            x_\alpha x_\beta&=x_{\alpha_1}x_{\alpha_2}x_{\beta}\ \ &&(x_\alpha=x_{\alpha_1}x_{\alpha_2})\\&=x_{\alpha_1}x_{\alpha_2'}x_\beta \ \ &&(x_{\alpha_2}x_{\beta}=x_{\alpha_2'}x_{\beta}) \\&=x_{\alpha'}x_{\beta}\ \ &&(x_{\alpha'}=x_{\alpha_1}x_{\alpha_2'}).
            \end{alignedat}
        \end{align*}
\end{proof}

\begin{lemma}\label{lem: Tn: change im alpha}
    Let $1\leq i\leq r+1$. If $|B_i\setminus(B_i\cap \im\alpha)|\geq 1$, let $a_i'\in B_i\setminus \im\alpha$. Define \[\alpha' = \begin{small}
        \begin{pmatrix}
        A_1 & A_2 & \dots & A_i & \dots & A_r & A_{r+1}\\
        a_1 & a_2 & \dots & a_i' & \dots & a_r & a_{r+1}
    \end{pmatrix}
    \end{small}.\] Then $\alpha\beta=\alpha'\beta$, and the relation $x_\alpha x_\beta=x_{\alpha'} x_\beta$ is a consequence of $\R_{r+1,r+2}$.
\end{lemma}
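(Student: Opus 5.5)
The plan is to use the same ``insert an idempotent-like factor and swap it'' pattern as in Lemma~\ref{lem: Tn: change im beta}. We write $x_\alpha=x_\alpha x_{\gamma}$ for a suitable $\gamma$ of rank $r+2$ fixing $\im\alpha$. Then we replace $x_\gamma x_\beta$ by $x_{\gamma'}x_\beta$, where $\gamma'$ differs from $\gamma$ only in sending $a_i$ to $a_i'$. Finally we absorb $x_\alpha x_{\gamma'}$ into $x_{\alpha'}$. Every relation used will have all three of its elements in $J_{r+1}\cup J_{r+2}$, so it lies in $\R_{r+1,r+2}$.

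The construction is as follows. Fix an element $c\in B_{r+1}\setminus\{a_i'\}$ if one exists; otherwise put $c=a_i'$. Recall that $B_{r+1}\cap\im\alpha=\emptyset$, so in either case $c\notin\im\alpha$. Define $\gamma\in\T_n$ by
\[
x\gamma=\begin{cases} x & (x\in\{a_1,\dots,a_{r+1},a_i'\}),\\ c & \text{otherwise},\end{cases}
\]
and let $\gamma'$ agree with $\gamma$ except that $a_i\gamma'=a_i'$. We then check the following facts.
\begin{thmenumerate}
\item Since $a_i'\notin\im\alpha$, we have $\im\gamma=\im\alpha\cup\{a_i'\}$ (together with $c$, which is either $a_i'$ or lies outside $\im\alpha$). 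Hence $\rk\gamma=r+2$, and this holds whether or not the ``otherwise'' set is empty.
\item $\rk{\gamma'}\in\{r+1,r+2\}$, because its image contains $a_j$ for $j\neq i$ together with $a_i'$.
\item $\alpha\gamma=\alpha$ and $\alpha\gamma'=\alpha'$, since $\alpha$ maps into $\{a_1,\dots,a_{r+1}\}$ and $\gamma'$ changes only the image of $a_i$.
\item $\gamma\beta=\gamma'\beta$. The two products can differ only at $a_i$, where $a_i\gamma\beta=a_i\beta=b_i$ and $a_i\gamma'\beta=a_i'\beta=b_i$, because $a_i'\in B_i$. (For $i=r,r+1$ the relevant class is the one containing $a_i$, namely $B_r$ or $B_{r+1}$ as in the hypothesis; the same computation applies.)
\item $\rk{\gamma\beta}=r+1$. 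The element $a_i'$ goes to $b_i$; the elements $a_j$ cover $b_1,\dots,b_r$; and $c\in B_{r+1}$ gives $b_{r+1}$, or when $c=a_i'$ and $i=r+1$, $a_i'$ itself gives $b_{r+1}$.
\end{thmenumerate}

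With these facts the derivation is
\begin{align*}
x_\alpha x_\beta &= x_\alpha x_\gamma x_\beta && (x_\alpha=x_\alpha x_\gamma)\\
&= x_\alpha x_{\gamma\beta} = x_\alpha x_{\gamma'} x_\beta && (x_\gamma x_\beta=x_{\gamma\beta}=x_{\gamma'}x_\beta)\\
&= x_{\alpha'}x_\beta && (x_\alpha x_{\gamma'}=x_{\alpha'}).
\end{align*}
The equality $\alpha\beta=\alpha'\beta$ follows from (iii) and (iv), since $\alpha'\beta=\alpha\gamma'\beta=\alpha\gamma\beta=\alpha\beta$.

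The only delicate step is verifying that $\rk{\gamma\beta}=r+1$, i.e.\ that we never drop into $J_r$ in the middle of the derivation. This is what forces $c$ to be chosen inside $B_{r+1}$ (and not merely outside $\im\alpha$), so that the class value $b_{r+1}$ is hit. The degenerate cases where $B_{r+1}=\{a_i'\}$, or where the ``otherwise'' set of $\gamma$ is empty, have to be checked separately. In those cases I expect that taking $c=a_i'$ still gives ranks $r+2$ for $\gamma$ and $r+1$ for $\gamma\beta$, as argued in (i) and (v).
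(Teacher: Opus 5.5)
There is a genuine gap, and it sits in fact (i). The strategy itself is the right one: write $x_\alpha=x_\alpha x_\gamma$, trade $x_\gamma x_\beta$ for $x_{\gamma'}x_\beta$ via $x_{\gamma\beta}$, then absorb $x_\alpha x_{\gamma'}$ into $x_{\alpha'}$. When $c\neq a_i'$, the point $c$ lies in the ``otherwise'' set, so $c\gamma=c$. Hence $\im{\gamma}=\{a_1,\dots,a_{r+1},a_i',c\}$, which has $r+3$ elements, because $a_i'\neq c$ and both lie outside $\im{\alpha}$. So $\gamma\in J_{r+3}$, and the relation $x_\alpha=x_\alpha x_\gamma$ is not in $\R_{r+1,r+2}$. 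Worse, for $r=m-2$, which this section allows, $\gamma$ has rank $m+1$, so $\gamma\notin I_m$ and $x_\gamma$ is not even a generator. For instance, when $m=n-1$ and $r=2m-n-1=n-3$ (a case the main theorem needs), your $\gamma$ is the identity of $\T_n$.

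Nor is $c\neq a_i'$ an exceptional case; it is the only one that occurs. For $\alpha\beta=\alpha'\beta$ you need $a_i'$ to lie in the $\ke{\beta}$-class of $a_i$, and this class is always one of $B_1,\dots,B_r$. For $i=r+1$ that class is $B_r$, not $B_{r+1}$ as your parenthetical suggests: if $a_{r+1}'\in B_{r+1}$, then $\alpha'\beta$ sends $A_{r+1}$ to $b_{r+1}\neq b_r$. So $a_i'\notin B_{r+1}$, an element $c\in B_{r+1}\setminus\{a_i'\}$ always exists, and the ``degenerate'' case $c=a_i'$ never arises. That degenerate case is the only one in which your rank count is right. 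The delicate point is therefore $\rk{\gamma}$, not $\rk{\gamma\beta}$, which equals $r+1$ either way.

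The repair is simply not to fix $a_i'$. Let $\gamma$ be the identity on $\im{\alpha}$ and send all of $[n]\setminus\im{\alpha}$ to a fixed $c\in B_{r+1}$; this set is non-empty since $r+1<n$, and it contains $a_i'$. Let $\gamma'$ differ from $\gamma$ only by $a_i\gamma'=a_i'$. Then $\im{\gamma}=\im{\alpha}\cup\{c\}$ and $\im{\gamma'}=(\im{\alpha}\setminus\{a_i\})\cup\{a_i',c\}$ both have size $r+2$. Moreover $\gamma\beta=\gamma'\beta$ has rank $r+1$, and $\alpha\gamma=\alpha$, $\alpha\gamma'=\alpha'$. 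Your three-line derivation then goes through unchanged, using only relations in $\R_{r+1,r+2}$. This is exactly the paper's proof: your corrected $\gamma,\gamma'$ are its $\alpha_2,\alpha_2'$, where the paper writes $x_\alpha=x_{\alpha_1}x_{\alpha_2}$ with $\alpha_1=\alpha$.
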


\begin{proof}
    Let $a\in B_{r+1}$. Define \begin{align*}
        \alpha_1&=\begin{small}
        \begin{pmatrix}
            A_1 & \dots & A_i & \dots & A_r & A_{r+1}\\
            a_1 & \dots & a_i & \dots & a_r & a_{r+1}
        \end{pmatrix}
    \end{small},\ \alpha_2=\begin{small}
        \begin{pmatrix}
            a_1 & \dots & a_i & \dots & a_r & a_{r+1} & [n]\setminus\im{\alpha}\\
            a_1 & \dots & a_i & \dots & a_r & a_{r+1} & a
        \end{pmatrix}
    \end{small},\\ \alpha_2'&=\begin{small}
        \begin{pmatrix}
            a_1 & \dots & a_i & \dots & a_r & a_{r+1} & [n]\setminus{\im{\alpha}}\\
            a_1 & \dots & a_i' & \dots & a_r & a_{r+1} & a
        \end{pmatrix}
    \end{small}. 
    \end{align*}Then $\alpha_1\alpha_2=\alpha,\alpha_1\alpha_2'=\alpha'$, \[\alpha_2\beta=\alpha_2'\beta=\begin{small}
        \begin{pmatrix}
            a_1 & \dots & a_i & \dots & a_{r-1} & \{a_r,a_{r+1}\} & [n]\setminus{\im{\alpha}}\\
            b_1 & \dots & b_i & \dots &b_{r-1} & b_r & b_{r+1}
        \end{pmatrix}
    \end{small}\in J_{r+1}.\] Hence, \begin{align*}
            \begin{alignedat}{2}
             x_\alpha x_\beta&=x_{\alpha_1}x_{\alpha_2}x_{\beta}\ \ &&(x_\alpha=x_{\alpha_1}x_{\alpha_2})\\&=x_{\alpha_1}x_{\alpha_2'}x_\beta \ \ &&(x_{\alpha_2}x_{\beta}=x_{\alpha_2'}x_{\beta}) \\&=x_{\alpha'}x_{\beta}\ \ &&(x_{\alpha'}=x_{\alpha_1}x_{\alpha_2'}).   
            \end{alignedat}
        \end{align*}
\end{proof}

After knowing some words that can be obtained from $x_\alpha x_\beta$, we are allowed to replace certain letters in a given word using consequences of $\R_{r+1,r+2}$.

\begin{lemma}\label{lem: Tn: all length-2 words obtained from r+1,r+2}
    Let $\alpha,\beta$ be as defined above, and suppose that $\alpha'$,$\beta'$ are obtained from $\alpha,\beta$ using one of the following rules: \begin{thmenumerate}
        \item \label{lem: Tn: change word: change im beta} If $b_{r+1}'\in[n]\setminus\im\beta$, let $\beta'$ be obtained from $\beta$ by changing the image of $B_{r+1}$ from $b_{r+1}$ to $b_{r+1}'$, and let $\alpha'=\alpha$.
        \item \label{lem: Tn: change word: change im alpha} If $1\leq i\leq r+1$, $1\leq j\leq r$, $a_i\in B_j$, and $a_i'\in B_j\setminus\im\alpha$, let $\alpha'$ be obtained from $\alpha$ by changing the image of $A_i$ from $a_i$ to $a_i'$, and let $\beta'=\beta$.
        \item \label{lem: Tn: change word: change ker beta} If $1\leq i,j\leq r+1$, $|B_i\setminus\im\alpha|\geq 1$, and $b\in B_i\setminus\im\alpha$, let $\beta'$ be obtained from $\beta$ changing the kernel classes $B_i,B_j$ to $B_i\setminus\{b\}$ and $B_j\cup\{b\}$ respectively. Let $\alpha'=\alpha$.
        \item \label{lem: Tn: change word: change ker alpha} If $1\leq i\leq r-1$, $|A_i|\geq 2$, $a_{i1}\neq a_{i2}\in B_i$, let $\alpha'$ be obtained from $\alpha$ by breaking the kernel class $A_i$ into $A_{i1}$ and $A_{i2}$, mapping them to $a_{i1},a_{i2}$ respectively, and merging the two kernel classes whose images belong to the same $\ke\beta$ classes, namely $A_r,A_{r+1}$, into $A_r\cup A_{r+1}$, and mapping it to $a_r$ or $a_{r+1}$. Let $\beta'=\beta$.
    \end{thmenumerate}Then the word $x_{\alpha'} x_{\beta'}$ can be obtained from $x_\alpha x_\beta$ using $\R_{r+1,r+2}$.
\end{lemma}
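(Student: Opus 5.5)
This lemma is a summary of the preceding four-plus lemmas, so the plan is to match each rule to the lemma that realises it. Where the hypotheses of a rule are weaker than those of the corresponding lemma, I will bridge the gap by chaining lemmas or swapping roles.

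Rule (i) is exactly Lemma \ref{lem: Tn: change im beta}, with $b_{r+1}'\in[n]\setminus\im\beta$.

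Rule (iii) is Lemma \ref{lem: Tn: change ker beta full} when $i\neq j$. When $i=j$ there is nothing to prove, since $\beta'=\beta$. One case needs care: $i=r+1$, where $b\in B_{r+1}\setminus\im\alpha$. Here Lemma \ref{lem: change ker beta} covers the case $|B_{r+1}|\ge 2$. If $|B_{r+1}|=1$, then moving $b$ out would empty a kernel class, so $\beta'$ would drop rank. I will read the rule as applying only when the resulting $\beta'$ still lies in $J_{r+1}$, so this situation does not arise.

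Rule (ii) is Lemma \ref{lem: Tn: change im alpha}, with $a_i\in B_j$ and $a_i'\in B_j\setminus\im\alpha$. That lemma was stated with $a_i'\in B_i\setminus\im\alpha$, but that is a labelling slip. In its proof the only thing used is that $\alpha_2\beta=\alpha_2'\beta$. This holds precisely because $a_i$ and $a_i'$ lie in the same $\ker\beta$-class, and that class is $B_j$. So I will re-run the same construction ($\alpha_1=\alpha$ viewed with a spare class, $\alpha_2$ and $\alpha_2'$ differing only at $a_i$) with the hypothesis $a_i,a_i'\in B_j$. The elements $\alpha_1,\alpha_2,\alpha_2'$ have rank $r+2$, and $\alpha_2\beta\in J_{r+1}$, so only relations of $\R_{r+1,r+2}$ are used. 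The requirement $r<m-1$ ensures rank $r+2$ is available.

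Rule (iv) is Lemma \ref{lem: Tn: change ker alpha 2}, which needs $|B_i|\ge 2$ in order to pick distinct $a_{i1},a_{i2}\in B_i$; this is implicit in the choice $a_{i1}\neq a_{i2}\in B_i$. The lemma produces the merged class $A_r\cup A_{r+1}$ mapped to $a_r$. If instead one wants it mapped to $a_{r+1}$, I will post-compose with rule (ii) for the merged class. This is legitimate because $a_r,a_{r+1}\in B_r$, and after the split $a_{r+1}$ is no longer in the image, so the hypothesis of (ii) is met.

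The main obstacle is bookkeeping, not new ideas: checking that each intermediate pair still satisfies the standing normal form used in this section (so that the cited lemma applies), and handling the degenerate cases noted above (the mismatched index in Lemma \ref{lem: Tn: change im alpha}, and the $i=r+1$ case of (iii)). Combining these, $x_{\alpha'}x_{\beta'}$ is obtained from $x_\alpha x_\beta$ using $\R_{r+1,r+2}$ in every case.
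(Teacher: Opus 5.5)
Your proposal is correct and takes the same approach as the paper, whose proof just cites Lemmas \ref{lem: Tn: change im beta}--\ref{lem: Tn: change im alpha} to cover the four rules. Your extra bookkeeping fills in details the paper leaves implicit: reading Lemma \ref{lem: Tn: change im alpha} with $a_i,a_i'$ in a common $\ker\beta$-class $B_j$ (this genuinely matters for $i=r+1$, since $a_{r+1}\in B_r$), excluding the rank-dropping case $i=r+1$, $|B_{r+1}|=1$ in rule (iii), and obtaining the ``map to $a_{r+1}$'' variant of rule (iv) by a further application of rule (ii).
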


We aim to show that ideals of $\T_n$ satisfy the conditions $\ref{P1}-\ref{P3}$. We start by showing that given a word of rank $r$ which is of length 2, we can obtain any word of length 2 that represents the same map, by applying the rules given in Lemma \ref{lem: Tn: all length-2 words obtained from r+1,r+2}.
\begin{proof}
    This follows from Lemmas \ref{lem: Tn: change im beta} - \ref{lem: Tn: change im alpha}.
\end{proof}

\begin{lemma}\label{lem: Tn: length-2 words}
    Let $\alpha,\beta,\gamma,\delta\in J_{r+1}$ such that $\alpha\beta=\gamma\delta\in J_r$. Then $x_\alpha x_\beta=x_\gamma x_\delta$ is a consequence of $\R_{r+1,r+2}$.
\end{lemma}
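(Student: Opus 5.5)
We follow the strategy of Lemma \ref{lem: In: length 2 words are equal}: starting from $x_\gamma x_\delta$, we apply the rewriting rules of Lemma \ref{lem: Tn: all length-2 words obtained from r+1,r+2} to move step by step to $x_\alpha x_\beta$. Write $\alpha,\beta$ in the normal form fixed at the start of this section, with $\alpha\beta=\gamma\delta=\begin{pmatrix}K_1&\dots&K_r\\ b_1&\dots&b_r\end{pmatrix}$. Here the kernel of $\alpha\beta$ is a coarsening of $\ke\alpha$, and likewise of $\ke\gamma$. In each of $\alpha$ and $\gamma$, exactly one pair of kernel classes is merged by the second factor, and every other class is a kernel class of the product.

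\textbf{Step 1: align the kernel of $\gamma$ with that of $\alpha$.} The classes $K_j$ are common to both. In $\alpha$, say $K_j$ splits into two classes of $\ke\alpha$ (so $j=r$ in the normal form); in $\gamma$ the split class is some $K_{j'}$, possibly with a different partition. Rule \ref{lem: Tn: change word: change ker alpha} splits a class of size at least $2$ and merges the currently split pair, so the split can be moved to any class and any partition.

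Rule \ref{lem: Tn: change word: change ker alpha} needs room: $|B_i|\ge 2$ in the second factor. We first enlarge $\ke\delta$-classes using rule \ref{lem: Tn: change word: change ker beta}, moving points of $[n]\setminus\im\gamma$ between $\ke\delta$-classes. Rule \ref{lem: Tn: change word: change ker alpha} then realigns the split; if it is applied with $i=j'$ first, the merged pair can afterwards be re-split arbitrarily. To get the exact partition $A_r,A_{r+1}$ we may need two applications, passing through an auxiliary class of size $\ge2$. Lemma \ref{lem: Tn: change ker alpha 1} handles moving points between the two split classes, and $|A_r\cup A_{r+1}|\ge 2$ always holds.

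\textbf{Step 2: align images.} Once $\ke\gamma=\ke\alpha$, the images $c_i$ of the classes of $\gamma$ must be moved to $a_i$. We use rule \ref{lem: Tn: change word: change im alpha} to change the image within a $\ke\delta$-class to a point outside $\im\gamma$. We also use rule \ref{lem: Tn: change word: change ker beta} to move $a_i$ into the $\ke\delta$-class containing $c_i$, first freeing it if $a_i\in\im\gamma$. We process the $a_i$ one at a time, as in Step~1 of the $\I_n$ proof, using a temporary spare point when $a_i$ is currently some $c_k$.

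This is the main obstacle. The rules require points outside $\im\gamma$ in suitable classes, and we must avoid destroying the relation $\alpha\beta=\gamma\delta$ or the rank conditions. Since $r<m-1\le n-2$, we have $|[n]\setminus\im\gamma|=n-r-1\ge 1$, and we will try to argue there are enough spare points. If a single spare point is insufficient, we will route through $\R_{r+1,r+2}$ decompositions as in Lemma \ref{lem: Tn: change ker alpha 1}, Case 2.

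\textbf{Step 3: align $\delta$ with $\beta$.} Now $\gamma=\alpha$, and $\delta,\beta$ agree on $\im\alpha$. Use rule \ref{lem: Tn: change word: change ker beta} to redistribute the points of $[n]\setminus\im\alpha$ among the kernel classes until $\ke\delta=\ke\beta$. The class $B_{r+1}$ is disjoint from $\im\alpha$, so it is entirely movable. Finally, use rule \ref{lem: Tn: change word: change im beta}, possibly twice via a spare image point, to set the image of $B_{r+1}$ to $b_{r+1}$. All the rules use only $\R_{r+1,r+2}$, which gives the claim.
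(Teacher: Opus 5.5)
Your plan is the paper's own argument: move the split class of $\ke\gamma$ onto that of $\ke\alpha$ by rule \ref{lem: Tn: change word: change ker alpha}, after enlarging a $\ke\delta$-class by rule \ref{lem: Tn: change word: change ker beta}; then align $\im\gamma$ with $\im\alpha$ class by class via rules \ref{lem: Tn: change word: change im alpha} and \ref{lem: Tn: change word: change ker beta}, using a spare point for swaps; then fix $\ke\delta$ and the last image by rules \ref{lem: Tn: change word: change ker beta} and \ref{lem: Tn: change word: change im beta}. Your use of Lemma \ref{lem: Tn: change ker alpha 1} for re-partitioning the split class, and freeing $a_i\in\im\gamma$ before moving it, covers points the paper passes over; the obstacle you flag is harmless, since $r\le m-2\le n-3$ gives $|[n]\setminus\im\gamma|=n-r-1\ge 2$, and the last $\ke\delta$-class contains no point of $\im\gamma$, so a spare point can always be moved into any required class without emptying another and no fallback is needed.
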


\begin{proof}
    Suppose that \[\alpha\beta=\gamma\delta=\begin{small}
        \begin{pmatrix}
        X_1 & X_2 & \dots & X_r\\
        x_1 & x_2 & \dots & x_r
    \end{pmatrix}
    \end{small},\] where $X_1,\dots,X_r$ partition $[n]$. Then $\{x_1,\dots,x_r\}\subseteq \im\beta$ and $\{x_1,\dots,x_r\}\subseteq \im\delta$. Let $\im\beta=\{x_1,\dots,x_r, y\}$, and $\im\delta=\{x_1,\dots,x_r, z\}$, where $y,z\in [n]\setminus\im\alpha\beta$. All $X_1,\dots,X_r$ are $\ke\alpha$-classes except for one of them, which is a union of two $\ke\alpha$-classes. A similar statement is true for $\ke\gamma$. Without loss of generality, assume that 
        \begin{align*}
        \alpha&=\begin{small}\begin{pmatrix}
            X_1 & X_2 & \dots & X_{i1} & X_{i2} & X_{i+1} & \dots & X_{r}\\
            a_1 & a_2 & \dots & a_{i1} & a_{i2} & a_{i+1} & \dots & a_{r}
        \end{pmatrix}\end{small},\\
        \beta&=\begin{small}\begin{pmatrix}
            B_1 & B_2 & \dots & B_i & \dots & B_{r-1} & B_r & B_{r+1}\\
            x_1 & x_2 & \dots & x_i & \dots & x_{r-1} & x_r & y
        \end{pmatrix}\end{small},
    \end{align*}where $X_{i1}, X_{i2}$ partition $X_i$, $B_1,\dots,B_{r+1}$ partition $[n]$, $a_j\in B_j$ for $j\in[r]\setminus\{i\}$; $a_{i1},a_{i2}\in B_i$, and $B_{r+1}\cap \im\alpha=\emptyset$. Similarly, 
        \begin{align*}
        \gamma&=\begin{small}\begin{pmatrix}
            X_1 & X_2 & \dots & X_{k1} & X_{k2} & X_{i+1} & \dots & X_{r}\\
            c_1 & c_2 & \dots & c_{k1} & c_{k2} & c_{i+1} & \dots & c_{r}
        \end{pmatrix}\end{small},\\
        \beta&=\begin{small}\begin{pmatrix}
            D_1 & D_2 & \dots & D_k & \dots & D_{r-1} & D_r & D_{r+1}\\
            x_1 & x_2 & \dots & x_k & \dots & x_{r-1} & x_r & z
        \end{pmatrix}\end{small},
    \end{align*}where $X_{k1}, X_{k2}$ partition $X_k$, and $c_{k1},c_{k2}\in D_k$. We will show how to use rules \ref{lem: Tn: change word: change im beta}-\ref{lem: Tn: change word: change ker alpha} in Lemma \ref{lem: Tn: all length-2 words obtained from r+1,r+2} to transform $x_\gamma x_\delta$ into $x_\alpha x_\beta$ as a consequence of $\R_{r+1,r+2}$. We split the proof into two cases, depending on whether $i=k$.

    \underline{Case 1}: $i=k$. We first show that we can obtain $x_\alpha x_{\delta'}$ from $x_\gamma x_{\delta}$, where $\im\delta'\im\delta$. We begin with $l=1$.
    
    \underline{Step 1}: If $a_l\in D_l$, go to Step 2. Otherwise $a_l\in D_j$ for some $j\neq l$. If $|D_j|>1$, change $\ke\delta$ by moving $a_l$ from $D_j$ to $D_l$ \ref{lem: Tn: change word: change ker beta}. Otherwise, $|D_j|=1$. In that case, change $\ke\delta$ by moving $c'\notin\{a_l\}\cup\im\gamma$ to $D_j$ \ref{lem: Tn: change word: change ker beta}. Then change $\im\gamma$ by swapping $c_j=a_l$ for $c'$ \ref{lem: Tn: change word: change im alpha}, and $\ke\delta$ by moving $a_l$ from $D_j$ to $D_l$ \ref{lem: Tn: change word: change ker beta}. 
    
    \underline{Step 2}: Change $\im\gamma$ by changing the image of $X_l$ from $c_l$ to $a_l$ \ref{lem: Tn: change word: change im alpha}. Define the resulting map to be $\gamma_l$ and $\delta'$.\\
    Next, we repeat the above until every element in $\im\gamma$ is changed to an element in $\im\alpha$ accordingly. To avoid ambiguity in the notation, we need to update the definition for $\gamma$ and $\delta$, as well as keeping track of the number of repetitions of Step 1 and 2. 
    
    \underline{Step 3}: If $l=r+1$, go to the next Step. Otherwise, let $\gamma=\gamma_l$, $\delta=\delta'$, $l=l+1$, and go back to Step 1.
    
    So far we have obtained $\gamma_{r+1}$, where $\im{\gamma_{r+1}}=\im\alpha$. Since $i=k$, we have obtained $x_\alpha x_{\delta'}$ from $x_\gamma x_\delta$. Finally, we show that we can obtain $x_\alpha x_\beta$ from $x_\alpha x_\delta'$. 
    
    \underline{Step 4}: Using rule \ref{lem: Tn: change word: change ker beta}, we may obtain $\beta'$ from $\delta'$ by moving elements between the kernel classes of $\delta'$, where $\ke{\beta'}=\ke\beta$.
    
    \underline{Step 5}: We obtain $\beta$ from $\beta'$, by changing the image of $B_{r+1}$ from $z$ to $y$ \ref{lem: Tn: change word: change im beta}.
    
    We have obtained $x_\alpha x_\beta$ from $x_\alpha x_{\delta'}$, completing the case.
    
    \underline{Case 2}: $i\neq k$. We show that we could obtain $\gamma'$ from $\gamma$ using the rules from the corollary, such that $\gamma'$ and $\alpha$ satisfy the condition for Case 1.
    
    \underline{Step 1}: If $|D_i|\neq1$, let $c\in D_i\setminus\{c_i\}$, and go to Step 2. Otherwise, if $|D_i|=1$, change $\ke\delta$ by moving $c\in [n]\setminus\im\gamma$ to $D_i$ \ref{lem: Tn: change word: change ker beta}.
    
    \underline{Step 2}: Rule \ref{lem: Tn: change word: change ker alpha} allows us to change the kernel classes of $\gamma$ and obtain $\gamma'$ such that \[\gamma'=\begin{small}
        \begin{pmatrix}
        X_1 & \dots & X_{i-1} & X_{i1} & X_{i2} & X_{i+1}\dots & X_k & \dots & X_r\\
        c_1 & \dots &c_{i-1} & c_i & c & c_{i+1}\dots & c_{k1} & \dots & c_r
    \end{pmatrix}
    \end{small}.\]
    
    \underline{Step 3}: Now $\gamma'$ and $\alpha$ satisfy the condition for Case 1. Apply the method in Case 1. 
\end{proof}

Recall that, if $n>m\geq 2$, $\dep{I_m}\geq 2$ by Proposition \ref{prop: lower bound for depth of all ideals}. It follows that a presentation for $I_m$ must contain at least relations of rank $m$ and $m-1$. As a result, from now on, we consider $m>\frac{n+1}{2}, 2\leq r\leq {\rm min}\{m-2,2m-n-1\}=2m-n-1$. 

\begin{lemma}\label{lem: Tn: length-2 and length-3}
    Let $1\leq r\leq 2m-n-1$. If $\alpha, \beta, \gamma\in J_{r+1},\alpha\beta, \beta\gamma, \alpha\beta\gamma\in J_{r}$, then a relation of the form $x_\alpha x_\beta x_\gamma=x_{\alpha'} x_\gamma$ where $\alpha'$ is some map in $J_{r+1}$, is a consequence of $\R_{r+1,r+2}$.
\end{lemma}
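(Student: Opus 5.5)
Following the template of Lemma \ref{lem: In: length 3 and length 2 words}, we will replace $\beta$ by some $\beta'\in J_{r+1}$ with $\alpha\beta'=\alpha\beta$, $x_\alpha x_\beta=x_\alpha x_{\beta'}$ a consequence of $\R_{r+1,r+2}$, and $\beta'\gamma\in J_{r+1}$. Then $x_{\beta'}x_\gamma=x_{\beta'\gamma}$ is a relation of $\R_{r+1,r+2}$, and $\alpha(\beta'\gamma)=\alpha\beta\gamma\in J_r$ with $\alpha,\beta'\gamma\in J_{r+1}$. We will then take $\alpha'\in J_{r+1}$ with $\alpha'\gamma=\alpha\beta\gamma$ (any such pair, existing since $\rk{\alpha\beta\gamma}=r$ and $\gamma$ has rank $r+1$), and conclude with Lemma \ref{lem: Tn: length-2 words}: $x_\alpha x_{\beta'\gamma}=x_{\alpha'}x_\gamma$. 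So the chain is $x_\alpha x_\beta x_\gamma=x_\alpha x_{\beta'}x_\gamma=x_\alpha x_{\beta'\gamma}=x_{\alpha'}x_\gamma$.

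To construct $\beta'$, write $\alpha,\beta$ in the normal form of this section: $\ke\beta$ has classes $B_1,\dots,B_{r+1}$ with images $b_1,\dots,b_{r+1}$, $a_r,a_{r+1}\in B_r$, and $B_{r+1}\cap\im\alpha=\emptyset$. Since $\alpha\beta\gamma\in J_r$, $\gamma$ is injective on $\{b_1,\dots,b_r\}=\im{\alpha\beta}$. Since $\beta\gamma\in J_r$, $b_{r+1}\gamma=b_j\gamma$ for some $j\le r$. Let $\gamma$ have image $\{b_1\gamma,\dots,b_r\gamma,c\}$, with $c=y\gamma$ for some $y\notin\{b_1,\dots,b_r\}$, so $y\ne b_{r+1}$ and $y\notin\im\beta$. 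Take $\beta'$ to be $\beta$ with the image of $B_{r+1}$ changed from $b_{r+1}$ to $y$. This is exactly Lemma \ref{lem: Tn: change im beta}, so $x_\alpha x_\beta=x_\alpha x_{\beta'}$ follows from $\R_{r+1,r+2}$ and $\alpha\beta'=\alpha\beta$. Now $\im{\beta'}\gamma=\{b_1\gamma,\dots,b_r\gamma,c\}$ has size $r+1$, so $\beta'\gamma\in J_{r+1}$. Moreover $\alpha\beta'\gamma=\alpha\beta\gamma$, and all letters involved ($\alpha,\beta',\gamma,\beta'\gamma$) lie in $J_{r+1}$.

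The main point to check is that the final appeal to Lemma \ref{lem: Tn: length-2 words} is legitimate. Both $x_\alpha x_{\beta'\gamma}$ and $x_{\alpha'}x_\gamma$ must be length-two words over $J_{r+1}$ representing the same rank-$r$ map; this holds by construction, and we should fix $\alpha'$ explicitly, for instance $\alpha'=\alpha\beta''$ with a suitable $\beta''$. Lemma \ref{lem: Tn: length-2 words} was proved using the rules of Lemma \ref{lem: Tn: all length-2 words obtained from r+1,r+2}, and we should confirm that this proof needs only the standing hypothesis $r<m-1$. That is ensured by $r\le 2m-n-1<m-1$, so rank-$(r+2)$ maps exist. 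In fact, for the statement as given we may simply set $\alpha'=\alpha\beta'\gamma$'s left factor, i.e. state the relation as $x_\alpha x_\beta x_\gamma=x_\alpha x_{\beta'\gamma}$. This already has the required form with $\alpha$ in the role of $\alpha'$ and $\beta'\gamma$ in place of $\gamma$. Lemma \ref{lem: Tn: length-2 words} is needed only to match the literal form $x_{\alpha'}x_\gamma$ with the original $\gamma$.
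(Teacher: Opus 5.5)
Your construction of $\beta'$ and the derivation $x_\alpha x_\beta x_\gamma=x_\alpha x_{\beta'}x_\gamma=x_\alpha x_{\beta'\gamma}$ are correct, and they match the paper's proof (your $y$ is the paper's $b\in C_{r+1}$). The gap is the last step. The existence of some $\alpha'\in J_{r+1}$ with $\alpha'\gamma=\alpha\beta\gamma$ does not follow from $\rk{\alpha\beta\gamma}=r$ and $\rk{\gamma}=r+1$. Such an $\alpha'$ exists if and only if some kernel class $P$ of $\alpha\beta\gamma$ with $|P|\geq 2$ has the property that the $\ke\gamma$-class mapped onto the image of $P$ also has at least two points.

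The paper gets such an $\alpha'$ by normalising so that $b_r$ and $b_{r+1}$ lie in the same $\ke\gamma$-class $C_r$, where $B_r$ is the $\ke\beta$-class containing $a_r,a_{r+1}$. Then $\alpha'\colon A_i\mapsto b_i$ works, with $A_r\cup A_{r+1}$ playing the role of $P$. That normalisation is not without loss of generality. In general $b_{r+1}\gamma=b_j\gamma$ for some $j\leq r$, and $j$ need not equal $r$, because the index of $B_r$ is already fixed by $\alpha$.

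Here is a concrete failure. Take $n=5$, $m=4$, $r=2=2m-n-1$, and define:
\begin{itemize}
\item $\alpha$ fixes $1,2$ and sends $3,4,5$ to $3$;
\item $\beta$ fixes $1$, sends $2,3$ to $2$, and sends $4,5$ to $3$;
\item $\gamma$ sends $1,3$ to $1$, fixes $2$, and sends $4,5$ to $3$.
\end{itemize}
All three lie in $J_3$. The product $\alpha\beta=\alpha\beta\gamma$ sends $1\mapsto 1$ and $2,3,4,5\mapsto 2$. The product $\beta\gamma$ sends $1,4,5\mapsto 1$ and $2,3\mapsto 2$. So all hypotheses hold. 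Any $\alpha'$ with $\alpha'\gamma=\alpha\beta\gamma$ must map $2,3,4,5$ into the singleton $\ke\gamma$-class $\{2\}$, so $\rk{\alpha'}\leq 2$. Every consequence of $\R_{3,4}$ holds in $I_4\subseteq\T_5$, so no relation $x_\alpha x_\beta x_\gamma=x_{\alpha'}x_\gamma$ with $\alpha'\in J_3$ is derivable by any argument.

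So this step cannot be repaired, and the literal statement needs an extra hypothesis such as $b_{r+1}\gamma=b_r\gamma$. What holds in general is your fallback $x_\alpha x_\beta x_\gamma=x_\alpha x_{\beta'\gamma}$ with $\alpha,\beta'\gamma\in J_{r+1}$. This is also all that \ref{P2} in Lemma \ref{lem: All: words of rank r are cons of above} requires. Present that as your conclusion outright, not as ``the required form'', since the last letter is no longer $x_\gamma$. Invoke Lemma \ref{lem: Tn: length-2 words} only in the cases where a suitable $\alpha'$ actually exists.
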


\begin{proof}
    Let \begin{align*}
        \alpha=\begin{small}\begin{pmatrix}
            A_1 & A_2 & \dots & A_{r+1}\\
            a_1 & a_2 & \dots & a_{r+1}
        \end{pmatrix}\end{small},\ \beta=\begin{small}\begin{pmatrix}
            B_1 & B_2 & \dots & B_{r+1}\\
            b_1 & b_2 & \dots & b_{r+1}
        \end{pmatrix}\end{small},\ \gamma=\begin{small}\begin{pmatrix}
            C_1 & C_2 & \dots & C_{r+1}\\
            c_1 & c_2 & \dots & c_{r+1}
        \end{pmatrix}\end{small}.
    \end{align*}Since $\alpha\beta,\beta\gamma,\alpha\beta\gamma\in D_{r}$, $\im\alpha$ intersects with $r$ of $\ke\beta$-classes, and $\im\beta,\im{\alpha\beta}$ intersect with $r$ of $\ke\gamma$-classes. Therefore the $\ke\gamma$-class that contains two elements in $\im\beta$ only contains one element from $\im{\alpha\beta}$. We may assume that $a_{r},a_{r+1}\in B_{r}$, and for $1\leq i\leq r-1$, $a_i\in B_i$, and $\im{\alpha\beta}=\{b_1,\dots,b_{r}\}$. Then if $i\neq j\leq r$, $b_i\gamma\neq b_j\gamma$. Hence we may let $b_i\in C_i$ for $i\leq r-1$, and $b_{r},b_{r+1}\in C_{r}$. Then \[\alpha\beta\gamma=\begin{small}
        \begin{pmatrix}
            A_1 & A_2 & \dots & A_{r-1} & A_r\cup A_{r+1}\\
            c_1 & c_2 & \dots & c_{r-1} & c_r
        \end{pmatrix}
    \end{small}.\]
    Let $b\in C_{r+1}$. Define \[\beta'=\begin{small}\begin{pmatrix}
            B_1 & B_2 & \dots & B_{r+1}\\
            b_1 & b_2 & \dots & b
        \end{pmatrix}\end{small},\alpha'=\begin{small}\begin{pmatrix}
            A_1 & A_2 & \dots & A_r & A_{r+1}\\
            b_1 & b_2 & \dots & b_r & b_{r+1}
        \end{pmatrix}
        \end{small}\in J_{r+1}.\] Then by Lemma \ref{lem: Tn: change im beta}, $\alpha\beta=\alpha\beta'$, and $x_\alpha x_\beta=x_\alpha x_{\beta'}$ is a consequence of $\R_{r+1,r+2}$. Notice that \[\beta'\gamma=\begin{small}\begin{pmatrix}
        B_1 & B_2 & \dots & B_{r+1}\\
            c_1 & c_2 & \dots & c_{r+1}
    \end{pmatrix}\end{small}\in J_{r+1}, \text{ and }\alpha'\gamma=\begin{small}
        \begin{pmatrix}
            A_1 & \dots & A_{r-1} & A_r\cup A_{r+1}\\
            c_1 & \dots & c_{r-1} & c_r
        \end{pmatrix}
    \end{small}=\alpha\beta'\gamma.\] Hence we have \begin{align*}
            \begin{alignedat}{2}
            x_\alpha x_\beta x_\gamma&=x_{\alpha}x_{\beta'}x_{\gamma}\ \ &&(x_\alpha x_\beta=x_\alpha x_{\beta'})\\&=x_{\alpha}x_{\beta'\gamma} \ \ &&(x_{\beta'}x_{\gamma}=x_{\beta'\gamma} {\rm\  is\ a\ relation\ in}\ \R_{r+1,r+2}) \\&=x_{\alpha'} x_{\gamma}\ \ &&({\rm Lemma} \ \ref{lem: Tn: all length-2 words obtained from r+1,r+2}).
            \end{alignedat}
        \end{align*}
\end{proof}

\begin{lemma}\label{lem: Tn: length-2 words deduced from length-2 words of higher rank}
    Let $1\leq r\leq 2m-n-1$. If $\gamma\delta\in J_{r}$ where $\rk\gamma,\rk\delta\geq r+1$, there exist $\alpha,\beta\in J_{r+1}$ such that $\alpha\beta=\gamma\delta$, and $x_\alpha x_\beta=x_\gamma x_\delta$ is a consequence of $\R_{r+1,m}$.
\end{lemma}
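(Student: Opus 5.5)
The plan is to imitate Lemma \ref{lem: In: can obtain words with higher rank letter}. We insert, next to one of the two letters, a letter that acts as a right or left ``identity'' for that letter and that collapses the other factor down to rank $r+1$. Throughout, let $\rk\gamma=i$ and $\rk\delta=j$ with $i,j\geq r+1$. Let the kernel classes of $\delta$ be $D_1,\dots,D_j$.

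\emph{Preliminary observation: not both of $i,j$ equal $m$.} The rank of $\gamma\delta$ is the number of $\ke\delta$-classes met by $\im\gamma$. Since $|\im\gamma|=i$ and the $j$ classes partition $[n]$, at least $i-(n-j)$ classes are met. So $r=\rk{\gamma\delta}\geq i+j-n$. If $i=j=m$ this gives $r\geq 2m-n$, contradicting $r\leq 2m-n-1$. Note also that $r+1\leq 2m-n<m$.

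\emph{Step A (case $i<m$): replace $\delta$ by a rank-$(r+1)$ map.} Since $\delta$ has $j\geq r+1>r$ kernel classes and $\im\gamma$ meets only $r$ of them, pick $p$ in a $\ke\delta$-class disjoint from $\im\gamma$. Define $\eta$ to fix $\im\gamma$ pointwise and send every other point to $p$. Then:
\begin{nitemize}
\item $\rk\eta=i+1\leq m$ and $\gamma\eta=\gamma$;
\item $\im{\eta\delta}=(\im\gamma)\delta\cup\{p\delta\}$, so $\beta:=\eta\delta\in J_{r+1}$.
\end{nitemize}
Hence $x_\gamma x_\delta=x_\gamma x_\eta x_\delta=x_\gamma x_\beta$, using the relations $x_\gamma=x_\gamma x_\eta$ and $x_\eta x_\delta=x_\beta$ of $\R_{r+1,m}$.

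\emph{Step B (case $j<m$): replace $\gamma$ by a rank-$(r+1)$ map.} Since $\rk\gamma\geq r+1>r$, some $\ke\delta$-class, say $D_1$, contains two distinct points $u,u'\in\im\gamma$. Choose a transversal point $d_k\in D_k$ for each $k$, taking $d_k\in\im\gamma$ whenever $D_k$ meets $\im\gamma$, and $d_1=u$. Split $D_1$ into two nonempty parts, one containing $u$ and one containing $u'$. Define $\varepsilon$ to send the part containing $u'$ to $u'$, and every other point of each $D_k$ to $d_k$. Then:
\begin{nitemize}
\item $\varepsilon$ preserves each $D_k$, so $\varepsilon\delta=\delta$;
\item $\rk\varepsilon=j+1\leq m$;
\item $\im{\gamma\varepsilon}$ consists of one point in each of the $r$ classes met by $\im\gamma$, plus the extra point $u'$, so $\alpha:=\gamma\varepsilon\in J_{r+1}$.
\end{nitemize}
Hence $x_\gamma x_\delta=x_\gamma x_\varepsilon x_\delta=x_\alpha x_\delta$, using the relations $x_\varepsilon x_\delta=x_\delta$ and $x_\gamma x_\varepsilon=x_\alpha$ of $\R_{r+1,m}$.

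\emph{Combining.} If $i<m$, apply Step A to get $x_\gamma x_\beta$ with $\rk\beta=r+1<m$. Then apply Step B to the pair $(\gamma,\beta)$ to get $x_\alpha x_\beta$ with $\alpha,\beta\in J_{r+1}$. If $i=m$, then $j<m$ by the preliminary observation. Apply Step B first, then Step A to the pair $(\alpha,\delta)$, which is legitimate because $\rk\alpha=r+1<m$.

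The products are unchanged throughout, so $\alpha\beta=\gamma\delta$. The main obstacle is the construction of $\varepsilon$ in Step B: it must be checked that $\varepsilon$ has rank at least $r+1$ (so that $x_\varepsilon$ is a generator), that it preserves each $\ke\delta$-class, and that $\gamma\varepsilon$ has rank exactly $r+1$. This is where the existence of a $\ke\delta$-class containing two points of $\im\gamma$ is essential.
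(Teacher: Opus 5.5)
Your proof is correct and follows essentially the same route as the paper. Your Step A is the paper's Case 1: insert a letter that fixes $\im{\gamma}$ and sends everything else into a $\ke{\delta}$-class that $\im{\gamma}$ does not meet. Your Step B is its Case 2: insert a letter that preserves each $\ke{\delta}$-class but splits one class containing two points of $\im{\gamma}$.

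Your combination step, which applies Step B first when $\rk{\gamma}=m$, is in fact stated more carefully than the paper's closing paragraph for $\T_n$. There, applying the Case 1 method first when $j<i=m$ would need a letter of rank $m+1$.
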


\begin{proof}
    Consider $\gamma,\delta$ where $\rk\gamma=i,\rk\delta=j$. Let \[\gamma=\begin{small}\begin{pmatrix}
        C_1 & C_2 & \dots & C_i\\
        c_1 & c_2 & \dots & c_i
    \end{pmatrix}\end{small}, \text{ and } \delta=\begin{small}\begin{pmatrix}
        D_1 & D_2 & \dots & D_j\\
        d_1 & d_2 & \dots & d_j
    \end{pmatrix}\end{small}.\] Since $\rk{\gamma\delta}=r$, the elements in $\im\gamma$ belong to $r$ of $\ke\delta$-classes, $D_1, D_2,\dots,D_{r}$, say. Relabel the $\ke\gamma$-classes and their images, \[\gamma=\begin{small}\begin{pmatrix}
        C_{11} & \dots & C_{1k_1} & C_{21} & \dots & C_{2k_2} & \dots & C_{r1} & \dots &C_{rk_r}\\
        c_{11} & \dots & c_{1k_1} & c_{21} & \dots & c_{2k_2} & \dots & c_{r1} & \dots &c_{r{k_r}}
    \end{pmatrix}\end{small},\] where $k_l=|D_l\cap\im\gamma|$, and $\{c_{l1},\dots,c_{lk_l}\}\subseteq{D_l}$ for $1\leq l \leq r$. Notice that with the assumption $r\leq 2m-n-1$, it is not possible that $\gamma,\delta\in J_m$ whereas $\gamma\delta\in J_r$. In the following, we can exclude the situation when $i=j=m$. 
    
\underline{Case 1}: $r+1 \leq i\leq j\leq m$.
        Let $c\in D_{r+1}$. Define \[\gamma_1=\begin{small}\begin{pmatrix}
        c_{11} & \dots & c_{1k_1} & \dots & c_{r1} & \dots &c_{rk_r} & [n]\setminus\{c_{11},\dots,c_{rk_r}\}\\
        c_{11} & \dots & c_{1k_1} &\dots & c_{r1} & \dots &c_{r{k_r}} & c
    \end{pmatrix}\end{small}\in J_{i+1},\] and \[\delta_1=\begin{small}\begin{pmatrix}
        \{c_{11},\dots,c_{1k_1}\} & \{c_{21},\dots,c_{2k_2}\} & \dots & \{c_{r1},\dots, c_{rk_r}\} & [n]\setminus\{c_{11},\dots,c_{rk_r}\}\\
        d_1 & d_2 & \dots & d_r & d_{r+1}
    \end{pmatrix}\end{small}.\] Then we have $\gamma\gamma_1=\gamma$, and $\gamma_1\delta=\delta_1$. Notice that $\gamma_1,\delta_1\in J_{r+1}\cup \dots\cup J_m$, hence \begin{align*}\begin{alignedat}{2}
        x_\gamma x_\delta&= x_\gamma x_{\gamma_1}x_\delta \ \ &&(x_\gamma=x_\gamma x_{\gamma_1}\ {\rm\ is\ a\ relation\ in\ } \R_{r+1,m})\\
        &=x_\gamma x_{\delta_1} \ \ &&(x_{\delta_1}=x_{\gamma_1} x_{\delta}\ {\rm\ is\ a\ relation\ in\ } \R_{r+1,m})
    \end{alignedat}
    \end{align*}is a consequence of $\R_{r+1,m}$.
    
\underline{Case 2}: $r+1\leq j < i \leq m$\\
Since $i>r+1$, there exists a $\ke\delta$-class that contains at least $2$ elements of $\im\gamma$. Here, we consider the situation where $k_r\geq 2$, and the proofs for other cases are similar. Let $c_{r1},c_{r2}\in D_r$. For $r+1\leq i\leq j$, let $d_i'\in D_i$. Define \begin{align*}
    \delta_1&=\begin{small}\begin{pmatrix}
    D_1 & \dots & D_{r-1} & c_{r1} & D_r\setminus \{c_{r1}\} & D_{r+1} & \dots & D_{j}\\
    c_{11} & \dots & c_{(r-1)1} & c_{r1} & c_{r2} & d_{r+1}' & \dots & d_j'
\end{pmatrix}\end{small},\\ \gamma_1&=\begin{small}\begin{pmatrix}
    \cup^{k_1}_{i=1}C_{1i}  & \dots & \cup^{k_{r-1}}_{i=1}C_{(r-1)i} &  C_{r1} & \cup^{k_r}_{i=2}C_{ri}\\c_{11} & \dots & c_{(r-1)1} & c_{r1} & c_{r2}
\end{pmatrix}\end{small}.
\end{align*} We have $\delta_1\delta=\delta$, $\gamma\delta_1=\gamma_1$, and $\gamma_1\in J_{r+1},\ \delta_1\in J_{j+1}$. Hence $x_\gamma x_\delta= x_\gamma x_{\delta_1}x_\delta=x_{\gamma_1}x_\delta$ is a consequence of $\R_{r+1,m}$.

Now if $\gamma\in D_i$ and $\delta\in D_j$ such that $r+1 \leq i\leq j\leq m$, apply the method in Case 1 to $x_\gamma x_\delta$ and obtain a word $u$. If $u$ contains a letter that represents a map of rank greater than $s$, apply the method in Case 2 to $u$. The resulting word should be made up of two letters that represent maps of rank $r+1$. Otherwise, if $\gamma\in D_i$ and $\delta\in D_j$ such that $r+1\leq j < i \leq m$, apply Case 1 to $x_\gamma x_\delta$, then Case 2 to the resulting word.
\end{proof}

\begin{prop}\label{prop: Tn: upper bound for Tn}
    Consider $I_m$ an ideal of $\T_n$. If $n>m>\frac{n+1}{2}$, then $\dep{I_m}\leq n-m+1$.
\end{prop}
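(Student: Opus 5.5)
The plan mirrors the proof of Proposition \ref{prop: In: upper bound for In}: we verify that, for every $r$ with $\epsilon=1\leq r\leq 2m-n-1$, the ideal $I_m$ of $\T_n$ satisfies conditions \ref{P1}--\ref{P3}. Then Theorem \ref{thm: All: presentation for Im} shows that $\C_{2m-n}$ defines $I_m$. Since $\C_{2m-n}$ uses only generators and relations of ranks in $[2m-n,m]$, its depth is $m-(2m-n)+1=n-m+1$, and this bounds $\dep{I_m}$ from above.

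The verification proceeds condition by condition.
\begin{itemize}
\item \ref{P1} is Lemma \ref{lem: Tn: length-2 words}: for $\alpha,\beta,\gamma,\delta\in J_{r+1}$ with $\alpha\beta=\gamma\delta\in J_r$, the relation $x_\alpha x_\beta=x_\gamma x_\delta$ follows from $\R_{r+1,r+2}\subseteq\R_{r+1,m}$.
\item \ref{P2} is Lemma \ref{lem: Tn: length-2 and length-3}, applied with $1\leq r\leq 2m-n-1$. It rewrites $x_\alpha x_\beta x_\gamma$ as $x_{\alpha'}x_\gamma$ with $\alpha',\gamma\in J_{r+1}$.
\item \ref{P3} is Lemma \ref{lem: Tn: length-2 words deduced from length-2 words of higher rank}. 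Its key input is that $r\leq 2m-n-1$ excludes the possibility that both $\gamma,\delta\in J_m$. This is what allows an auxiliary map of rank one higher (for example $\gamma_1\in J_{i+1}$ or $\delta_1\in J_{j+1}$) to exist inside $I_m$.
\end{itemize}
The hypothesis $m>\frac{n+1}{2}$ guarantees $2m-n-1\geq 1=\epsilon$, so the range of $r$ is nonempty and the lemmas apply. When $2m-n=1$ the conclusion is already immediate, since $\C_1$ is the full Cayley table of $I_m$.

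The main obstacle is checking that the auxiliary maps in these lemmas really exist, for every $r$ in the range $r\leq 2m-n-1$. The lemmas on $\R_{r+1,r+2}$ need maps of rank $r+2$, so we require $r+2\leq m$, that is $r\leq m-2$. This follows because $2m-n-1\leq m-2$ is equivalent to $m\leq n-1$, which holds since $m<n$. They also need enough spare points: for instance, Lemma \ref{lem: change ker beta} uses two points $b_{r+2},b_{r+3}\notin\im\beta$, which requires $n\geq r+3$. This holds because $r+3\leq 2m-n+2\leq n$ when $m<n$.

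In the writeup I would note these inequalities explicitly. I would also handle the small edge cases $r=1$ (and $m=2$) for $\T_n$, where some constructions need $r-1\geq 1$, by a direct check or by observing that the same rewriting rules still apply. Combining these observations with Theorem \ref{thm: All: presentation for Im} completes the argument.
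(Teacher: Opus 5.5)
Your proposal is correct and matches the paper's proof: the paper also cites Lemmas \ref{lem: Tn: length-2 words}--\ref{lem: Tn: length-2 words deduced from length-2 words of higher rank} for \ref{P1}--\ref{P3}, then applies Theorem \ref{thm: All: presentation for Im} to obtain the bound $m-(2m-n)+1=n-m+1$. Your extra checks ($r\leq m-2$ and $n\geq r+3$) are sound, and the edge cases $2m-n=1$ and $m=2$ that you flag cannot occur under $m>\frac{n+1}{2}$ with $n>m$.
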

\begin{proof}
    We have proved in Lemma \ref{lem: Tn: length-2 words}-\ref{lem: Tn: length-2 words deduced from length-2 words of higher rank} that when $m>\frac{n+1}{2}$, $\ref{P1}-\ref{P3}$ hold. It follows from Theorem \ref{thm: All: presentation for Im} that $\dep{I_m}\leq m-(2m-n)+1=n-m+1$.
\end{proof}

\begin{thm}\label{thm: Tn: relational depth}
    Let $I_m$ be an ideal of $\T_n$. Then $\dep{I_m}=\begin{cases}
        3 & m=n\\
        n-m+1 & m>\frac{n+1}{2}\\
        m & m\leq \frac{n+1}{2}
    \end{cases}$.
\end{thm}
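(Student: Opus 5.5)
The plan is to assemble the three cases from results already in place, exactly as was done for $\I_n$ in Theorem \ref{thm: In: relational depth}. Throughout, $\epsilon=1$ and $\theta=\frac{n+1}{2}$.

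\emph{Small ideals.} If $m\leq \frac{n+1}{2}$, then Corollary \ref{cor: All: exact value for depth of small ideals} with $\epsilon=1$ gives $\dep{I_m}=m-\epsilon+1=m$. This is the value claimed.

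\emph{Large proper ideals.} If $\frac{n+1}{2}<m<n$, then $2m-n>1=\epsilon$. Proposition \ref{prop: lower bound for depth of all ideals} therefore gives
\[
\dep{I_m}\geq m-(2m-n)+1=n-m+1.
\]
Proposition \ref{prop: Tn: upper bound for Tn} gives the reverse inequality, because conditions \ref{P1}--\ref{P3} hold by Lemmas \ref{lem: Tn: length-2 words}--\ref{lem: Tn: length-2 words deduced from length-2 words of higher rank}, and Theorem \ref{thm: All: presentation for Im} then applies.

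\emph{The case $m=n$.} For the upper bound I would invoke A\u{\i}zen\v{s}tat's presentation, which has depth $3$. Alternatively, East's presentation of $I_{n-1}$ has depth $2$; adjoining the units and the rank-$n$ and rank-$(n-1)$ relations gives a presentation of depth at most $3$.

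For the lower bound I would show that $\C_{n-1}$ does not define $\T_n$, by applying Proposition \ref{prop: general: an example of relations that cannot be deduced} inside $\T_n$ with $m=n$ and $r=n-2$. This requires $\alpha,\beta,\gamma\in J_n$ with $\alpha\beta$ and $\alpha\beta\gamma=\beta\gamma$ in $J_{n-2}$. That is impossible, since $J_n$ is the group of units, so $\alpha\beta$ would again have rank $n$. So the proposition cannot be used verbatim with top $\J$-class $J_n$.

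Instead I would reuse its mechanism, Lemma \ref{lem: all: words that can be deduced from x_betax_gamma}, on two elements of rank $n-1$. Choose $\alpha,\beta\in J_{n-1}$ with $\alpha\beta=\beta^2=\alpha\beta^2\in J_{n-2}$; the construction in the proof of Proposition \ref{prop: non-pre for I_m merged for all semigroups} with $m=n-1$ and $r=n-1$ adapts for this. The aim is to show that $x_\alpha x_\beta^2=x_\beta^2$ is not a consequence of $\R_{n-1,n}$.

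The key step is an invariant for words reachable from $x_\beta x_\beta$. Such words have the form $x_{\alpha_1}\cdots x_{\alpha_k}x_{\beta_1}\cdots x_{\beta_l}$, where each factor has rank $\geq n-1$, the two blocks multiply to $\beta$, and there is a distinguished split point $\alpha_k\beta_1\in J_{n-2}$. Factors of rank $n$ may now also be inserted or absorbed. The point to check is that multiplying a rank-$(n-1)$ element by a unit preserves rank, so the split point survives. Relations merging across the split point are absent from $\R_{n-1,n}$ because their product has rank $n-2$. The prefix block always evaluates to $\beta$, whereas the word $x_\alpha x_\beta x_\beta$ would need a prefix evaluating to $\alpha\beta$ or to $\alpha$ with split after $x_\alpha$. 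In each case the product of the prefix is a rank-$(n-1)$ element differing from $\beta$, so this word is not reachable.

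The main obstacle is redoing the induction of Lemma \ref{lem: all: words that can be deduced from x_betax_gamma} when the top $\J$-class is the group $J_n$ rather than $J_m$. I would do this using stability of $\T_n$ exactly as in that proof, noting that $\L$- and $\J$-relations are preserved under multiplication by units. This yields $\dep{\T_n}>2$, hence $\dep{\T_n}=3$.
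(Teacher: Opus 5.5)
Your handling of $m\le\frac{n+1}{2}$ and of $\frac{n+1}{2}<m<n$ is the paper's, and citing A\u{\i}zen\v{s}tat's presentation for $\dep{\T_n}\le 3$ is fine. For the lower bound at $m=n$ you are right that Proposition~\ref{prop: general: an example of relations that cannot be deduced} does not apply verbatim. The witnesses $\alpha,\beta$ lie in $J_{n-1}$, while the top class is the group $J_n$. The paper's proof simply cites the proposition at this point, so an adapted argument is genuinely needed. However, the invariant you propose is not preserved, and this is a real gap.

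\textbf{The gap.} Units can migrate across the split. For any unit $g$, the defining relations $x_{\beta g}x_{g^{-1}}=x_\beta$ and $x_{g^{-1}}x_\beta=x_{g^{-1}\beta}$ lie in $\R_{n-1,n}$. They give $x_\beta x_\beta\to x_{\beta g}x_{g^{-1}}x_\beta\to x_{\beta g}x_{g^{-1}\beta}$.
\begin{itemize}
\item In the middle word no two adjacent letters multiply into $J_{n-2}$, so there is no split with $\alpha_k\beta_1\in J_{n-2}$.
\item The second step is a relation merging across the split you inherited.
\item In the last word the only split has prefix value $\beta g$. This differs from $\beta$ whenever $g$ does not fix $\im\beta$ pointwise.
\end{itemize}
So ``the prefix block always evaluates to $\beta$'' and ``relations merging across the split point are absent'' are both false. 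Your final unreachability check rests on them, so it does not go through as written. (Also, $\alpha\beta$ has rank $n-2$, not $n-1$.)

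\textbf{A repair.} Record only the values of the two halves, writing $\overline{w}$ for the element represented by $w$. Call a word $w$ with $\overline{w}\in J_{n-2}$ \emph{split} if $w\equiv uv$ with $u,v$ non-empty and $\overline{u},\overline{v}\in J_{n-1}$. Relations applied inside $u$ or inside $v$ preserve this, and so does expanding a single letter. Now suppose a contraction $x_sx_t\to x_{st}$ straddles the split, with $u\equiv u'x_s$ and $v\equiv x_tv'$.
\begin{itemize}
\item If $s$ is a unit, re-split as $u'\mid x_{st}v'$. Here $u'$ is non-empty because $\overline{u}\notin J_n$, and both $\overline{u'}=\overline{u}s^{-1}$ and $\overline{x_{st}v'}=s\overline{v}$ lie in $J_{n-1}$.
\item If $t$ is a unit, argue symmetrically.
\item If $s,t,st\in J_{n-1}$, then $\im{\overline{u}}=\im{s}$ and $\ke{\overline{v}}=\ke{t}$. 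Hence $\im{\overline{u}}$ is a transversal of $\ke{\overline{v}}$, so $\overline{u}\,\overline{v}\in J_{n-1}$, contradicting $\overline{w}\in J_{n-2}$.
\end{itemize}
Now $x_\beta x_\beta$ is split. The word $x_\alpha x_\beta x_\beta$ is not, since for each of its two factorizations either the prefix value $\alpha\beta$ or the suffix value $\beta^2$ lies in $J_{n-2}$. Hence $x_\alpha x_\beta^2=x_\beta^2$ is not a consequence of $\R_{n-1,n}$, so $\C_{n-1}$ does not define $\T_n$. With Proposition~\ref{prop: depth of semigroups same as cayley presentation} this gives $\dep{\T_n}=3$. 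This argument also fills the corresponding step that the paper's own proof skips.
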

\begin{proof}
    When $m\leq\frac{n+1}{2}$, the exact value for $\dep{I_m}$ is given in Corollary \ref{cor: All: exact value for depth of small ideals}. When $n>m>\frac{n+1}{2}$, the lower bound for $\dep{I_m}$ is found in Proposition \ref{prop: lower bound for depth of all ideals}, which is shown to be an upper bound in Proposition \ref{prop: Tn: upper bound for Tn}. 
    
    Finally, we know that $\dep{\T_n}\leq 3$, as $\dep{I_{n-1}}=2$. Consider transformations \begin{align*}
        \alpha=\begin{small}
        \begin{pmatrix}
            1 & 2 & \dots & n-2 & \{n-1,n\}\\
            1 & 2 & \dots & n-2 & n-1
        \end{pmatrix}
    \end{small}, \beta=\begin{small}
        \begin{pmatrix}
            1 & \dots & n-3 & \{n-2, n-1\} & n\\
            1 & \dots & n-3 & n-2 & n-1
        \end{pmatrix}
    \end{small}. \end{align*}Then \[\alpha\beta^2=\alpha\beta=\beta^2=\begin{small}
        \begin{pmatrix}
            1 & 2 & ... & n-3 & [n-2,n]\\
            1 & 2 & ... & n-3 & n-2
        \end{pmatrix}
    \end{small}\in J_{n-2}.\] However, we cannot obtain $x_\alpha{x_\beta}^2$ from ${x_\beta}^2$ using relations from $\R_{n-1,n}$, as $\alpha, \beta$ satisfy the condition in Proposition \ref{prop: general: an example of relations that cannot be deduced}. Hence $\C_{n-1}=\langle X_{n-1,n}\ |\ \R_{n-1,n}\rangle$ does not define a presentation for $\T_n$, and $\dep{\T_n}>2$, which gives $\dep{\T_n}=3$.
\end{proof}

\section{Upper bound for $\PT_n$}\label{sec: PT_n}
Finally, we look at ideals of $\PT_n$. 
Assume that, unless stated otherwise,
    \begin{align*}
    \alpha &= \begin{small}\begin{pmatrix}
        A_1 & A_2 & \dots & A_r & A_{r+1} \\
        a_1 & a_2 & \dots & a_r & a_{r+1} 
        \end{pmatrix}\end{small},\ 
        \beta = \begin{small}\begin{pmatrix}
        B_1 & B_2 & \dots & B_r & B_{r+1} \\
        b_1 & b_2 & \dots & b_r & b_{r+1} 
        \end{pmatrix}\end{small}.
\end{align*}Define $A=[n]\setminus\dm{\alpha}, B=[n]\setminus\dm{\beta}$.
We consider the two possible ways of obtaining a partial map of rank $r$ from a composition of two rank-$(r+1)$ partial maps.

\underline{Case 1}: $\im{\alpha}\subseteq\dm{\beta}$. In this case both $a_r,a_{r+1}\in B_r$, and
\[\alpha\beta=\begin{small}\begin{pmatrix}
        A_1 & A_2 & \dots &A_{r-1} & A_r\cup A_{r+1} \\
        b_1 & b_2 & \dots & b_{r-1} & b_r 
        \end{pmatrix}\end{small}.\]  

\underline{Case 2}: $|\im{\alpha}\ \cap\dm{\beta}^c|=1$. In this case we have $a_{r+1}\in B$, and
\[\alpha\beta=\begin{small}\begin{pmatrix}
        A_1 & A_2 & \dots &A_{r-1} & A_r\\
        b_1 & b_2 & \dots & b_{r-1} & b_r 
        \end{pmatrix}\end{small}.\]

$\T_n$ has a similar pattern to Case 1, and $\I_n$ is like Case 2. In fact, some of the results that were established in Sections \ref{sec: I_n} and \ref{sec: T_n} can also be applied to $\PT_n$ with some generalisations. Therefore we will not repeat the arguments that are the same in the previous sections. First we consider Case 1.
\begin{lemma}\label{lem: PTn: case 1 change im b}
    Let $b_{r+1}'\in [n]\setminus\im\beta$. Define \[\beta' = \begin{small}
        \begin{pmatrix}
        B_1 & B_2 & \dots & B_r & B_{r+1} \\
        b_1 & b_2 & \dots & b_r & b_{r+1}'
    \end{pmatrix}
    \end{small}.\] Then $\alpha\beta=\alpha\beta'$, and the relation $x_\alpha x_\beta=x_\alpha x_{\beta'}$ is a consequence of $\R_{r+1,r+2}$.
\end{lemma}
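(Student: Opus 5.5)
We mimic the proof of Lemma \ref{lem: Tn: change im beta}, adapting it to partial maps in Case 1 (so $\im\alpha\subseteq\dm\beta$, $a_i\in B_i$ for $i\le r-1$, $a_r,a_{r+1}\in B_r$, and $B_{r+1}\cap\im\alpha=\emptyset$). The idea is to factor $x_\alpha x_\beta$ as $x_\alpha x_{\alpha_1}x_{\beta_1}x_{\beta_2}$ with $\alpha_1,\beta_1\in J_{r+2}$ and $\beta_2\in J_{r+1}$. The point of the factorisation is that the product $\alpha_1\beta_1$ has rank $r+1$ and does not depend on the image of $B_{r+1}$. Then we swap $\beta_1$ for a variant $\beta_1'$ that carries $b_{r+1}'$ in place of $b_{r+1}$, and recombine.

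Concretely, define $\alpha_1$ to fix each of $a_1,\dots,a_r$ and to send every other point of $[n]$ (a nonempty set, as $n\ge r+2$) to $a_{r+1}$. Then $\alpha_1$ is a full map of rank $r+1$ with $\alpha\alpha_1=\alpha$. Next split $B_r$ into $\{a_r\}$ and $B_r\setminus\{a_r\}$; the latter contains $a_{r+1}$, so it is nonempty. Define
\[
\beta_1=\begin{small}\begin{pmatrix} B_1&\dots&B_{r-1}&a_r&B_r\setminus\{a_r\}&B_{r+1}&B\\ b_1&\dots&b_{r-1}&b_r&b_r'&b_{r+1}&-\end{pmatrix}\end{small},
\]
where $b_r'\notin\im\beta\cup\{b_{r+1}'\}$; such a point exists since $n\ge r+3$, which follows from $r<m-1\le n-2$. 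Let $\beta_1'$ be the same map but with $B_{r+1}\mapsto b_{r+1}'$. Both have rank $r+2$. Let $\beta_2$ be the full map that fixes $b_1,\dots,b_{r-1},b_{r+1},b_{r+1}'$, sends $b_r$ and $b_r'$ to $b_r$, and sends everything else to $b_1$. This gives rank $r+1$ when $r\ge 1$; the case $r=0$ can be handled by sending the rest to $b_{r+1}$ instead, or one may note that the partial setting allows leaving the rest undefined. One checks $\beta_1\beta_2=\beta$ and $\beta_1'\beta_2=\beta'$.

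The key check is $\alpha_1\beta_1=\alpha_1\beta_1'$. The image of $\alpha_1$ is $\{a_1,\dots,a_{r+1}\}\subseteq B_1\cup\dots\cup B_r$, which misses $B_{r+1}$, so $\beta_1$ and $\beta_1'$ agree on it. The product has kernel classes $\{a_1\},\dots,\{a_{r-1}\},\{a_r\}$ and the remainder, with images $b_1,\dots,b_r,b_r'$, so it lies in $J_{r+1}$. Hence
\[
x_\alpha x_\beta=x_\alpha x_{\alpha_1}x_{\beta_1}x_{\beta_2}=x_\alpha x_{\alpha_1}x_{\beta_1'}x_{\beta_2}=x_\alpha x_{\beta'},
\]
and every relation used involves only ranks $r+1$ and $r+2$, so it lies in $\R_{r+1,r+2}$. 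The main obstacle is the bookkeeping for partiality: we must ensure that $\beta_1$ keeps $B$ undefined so that $\beta_1\beta_2$ has domain exactly $\dm\beta$, and that $\beta_2$ is defined on $\im\beta_1\cup\im\beta_1'$. Both conditions hold by the construction above.
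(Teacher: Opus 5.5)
Your proof is correct and follows the paper's argument: the paper reuses the factorisation $x_\alpha x_\beta=x_\alpha x_{\alpha_1}x_{\beta_1}x_{\beta_2}$ from Lemma \ref{lem: Tn: change im beta}, with $\beta_1,\beta_1'$ made partial (undefined on $B$), exactly as you do. One slip is harmless: your $\beta_2$ has image $\{b_1,\dots,b_r,b_{r+1},b_{r+1}'\}$, so it lies in $J_{r+2}$ rather than $J_{r+1}$, but every relation used still involves only ranks $r+1$ and $r+2$ (and your $r=0$ aside is vacuous, since Case 1 forces $r\geq 1$).
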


\begin{proof}
    The proof for Lemma \ref{lem: Tn: change im beta} also works here, with the only difference being the definition of $\beta_1'$, which is now a partial map. Here, under the same definition of $b_r'$ and $b_{r+1}'$, the rest of the proof is identical to that of Lemma \ref{lem: Tn: change im beta}.
\end{proof}

\begin{lemma}\label{lem: PTn: case 1 change ker b}
    If $|B_{r+1}|\geq 2$, let $b\in B_{r+1}$. Let $1\leq i\leq r+1$. Define \[\beta' = \begin{small}
        \begin{pmatrix}
        B_1 & B_2 &\dots &B_i\cup \{b\}&\dots & B_{r} & B_{r+1}\setminus\{b\} \\
        b_1 & b_2 &\dots  &b_i &\dots& b_{r} & b_{r+1} \end{pmatrix}
    \end{small}.\] Then $\alpha\beta=\alpha\beta'$, and the relation $x_\alpha x_\beta=x_\alpha x_{\beta'}$ is a consequence of $\R_{r+1,r+2}$. 
\end{lemma}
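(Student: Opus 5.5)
We will adapt the proof of Lemma \ref{lem: change ker beta} from $\T_n$ to the partial setting of Case~1, where $\im\alpha\subseteq\dm\beta$ and $a_r,a_{r+1}\in B_r$. We insert auxiliary idempotent-like factors, swap the middle two letters using a rank-$(r+1)$ relation, and then recombine. Concretely, the aim is a chain
\begin{align*}
x_\alpha x_\beta = x_\alpha x_{\alpha_1} x_{\beta_1} x_{\beta_2} = x_\alpha x_{\alpha_1} x_{\beta_1'} x_{\beta_2} = x_\alpha x_{\beta'},
\end{align*}
in which $\alpha_1,\beta_1,\beta_1'\in J_{r+2}$, $\beta_2\in J_{r+1}$, and $\alpha_1\beta_1=\alpha_1\beta_1'\in J_{r+1}$. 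Every relation used will then lie in $\R_{r+1,r+2}$.

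\textbf{Choice of auxiliary maps.} Fix $b'\in B_{r+1}\setminus\{b\}$, which exists since $|B_{r+1}|\ge 2$. Choose two distinct points $b_{r+2},b_{r+3}\in[n]\setminus\im\beta$. Let $\alpha_1$ be the full map fixing $a_1,\dots,a_{r+1}$ and sending every other point to $b'$. It is full, so it has rank $r+2$, and $\alpha\alpha_1=\alpha$ holds because $\im\alpha=\{a_1,\dots,a_{r+1}\}$.

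Let $\beta_1$ agree with $\beta$ on $B_1,\dots,B_r$ and on $B_{r+1}\setminus\{b\}$, send $b$ to $b_{r+2}$, and stay undefined on $B=[n]\setminus\dm\beta$. Let $\beta_1'$ be the same map but with $b\mapsto b_{r+3}$.

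Let $\beta_2$ be the full map that fixes $b_1,\dots,b_r,b_{r+1}$ and sends $b_{r+2}\mapsto b_{r+1}$ and $b_{r+3}\mapsto b_i$. Every remaining point is sent to $b_{r+1}$, say. This gives $\beta_2$ rank $r+1$, and we need $r+1\ge\epsilon$, which is harmless.

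We then check three facts:
\begin{itemize}
\item $\beta_1\beta_2=\beta$;
\item $\beta_1'\beta_2=\beta'$, since $b$ now goes to $b_i$ while its class moves from $B_{r+1}$ to $B_i$;
\item $\alpha_1\beta_1=\alpha_1\beta_1'$, which holds because $\im\alpha_1=\{a_1,\dots,a_{r+1},b'\}$ avoids $b$, so the only place the two maps differ is never reached.
\end{itemize}
The product $\alpha_1\beta_1$ has rank $r+1$: its image is $\{b_1,\dots,b_r,b_{r+1}\}$, with $a_r,a_{r+1}$ both landing on $b_r$ and $b'$ landing on $b_{r+1}$.

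\textbf{Main obstacle.} The delicate point is the case $i=r+1$ allowed in the statement. There $\beta'=\beta$ once $b$ is returned to $B_{r+1}$, so the claim is trivial and can be dismissed separately. For $i\le r$ the argument is exactly as in $\T_n$.

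We must also confirm that $\beta_1$ and $\beta_1'$ genuinely have rank $r+2$ even though they are partial. Their images are $\{b_1,\dots,b_{r+1},b_{r+2}\}$, respectively $\{b_1,\dots,b_{r+1},b_{r+3}\}$, which have size $r+2$ provided $B_{r+1}\setminus\{b\}\neq\emptyset$; this is guaranteed by $b'$. Finally, we need $n\ge r+3$ to pick $b_{r+2},b_{r+3}$, and this follows from $r\le 2m-n-1<m-1$.
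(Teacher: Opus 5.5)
Your proposal is correct and is essentially the paper's proof. The paper just says to reuse the construction of Lemma~\ref{lem: change ker beta}, keeping $\alpha_1$ and $\beta_2$ full and leaving $\beta_1,\beta_1'$ undefined on $B$. That gives exactly your chain $x_\alpha x_\beta=x_\alpha x_{\alpha_1}x_{\beta_1}x_{\beta_2}=x_\alpha x_{\alpha_1}x_{\beta_1'}x_{\beta_2}=x_\alpha x_{\beta'}$. Your separate treatment of the degenerate case $i=r+1$, and your checks of the ranks and of $n\geq r+3$, fill in details the paper leaves implicit.
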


\begin{proof}
    The proof is the same as that of Lemma \ref{lem: change ker beta}. When defining $\beta_1$ and $\beta_1'$, we let $B$ be the complement of their domains.
\end{proof}

%As a consequence, the following lemma holds.
\begin{lemma}\label{lem: PTn: case 1 change ker b coro}
    For $1\leq i\neq j\leq r+1$, if $|B_i\setminus \im\alpha|\geq 1$, let $b\in B_i\setminus \im\alpha$, define \[\beta_{ij}' = \begin{small}
        \begin{pmatrix}
        B_1 & B_2 &\dots &B_i\setminus\{b\}& \dots& B_j\cup\{b\}&\dots & B_{r} & B_{r+1} &B\\
        b_1 & b_2 &\dots  &b_i &\dots& b_j & \dots& b_{r} & b_{r+1} &-\end{pmatrix}
    \end{small}.\] Then $\alpha\beta=\alpha\beta_{ij}'$, and the relation $x_\alpha x_\beta=x_\alpha x_{\beta_{ij}'}$ is a consequence of $\R_{r+1,r+2}$. 
\end{lemma}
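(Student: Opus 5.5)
The plan is to imitate the proof of Lemma \ref{lem: Tn: change ker beta full}: realise the move of $b$ from $B_i$ to $B_j$ as a composite of two moves, each supplied by Lemma \ref{lem: PTn: case 1 change ker b}. That lemma lets us take an element out of $B_{r+1}$ (provided $|B_{r+1}|\geq 2$) and put it into any other kernel class. We are in Case 1, so $\im\alpha\subseteq\dm\beta$, and $B_{r+1}\cap\im\alpha=\emptyset$. Every relation used will be an application of Lemma \ref{lem: PTn: case 1 change ker b} to a pair $(\alpha,\beta'')$ in which $\beta''$ agrees with $\beta$ except for where $b$ lies. The complement $B$ of the domain is never changed, so $\alpha\beta''=\alpha\beta$ throughout, and we remain in Case 1.

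First, suppose neither $i$ nor $j$ equals $r+1$. Define $\beta_1$ from $\beta$ by moving $b$ out of $B_i$ into $B_{r+1}$, so that $\beta_1$ has classes $B_i\setminus\{b\}$ and $B_{r+1}\cup\{b\}$. We must check that $\beta_1$ still has rank $r+1$, i.e.\ that $B_i\setminus\{b\}\neq\emptyset$. This holds because $B_i$ meets $\im\alpha$ for $i\leq r$, while $b\notin\im\alpha$.

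Next we compare $\alpha\beta_1$ with $\alpha\beta$. Since $b\notin\im\alpha$, moving $b$ does not change how $\im\alpha$ meets the kernel classes, so $\alpha\beta_1=\alpha\beta$. Now $|B_{r+1}\cup\{b\}|\geq 2$, so Lemma \ref{lem: PTn: case 1 change ker b}, applied to $(\alpha,\beta_1)$ with the element $b$, gives $x_\alpha x_{\beta_1}=x_\alpha x_\beta$; this is because moving $b$ back into $B_i$ recovers $\beta$. Applying the same lemma again to $(\alpha,\beta_1)$ with target class $B_j$ gives $x_\alpha x_{\beta_1}=x_\alpha x_{\beta'_{ij}}$. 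Chaining the two relations yields the claim.

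The cases where $i$ or $j$ equals $r+1$ are handled separately. If $j=r+1$, a single step suffices: the step from $\beta$ to $\beta_1$ above is exactly $\beta'_{i,r+1}$, obtained by one application of the lemma read in reverse.

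If $i=r+1$, there are two sub-cases. When $|B_{r+1}|\geq 2$, Lemma \ref{lem: PTn: case 1 change ker b} applies directly. When $|B_{r+1}|=1$, removing $b$ would empty the class and lower the rank, so $\beta'_{ij}$ would not lie in $J_{r+1}$. I expect this degenerate case to be excluded implicitly; otherwise one would first enlarge $B_{r+1}$ by moving some element of $B$ or of some $B_k\setminus\im\alpha$ into it.

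The main obstacle is verifying that Lemma \ref{lem: PTn: case 1 change ker b} is really valid for every target index, including $i=r+1$ itself, in the partial setting. That lemma's construction needs auxiliary maps whose ranks stay at most $r+2\leq m$ and which need fresh points $b_{r+2},b_{r+3}\notin\im\beta$. I would re-check that these choices remain possible once the domain complement $B$ is present, using that $n$ is large relative to $r$ (recall $r\leq 2m-n-1$).
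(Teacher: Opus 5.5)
Your proposal is correct and is essentially the paper's argument. The paper derives this from Lemma \ref{lem: PTn: case 1 change ker b} exactly as in Lemma \ref{lem: Tn: change ker beta full}: first park $b$ in $B_{r+1}$ (giving $\beta_1$), then move it on to $B_j$, with the cases $j=r+1$ and $i=r+1$ needing only one step. You are right that for $i=r+1$ one needs $|B_{r+1}|\geq 2$ simply for $\beta'_{ij}$ to lie in $J_{r+1}$, so your fallback of enlarging $B_{r+1}$ is not needed, and the auxiliary-map check you flag as the main obstacle is already supplied by Lemma \ref{lem: PTn: case 1 change ker b} itself.
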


\begin{proof}
    This follows from Lemma \ref{lem: PTn: case 1 change ker b}.
\end{proof}

\begin{lemma}\label{lem: PTn: case 1 change ker a}
    If $|A_r|\geq 2$, let $a\in A_r$. Define \[\alpha' = \begin{small}
        \begin{pmatrix}
        A_1 & A_2 & \dots & A_r\setminus\{a\} & A_{r+1}\cup \{a\}\\
        a_1 & a_2 & \dots & a_r & a_{r+1}
    \end{pmatrix}
    \end{small}.\] Then $\alpha\beta=\alpha'\beta$, and the relation $x_\alpha x_\beta=x_{\alpha'} x_\beta$ is a consequence of $\R_{r+1,r+2}$.
\end{lemma}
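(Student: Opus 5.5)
We will mimic the proof of Lemma \ref{lem: Tn: change ker alpha 1}, working now in $\PT_n$ under Case 1, i.e.\ $\im\alpha\subseteq\dm\beta$, $a_i\in B_i$ for $i\leq r-1$ and $a_r,a_{r+1}\in B_r$. The task is to write $\alpha$ and $\alpha'$ as products through elements of rank $r+2$ whose relevant two-letter products have rank $r+1$. Then the passage from $x_\alpha x_\beta$ to $x_{\alpha'}x_\beta$ uses only relations of $\R_{r+1,r+2}$. We split according to $|B_r|$, exactly as in the full transformation case. The only genuinely new feature is that maps may be partial, so the complements $A$ and $B$ of the domains must be carried along unchanged.

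\emph{Case $|B_r|\geq 3$.} Choose $a_{r+2}\in B_r\setminus\{a_r,a_{r+1}\}$ and $a_{r+3}\in B_{r+1}$. Partition $B_r$ into $B_{r1}\ni a_r,a_{r+1}$ and $B_{r2}\ni a_{r+2}$, and also into $B_{r1}'\ni a_r$ and $B_{r2}'\ni a_{r+1},a_{r+2}$. Define the following maps.
\begin{itemize}
\item $\alpha_1$, with kernel classes $A_1,\dots,A_{r-1},A_r\setminus\{a\},\{a\},A_{r+1}$ mapped to $a_1,\dots,a_r,a_{r+1},a_{r+2}$, and undefined on $A$.
\item $\alpha_2$, with classes $B_1,\dots,B_{r-1},B_{r1},B_{r2},B_{r+1}$ mapped to $a_1,\dots,a_{r-1},a_r,a_{r+1},a_{r+3}$, and undefined on $B$.
\item $\beta_1$, defined like $\alpha_2$ but using $B_{r1}',B_{r2}'$ in place of $B_{r1},B_{r2}$, and undefined on $B$.
\end{itemize}
We then check the following facts.
\begin{itemize}
\item $\alpha_1\alpha_2=\alpha$ and $\alpha_1\beta_1=\alpha'$, since $a\mapsto a_{r+1}\in B_{r2}'$ and $A_{r+1}\mapsto a_{r+2}\in B_{r2}'$.
\item $\alpha_2\beta=\beta_1\beta=\beta$, because each class of $\alpha_2$ and $\beta_1$ lands in the correct $\ker\beta$-class and the domains agree.
\item $\alpha_1,\alpha_2,\beta_1\in J_{r+2}$.
\end{itemize}
This yields the chain
\[
x_\alpha x_\beta=x_{\alpha_1}x_{\alpha_2}x_\beta=x_{\alpha_1}x_\beta=x_{\alpha_1}x_{\beta_1}x_\beta=x_{\alpha'}x_\beta,
\]
and every relation used lies in $\R_{r+1,r+2}$ because each side has rank $r+1$ or $r+2$. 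Note that $\alpha_1\beta$ has rank $r+1$: $\alpha_1$ maps $r+2$ classes into $\dm\beta$, and these images meet $r+1$ classes of $\ker\beta$, namely $B_1,\dots,B_r$ and the class containing $a_{r+2}$. Hence $x_{\alpha_1}x_\beta$ is a legitimate intermediate word. One point to verify is that $\alpha_2,\beta_1$ really have rank $r+2$. This requires $a_{r+3}\notin\{a_1,\dots,a_{r+1}\}$, which holds since $a_{r+3}\in B_{r+1}$ while $\im\alpha\subseteq B_1\cup\dots\cup B_r$.

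\emph{Case $|B_r|=2$.} Here $B_r=\{a_r,a_{r+1}\}$. We first enlarge $B_r$ using Lemma \ref{lem: PTn: case 1 change ker b coro}, moving some $b\in B_i\setminus\im\alpha$ into $B_r$ to form $\beta'$. This gives $x_\alpha x_\beta=x_\alpha x_{\beta'}$ and $x_{\alpha'}x_\beta=x_{\alpha'}x_{\beta'}$, because $\im\alpha'=\im\alpha$. We then apply the first case to $x_\alpha x_{\beta'}$.

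The main obstacle is guaranteeing that such an element $b$ exists. If some $\ker\beta$-class $B_i$ with $i\neq r$ has an element outside $\im\alpha$, we take it; $B_{r+1}$ is nonempty and disjoint from $\im\alpha$, so it always supplies one. If instead $|B_{r+1}|=1$, moving its element would destroy the class. In that situation we first move a point from $B$ into the domain, or use Lemma \ref{lem: PTn: case 1 change ker b} after first enlarging $B_{r+1}$. Otherwise we choose $a_{r+2}$ directly in $B$ and adapt $\alpha_2,\beta_1$ so that they send $B_{r2}$ outside the domain in a compatible way. We would treat this boundary configuration separately, checking ranks by hand.
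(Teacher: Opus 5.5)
\emph{Verdict:} the $|B_r|\geq 3$ case is correct, but the $|B_r|=2$ case is left unfinished in a configuration that really occurs in $\PT_n$.

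\textbf{Agreement with the paper.} Your treatment of the case $|B_r|\geq 3$ is exactly the paper's argument. The paper's proof only says to rerun the proof of Lemma~\ref{lem: Tn: change ker alpha 1} with the complements $A$, $B$ carried along. Your $\alpha_1,\alpha_2,\beta_1$ and the four-step chain are the correct transplant, and so is the reduction of $|B_r|=2$ via Lemma~\ref{lem: PTn: case 1 change ker b coro}.

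\textbf{A harmless slip.} The claim that $\alpha_1\beta$ has rank $r+1$ is false. Since $a_{r+2}\in B_r$, the image of $\alpha_1$ meets only $B_1,\dots,B_r$, so $\alpha_1\beta=\alpha\beta\in J_r$. This is forced anyway, since every word derived from $x_\alpha x_\beta$ represents $\alpha\beta$. No relation is ever applied to $x_{\alpha_1}x_\beta$ as a whole, so nothing breaks; just delete that sentence.

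\textbf{The gap: the boundary configuration of the case $|B_r|=2$.} You identify this configuration but do not settle it, and it genuinely occurs in $\PT_n$. For example, take $B_i=\{a_i\}$ for $i<r$, $B_r=\{a_r,a_{r+1}\}$, $B_{r+1}=\{y\}$, and put the remaining $n-r-2\geq 1$ points in $B$. In $\T_n$ this cannot happen: $n\geq r+3$ forces either a non-image point in some $B_i$ with $i<r$, or $|B_{r+1}|\geq 2$.
\begin{itemize}
\item Lemmas~\ref{lem: PTn: case 1 change ker b} and~\ref{lem: PTn: case 1 change ker b coro} only move points among $B_1,\dots,B_{r+1}$, so none of the Case~1 lemmas can enlarge $B_r$ here. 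The paper's one-line proof does not address this situation either.
\item Your remedy of choosing $a_{r+2}\in B$ fails as stated. The identity $\alpha_1\alpha_2=\alpha$ forces $a_{r+2}\alpha_2=a_{r+1}\in\dm{\beta}$. Then $\alpha_2\beta$ is defined at $a_{r+2}\notin\dm{\beta}$, so $\alpha_2\beta\neq\beta$ and the step $x_{\alpha_2}x_\beta=x_\beta$ is lost.
\item Your other remedy, moving a point of $B$ into the domain of $\beta$, is the right idea. However, it is a new rewriting rule, and you must prove it.
\end{itemize}

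\textbf{How to close it.}
\begin{itemize}
\item Take $c\in B$, and let $\beta^*$ be $\beta$ extended by $c\mapsto b_r$. Then $\alpha\beta^*=\alpha\beta$ and $\alpha'\beta^*=\alpha'\beta$.
\item Pick $y\in B_{r+1}$ and distinct points $e_1,\dots,e_{r+1},p,q$; this is possible since $n\geq r+3$. Set
\[
\beta_1=\begin{pmatrix} B_1&\dots&B_{r+1}&c\\ e_1&\dots&e_{r+1}&p\end{pmatrix},\qquad
\beta_2=\begin{pmatrix} e_1&\dots&e_{r-1}&\{e_r,q\}&e_{r+1}\\ b_1&\dots&b_{r-1}&b_r&b_{r+1}\end{pmatrix}.
\]
Let $\beta_1'$ be $\beta_1$ but with $c\mapsto q$, and let $\alpha_1$ be the partial identity on $\im{\alpha}\cup\{y\}$.
\item We have $\beta_1\beta_2=\beta$ (as $p\notin\dm{\beta_2}$), $\beta_1'\beta_2=\beta^*$, $\alpha\alpha_1=\alpha$ and $\alpha'\alpha_1=\alpha'$.
\item Since $c\notin\dm{\alpha_1}$, we get $\alpha_1\beta_1=\alpha_1\beta_1'$. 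Its image is $\{e_1,\dots,e_{r+1}\}$, so it lies in $J_{r+1}$.
\item All maps involved have rank $r+1$ or $r+2$. Hence, using only $\R_{r+1,r+2}$,
\[
x_\alpha x_\beta=x_\alpha x_{\alpha_1}x_{\beta_1}x_{\beta_2}=x_\alpha x_{\alpha_1}x_{\beta_1'}x_{\beta_2}=x_\alpha x_{\beta^*},
\]
and likewise $x_{\alpha'}x_\beta=x_{\alpha'}x_{\beta^*}$.
\item Now $|B_r\cup\{c\}|=3$, so your first case applies to $x_\alpha x_{\beta^*}$. This yields $x_\alpha x_\beta=x_{\alpha'}x_\beta$ in the boundary configuration too.
\end{itemize}
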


\begin{proof}
    Similar to the proof strategy of the above lemmas, the proof for Lemma \ref{lem: Tn: change ker alpha 1} can be applied to maps in $\PT_n$, with the consideration of the complements $A$ and $B$ of the domains in the definitions of $\alpha_1,\alpha_2$ and $\beta_1$.
\end{proof}

%For the following 2 lemmas, we follow the same proof for Lemma \ref{lem: Tn: change ker alpha 2} and Lemma \ref{lem: Tn: change im alpha} respectively, where we define $\alpha_1$ to account the compliment of the domain $A$.

\begin{lemma}\label{lem: PTn: case 1 change im a1}
    Let $1\leq i\leq r-1$. If $|A_i|\geq 2$ and $|B_i|\geq 2$, let $a_{i1},a_{i2}\in B_i$, let $A_{i1},A_{i2}$ partition $A_i$. Define \[\alpha'=\begin{small}\begin{pmatrix}
        A_1 & A_2 & \dots & A_{i1} & A_{i2} &\dots  & A_r\cup A_{r+1}\\
        a_1 & a_2 & \dots &a_{i1} & a_{i2}&\dots & a_r 
        \end{pmatrix}\end{small}.\] Then $\alpha\beta=\alpha'\beta$, and the relation $x_\alpha x_\beta=x_{\alpha'} x_\beta$ is a consequence of $\R_{r+1,r+2}$. 
\end{lemma}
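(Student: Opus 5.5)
We will imitate the proof of Lemma \ref{lem: Tn: change ker alpha 2}, adapting it to partial maps by carrying the complements of domains explicitly. Without loss of generality take $i=1$. We are in Case 1, so $\im\alpha\subseteq\dm\beta$, $a_j\in B_j$ for $j\le r-1$, $a_r,a_{r+1}\in B_r$, and $B_{r+1}\cap\im\alpha=\emptyset$. Fix some $a\in B_{r+1}$. Define
\[
\alpha_1=\begin{small}\begin{pmatrix}A_{11}&A_{12}&A_2&\dots&A_r&A_{r+1}&A\\ a_{11}&a_{12}&a_2&\dots&a_r&a_{r+1}&-\end{pmatrix}\end{small}\in J_{r+2}.
\]
Let $Z=[n]\setminus\{a_{11},a_{12},a_2,\dots,a_{r+1}\}$, and define
\[
\alpha_2=\begin{small}\begin{pmatrix}\{a_{11},a_{12}\}&a_2&\dots&a_r&a_{r+1}&Z\\ a_1&a_2&\dots&a_r&a_{r+1}&a\end{pmatrix}\end{small},\quad
\alpha_2'=\begin{small}\begin{pmatrix}a_{11}&a_{12}&a_2&\dots&\{a_r,a_{r+1}\}&Z\\ a_{11}&a_{12}&a_2&\dots&a_r&a\end{pmatrix}\end{small}.
\]
Both are total and lie in $J_{r+2}$. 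The elements $a_{11},a_{12},a_2,\dots,a_{r+1}$ are distinct: $a_{11},a_{12}\in B_1$, $a_j\in B_j$, and $a_r\ne a_{r+1}$. The set $Z$ is nonempty because $a\in Z$. Here $a\notin\im\alpha$ and $a\notin\{a_{11},a_{12}\}$, since the latter lie in $B_1$.

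We then verify the identities. First, $\alpha_1\alpha_2=\alpha$ and $\alpha_1\alpha_2'=\alpha'$; the domains agree because $\dm{\alpha_1}=\dm\alpha=\dm{\alpha'}$. Second, $\alpha_2\beta=\alpha_2'\beta$. Indeed, $\{a_{11},a_{12}\}\subseteq B_1$ is sent to $b_1$, each $a_j\mapsto b_j$ for $2\le j\le r-1$, $\{a_r,a_{r+1}\}\mapsto b_r$, and $Z\mapsto a\mapsto b_{r+1}$. This common product is total and of rank $r+1$, so it lies in $J_{r+1}$. The chain
\[
x_\alpha x_\beta=x_{\alpha_1}x_{\alpha_2}x_\beta=x_{\alpha_1}x_{\alpha_2'}x_\beta=x_{\alpha'}x_\beta
\]
then uses only relations from $\R_{r+1,r+2}$. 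The relations $x_\alpha=x_{\alpha_1}x_{\alpha_2}$ and $x_{\alpha'}=x_{\alpha_1}x_{\alpha_2'}$ have all letters in $J_{r+1}\cup J_{r+2}$. The middle step uses $x_{\alpha_2}x_\beta=x_{\alpha_2\beta}=x_{\alpha_2'}x_\beta$, which is two relations of $\R_{r+1,r+2}$.

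The main point needing care is that $\alpha'\in J_{r+1}$ and $\alpha'\beta=\alpha\beta$. Rank: $\alpha'$ has classes $A_{11},A_{12},A_2,\dots,A_{r-1},A_r\cup A_{r+1}$, which is $r+1$ of them. Product: $\alpha'\beta$ sends $A_{11}\cup A_{12}$ to $b_1$ and $A_r\cup A_{r+1}$ to $b_r$, with the same domain as $\alpha\beta$. We must also check that the hypothesis $|B_1|\ge 2$ is what permits choosing distinct $a_{11},a_{12}$; the stated condition $a_{i1}\neq a_{i2}$ is implicit. Finally, $a$ must be taken in $B_{r+1}$ rather than in $B$, so that $\alpha_2\beta$ is still total of rank $r+1$. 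If instead one had $B_{r+1}$ empty, this would fail, but $\beta$ has rank $r+1$, so $B_{r+1}\neq\emptyset$.
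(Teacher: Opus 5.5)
Your proof is correct and follows essentially the same route as the paper. The paper's proof reuses the argument of Lemma \ref{lem: Tn: change ker alpha 2}, with the same $\alpha_2,\alpha_2'$ and with $\alpha_1$ taken to be a partial map undefined on $A=[n]\setminus\dm{\alpha}$; this is exactly your construction, and your chain $x_\alpha x_\beta=x_{\alpha_1}x_{\alpha_2}x_\beta=x_{\alpha_1}x_{\alpha_2'}x_\beta=x_{\alpha'}x_\beta$ matches it.
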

\begin{proof}
    The proof is the same as that of Lemma \ref{lem: Tn: change ker alpha 2}, except that $\alpha_1$ is defined to be a partial map and that $A=[n]\setminus\dm{\alpha}$.
\end{proof}

\begin{lemma}\label{lem: PTn: case 1 change im a2}
    Let $1\leq i\leq r+1$. If $|B_i\setminus(B_i\cap \im\alpha)|\geq 1$, let $a_i'\in B_i\setminus \im\alpha$. Define \[\alpha' = \begin{small}
        \begin{pmatrix}
        A_1 & A_2 & \dots & A_i & \dots & A_r & A_{r+1} \\
        a_1 & a_2 & \dots & a_i' & \dots & a_r & a_{r+1} 
    \end{pmatrix}
    \end{small}.\] Then $\alpha\beta=\alpha'\beta$, and the relation $x_\alpha x_\beta=x_{\alpha'} x_\beta$ is a consequence of $\R_{r+1,r+2}$.
\end{lemma}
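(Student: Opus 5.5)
We will adapt the proof of Lemma \ref{lem: Tn: change im alpha} to partial maps: we factor $\alpha$ through a map of rank $r+2$ that fixes $\im\alpha$ pointwise, and perform the change of image inside a product that has rank $r+1$, where the Cayley relations of $\R_{r+1,r+2}$ are available. We are in Case 1, so $\im\alpha\subseteq\dm\beta$, $a_j\in B_j$ for $j\le r-1$, $a_r,a_{r+1}\in B_r$, and $B_{r+1}\cap\im\alpha=\emptyset$. Choose $a\in B_{r+1}$; this is possible since $B_{r+1}\neq\emptyset$ as a kernel class of $\beta$.

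First define $\alpha_1=\alpha$, viewed as a partial map with domain complement $A$. Next define $\alpha_2$ to fix each of $a_1,\dots,a_{r+1}$ and send the whole of $[n]\setminus\im\alpha$ to $a$. Then define $\alpha_2'$ to agree with $\alpha_2$ except that it sends $a_i$ to $a_i'$.

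The first check is that $\alpha_2,\alpha_2'$ are well-defined maps of rank $r+2$. For $\alpha_2$, note that $a\notin\im\alpha$. For $\alpha_2'$, note that $a_i'\notin\im\alpha\cup\{a\}$, because $a_i'\in B_i$ with $i\le r$ while $a\in B_{r+1}$. The case $i=r+1$ needs slight care, since then $a_{r+1}'\in B_r$ and the argument is unchanged. We also need $[n]\setminus\im\alpha\neq\emptyset$, which holds because $r+1<n$.

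Then we verify the following identities:
\begin{itemize}
\item $\alpha_1\alpha_2=\alpha$;
\item $\alpha_1\alpha_2'=\alpha'$;
\item $\alpha_2\beta=\alpha_2'\beta$.
\end{itemize}
The last identity holds because $a_i,a_i'$ lie in the same $\ke\beta$-class $B_i$. The common value $\alpha_2\beta$ has kernel classes $\{a_1\},\dots,\{a_{r-1}\},\{a_r,a_{r+1}\}$ and $[n]\setminus\im\alpha$, with images $b_1,\dots,b_r,b_{r+1}$. It is a total map, hence lies in $J_{r+1}$.

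The deduction is then
\[
x_\alpha x_\beta=x_{\alpha_1}x_{\alpha_2}x_\beta=x_{\alpha_1}x_{\alpha_2'}x_\beta=x_{\alpha'}x_\beta .
\]
Each step uses relations of $\R_{r+1,r+2}$: the relations $x_{\alpha}=x_{\alpha_1}x_{\alpha_2}$ and $x_{\alpha'}=x_{\alpha_1}x_{\alpha_2'}$, and the relation $x_{\alpha_2}x_\beta=x_{\alpha_2\beta}=x_{\alpha_2'}x_\beta$ coming from the rank-$(r+1)$ product.

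The main obstacle is bookkeeping rather than ideas. We must ensure that every auxiliary map has rank in $\{r+1,r+2\}$, which uses $r+2\le m$ (as in the $\T_n$ section). We must also confirm that the partial-map features cause no trouble: the undefined set $A$ of $\alpha$ is simply inherited by $\alpha_1$, and $\alpha_2$, $\alpha_2'$ are chosen total so that composing with $\beta$ does not lose rank. Finally we note that $\alpha\beta=\alpha'\beta$, which follows directly since $a_i'\in B_i$.
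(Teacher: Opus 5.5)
Your proposal is correct and follows the paper's proof exactly. The paper reuses the argument of Lemma~\ref{lem: Tn: change im alpha} with $\alpha_1=\alpha$ now a partial map, and $\alpha_2,\alpha_2'$ the total rank-$(r+2)$ maps that fix $\im\alpha$ (resp.\ send $a_i$ to $a_i'$) and collapse $[n]\setminus\im\alpha$ onto some $a\in B_{r+1}$, just as you do.

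Your handling of $i=r+1$ is the right reading of the statement. There $a_{r+1}'$ must be taken in $B_r$, the $\ke\beta$-class of $a_{r+1}$, not in $B_{r+1}$ as literally written, since with $a_{r+1}'\in B_{r+1}$ one gets $\rk{\alpha'\beta}=r+1\neq\rk{\alpha\beta}$. This agrees with rule (ii) of Lemma~\ref{lem: Tn: all length-2 words obtained from r+1,r+2}.
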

\begin{proof}
    The proof is the same as that of Lemma \ref{lem: Tn: change im alpha}, except that $\alpha_1$ is defined to be a partial map and that $A=[n]\setminus\dm{\alpha}$.
\end{proof}

\begin{lemma}\label{lem: PTn: type 1 words of length 2 are equal}
    Let $\alpha,\beta,\gamma,\delta\in J_{r+1}$ such that $\alpha\beta=\gamma\delta\in J_r$ are of Case 1. Then $x_\alpha x_\beta=x_\gamma x_\delta$ is a consequence of $\R_{r+1,r+2}$.
\end{lemma}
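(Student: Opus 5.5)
We follow the proof of Lemma \ref{lem: Tn: length-2 words}: starting from $x_\gamma x_\delta$, we apply the rewriting rules supplied by Lemmas \ref{lem: PTn: case 1 change im b}--\ref{lem: PTn: case 1 change im a2} to reach $x_\alpha x_\beta$. These are the partial analogues of the rules in Lemma \ref{lem: Tn: all length-2 words obtained from r+1,r+2}.

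Write
\[
\alpha\beta=\gamma\delta=\begin{pmatrix} X_1&\dots&X_r& X\\ x_1&\dots&x_r&-\end{pmatrix}.
\]
Since both factorisations are of Case 1, $\dm{\alpha}=\dm{\gamma}=X_1\cup\dots\cup X_r$. Hence $\alpha$ and $\gamma$ have the same domain complement $A=X$. Moreover, each of $\ke\alpha$ and $\ke\gamma$ refines $\{X_1,\dots,X_r\}$ by splitting exactly one block: say $X_i$ for $\alpha$ and $X_k$ for $\gamma$. Likewise $\im\beta=\{x_1,\dots,x_r,y\}$ and $\im\delta=\{x_1,\dots,x_r,z\}$. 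The kernel blocks of $\beta$ and $\delta$ containing $\im\alpha$ and $\im\gamma$ are determined up to the elements outside these images, and the complements $B,D$ of $\dm\beta,\dm\delta$ may differ.

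As in $\T_n$, we first reduce to $i=k$. If $i\neq k$, we use Lemma \ref{lem: PTn: case 1 change im a1}, which splits a block of $\gamma$ and merges the pair sent into the same $\ke\delta$-class. Beforehand, if necessary, we enlarge $D_i$ with the help of Lemma \ref{lem: PTn: case 1 change ker b coro}, exactly as in Case 2 of Lemma \ref{lem: Tn: length-2 words}. Lemma \ref{lem: PTn: case 1 change ker a} then lets us rebalance the two sub-blocks of $X_i$ so that $\ke\gamma=\ke\alpha$. Next we run Steps 1--3 of Case 1 there, block by block. For each $l$, we move $a_l$ into $D_l$ via Lemma \ref{lem: PTn: case 1 change ker b coro}, first making room with Lemma \ref{lem: PTn: case 1 change im a2} if $D_j$ is a singleton. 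Then we redirect the image of the $l$-th block of $\gamma$ to $a_l$ via Lemma \ref{lem: PTn: case 1 change im a2}. At the end $\gamma$ has become $\alpha$.

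Finally we must turn $\delta'$ into $\beta$. The new feature compared with $\T_n$ is that $\ke\delta'$ and $\ke\beta$ may have different domains. Elements outside $\im\alpha$ may need to move between kernel classes and the complement of the domain. This is the step I expect to be the main obstacle, since the listed lemmas only move elements between kernel classes. The plan is to prove an extra rule in the same style as the proof of Lemma \ref{lem: change ker beta}. We factor $x_\beta=x_{\beta_1}x_{\beta_2}$ and $x_\alpha=x_\alpha x_{\alpha_1}$ with $\alpha_1,\beta_1\in J_{r+2}$ and $\alpha_1\beta_1\in J_{r+1}$. We then swap $\beta_1$ for $\beta_1'$, which differs only off $\im\alpha_1$, so that $\alpha_1\beta_1=\alpha_1\beta_1'$; here $\beta_1'$ adds or removes a point $b\notin\im\alpha$ from the domain. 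Choosing $\beta_2$ to collapse appropriately gives $\beta_1'\beta_2=\beta'$. With this rule, Lemma \ref{lem: PTn: case 1 change ker b coro} makes $\ke\delta'=\ke\beta$ including the domain. Lemma \ref{lem: PTn: case 1 change im b} then changes the image of the last block from $z$ to $y$. Everything used lies in $J_{r+1}\cup J_{r+2}$, so the relation follows from $\R_{r+1,r+2}$.
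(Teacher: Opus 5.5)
Your argument follows the paper's route. The paper's proof of this lemma is a single instruction: rerun the $\T_n$ argument (Lemma~\ref{lem: Tn: length-2 words}) using Lemmas~\ref{lem: PTn: case 1 change im b}--\ref{lem: PTn: case 1 change im a2}. Your reduction to $i=k$ is that argument. So are the rebalancing of the two sub-blocks of $X_i$ via Lemma~\ref{lem: PTn: case 1 change ker a}, a step the $\T_n$ proof leaves implicit, and the block-by-block redirection of images.

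The obstacle you flag is genuine, and the paper does not address it. Each of the six cited lemmas leaves the domain of the right-hand factor unchanged, whereas $\dm\beta$ and $\dm\delta$ need only agree on $\im\alpha$. For instance, take $n=5$, $m=4$, $r=1$, $\alpha=\gamma$ the partial identity on $\{1,2\}$, $\beta=\begin{pmatrix}\{1,2\}&\{3,4,5\}\\1&2\end{pmatrix}$ and $\delta=\beta|_{\{1,2,3\}}$. Then the relation $x_\alpha x_\beta=x_\alpha x_\delta$ cannot be reached with those lemmas alone.

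Your extra rule closes this gap, and your construction can be carried out. If $b$ is a singleton block of $\beta_1$, then $\beta_1'$ drops into $J_{r+1}$. That is harmless, since $x_{\alpha_1}x_{\beta_1'}=x_{\alpha_1\beta_1'}$ and $x_{\beta_1'}x_{\beta_2}=x_{\beta'}$ still lie in $\R_{r+1,r+2}$.

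In fact no factorisation of $\beta$ is needed. Let $\alpha_1\in J_{r+2}$ be the idempotent that fixes $\im\alpha$ pointwise and sends every other point of $[n]$ to some $b'\in B_{r+1}$. Then for every $\beta'\in J_{r+1}$ agreeing with $\beta$ on $\im\alpha\cup\{b'\}$,
\[
x_\alpha x_\beta=x_\alpha x_{\alpha_1}x_\beta=x_\alpha x_{\alpha_1\beta}=x_\alpha x_{\alpha_1\beta'}=x_\alpha x_{\alpha_1}x_{\beta'}=x_\alpha x_{\beta'},
\]
using only relations of $\R_{r+1,r+2}$. This adds or deletes any points outside $\im\alpha\cup\{b'\}$ from the domain in one step.

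One adjustment is needed: apply this rule at the very start rather than at the end. The ``making room'' moves (enlarging $D_i$, or refilling a singleton $D_j$) may require a point of $\dm\delta\setminus\im\gamma$ that does not exist. This happens, for example, when $\dm\delta=\im\gamma\cup D_{r+1}$ with $|D_{r+1}|=1$. If you first make both $\beta$ and $\delta$ total, the rest is the $\T_n$ argument verbatim.
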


\begin{proof}
    To prove the lemma, it suffices to follow the proof for Lemma \ref{lem: Tn: all length-2 words obtained from r+1,r+2} on applying Lemmas \ref{lem: PTn: case 1 change im b} to \ref{lem: PTn: case 1 change im a2}.
\end{proof}

Next, we look at words that are obtained as in Case 2.
\begin{lemma}\label{lem: PTn: case 2 change ker a}
    Suppose that $|A_{r+1}|\geq 2$. Let $a'\in A_{r+1}$. Define \[\alpha' = \begin{small}
        \begin{pmatrix}
        A_1 & A_2 & \dots & A_r & A_{r+1}\setminus\{a\}\\
        a_1 & a_2 & \dots & a_r & a_{r+1}
    \end{pmatrix}
    \end{small}.\] Then $\alpha\beta=\alpha'\beta$, and the relation $x_\alpha x_\beta=x_{\alpha'} x_\beta$ is a consequence of $\R_{r+1,r+2}$.
\end{lemma}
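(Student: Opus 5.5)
We are in Case 2 of the $\PT_n$ setting: $a_{r+1}\in B=[n]\setminus\dm{\beta}$, and $\alpha\beta$ does not depend on where $A_{r+1}$ goes. The statement has a typo ($a'$ versus $a$); we read it as follows. For $a\in A_{r+1}$, $\alpha'$ is $\alpha$ with $a$ removed from the domain, i.e.\ $a$ is added to the complement $A$. We must realise $x_\alpha x_\beta=x_{\alpha'}x_\beta$ through a chain of relations from $\R_{r+1,r+2}$.

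The plan follows the factorisation pattern of Lemmas \ref{lem: In: change ker alpha} and \ref{lem: Tn: change ker alpha 2}. We write $\alpha=\alpha_1\alpha_2$ and $\alpha'=\alpha_1\alpha_2'$ with $\alpha_1\in J_{r+2}$, where $\alpha_2\beta=\alpha_2'\beta\in J_{r+1}$, and then compute
\begin{align*}
x_\alpha x_\beta=x_{\alpha_1}x_{\alpha_2}x_\beta=x_{\alpha_1}x_{\alpha_2\beta}=x_{\alpha_1}x_{\alpha_2'}x_\beta=x_{\alpha'}x_\beta .
\end{align*}
Concretely, $\alpha_1$ splits $A_{r+1}$:
\[\alpha_1=\begin{small}\begin{pmatrix} A_1&\dots&A_r&A_{r+1}\setminus\{a\}&a&A\\ a_1&\dots&a_r&a_{r+1}&c&-\end{pmatrix}\end{small},\]
where $c\notin\im\alpha$ is a fresh point; this requires $|A_{r+1}|\ge2$, which is exactly the hypothesis. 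Then $\alpha_2$ acts as the identity on $\{a_1,\dots,a_{r+1},c\}$, and $\alpha_2'$ is the same map with $c$ removed from its domain. This gives $\alpha_1\alpha_2=\alpha$ and $\alpha_1\alpha_2'=\alpha'$. However, $\alpha_2'\in J_{r+1}$ and $\alpha_2\in J_{r+2}$, and $\alpha_2\beta$ and $\alpha_2'\beta$ differ unless $c\in B$.

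So we choose $c\in B\setminus\{a_{r+1}\}$. Then $c\beta$ is undefined, so $\alpha_2\beta=\alpha_2'\beta$. This map has rank $r$, because the image points of $\alpha_2'$ that lie in $\dm{\beta}$ are just $a_1,\dots,a_r$. That is too low: the middle relation $x_{\alpha_2}x_\beta=x_{\alpha_2\beta}$ must have rank at least $r+1$. To repair this, we enlarge $\alpha_2$ and $\alpha_2'$ by an extra point $e\notin\im{\alpha_1}$ mapped to a point $f\in B_{r+1}$ (or into any $\ke\beta$-class not met by $a_1,\dots,a_r$). Then $\alpha_2\beta=\alpha_2'\beta$ has rank $r+1$, since $e$ contributes $b_{r+1}$. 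The ranks become $\alpha_2\in J_{r+3}$ and $\alpha_2'\in J_{r+2}$, so we instead take $\alpha_2$ not to be injective: we map $c$ and $a_{r+1}$ both to $a_{r+1}$. This keeps $\alpha_2\in J_{r+2}$ and $\alpha_1\alpha_2=\alpha$. The map $\alpha_2'$ sends $a_{r+1}$ to $a_{r+1}$ and $c$ nowhere. Both remain in $J_{r+2}$, both products with $\alpha_1$ are correct, and $\alpha_2\beta=\alpha_2'\beta$ because $a_{r+1}$ and $c$ both land in $B$. Every letter and product in the chain then has rank in $\{r+1,r+2\}$, so all relations used lie in $\R_{r+1,r+2}$.

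The main obstacle is guaranteeing that the auxiliary points exist. We need $c\notin\im\alpha$, and we need $e\notin\{a_1,\dots,a_{r+1},c\}$ together with a $\ke\beta$-class disjoint from $\{a_1,\dots,a_r\}$ to receive $e$. The standing assumption $r+2\le m<n$ makes these counting conditions hold, as in the $\I_n$ and $\T_n$ lemmas. If $B\setminus\{a_{r+1}\}$ is empty, we can simply take $c=a_{r+1}$ instead, since only $c\in B$ matters. If the point counts nonetheless fail in an edge case, we would first apply Lemma \ref{lem: PTn: case 1 change ker b coro}-type moves to $\beta$ to create room, mirroring Case 2 of Lemma \ref{lem: Tn: change ker alpha 1}.
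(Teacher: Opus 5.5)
Your final construction is essentially the paper's. $\alpha_1$ sends the deleted point $a$ to a fresh point $c\notin\im\alpha$, $\alpha_2$ collapses $\{a_{r+1},c\}$ onto $a_{r+1}\in B$ and sends one extra point into $B_{r+1}$, and $\alpha_2'$ is $\alpha_2$ with $c$ removed from its domain. The paper's $a'$ is your $c$, and its single point $a''\in B_{r+1}$ serves as both your $e$ and your $f$. With these choices the chain $x_\alpha x_\beta=x_{\alpha_1}x_{\alpha_2}x_\beta=x_{\alpha_1}x_{\alpha_2\beta}=x_{\alpha_1}x_{\alpha_2'}x_\beta=x_{\alpha'}x_\beta$ uses only relations of $\R_{r+1,r+2}$, as you say.

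As written, though, the proposal carries over a condition from your abandoned first attempt, and your handling of it contains a step that fails. In that first attempt the claim $\alpha_1\alpha_2=\alpha$ was also false: with $\alpha_2$ the identity on $c$, the point $a$ goes to $c$, not to $a_{r+1}$. In the final version $c\alpha_2=a_{r+1}$, so nothing uses $c\in B$. What is essential is $c\notin\im\alpha$; otherwise $\alpha_1$ has rank only $r+1$ and $\alpha_1\alpha_2'$ no longer deletes $a$.

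Your fallback ``take $c=a_{r+1}$ if $B\setminus\{a_{r+1}\}$ is empty'' therefore breaks the proof. Then $\alpha_1\in J_{r+1}$ and $\alpha_1\alpha_2'=\alpha$, not $\alpha'$. This case really occurs: take $n=5$, $m=4$, $r=1$, with $\alpha$ mapping $\{1\},\{2,3\}$ to $1,5$ and $\beta$ mapping $\{1,2\},\{3,4\}$ to $1,2$. Here $B=\{5\}=\{a_2\}$ and $|A_2|=2$.

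The repair is to drop the requirement $c\in B$. Choose $f\in B_{r+1}$, which in Case 2 is nonempty and disjoint from $\im\alpha$. Then choose $c\in[n]\setminus\im\alpha$ and $e\notin\{a_1,\dots,a_{r+1},c\}$, which is possible because $r+2\le m<n$. Alternatively take $e=f$ and $c\neq f$, as the paper does. There are then no edge cases at all, and your closing hedge about first modifying $\beta$ is unnecessary.
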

\begin{proof}
    Let $a\in [n]\setminus\im\alpha, a''\in B_{r+1}$. Define \begin{align*}
        \alpha_1 &= \begin{small}
            \begin{pmatrix}
                A_1 & \dots & A_r & A_{r+1}\setminus\{a\} & a \\
                a_1 & \dots & a_r & a_{r+1} & a' 
            \end{pmatrix}
        \end{small},\  \alpha_2 = \begin{small}
            \begin{pmatrix}
                a_1 & \dots & a_r & \{a_{r+1},a'\} & a'' \\
                a_1 & \dots & a_r & a_{r+1} & a'' 
            \end{pmatrix}
        \end{small},\\ \alpha_2' &= \begin{small}
            \begin{pmatrix}
                a_1 & \dots & a_r & a_{r+1} & a''\\
                a_1 & \dots & a_r & a_{r+1} & a'' 
            \end{pmatrix}
        \end{small}.
    \end{align*}Then $\alpha_1\alpha_2=\alpha$, $\alpha_1\alpha_2'=\alpha'$, and \[\alpha_2\beta=\alpha_2'\beta=\begin{small}
        \begin{pmatrix}
            a_1 & \dots & a_r & a'' \\
            b_1 & \dots & b_r & b_{r+1} 
        \end{pmatrix}
    \end{small}\in J_{r+1}.\] Hence \begin{align*}
            \begin{alignedat}{2}
            x_\alpha x_\beta&=x_{\alpha_1}x_{\alpha_2}x_{\beta}\ \ &&(x_\alpha=x_{\alpha_1}x_{\alpha_2})\\&=x_{\alpha_1}x_{\alpha_2'}x_{\beta}\ \ &&(x_{\alpha_2}x_{\beta}=x_{\alpha_2'}x_{\beta}) \\&=x_{\alpha'} x_{\beta}\ \ &&(x_{\alpha'}=x_{\alpha_1}x_{\alpha_2'}).
            \end{alignedat}
        \end{align*}
\end{proof}

\begin{lemma}\label{lem: PTn: case 2 change im a}
    Suppose that $|B|\geq 2$. Let $a_{r+1}'\in B\setminus\{a_{r+1}\}$. Define \[\alpha' = \begin{small}
        \begin{pmatrix}
        A_1 & A_2 & \dots & A_r & A_{r+1} \\
        a_1 & a_2 & \dots & a_r & a_{r+1}'
    \end{pmatrix}
    \end{small}.\] Let $1\leq i\leq r$. Suppose that $|B_i|\geq 2$, let $a_i'\in B_i\setminus\{a_i\}$. Define \[\alpha'' = \begin{small}
        \begin{pmatrix}
        A_1 & \dots & A_i & \dots & A_r & A_{r+1}\\
        a_1 & \dots & a_i' & \dots & a_r & a_{r+1}
    \end{pmatrix}
    \end{small}.\] 
    Then $\alpha\beta=\alpha'\beta$, $\alpha\beta=\alpha''\beta$, and the relation $x_\alpha x_\beta=x_{\alpha'} x_\beta,x_\alpha x_\beta=x_{\alpha''} x_\beta$ are consequences of $\R_{r+1,r+2}$.
\end{lemma}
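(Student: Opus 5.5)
The plan is to imitate the factorisation trick used in Lemmas \ref{lem: Tn: change im alpha} and \ref{lem: PTn: case 2 change ker a}. We write $\alpha=\alpha\alpha_2$, where $\alpha_2$ is a partial map of rank $r+2$ that fixes $\im\alpha$ pointwise and has one extra point in its domain. We then choose a second rank-$(r+2)$ map $\alpha_2'$ with $\alpha_2\beta=\alpha_2'\beta\in J_{r+1}$ and $\alpha\alpha_2'=\alpha'$ (respectively $\alpha''$). All letters involved then have rank $r+1$ or $r+2$, and the chain
\[
x_\alpha x_\beta=x_\alpha x_{\alpha_2}x_\beta=x_\alpha x_{\alpha_2'}x_\beta=x_{\alpha'}x_\beta
\]
uses only relations of $\R_{r+1,r+2}$. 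The equalities $\alpha\beta=\alpha'\beta=\alpha''\beta$ are immediate: in Case 2 the point $a_{r+1}$ lies outside $\dm\beta$, and so does $a_{r+1}'\in B$; likewise $a_i'$ lies in the same $\ke\beta$-class $B_i$ as $a_i$.

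Concretely, pick $c\in[n]\setminus\im\alpha$, which exists since $r+1<m\leq n$, and pick $d\in B_{r+1}$. Define $\alpha_2$ with domain $\{a_1,\dots,a_{r+1},c\}$ by $a_j\alpha_2=a_j$ for $j\leq r+1$ and $c\alpha_2=d$. The images $a_1,\dots,a_r,a_{r+1},d$ are distinct, since they lie in the pairwise disjoint sets $B_1,\dots,B_r,B,B_{r+1}$, so $\rk{\alpha_2}=r+2$. Clearly $\alpha\alpha_2=\alpha$. Moreover $\alpha_2\beta$ maps $a_j\mapsto b_j$ for $j\leq r$ and $c\mapsto b_{r+1}$, and is undefined at $a_{r+1}$ because $a_{r+1}\in B$. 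Hence $\alpha_2\beta\in J_{r+1}$.

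For $\alpha'$, let $\alpha_2'$ agree with $\alpha_2$ except that $a_{r+1}\alpha_2'=a_{r+1}'$. Since $a_{r+1}'\in B\setminus\{a_{r+1}\}$, the images are still distinct and $\rk{\alpha_2'}=r+2$. Also $\alpha_2'\beta=\alpha_2\beta$, because $a_{r+1}'\notin\dm\beta$, and $\alpha\alpha_2'=\alpha'$. For $\alpha''$, let $\alpha_2''$ agree with $\alpha_2$ except that $a_i\alpha_2''=a_i'$. We have $a_i'\in B_i\setminus\{a_i\}$, so $a_i'$ cannot equal any $a_j$ ($j\neq i$), $a_{r+1}$ or $d$, as these lie in other classes or in $B$. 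Thus $\rk{\alpha_2''}=r+2$. Since $a_i'\beta=b_i=a_i\beta$, we get $\alpha_2''\beta=\alpha_2\beta$, and $\alpha\alpha_2''=\alpha''$.

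The relations used are $x_\alpha=x_\alpha x_{\alpha_2}$, the pair $x_{\alpha_2}x_\beta=x_{\alpha_2\beta}=x_{\alpha_2'}x_\beta$, and $x_\alpha x_{\alpha_2'}=x_{\alpha'}$ (and the analogues for $\alpha''$). All of these lie in $\R_{r+1,r+2}$, since every element appearing has rank $r+1$ or $r+2$.

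The only delicate point, which I expect to be the main check, is the distinctness of images so that $\alpha_2,\alpha_2',\alpha_2''$ genuinely have rank $r+2$ and not less. This relies on $B_1,\dots,B_{r+1},B$ partitioning $[n]$ together with the Case 2 hypothesis $a_{r+1}\in B$. A second point is ensuring that $B_{r+1}\neq\emptyset$ and that a point $c\notin\im\alpha$ exists, which holds because $r+2\leq m\leq n$.
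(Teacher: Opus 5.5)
Your proof is correct. For $\alpha''$ it is the paper's argument. The paper inserts the partial identity $\alpha_1$ on $\{a_1,\dots,a_{r+1},a\}$ and swaps it for the map sending $a_i\mapsto a_i'$. The point $a$ is left unspecified there, though it must lie in $B_{r+1}$ for $\alpha_1\beta\in J_{r+1}$; so $\alpha_1$ is your $\alpha_2$ with $c=d$.

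For $\alpha'$ the paper works on the right-hand factor instead. It picks $b\notin\im\beta$ and extends $\beta$ to a rank-$(r+2)$ map $\beta_1$ by adding the new kernel class $\{a_{r+1},a_{r+1}'\}\subseteq B$, mapped to $b$. It then writes $\beta=\beta_1\beta_2$ with $\beta_2$ the partial identity on $\im\beta$. Since $\alpha\beta_1=\alpha'\beta_1\in J_{r+1}$, this gives $x_\alpha x_\beta=x_\alpha x_{\beta_1}x_{\beta_2}=x_{\alpha'}x_{\beta_1}x_{\beta_2}=x_{\alpha'}x_\beta$.

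Your version handles both changes with the single left factor $\alpha_2$. The only check needed is that the modified images still lie in distinct blocks of the partition $B_1,\dots,B_{r+1},B$. So it is more uniform, and it makes the choice of the auxiliary point explicit. The paper's route for $\alpha'$ uses the freedom in $\PT_n$ to merge $a_{r+1}$ and $a_{r+1}'$ into one kernel class, so a single extension of $\beta$ absorbs both images. Both arguments use only relations from $\R_{r+1,r+2}$.
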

\begin{proof}
    Let $b\in [n]\setminus\im\beta$. Define\begin{align*}
        \beta_1&=\begin{small}
            \begin{pmatrix}
                B_1 & \dots & B_{r+1} & \{a_{r+1},a_{r+1}'\} \\
                b_1 & \dots & b_{r+1} & b 
            \end{pmatrix}
        \end{small},\ \beta_2= \begin{small}
            \begin{pmatrix}
                b_1 & \dots & b_{r+1} \\
                b_1 & \dots & b_{r+1} 
            \end{pmatrix}
        \end{small}.
    \end{align*}Then $\beta_1\beta_2=\beta, \alpha\beta_1=\alpha'\beta\in J_{r+1}$. We have \begin{align*}
            \begin{alignedat}{2}
            x_\alpha x_\beta&=x_{\alpha}x_{\beta_1}x_{\beta_2}\ \ &&(x_\beta=x_{\beta_1}x_{\beta_2})\\&=x_{\alpha'}x_{\beta_1}x_{\beta_2}\ \ &&(x_{\alpha}x_{\beta_1}=x_{\alpha'}x_{\beta_1}) \\&=x_{\alpha'} x_{\beta}\ \ &&(x_{\beta_1}x_{\beta_2}=x_{\beta}).
            \end{alignedat}
        \end{align*}For the second part, it suffices to prove the lemma when $i=1$. Let $a_1'\in B_1\setminus\{a_1\}$. Define\begin{align*}
            \alpha_1&= \begin{small}
                \begin{pmatrix}
                    a_1 & a_2 & \dots & a_{r+1} & a\\
                    a_1 & a_2 & \dots & a_{r+1} & a 
                \end{pmatrix}
            \end{small},\ \alpha_1'= \begin{small}
                \begin{pmatrix}
                    a_1 & a_2 & \dots & a_{r+1} & a \\
                    a_1' & a_2 & \dots & a_{r+1} & a
                \end{pmatrix}
            \end{small}.
        \end{align*}Then $\alpha\alpha_1=\alpha,\alpha\alpha_1'=\alpha''$, and $\alpha_1\beta=\alpha_1'\beta\in J_{r+1}$. Hence \begin{align*}
        \begin{alignedat}{2}
            x_\alpha x_\beta&=x_{\alpha}x_{\alpha_1}x_{\beta}\ \ &&(x_\alpha=x_{\alpha}x_{\alpha_1})\\&=x_{\alpha}x_{\alpha_1'}x_{\beta}\ \ &&(x_{\alpha_1}x_{\beta}=x_{\alpha_1'}x_{\beta}) \\&=x_{\alpha''} x_{\beta}\ \ &&(x_{\alpha}x_{\alpha_1'}=x_{\alpha''}).
            \end{alignedat}
        \end{align*}
\end{proof}

\begin{lemma}\label{lem: PTn: case 2 change ker b}
    Suppose that $|B|\geq 2$. Let $b\in B\setminus\{a_{r+1}\}$. For $1\leq i\leq r+1$, define \[\beta' = \begin{small}
        \begin{pmatrix}
        B_1 & B_2 &\dots &B_i\cup \{b\}&\dots & B_{r} & B_{r+1}\\
        b_1 & b_2 &\dots  &b_i &\dots& b_{r} & b_{r+1} \end{pmatrix}
    \end{small}.\] Then $\alpha\beta=\alpha\beta'$, and the relation $x_\alpha x_\beta=x_\alpha x_{\beta'}$ is a consequence of $\R_{r+1,r+2}$. 
\end{lemma}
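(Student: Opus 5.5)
The plan is to mimic the factor-and-swap trick used in Lemmas \ref{lem: In: change ker alpha} and \ref{lem: In: local change ker beta}. We are in Case 2: $a_{r+1}\in B$, so $\alpha\beta$ has domain $A_1\cup\dots\cup A_r$. The point $b\in B\setminus\{a_{r+1}\}$ lies outside $\im\alpha$; if it were in $\im\alpha$, it would equal some $a_k$ with $k\le r$, contradicting $a_k\in B_k\subseteq\dm\beta$. Hence adding $b$ to any kernel class of $\beta$ does not change $\alpha\beta$, which gives $\alpha\beta=\alpha\beta'$ at once. It remains to derive the relation.

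I will first factor $\beta=\beta_1\beta_2$ with $\beta_1\in J_{r+2}$ and $\beta_2\in J_{r+1}$. Choose $c\in[n]\setminus\im\beta$ and put
\[
\beta_1=\begin{small}\begin{pmatrix} B_1 & \dots & B_{r+1} & b & B\setminus\{b\}\\ b_1 & \dots & b_{r+1} & c & - \end{pmatrix}\end{small},\qquad
\beta_2=\begin{small}\begin{pmatrix} b_1 & \dots & b_{r+1}\\ b_1 & \dots & b_{r+1}\end{pmatrix}\end{small},
\]
and let $\beta_1'$ agree with $\beta_1$ except that $b\mapsto b_i$ is replaced by sending $b$ to $c$ in a way that $\beta_1'\beta_2=\beta'$. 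To arrange this, define $\beta_1'$ by $b\mapsto c'$ and then take a second idempotent-like $\beta_2'$ identifying $c'$ with $b_i$. More cleanly, let $\beta_1$ send $b$ to $c$, and let $\beta_1'$ send $b$ to a different new point $c'\notin\im\beta\cup\{c\}$; this needs $n\ge r+3$, which follows from $r\le 2m-n-1$ and $m<n$. Then let $\beta_2$ be the rank-$(r+1)$ map fixing $b_1,\dots,b_{r+1}$, sending $c'$ to $b_i$ and leaving $c$ undefined. With these choices $\beta_1\beta_2=\beta$, $\beta_1'\beta_2=\beta'$, and $\rk{\beta_2}=r+1$.

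The key relation is $x_\alpha x_{\beta_1}=x_\alpha x_{\beta_1'}$. Since $b\notin\im\alpha$, we have $\alpha\beta_1=\alpha\beta_1'$, but this product has rank $r$, so it is not a defining relation. To repair this, insert a factor $\alpha=\alpha_1\alpha_2$ as in Lemma \ref{lem: In: local change ker beta}: let $\alpha_1=\alpha$ restricted appropriately, and let $\alpha_2\in J_{r+2}$ extend the identity on $\im\alpha$ by sending an extra point $a$ to $b_{r+1}$'s preimage class $B_{r+1}$. (The point $a$ should lie in $[n]\setminus\im\alpha$; if $A\neq\emptyset$ one can instead use $\alpha_1=\alpha$ and an $\alpha_2$ defined on $\im\alpha\cup\{a\}$.) Then $\alpha_2\beta_1=\alpha_2\beta_1'\in J_{r+1}$, because $b\notin\im\alpha_2$ and $a_{r+1}$ is still undefined under $\beta_1$ and $\beta_1'$. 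This gives the chain
\[
x_\alpha x_\beta=x_{\alpha_1}x_{\alpha_2}x_{\beta_1}x_{\beta_2}=x_{\alpha_1}x_{\alpha_2}x_{\beta_1'}x_{\beta_2}=x_\alpha x_{\beta'},
\]
using only relations of $\R_{r+1,r+2}$.

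The main obstacle is bookkeeping of ranks. I must make sure that $\alpha_2$, $\beta_1$ and $\beta_1'$ have rank exactly $r+2$, that $\beta_2$ has rank $r+1$, and that $\alpha_2\beta_1$ has rank $r+1$ rather than $r$. For the last point, the extra point $a$ supplies the $(r+1)$-st image in place of $a_{r+1}$, which is killed by $\beta_1$. I also need enough spare points outside $\im\beta$ and outside $\im\alpha$, and I will check these counts using $|B|\ge2$ and $n\ge r+3$. The case $i=r+1$ works the same way.
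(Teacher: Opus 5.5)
Your proof is correct and follows essentially the paper's route. You pad $\alpha$ on the right by a rank-$(r+2)$ map that fixes $\im\alpha$ and sends one extra point into $B_{r+1}$; the paper uses the partial identity on $\im\alpha\cup\{a\}$ with $a\in B_{r+1}$. You then factor $\beta=\beta_1\beta_2$ and $\beta'=\beta_1'\beta_2$ through rank-$(r+2)$ maps that differ only at $b$, so the swap $x_{\alpha_2}x_{\beta_1}=x_{\alpha_2}x_{\beta_1'}$ takes place in $J_{r+1}$.

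Building $\beta_1,\beta_1',\beta_2$ from two fresh points $c,c'$ is only a cosmetic variant. It also avoids a slip in the paper's printed $\beta_2$, which should send $a$ rather than $a_{r+1}$ to $b_{r+1}$. Two small points: the caveat ``if $A\neq\emptyset$'' is unnecessary, since $\alpha_1=\alpha$ always works, and $n\geq r+3$ already follows from $|B|\geq 2$.
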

\begin{proof}
Without loss of generality, let $i=1$. Let $a\in B_{r+1}$. Define \begin{align*}
    \begin{alignedat}{2}
    \beta_1&=\begin{small}
        \begin{pmatrix}
            B_1 & \dots & B_r & B_{r+1} & b \\
            a_1 & \dots & a_r & a & a_{r+1} 
        \end{pmatrix}
    \end{small},\ \beta_1'=\begin{small}
        \begin{pmatrix}
            B_1 & \dots & B_r & B_{r+1} & b \\
            a_1 & \dots & a_r & a & b 
        \end{pmatrix}
    \end{small},\\ \beta_2&=\begin{small}
        \begin{pmatrix}
            \{a_1,b\} & a_2 & \dots & a_{r+1} \\
            b_1 & b_2 & \dots & b_{r+1} 
        \end{pmatrix}
    \end{small},\ \alpha_1=\begin{small}
        \begin{pmatrix}
            a_1 & \dots & a_{r+1} & a \\
            a_1 & \dots & a_{r+1} & a 
        \end{pmatrix}
    \end{small}.
    \end{alignedat}
\end{align*}Then $\beta_1\beta_2=\beta, \beta_1'\beta_2=\beta'$, $\alpha\alpha_1=\alpha$, and $\alpha_1\beta_1=\alpha_1\beta_1'\in J_{r+1}$. Then \begin{align*}
            \begin{alignedat}{2}
            x_\alpha x_\beta&=x_{\alpha}x_{\alpha_1}x_{\beta_1}x_{\beta_2}\ \ &&(x_\alpha=x_{\alpha}x_{\alpha_1}, x_\beta=x_{\beta_1}x_{\beta_2})\\&=x_{\alpha}x_{\alpha_1}x_{\beta_1'}x_{\beta_2}\ \ &&(x_{\alpha_1}x_{\beta_1}=x_{\alpha_1}x_{\beta_1'}) \\&=x_{\alpha} x_{\beta'}\ \ &&(x_{\beta_1'}x_{\beta_2}=x_{\beta'}).
            \end{alignedat}
        \end{align*}
\end{proof}

\begin{lemma}\label{lem: PTn: case 2 change im b}
    Let $b_{r+1}'\in [n]\setminus\im\beta$. Define \[\beta' = \begin{small}
        \begin{pmatrix}
        B_1 & B_2 & \dots & B_r & B_{r+1} &B\\
        b_1 & b_2 & \dots & b_r & b_{r+1}' &-
    \end{pmatrix}
    \end{small}.\] Then $\alpha\beta=\alpha\beta'$, and the relation $x_\alpha x_\beta=x_\alpha x_{\beta'}$ is a consequence of $\R_{r+1,r+2}$.
\end{lemma}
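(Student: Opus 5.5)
The plan is to use the same three-step factorisation trick as in Lemmas \ref{lem: In: change im beta} and \ref{lem: Tn: change im beta}. We are in Case 2, so $a_{r+1}\in B$ and $a_1,\dots,a_r$ lie in $B_1,\dots,B_r$ respectively. We factor $\beta=\beta_1\beta_2$ through maps of rank $r+2$ and $r+1$. We also insert a right identity for $\alpha$ of rank $r+2$. The goal is that the middle product lies in $J_{r+1}$, so that changing the image of $B_{r+1}$ inside $\beta_1$ becomes a single relation of $\R_{r+1,r+2}$.

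\textbf{Construction.} Pick $a\in B_{r+1}$ and choose $c\notin\im\beta\cup\{b_{r+1}'\}$; such a $c$ exists whenever $r+2\leq n-1$, which holds since $r\leq 2m-n-1\leq m-2$. Set
\[
\alpha_1=\begin{pmatrix} a_1 & \dots & a_{r+1} & a\\ a_1 & \dots & a_{r+1} & a\end{pmatrix}\in J_{r+2},
\]
the partial identity on $\im\alpha\cup\{a\}$. Then $\alpha\alpha_1=\alpha$.

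Next define
\[
\beta_1=\begin{pmatrix} B_1&\dots&B_r& B_{r+1}& a_{r+1}\\ b_1&\dots&b_r&b_{r+1}& c\end{pmatrix},\qquad
\beta_1'=\begin{pmatrix} B_1&\dots&B_r& B_{r+1}& a_{r+1}\\ b_1&\dots&b_r&b_{r+1}'& c\end{pmatrix},
\]
both in $J_{r+2}$. Finally let $\beta_2$ be the partial identity on $\{b_1,\dots,b_r,b_{r+1},b_{r+1}'\}$, which has rank $r+2$. If one insists on rank $r+1$, instead map $b_{r+1}'\mapsto b_{r+1}$ and keep $c$ outside the domain.

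\textbf{Checks.} We need four identities:
\begin{itemize}
\item $\beta_1\beta_2=\beta$, because $c$ lies outside $\dm{\beta_2}$, so the extra point $a_{r+1}$ becomes undefined again;
\item $\beta_1'\beta_2=\beta'$, for the same reason;
\item $\alpha_1\beta_1=\alpha_1\beta_1'$, both equal to the map sending $a_i\mapsto b_i$ for $i\le r$, $a_{r+1}\mapsto c$ and $a\mapsto b_{r+1}$ or $b_{r+1}'$ respectively.
\end{itemize}
The last item fails as stated, because the image of $a$ differs. To fix it, choose $\alpha_1$ to omit $a$, so $\alpha_1$ is the partial identity on $\im\alpha$. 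Then $\alpha_1\in J_{r+1}$, and $\alpha_1\beta_1=\alpha_1\beta_1'$ maps $a_i\mapsto b_i$ for $i\le r$ and $a_{r+1}\mapsto c$. This product has rank $r+1$, so the equation is a genuine relation in $\R_{r+1,r+2}$.

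\textbf{Chain of relations.} With these choices,
\[
x_\alpha x_\beta=x_\alpha x_{\alpha_1}x_{\beta_1}x_{\beta_2}=x_\alpha x_{\alpha_1}x_{\beta_1'}x_{\beta_2}=x_\alpha x_{\beta'}.
\]
Each step is a relation of $\R_{r+1,r+2}$, since every factor and every two-letter product involved has rank $r+1$ or $r+2$.

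\textbf{Main obstacle.} The main difficulty is the rank bookkeeping. Every letter used must have rank in $\{r+1,r+2\}$, and every product used in a relation must also stay in that range. In particular $\beta_2$ must have rank $r+1$ or $r+2$ while still killing $c$. Choosing $\beta_2$ as the partial identity on $\im\beta\cup\{b_{r+1}'\}$, of rank $r+2$, handles this. I would double-check that $n\ge r+3$, so that room for both $c$ and $b_{r+1}'$ outside $\im\beta$ exists. If it does not, use the alternative $\beta_2$ sending $b_{r+1}'\mapsto b_{r+1}$ composed appropriately, or route through a transposition as in Lemma \ref{lem: PTn: case 1 change im b}.
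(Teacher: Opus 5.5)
Your final construction, with $\alpha_1$ the partial identity on $\im\alpha$ and $\beta_2$ the partial identity on $\im\beta\cup\{b_{r+1}'\}$, is correct and is essentially the paper's proof; the only difference is that the paper's $\beta_1,\beta_1'$ send all of $B$ to the auxiliary point, whereas yours send only $a_{r+1}$ there, which works equally well. Drop the suggested rank-$(r+1)$ variant of $\beta_2$ sending $b_{r+1}'\mapsto b_{r+1}$: it would give $\beta_1'\beta_2=\beta$ instead of $\beta'$, and your argument does not need it.
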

\begin{proof}
    Let $b\in [n]\setminus(\im{\beta}\cup\im{\beta'})$. Define\begin{align*}
        \alpha_1 &=\begin{small}
            \begin{pmatrix}
                a_1 & \dots & a_{r+1}\\
                a_1 & \dots & a_{r+1}
            \end{pmatrix}
        \end{small},\ \beta_1=\begin{small}
            \begin{pmatrix}
                B_1 & \dots & B_{r+1} & B\\
                b_1 & \dots & b_{r+1} & b
            \end{pmatrix}
        \end{small},\\ \beta_1'&=\begin{small}
            \begin{pmatrix}
                B_1 & \dots & B_{r+1} & B\\
                b_1 & \dots & b_{r+1}' & b
            \end{pmatrix}
        \end{small},\ \beta_2=\begin{small}
            \begin{pmatrix}
                b_1 & \dots & b_{r+1} & b_{r+1}' \\
                b_1 & \dots & b_{r+1} & b_{r+1}' 
            \end{pmatrix}
        \end{small}.
    \end{align*}Then \begin{align*}
            \begin{alignedat}{2}
            x_\alpha x_\beta&=x_{\alpha}x_{\alpha_1}x_{\beta_1}x_{\beta_2}\ \ &&(x_\alpha=x_{\alpha}x_{\alpha_1})\\&=x_{\alpha}x_{\alpha_1}x_{\beta_1'}x_{\beta_2}\ \ &&(x_{\alpha_1}x_{\beta_1}=x_{\alpha_1}x_{\beta_1'}) \\&=x_{\alpha} x_{\beta'}\ \ &&(x_{\beta_1'}x_{\beta_2}=x_{\beta'}).
            \end{alignedat}
        \end{align*}
\end{proof}

\begin{lemma}\label{lem: PTn: all case 2 length-2 words obtained from r+1,r+2}
    Let $\alpha,\beta$ be as defined above, and suppose that $\alpha'$,$\beta'$ are obtained from $\alpha,\beta$ using one of the following rules: \begin{thmenumerate}
        \item \label{lem: PTn: change word: change ker alpha} If $|A_{r+1}|\geq 2$, and $a\in A_{r+1}$, let $\alpha'$ be obtained from $\alpha$ by changing the kernel class $A_{r+1}$ by moving $a$ to $A$, and let $\beta'=\beta$.
        \item \label{lem: PTn: change word: change im alpha} If $|B|\geq 2$, and $a_{r+1}'\in B\setminus\{a_{r+1}\}$, let $\alpha'$ be obtained from $\alpha$ by changing the image of $A_{r+1}$ from $a_{r+1}$ to $a_{r+1}'$, and let $\beta'=\beta$.
        \item \label{lem: PTn: change word: change im alpha 2} For $1\leq i\leq r$, if $|B_i|\geq 2$, and $a_i'\in B\setminus\{a_i\}$, let $\alpha'$ be obtained from $\alpha$ by changing the image of $A_i$ from $a_i$ to $a_i'$, and let $\beta'=\beta$.
        \item \label{lem: PTn: change word: change ker beta} For $1\leq i\leq r$, if $|B|\geq 2$, and $b\in B\setminus\{a_{r+1}\}$, let $\beta'$ be obtained from $\beta$ by changing the kernel by moving $b$ from $B$ to $B_i$, and let $\alpha'=\alpha$.
        \item \label{lem: PTn: change word: change im beta} If $b_{r+1}'\in [n]\setminus\im\beta$, let $\beta'$ be obtained from $\beta$ by changing the image of $B_{r+1}$ from $b_{r+1}$ to $b_{r+1}'$, and let $\alpha'=\alpha$.
    \end{thmenumerate}Then the word $x_{\alpha'} x_{\beta'}$ can be obtained from $x_\alpha x_\beta$ using $\R_{r+1,r+2}$.
\end{lemma}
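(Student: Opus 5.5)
This lemma is a compilation statement, parallel to Lemma \ref{lem: In: all length-2 words obtained from r+1,r+2} and Lemma \ref{lem: Tn: all length-2 words obtained from r+1,r+2}. The plan is to match each of the five rules with one of the Case~2 lemmas just proved for $\PT_n$, and to check that the hypotheses line up.

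Throughout, $\alpha,\beta\in J_{r+1}$ are in Case~2. That is, $a_{r+1}\in B$, $a_i\in B_i$ for $i\le r$, and
\[
\alpha\beta=\begin{pmatrix} A_1&\dots&A_r\\ b_1&\dots&b_r\end{pmatrix}.
\]
The rules are matched as follows.
\begin{thmenumerate}
\item Rule (i) (remove a point of $A_{r+1}$ from the domain) is exactly Lemma \ref{lem: PTn: case 2 change ker a}. The hypothesis $|A_{r+1}|\ge 2$ is the same, and the statement of that lemma should be read with $a'$ and $a$ identified.
\item Rule (ii) (move the image of $A_{r+1}$ to another point of $B$) is the first half of Lemma \ref{lem: PTn: case 2 change im a}, under the hypothesis $|B|\ge2$.
\item Rule (iii) (move the image of $A_i$, $i\le r$, within the $\ke\beta$-class $B_i$) is the second half of the same lemma. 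Here I read the target set in the rule as $B_i\setminus\{a_i\}$, as in Lemma \ref{lem: PTn: case 2 change im a}. Indeed, $a_i'\in B$ would destroy the equality $\alpha\beta=\alpha'\beta$, so the $B$ in the rule must be a typo for $B_i$.
\item Rule (iv) (move a point $b\in B\setminus\{a_{r+1}\}$ into the kernel class $B_i$) is Lemma \ref{lem: PTn: case 2 change ker b}.
\item Rule (v) (change the image of $B_{r+1}$ to a point outside $\im\beta$) is Lemma \ref{lem: PTn: case 2 change im b}.
\end{thmenumerate}

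In each case the cited lemma shows that the relation $x_\alpha x_\beta=x_{\alpha'}x_{\beta'}$ is a consequence of $\R_{r+1,r+2}$. So $x_{\alpha'}x_{\beta'}$ is obtained from $x_\alpha x_\beta$ by a finite sequence of applications of relations of $\R_{r+1,r+2}$, which is the claim.

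The only point needing care is that every auxiliary map used in those proofs really lies in $J_{r+1}\cup J_{r+2}$, and that the required points exist. For example, Lemma \ref{lem: PTn: case 2 change im b} needs $b\notin\im\beta\cup\im\beta'$, which requires $n\ge r+3$, and this holds since $r+2\le m<n$. Likewise, in Lemma \ref{lem: PTn: case 2 change ker b}, $\beta_1$ has rank $r+2$ because $b\in B$. A second point to check is that $B_{r+1}\cap\im\alpha=\emptyset$ in Case~2. This holds because $\alpha\beta$ has rank $r$, with the images $a_1,\dots,a_r$ lying in $B_1,\dots,B_r$ and $a_{r+1}\in B$.

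I expect the main obstacle to be only this bookkeeping of ranks and the reconciliation of the slight notational mismatches between the rules and the lemma statements. There is no new construction.
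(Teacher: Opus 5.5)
Your proposal is correct and is essentially the paper's own proof: the paper simply cites Lemmas~\ref{lem: PTn: case 2 change ker a}--\ref{lem: PTn: case 2 change im b}, matching rules (i)--(v) to them exactly as you do. Your reconciliations are also the right ones: rule (iii) should read $a_i'\in B_i\setminus\{a_i\}$ rather than $B$, and rule (i) should be read with $a=a'$.
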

\begin{proof}
    This follows from Lemmas \ref{lem: PTn: case 2 change ker a}-\ref{lem: PTn: case 2 change im b}.
\end{proof}

Using Lemma \ref{lem: PTn: all case 2 length-2 words obtained from r+1,r+2}, we aim to show that given a word $u$ of length 2 as in Case 2, we can obtain any other word $v$ which represents the same partial map, if $v$ is also of Case 2.
\begin{lemma}\label{lem: PTn: type 2 words of length 2 are equal}
    Let $\alpha,\beta,\gamma,\delta\in J_{r+1}$ such that $\alpha\beta=\gamma\delta\in J_r$ are of type $2$. Then $x_\alpha x_\beta=x_\gamma x_\delta$ is a consequence of $\R_{r+1,r+2}$.
\end{lemma}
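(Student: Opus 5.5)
We mimic the proof of Lemma \ref{lem: In: length 2 words are equal}: fix the common product
\[
\alpha\beta=\gamma\delta=\begin{small}\begin{pmatrix} X_1 & \dots & X_r & X\\ x_1 & \dots & x_r & -\end{pmatrix}\end{small},
\]
and use the rewriting rules of Lemma \ref{lem: PTn: all case 2 length-2 words obtained from r+1,r+2} to transform $x_\gamma x_\delta$ step by step into $x_\alpha x_\beta$. Since both factorisations are of Case 2, in each of them exactly one kernel class of the left factor is sent outside the domain of the right factor. The first task is to normalise this ``lost'' class.

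\textbf{Step 1: normalise the lost class.} Let $A_{r+1}$ and $C_{r+1}$ be the lost classes of $\alpha$ and $\gamma$; both are subsets of $X$. Using rule \ref{lem: PTn: change word: change ker alpha}, shrink the lost class of $\gamma$ to a single point, moving the other points into the complement of the domain. Then, by a symmetric move (essentially Lemma \ref{lem: PTn: case 2 change ker a} read backwards, which is legitimate since all steps are equalities), enlarge it to exactly $A_{r+1}$. Now $\gamma$ and $\alpha$ have the same kernel classes and domain.

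\textbf{Step 2: match images and the kernel of the right factor.} We proceed coordinate by coordinate, as in Steps 1--2 of Lemma \ref{lem: In: length 2 words are equal}. For $i\le r$, we change $c_i$ to $a_i$ and move $a_i$ into the $i$-th kernel class of $\delta$. For this we use rule \ref{lem: PTn: change word: change ker beta} to add points of the complement $D$ of $\dm\delta$ to the relevant kernel classes of $\delta$, and rule \ref{lem: PTn: change word: change im alpha 2} to change the image point within a kernel class of $\delta$ of size $\ge 2$. The lost image $c_{r+1}$ is changed to $a_{r+1}$ by rule \ref{lem: PTn: change word: change im alpha}, provided $a_{r+1}\in D$. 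If instead $a_{r+1}$ lies in $\dm\delta$, we first move things around, shifting the competing point out via the kernel rules. Once $\gamma=\alpha$, the right factor $\delta$ agrees with $\beta$ on the $r$ relevant classes up to extra points. We then adjust the remaining kernel class and its image by rules \ref{lem: PTn: change word: change ker beta} and \ref{lem: PTn: change word: change im beta}, together with Lemma \ref{lem: PTn: case 1 change ker b coro}-type moves of points not in $\im\alpha$, to reach $\beta$.

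\textbf{Main obstacle.} The difficulty is the side conditions $|B|\ge 2$ and $|B_i|\ge 2$ in the rules. Points can only be moved between the complement of the domain of $\delta$ and its kernel classes if there are spare points. The plan is to first create room: use rule \ref{lem: PTn: change word: change ker beta} (or its reverse) to enlarge $D$ or a given class, exploiting that $\delta$ has rank $r+1<n$. If $D=\{c_{r+1}\}$ is a singleton, we first enlarge a kernel class of $\delta$ via the Case~1-type moves, or temporarily remove a point from a class of size $\ge 2$. Careful bookkeeping of these degenerate small cases, where $|D|=1$ and all classes are singletons, is where we expect the real work to lie. In that extreme case $\delta$ is essentially a partial bijection, and we would fall back on the $\I_n$ arguments of Lemmas \ref{lem: In: change im alpha}--\ref{lem: In: global change ker beta}, adapted verbatim.
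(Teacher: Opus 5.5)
Your overall route is the same as the paper's: normalise $\gamma$ to $\alpha$ class by class using the type~2 rules of Lemma~\ref{lem: PTn: all case 2 length-2 words obtained from r+1,r+2}, then reshape $\delta$ into $\beta$. The problem is that you leave undone the part you yourself call ``the real work'', and part of the resolution you sketch is wrong.

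First, the degenerate case you are worried about cannot occur. In the range where the lemma is used, $r\le 2m-n-1$ with $m\le n-1$, so $n\ge r+3$. Write $D=[n]\setminus\dm{\delta}$ and $c_i\in D_i$ as in your notation. Since $|D|+\sum_{i=1}^{r+1}|D_i|=n$, if $|D|=1$ then some kernel class of $\delta$ has at least two points. One of those points lies outside $\im{\gamma}$, because $D_i\cap\im{\gamma}=\{c_i\}$ for $i\le r$ and $D_{r+1}\cap\im{\gamma}=\emptyset$. Reading Lemma~\ref{lem: PTn: case 2 change ker b} backwards, any point of a kernel class of $\delta$ that is not in $\im{\gamma}$ can be pushed into $D$, provided its class stays nonempty. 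So you can always first make $|D|\ge 2$, and then, by rule (iv) forwards, give any chosen $D_j$ at least two points. Every kernel change of $\delta$ has to be routed through $D$ in this way. Lemma~\ref{lem: PTn: case 1 change ker b coro} is proved only for Case~1 pairs, so it is not available here. The $\I_n$ lemmas cannot be used ``verbatim'' either: they are stated for pairs of partial bijections, and $\gamma$ need not be injective.

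Second, your Step~2 never says what to do when $a_i$ is itself an image point of $\gamma$, that is, $a_i=c_j$ for some $j>i$, or $a_i=c_{r+1}$. In that situation no kernel move can touch $a_i$ without changing the product. You must first re-route that image of $\gamma$:
\begin{itemize}
\item if $a_i=c_j$, use rule (iii) inside $D_j$, after enlarging $D_j$ as above if it is a singleton;
\item if $a_i=c_{r+1}$, use rule (ii) inside $D$, after ensuring $|D|\ge 2$.
\end{itemize}
Only then carry $a_i$ through $D$ into $D_i$ and apply rule (iii) to set $c_i:=a_i$. This is exactly the manoeuvre the paper uses in its Step~1 when $|D_j|=1$. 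The same care is needed when placing $a_{r+1}$ into $D$ at the end. When you reshape $\delta$ into $\beta$, first move a point of $B_{r+1}$ into $D_{r+1}$, so that this class never empties.

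There is also a small slip in your Step~1. If $C_{r+1}\cap A_{r+1}=\emptyset$, you cannot shrink the lost class to a point of $A_{r+1}$. Instead, enlarge it to $\{c\}\cup A_{r+1}$ first, then delete $c$ by rule (i). With these additions the step-by-step normalisation goes through.
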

\begin{proof}
    Suppose that \[\alpha\beta=\gamma\delta=\begin{small}
        \begin{pmatrix}
            A_1 & \dots & A_r & X\\
            b_1 & \dots & b_r & -
        \end{pmatrix}
    \end{small}.\] Without loss of generality, \begin{align*}
        \alpha&=\begin{small}
            \begin{pmatrix}
                A_1 & \dots & A_r & X_1 & X_1'\\
                a_1 & \dots & a_r & a_{r+1} & -
            \end{pmatrix}
        \end{small}, \ \beta=\begin{small}
            \begin{pmatrix}
                B_1 & \dots & B_r & B_{r+1} & B\\
                b_1 & \dots & b_r & b_{r+1} & -
            \end{pmatrix}
        \end{small},\\
        \gamma&= \begin{small}
            \begin{pmatrix}
                A_1 & \dots & A_r & X_2 & X_2'\\
                c_1 & \dots & c_r & c_{r+1} & -
            \end{pmatrix}
        \end{small},\ \delta=\begin{small}
            \begin{pmatrix}
                D_1 & \dots & D_r & D_{r+1} & D\\
                b_1 & \dots & b_r & d_{r+1} & -
            \end{pmatrix}
        \end{small},
    \end{align*}where $a_i\in B_i$ and $c_i\in D_i$ for $1\leq i\leq r$, $a_{r+1}\in B$ and $c_{r+1}\in D$. We will show that we can obtain $x_\alpha x_\beta$ from $x_\gamma x_\delta$ using rules in Lemma \ref{lem: PTn: all case 2 length-2 words obtained from r+1,r+2}. We first show that we can obtain a word of the form $x_\alpha x_{\delta'}$ from $x_\gamma x_\delta$. Let $i=1$.
    
    \underline{Step 1}: If $a_i\in D_i$, go to Step 2. Otherwise, $a_i\in D_j$. If $|D_j|>1$, change $\ke\delta$ by moving $a_i$ from $D_j$ to $D_i$ \ref{lem: PTn: change word: change ker beta}. If $|D_j|=1$, first move some element $c'\in [n]\setminus(\im\gamma\cup\{a_i\})$ from another $\ke\delta$-class to $D_j$ \ref{lem: PTn: change word: change ker beta}, then change $\gamma$ by changing the image of $C_j$ from $a_i$ to $c'$ \ref{lem: PTn: change word: change im alpha 2}, and change the kernel of $\delta$ by moving $a_i$ to $D_i$ \ref{lem: PTn: change word: change ker beta}. 
    
    \underline{Step 2}: Change $\gamma$ by changing the image of $A_i$ from $c_i$ to $a_i$ \ref{lem: PTn: change word: change im alpha 2}.
    
    \underline{Step 3}: If $i=r$, proceed. Otherwise let $i=i+1$, and return to Step 1.
    
    \underline{Step 4}: If $a_{r+1}\in D$, change $\im\gamma$ by changing the image of $X_2$ from $c_{r+1}$ to $a_{r+1}$ \ref{lem: PTn: change word: change im alpha}. Otherwise, if $a_{r+1}\notin D$, move elements between $D$ and $\ke\delta$-classes until $a_{r+1}$ is moved to $D$ \ref{lem: PTn: change word: change ker beta}, then change $\im\gamma$ as in the previous situation \ref{lem: PTn: change word: change im alpha}.
    
    \underline{Step 5}: Change $\delta$ by moving elements between $X_2$ and $X_2'$ until we have $X_1=X_2$ and $X_1'=X_2'$ \ref{lem: PTn: change word: change ker alpha}.
    
    So far we have obtained $x_\alpha x_{\delta'}$ from $x_\gamma x_\delta$.
    
    \underline{Step 6}: Change $\im{\delta'}$ by replacing the image of $X_1$ from $d_{r+1}$ by $b_{r+1}$ \ref{lem: PTn: change word: change im beta}.
    
    \underline{Step 7}: Change $\delta'$ by moving elements between the kernel classes, until we obtain $\beta$ \ref{lem: PTn: change word: change ker beta}.
\end{proof}

Then, we show that relations that equates words of Case 1 and Case 2 can be deduced from $\R_{r+1,r+2}$.
\begin{lemma}\label{lem: PTn: can switch between type 1 and 2}
    Suppose that $\alpha,\beta\in J_{r+1}$ such that $\alpha\beta\in J_r$ and is type 2. Suppose that $|B_r|\geq 2$. Let $a_{r1},a_{r2}\in B_r$, let $A_{r1},A_{r2}$ partition $A_r$. Define \[\alpha' = \begin{small}
        \begin{pmatrix}
        A_1 & A_2 & \dots&A_{r-1} & A_{r1} & A_{r2}\\
        a_1 & a_2 & \dots& a_{r-1} & a_{r1} & a_{r2}
    \end{pmatrix}
    \end{small}.\] Then $\alpha\beta=\alpha'\beta$, and the relation $x_\alpha x_\beta=x_{\alpha'} x_\beta$ is a consequence of $\R_{r+1,r+2}$.
\end{lemma}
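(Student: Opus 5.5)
The plan is to use the same factorisation trick as in Lemmas \ref{lem: Tn: change ker alpha 2} and \ref{lem: PTn: case 2 change ker a}. We write $\alpha=\alpha_1\alpha_2$ and $\alpha'=\alpha_1\alpha_2'$, where $\alpha_1,\alpha_2,\alpha_2'\in J_{r+1}\cup J_{r+2}$ and $\alpha_2\beta=\alpha_2'\beta\in J_{r+1}$. Then the chain
\[
x_\alpha x_\beta=x_{\alpha_1}x_{\alpha_2}x_\beta=x_{\alpha_1}x_{\alpha_2\beta}=x_{\alpha_1}x_{\alpha_2'}x_\beta=x_{\alpha'}x_\beta
\]
uses only relations of $\R_{r+1,r+2}$. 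Note also that $r+2\le m$, since $r\le 2m-n-1<m-1$.

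We first record the set-up. We have $a_i\in B_i$ for $i\le r$ and $a_{r+1}\in B$. The elements $a_1,\dots,a_{r-1},a_{r1},a_{r2},a_{r+1}$ are pairwise distinct: $a_{r1},a_{r2}\in B_r$ are chosen distinct, $a_i\in B_i$ for $i<r$, and $a_{r+1}\notin\dm\beta$. Since $a_{r1},a_{r2}\in B_r$, we get $\alpha'\beta=\alpha\beta$ directly, and $\alpha'\in J_{r+1}$.

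Now define the factors.
\begin{itemize}
\item $\alpha_1$ is the partial map sending $A_i\mapsto a_i$ for $i\le r-1$, $A_{r1}\mapsto a_{r1}$, $A_{r2}\mapsto a_{r2}$ and $A_{r+1}\mapsto a_{r+1}$, undefined elsewhere. It has rank $r+2$.
\item Choose a point $c\notin\{a_1,\dots,a_{r-1},a_{r1},a_{r2},a_{r+1}\}$ and a point $d\in B_{r+1}$. Such a $c$ exists because $n\ge r+3$, which follows from $r\le 2m-n-1$ and $m<n$.
\item $\alpha_2$ is the identity on $a_1,\dots,a_{r-1}$, sends $\{a_{r1},a_{r2}\}\mapsto a_r$, fixes $a_{r+1}$, and sends $c\mapsto d$. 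It has rank $r+2$.
\item $\alpha_2'$ is the identity on $a_1,\dots,a_{r-1},a_{r1},a_{r2}$, sends $c\mapsto d$, and is undefined on $a_{r+1}$. It has rank $r+2$.
\end{itemize}

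Then $\alpha_1\alpha_2=\alpha$ and $\alpha_1\alpha_2'=\alpha'$, because $c\notin\im{\alpha_1}$. Composing with $\beta$ gives
\[
\alpha_2\beta=\alpha_2'\beta=\begin{small}\begin{pmatrix} a_1&\dots&a_{r-1}&\{a_{r1},a_{r2}\}&c\\ b_1&\dots&b_{r-1}&b_r&b_{r+1}\end{pmatrix}\end{small},
\]
with the point $a_{r+1}$ dropped in both cases. For $\alpha_2\beta$ it is dropped because $a_{r+1}\in B$; for $\alpha_2'\beta$ because $a_{r+1}\notin\dm{\alpha_2'}$. This common product has rank $r+1$, so $x_{\alpha_2}x_\beta=x_{\alpha_2\beta}=x_{\alpha_2'}x_\beta$ are relations in $\R_{r+1,r+2}$. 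Together with $x_\alpha=x_{\alpha_1}x_{\alpha_2}$ and $x_{\alpha'}=x_{\alpha_1}x_{\alpha_2'}$, this gives the chain above.

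The main point needing care is keeping the ranks of all auxiliary maps within $[r+1,r+2]$. In particular, $\alpha_2\beta$ must have rank $r+1$ rather than $r$. The naive choice without the extra point $c\mapsto d$ would collapse to rank $r$ and fail. Beyond that, one only needs the pairwise distinctness of the listed points so that $\alpha_1,\alpha_2$ are well-defined partial maps of the claimed ranks.
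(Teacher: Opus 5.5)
Your proof is correct and is essentially the paper's argument. The paper uses the same factorisation $\alpha=\alpha_1\alpha_2$, $\alpha'=\alpha_1\alpha_2'$ with $\alpha_2\beta=\alpha_2'\beta\in J_{r+1}$, and it gets the extra rank by sending the point $a_r$ to some $b\in B_{r+1}$. Your choice of an arbitrary auxiliary point $c\notin\{a_1,\dots,a_{r-1},a_{r1},a_{r2},a_{r+1}\}$ is slightly more careful, since the paper's choice tacitly requires $a_r\notin\{a_{r1},a_{r2}\}$.
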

\begin{proof}
    Let $b\in B_{r+1}$. Define \begin{align*}
        \alpha_1 &= \begin{small}
            \begin{pmatrix}
                A_1 & \dots & A_{r1} & A_{r2} & A_{r+1} \\
                a_1 & \dots & a_{r1} & a_{r2} & a_{r+1} 
            \end{pmatrix}
        \end{small},\ \alpha_2=\begin{small}
            \begin{pmatrix}
                a_1 & \dots & a_{r-1} & \{a_{r1},a_{r2}\} & a_{r+1} & a_r\\
                a_1 & \dots & a_{r-1} & a_r & a_{r+1} & b 
            \end{pmatrix}
        \end{small},\\ \alpha_2'&=\begin{small}
            \begin{pmatrix}
                a_1 & \dots & a_{r-1} & a_{r1} & a_{r2} & a_r \\
                a_1 & \dots & a_{r-1} & a_{r1} & a_{r2} & b 
            \end{pmatrix}
        \end{small}.
    \end{align*}Then $\alpha_1\alpha_2=\alpha,\alpha_1\alpha_2'=\alpha'$, and \[\alpha_2\beta=\alpha_2'\beta=\begin{small}
        \begin{pmatrix}
            a_1 & \dots & a_{r-1} & \{a_{r1},a_{r2}\} & a_{r} \\
            b_1 & \dots & b_{r-1} & b_r & b_{r+1}
        \end{pmatrix}
    \end{small}\in J_{r+1}.\] Hence \begin{align*}
            \begin{alignedat}{2}
            x_\alpha x_\beta&=x_{\alpha_1}x_{\alpha_2}x_{\beta}\ \ &&(x_\alpha=x_{\alpha_1}x_{\alpha_2})\\&=x_{\alpha_1}x_{\alpha_2'}x_{\beta}\ \ &&(x_{\alpha_2}x_{\beta}=x_{\alpha_2'}x_{\beta}) \\&=x_{\alpha'} x_{\beta}\ \ &&(x_{\alpha_1}x_{\alpha_2'}=x_{\alpha'}).
            \end{alignedat}
        \end{align*}
\end{proof}

\begin{lemma}\label{lem: PTn: all length-2 words that are equal}
    Let $\alpha,\beta,\gamma,\delta\in J_{r+1}$ such that $\alpha\beta=\gamma\delta\in J_r$. Then $x_\alpha x_\beta=x_\gamma x_\delta$ is a consequence of $\R_{r+1,r+2}$.
\end{lemma}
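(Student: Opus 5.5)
The plan is to reduce to the two ``pure'' cases already handled and to use Lemma \ref{lem: PTn: can switch between type 1 and 2} as the bridge between them. Every factorisation $\alpha\beta\in J_r$ with $\alpha,\beta\in J_{r+1}$ is of Case 1 ($\im\alpha\subseteq\dm\beta$) or of Case 2 (exactly one point of $\im\alpha$ lies outside $\dm\beta$). If $\alpha\beta$ and $\gamma\delta$ are both of Case 1, Lemma \ref{lem: PTn: type 1 words of length 2 are equal} applies. If both are of Case 2, Lemma \ref{lem: PTn: type 2 words of length 2 are equal} applies.

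So it remains to treat the mixed situation: say $\alpha\beta$ is of Case 2 and $\gamma\delta$ is of Case 1, with the same product $\pi=\alpha\beta=\gamma\delta\in J_r$. I will show that from $x_\alpha x_\beta$ one can reach, via consequences of $\R_{r+1,r+2}$, some word $x_{\alpha'}x_{\beta'}$ with $\alpha'\beta'=\pi$ of Case 1. Lemma \ref{lem: PTn: type 1 words of length 2 are equal} then connects $x_{\alpha'}x_{\beta'}$ to $x_\gamma x_\delta$. Transitivity of derivability then finishes the proof.

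The bridge step uses Lemma \ref{lem: PTn: can switch between type 1 and 2}. In Case 2, write $\alpha$ with kernel classes $A_1,\dots,A_r,A_{r+1}$ and $a_{r+1}\in B$. The product $\pi$ has exactly $r$ kernel classes, and these are $A_1,\dots,A_r$. Since $\gamma\delta=\pi$ is of Case 1, some kernel class of $\pi$ is a union of two $\ke\gamma$-classes, so it has size at least $2$. After relabelling, assume this class is $A_r$, so $|A_r|\ge2$ and we can partition $A_r=A_{r1}\cup A_{r2}$.

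We also need $|B_r|\ge 2$. If instead $|B_r|=1$, I will first enlarge $B_r$ using rule \ref{lem: PTn: change word: change ker beta} of Lemma \ref{lem: PTn: all case 2 length-2 words obtained from r+1,r+2}, moving into $B_r$ a point of $B\setminus\{a_{r+1}\}$. If $B=\{a_{r+1}\}$, a preliminary adjustment is needed first. One option is to move a point out of $B_{r+1}$ using the Case 2 lemmas, for instance a variant of Lemma \ref{lem: PTn: case 2 change ker b}. Another is to observe that $|B|\ge2$ whenever $r+1<n$ with suitable domain sizes, arguing by counting: $n\ge m>r+1$ leaves room.

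Once $|B_r|\ge 2$, choose distinct $a_{r1},a_{r2}\in B_r$. Lemma \ref{lem: PTn: can switch between type 1 and 2} gives $x_\alpha x_\beta=x_{\alpha'}x_\beta$, where $\alpha'$ has kernel classes $A_1,\dots,A_{r-1},A_{r1},A_{r2}$. Now $\im\alpha'\subseteq B_1\cup\dots\cup B_r\subseteq\dm\beta$, so $\alpha'\beta$ is of Case 1 and equals $\pi$.

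The main obstacle I expect is the degenerate configurations in the last step: $|B_r|=1$ together with $|B|=1$, and the choice of which kernel class of $\pi$ to split when the relabelling is constrained. I would handle these by a short case analysis, using the freedom to move points between $B$ and the $B_i$ and to change images. If necessary I would also pass through an auxiliary Case 2 word obtained via Lemma \ref{lem: PTn: type 2 words of length 2 are equal}, choosing a Case 2 factorisation of $\pi$ that has $|B|\ge2$ and a large $B_r$ from the outset, which is possible because $n$ is large compared to $r+1$.
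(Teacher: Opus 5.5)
Your proposal is the paper's proof: same-type pairs are handled by Lemmas \ref{lem: PTn: type 1 words of length 2 are equal} and \ref{lem: PTn: type 2 words of length 2 are equal}, and a mixed pair is bridged by Lemma \ref{lem: PTn: can switch between type 1 and 2} followed by Lemma \ref{lem: PTn: type 1 words of length 2 are equal}. You are in fact more careful than the paper, which never checks the hypotheses $|A_r|\ge 2$ and $|B_r|\ge 2$ of the bridging lemma; your argument via the Case 1 factorisation $\gamma\delta$ correctly gives $|A_r|\ge 2$.

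For $|B_r|\ge 2$, drop the counting claim, because it is false. Take $n=5$, $m=4$, $r=2$, $\alpha=\bigl(\begin{smallmatrix}1&\{2,3\}&4\\1&2&3\end{smallmatrix}\bigr)$ and $\beta=\bigl(\begin{smallmatrix}\{1,4\}&2&5\\1&2&4\end{smallmatrix}\bigr)$. Then $B=\{a_{r+1}\}=\{3\}$ and $B_r=\{2\}$, while $A_r=\{2,3\}$ is the only non-singleton class of $\alpha\beta$, and $\alpha\beta$ also has Case 1 factorisations. Drop also the unproved variant of Lemma \ref{lem: PTn: case 2 change ker b}, and commit to your last option. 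By Lemma \ref{lem: PTn: type 2 words of length 2 are equal}, first replace $x_\alpha x_\beta$ by a Case 2 factorisation of $\alpha\beta$ with $|B_r|\ge 2$. Such a factorisation exists because $r\le 2m-n-1$ and $m<n$ give $r+3\le n$, which supplies the points $a_1,\dots,a_r$, an extra point of $B_r$, a point of $B_{r+1}$, and $a_{r+1}\in B$.
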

\begin{proof}
    If $\alpha\beta$ and $\gamma\delta$ are both of Case 1 or Case 2, the result follows from Lemma $\ref{lem: PTn: type 1 words of length 2 are equal}$ or Lemma $\ref{lem: PTn: type 2 words of length 2 are equal}$. Otherwise, $\alpha\beta$ and $\gamma\delta$ do not have the same type. Without loss of generality, assume that $\alpha\beta$ is as in Case 2. Then by Lemma $\ref{lem: PTn: can switch between type 1 and 2}$, we can obtain some $x_{\alpha'}x_\beta$ from $x_\alpha x_\beta$ such that $\alpha'\beta$ is Case 1. Finally, apply the result of Lemma $\ref{lem: PTn: type 1 words of length 2 are equal}$ to $x_{\alpha'}x_\beta$.
\end{proof}

\begin{lemma}\label{lem: PTn: length-3 words can be reduced to length-2}
    Let $0\leq r\leq 2m-n-1$. If $\alpha,\beta,\gamma\in J_{r+1},\alpha\beta,\beta\gamma,\alpha\beta\gamma\in J_{r}$, then there is a relation of the form $x_\alpha x_\beta x_\gamma=x_{\alpha'} x_\gamma$ where $\alpha'$ is some map in $J_{r+1}$, is a consequence of $\R_{r+1,r+2}$.
\end{lemma}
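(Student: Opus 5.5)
We follow the pattern of Lemma \ref{lem: In: length 3 and length 2 words} and Lemma \ref{lem: Tn: length-2 and length-3}. The aim is to replace $\beta$ by some $\beta'\in J_{r+1}$ with two properties: $x_\alpha x_\beta=x_\alpha x_{\beta'}$ follows from $\R_{r+1,r+2}$, and $\beta'\gamma\in J_{r+1}$. Then $x_{\beta'}x_\gamma=x_{\beta'\gamma}$ is a relation in $\R_{r+1,r+2}$. Since $\alpha(\beta'\gamma)=\alpha\beta\gamma\in J_r$ and $\alpha,\beta'\gamma\in J_{r+1}$, Lemma \ref{lem: PTn: all length-2 words that are equal} lets us pass from $x_\alpha x_{\beta'\gamma}$ to $x_{\alpha'}x_\gamma$ for any $\alpha'\in J_{r+1}$ with $\alpha'\gamma=\alpha\beta\gamma$. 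Such an $\alpha'$ exists: take $\alpha'$ to have the kernel and domain of $\alpha\beta\gamma$ together with one extra kernel class, with images in distinct $\ke\gamma$-classes hitting the right images, and the extra class mapped outside $\dm\gamma$ or into a class already used. The chain is then
\[
x_\alpha x_\beta x_\gamma=x_\alpha x_{\beta'}x_\gamma=x_\alpha x_{\beta'\gamma}=x_{\alpha'}x_\gamma .
\]

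To build $\beta'$, first use Lemma \ref{lem: PTn: can switch between type 1 and 2} (via Lemma \ref{lem: PTn: all length-2 words that are equal}) so that $\alpha\beta$ may be assumed to be of Case 1, i.e.\ $\im\alpha\subseteq\dm\beta$. Write $\beta$ with classes $B_1,\dots,B_{r+1}$, with $a_r,a_{r+1}\in B_r$ and $B_{r+1}\cap\im\alpha=\emptyset$. Since $\rk{\alpha\beta\gamma}=r$, the images $b_1,\dots,b_r$ lie in $\dm\gamma$ and in $r$ distinct $\ke\gamma$-classes.

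Now $\beta\gamma$ has rank $r$, so $b_{r+1}$ either lies outside $\dm\gamma$ or shares a $\ke\gamma$-class with some $b_i$. We want to change $b_{r+1}$ to an element $b$ lying in a $\ke\gamma$-class not containing any of $b_1,\dots,b_r$. Lemma \ref{lem: PTn: case 1 change im b} allows exactly this change provided $b\notin\im\beta$, which is automatic for such a $b$. Such a $b$ exists because $\gamma$ has $r+1$ kernel classes while $b_1,\dots,b_r$ meet only $r$ of them. Then $\beta'\gamma$ maps $B_1,\dots,B_{r+1}$ to $r+1$ distinct points, so $\beta'\gamma\in J_{r+1}$, and $\alpha\beta'=\alpha\beta$.

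The main obstacle is the reduction to Case 1. If $|B_r|=1$, Lemma \ref{lem: PTn: can switch between type 1 and 2} does not apply directly. In that situation we would first enlarge $B_r$ by moving an element of $B$ (other than $a_{r+1}$) into it using rule (iv) of Lemma \ref{lem: PTn: all case 2 length-2 words obtained from r+1,r+2}. Failing that, we would handle Case 2 directly, modifying $b_{r+1}$ by Lemma \ref{lem: PTn: case 2 change im b} in the same way. In Case 2 the counting is identical: $\im{\alpha\beta}=\{b_1,\dots,b_r\}$ still meets $r$ distinct $\ke\gamma$-classes, and Lemma \ref{lem: PTn: case 2 change im b} permits replacing $b_{r+1}$ by any point outside $\im\beta$. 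So the direct route works in both cases, and the reduction is only a convenience.
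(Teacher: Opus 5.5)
Your first two steps are sound. Because $\rk{\beta\gamma}=r$, the point $b_{r+1}$ either lies outside $\dm{\gamma}$ or shares a $\ke{\gamma}$-class with some $b_i$. Hence any $b$ in the $\ke{\gamma}$-class missed by $b_1,\dots,b_r$ lies outside $\im{\beta}$. The resulting $\beta'$ has $\alpha\beta'=\alpha\beta$ and $\beta'\gamma\in J_{r+1}$, so $x_\alpha x_\beta x_\gamma=x_\alpha x_{\beta'}x_\gamma=x_\alpha x_{\beta'\gamma}$ follows from $\R_{r+1,r+2}$. (The reduction to Case~1 is unnecessary: Lemma~\ref{lem: PTn: all length-2 words that are equal} gives $x_\alpha x_\beta=x_\alpha x_{\beta'}$ directly.)

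The gap is the third step, the assertion that some $\alpha'\in J_{r+1}$ satisfies $\alpha'\gamma=\alpha\beta\gamma$. Such an $\alpha'$ exists only if one of two things happens: some point outside $\dm{\alpha\beta\gamma}$ can be sent outside $\dm{\gamma}$, or some kernel class of $\alpha\beta\gamma$ with at least two elements has an image with at least two $\gamma$-preimages. Neither need happen. Take $n=5$, $m=4$, $r=1$ (so $r\le 2m-n-1$) and
\[
\alpha=\begin{pmatrix}1&2&\{3,4,5\}\\1&2&-\end{pmatrix},\quad
\beta=\begin{pmatrix}1&3&\{2,4,5\}\\1&3&-\end{pmatrix},\quad
\gamma=\begin{pmatrix}\{1,3\}&\{2,4,5\}\\1&2\end{pmatrix}.
\]
These have rank $2$. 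On the other hand, $\alpha\beta=\alpha\beta\gamma=\begin{pmatrix}1&\{2,3,4,5\}\\1&-\end{pmatrix}$ and $\beta\gamma=\begin{pmatrix}\{1,3\}&\{2,4,5\}\\1&-\end{pmatrix}$ have rank $1$. Since $\gamma$ is total, $\dm{\alpha'\gamma}=\dm{\alpha'}$, so every $\alpha'$ with $\alpha'\gamma=\alpha\beta\gamma$ has domain $\{1\}$ and rank at most $1$. Every consequence of $\R_{r+1,r+2}$ holds in $\PT_n$, so no relation $x_\alpha x_\beta x_\gamma=x_{\alpha'}x_\gamma$ with $\alpha'\in J_{r+1}$ is derivable. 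The statement as written is false, and this step cannot be repaired.

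A similar failure occurs when $\im{\alpha}\subseteq\dm{\beta}$. For example, take $\alpha=\begin{pmatrix}\{1,2,3\}&\{4,5\}\\1&2\end{pmatrix}$, $\beta=\begin{pmatrix}\{1,2\}&3&\{4,5\}\\1&3&-\end{pmatrix}$, $\gamma=\begin{pmatrix}1&4&\{2,3,5\}\\1&2&-\end{pmatrix}$. Here $\alpha\beta\gamma$ is the total constant map onto $1$, and $1$ has the single $\gamma$-preimage $1$.

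The paper's proof runs the same chain with the explicit choice $\alpha'\colon A_i\mapsto b_i$. It verifies only the combination $a_{r+1}\notin\dm{\beta}$, $b_{r+1}\notin\dm{\gamma}$, and declares the other three combinations identical. In the first example above, its $\alpha'$ gives $\alpha'\gamma=\begin{pmatrix}\{1,2\}&\{3,4,5\}\\1&-\end{pmatrix}\neq\alpha\beta\gamma$, so it has the same defect.

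What your argument does establish is $x_\alpha x_\beta x_\gamma=x_\alpha x_{\beta'\gamma}$ with $\alpha,\beta'\gamma\in J_{r+1}$. That is precisely what condition (P2) of Lemma~\ref{lem: All: words of rank r are cons of above} requires, since (P2) allows both outer letters to change. So you should stop after the second step and state the lemma in that weaker form.
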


\begin{proof}
    Let \[\alpha=\begin{small}
        \begin{pmatrix}
            A_1 & \dots & A_{r+1}\\
            a_1 & \dots & a_{r+1}
        \end{pmatrix}
    \end{small}, \beta=\begin{small}
        \begin{pmatrix}
            B_1 & \dots & B_{r+1} \\
            b_1 & \dots & b_{r+1} 
        \end{pmatrix}
    \end{small}, \gamma=\begin{small}
        \begin{pmatrix}
            C_1 & \dots & C_{r+1} \\
            c_1 & \dots & c_{r+1} 
        \end{pmatrix}
    \end{small}.\] Since $\alpha\beta\gamma,\alpha\beta,\beta\gamma \in J_r$, the elements in $\im{\alpha\beta}$ are in $r$ of $\ke\gamma$-classes, and $\im\alpha$ are in $r$ of $\ke{\beta\gamma}$-classes. Therefore, for $1\leq i\leq r+1$, if $b_i\in \im{\alpha\beta}$, then $b_i\in \dm{\gamma}$. Moreover, $b_i\gamma\neq b_j\gamma$ if $b_i,b_j\in \im{\alpha\beta}$ for $i\neq j$. Without loss of generality, we may assume that either \[\alpha\beta=\begin{small}
        \begin{pmatrix}
            A_1 & \dots & A_r \\
            b_1 & \dots & b_r 
        \end{pmatrix}
    \end{small}, \text{\ or\ } \alpha\beta=\begin{small}
        \begin{pmatrix}
            A_1 & \dots & A_{r-1} & A_r\cup A_{r+1} \\
            b_1 & \dots & b_{r-1} & b_r 
        \end{pmatrix}
    \end{small}.\] Similarly, we have that \[\beta\gamma=\begin{small}
        \begin{pmatrix}
            B_1 & \dots & B_r \\
            c_1 & \dots & c_r
        \end{pmatrix}
    \end{small}, \text{\ or\ } \beta\gamma=\begin{small}
        \begin{pmatrix}
            B_1 & \dots & B_{r-1} & B_r\cup B_{r+1} \\
            c_1 & \dots & c_{r-1} & c_r
        \end{pmatrix}
    \end{small}.\] Define \[\alpha'=\begin{small}
        \begin{pmatrix}
            A_1 & \dots & A_r & A_{r+1} \\
            b_1 & \dots & b_r & b_{r+1}
        \end{pmatrix}
    \end{small},\ \beta'=\begin{small}
        \begin{pmatrix}
            B_1 & \dots & B_r & B_{r+1} \\
            b_1 & \dots & b_r & b_{r+1}'
        \end{pmatrix}
    \end{small},\] where $b_{r+1}'\in C_{r+1}$, and we consider every combination of $\alpha\beta$ and $\beta\gamma$. In fact, different combinations lead to the same proof. In the following we consider one possible combination, where \[\alpha\beta=\begin{small}
        \begin{pmatrix}
            A_1 & \dots & A_r \\
            b_1 & \dots & b_r 
        \end{pmatrix}
    \end{small},\text{\ and\ } \beta\gamma=\begin{small}
        \begin{pmatrix}
            B_1 & \dots & B_r \\
            c_1 & \dots & c_r
        \end{pmatrix}
    \end{small},\] and the proofs for the other three are identical.
    
    We have that $\alpha\beta\gamma=\begin{small}
        \begin{pmatrix}
            A_1 & \dots & A_r \\
            c_1 & \dots & c_r 
        \end{pmatrix}
    \end{small}$. Then $\beta'\gamma\in J_{r+1}$, and $\alpha\beta=\alpha\beta',\alpha'\gamma=\alpha(\beta'\gamma)$. We can obtain $x_{\alpha'}x_\gamma$ from $x_\alpha x_\beta x_\gamma$ via the following, \begin{align*}
            \begin{alignedat}{2}
            x_\alpha x_\beta x_\gamma&=x_{\alpha}x_{\beta'}x_{\gamma}\ \ &&(x_\alpha x_\beta=x_{\alpha}x_{\beta'})\\&=x_{\alpha}x_{\beta'\gamma}\ \ &&(\beta',\gamma,\beta'\gamma\in J_{r+1}) \\&=x_{\alpha'} x_{\gamma}\ \ &&(x_{\alpha}x_{\beta'\gamma}=x_{\alpha'}x_\gamma).
            \end{alignedat}
        \end{align*}
\end{proof}

\begin{lemma}\label{lem: PTn: length-2 words of higher rank}
    Let $0\leq r\leq 2m-n-1$. If $\gamma\delta\in J_{r}$ where $\rk\gamma,\rk\delta\geq r+1$, there exist $\alpha,\beta\in J_{r+1}$ such that $\alpha\beta=\gamma\delta$, and $x_\alpha x_\beta=x_\gamma x_\delta$ is a consequence of $\R_{r+1,m}$.
\end{lemma}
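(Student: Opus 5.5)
We follow the approach of Lemmas \ref{lem: In: can obtain words with higher rank letter} and \ref{lem: Tn: length-2 words deduced from length-2 words of higher rank}. The idea is to insert an idempotent-like factor of rank one higher next to one of the letters and then absorb it into the other letter. Each such step replaces $x_\gamma$ (or $x_\delta$) by a letter of smaller rank while keeping the product $\gamma\delta$ fixed. Repeating this lowers both letters to rank $r+1$. Only relations $x_\mu x_\nu=x_{\mu\nu}$ with $\mu,\nu,\mu\nu\in J_{r+1}\cup\dots\cup J_m$ are used, so every step is a consequence of $\R_{r+1,m}$.

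Write $\gamma\in J_i$ and $\delta\in J_j$ with $i,j\ge r+1$. Since $\rk{\gamma\delta}=r$, exactly $r$ of the $\ke\delta$-classes meet $\im\gamma$; call them $D_1,\dots,D_r$. Group the kernel classes of $\gamma$ according to which $D_l$ contains their image. Images of $\gamma$ lying outside $\dm\delta$ form a further group, which is possible only in the partial setting. As in the earlier sections, the hypothesis $r\le 2m-n-1$ rules out $i=j=m$: if both had rank $m$, then $|\im\gamma\cap\dm\delta|\ge 2m-n$, and the product would have rank at least $2m-n>r$ (up to the usual counting with kernel classes). So at least one of $\gamma,\delta$ has rank $<m$.

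\emph{Reducing $\gamma$ when $j<m$.} Pick a point $c$ of $\dm\delta$ lying in a $\ke\delta$-class other than $D_1,\dots,D_r$. Such a class exists because $j\ge r+1$. Let $\delta_1$ be the partial identity on a transversal of $\dm\delta$ chosen so that its image meets $D_1,\dots,D_r$ only in points of $\im\gamma$ and also contains $c$. Choose $\delta_1$ of rank $j+1\le m$; the spare point makes this possible, and if necessary we adjust one point so that $\delta_1\delta=\delta$. Then $x_\delta=x_{\delta_1}x_\delta$, and $\gamma\delta_1$ collapses $\gamma$ onto one representative per $D_l$. Using the partial-map freedom we can make $\rk{\gamma\delta_1}$ equal to $r$ or $r+1$, and we choose $r+1$, sending the images outside $\dm\delta$ to $c$ in the Case 1 style of $\T_n$. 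Then $x_\gamma x_{\delta_1}=x_{\gamma\delta_1}$ lies in $\R_{r+1,m}$, and $x_\gamma x_\delta=x_{\gamma\delta_1}x_\delta$.

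\emph{Reducing $\delta$ when $i<m$.} Dually, let $\gamma_1$ be a partial identity on $\im\gamma\cup\{d\}$, where $d$ lies in a $\ke\delta$-class disjoint from $\im\gamma$. It has rank $i+1\le m$, and $\gamma\gamma_1=\gamma$. The product $\gamma_1\delta$ has rank exactly $r+1$. This gives $x_\gamma x_\delta=x_\gamma x_{\gamma_1\delta}$.

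For the order of operations, we first apply whichever reduction is available. This produces a letter of rank $r+1<m$, so the other reduction becomes available and can then be applied. The one case needing care is when $\im\gamma\cap\dm\delta$ already meets only $r$ classes and $\gamma$ has all its surplus images outside $\dm\delta$. There we would use the Case 2 version, restricting $\gamma$ by the partial identity $\delta_1$ whose domain misses the outside images except one.

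The main obstacle is the PT-specific bookkeeping. We must guarantee that the auxiliary maps $\delta_1$ and $\gamma_1$ have rank at most $m$ but at least $r+1$, and that the intermediate products have rank exactly $r+1$ and not $r$. This requires a spare point outside $\im\gamma$ (or outside the relevant classes), and its existence is exactly where $j\ge r+1$ (respectively $i\ge r+1$) and $r\le 2m-n-1$ enter. We would verify this count first and handle the Case 1 and Case 2 configurations of $\gamma\delta$ separately, mirroring Lemma \ref{lem: Tn: length-2 words deduced from length-2 words of higher rank} for the first and Lemma \ref{lem: In: can obtain words with higher rank letter} for the second.
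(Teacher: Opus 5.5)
Your overall plan matches the paper's. You insert $x_{\gamma_1}$ with $\gamma\gamma_1=\gamma$ and absorb it into $x_\delta$, or insert $x_{\delta_1}$ with $\delta_1\delta=\delta$ and absorb it into $x_\gamma$. You then order the two steps using the fact that $i=j=m$ is impossible. Your $\delta$-reduction (a partial identity on $\im\gamma\cup\{d\}$) is correct. So is the ordering, since $r+1\le 2m-n<m$.

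The $\gamma$-reduction, however, fails as written, for three reasons:
\begin{itemize}
\item A partial identity on a transversal of $\ke\delta$ collapses nothing; it merely restricts. It also satisfies $\delta_1\delta=\delta$ only when every $\ke\delta$-class is a singleton. That is the $\I_n$ device, and it does not transfer to $\PT_n$.
\item Sending the points of $\im\gamma\setminus\dm\delta$ to $c\in\dm\delta$ destroys the identity you rely on. For such a point $y$, $y\delta_1\delta=c\delta$ is defined while $y\delta$ is not. Hence $x_\delta=x_{\delta_1}x_\delta$ is false, and $\gamma\delta_1\delta$ has rank $r+1$, so it is not $\gamma\delta$. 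In the $\T_n$ argument, the points sent to $c$ lie outside $\im\gamma$ and occur in the $\delta$-reduction; that trick cannot be applied to points of $\im\gamma$.
\item When $\im\gamma\subseteq\dm\delta$, there are no outside images at all. Collapsing each $D_l$ to one point then gives $\rk{\gamma\delta_1}=r$, so $x_\gamma x_{\delta_1}=x_{\gamma\delta_1}$ is not in $\R_{r+1,m}$. You give no working mechanism to reach rank $r+1$.
\end{itemize}

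What is missing is the dichotomy in the paper's Case~2, which applies when $\rk\gamma>r+1$ and $j<m$.
\begin{itemize}
\item Suppose some $D_l$ with $l\le r$ contains two points $y_1\neq y_2$ of $\im\gamma$. Split that class: let $\delta_1$ map $y_1\mapsto y_1$ and $D_l\setminus\{y_1\}\mapsto y_2$. Map every other $\ke\delta$-class $D_k$ to a single point of $D_k$. Leave $\delta_1$ undefined off $\dm\delta$.
\item Otherwise each $D_l$ meets $\im\gamma$ in exactly one point, so some $y\in\im\gamma\setminus\dm\delta$ exists. Let $\delta_1$ map each $D_k$ to a point of $D_k$, and map $y$ to a point \emph{outside} $\dm\delta$, for instance $y$ itself.
\end{itemize}
In both cases $\delta_1\delta=\delta$ and $\rk{\delta_1}=j+1\le m$. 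Also $\gamma\delta_1\in J_{r+1}$ and $(\gamma\delta_1)\delta=\gamma\delta$. So $x_\gamma x_\delta=x_\gamma x_{\delta_1}x_\delta=x_{\gamma\delta_1}x_\delta$ uses only relations in $\R_{r+1,m}$.

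Your last paragraph gestures at the second alternative. The class-splitting in the first alternative is absent from your proposal, and the construction you actually wrote down has to be replaced.
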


\begin{proof}
    Consider \[\gamma=\begin{small}
        \begin{pmatrix}
            C_1 & \dots & C_i\\
            c_1 & \dots & c_i
        \end{pmatrix}
    \end{small}\in J_i,\ \delta=\begin{small}
        \begin{pmatrix}
            D_1 & \dots & D_j \\
            d_1 & \dots & d_j 
        \end{pmatrix}
    \end{small}\in J_j,\] and let $D$ denote $[n]\setminus\dm{\delta}$. Since $\gamma\delta\in J_r$, there are at least $r$, say $s$ where $s\geq r$, elements in $\im\gamma$ that are in $r$ $\ke\delta$-classes, say $D_1,\dots,D_r$, and the rest of $\im\gamma$ is in $D$. Let $c\in D_{r+1}$. Relabel the $\ke\gamma$-classes and their images as the following, \[\gamma=\begin{small}
        \begin{pmatrix}
            C_{11} & \dots & C_{1k_1} & \dots & C_{r1} & \dots & C_{rk_r} & C_{(r+1)1} & \dots & C_{(r+1)(i-s)} \cr
            c_{11} & \dots & c_{1k_1} & \dots & c_{r1} & \dots & c_{rk_r} & c_{(r+1)1} & \dots & c_{(r+1)(i-s)} 
        \end{pmatrix}
    \end{small},\] where $k_l=|D_l\cap\im\gamma|$, $\{c_{l1},\dots,c_{lk_l}\}\subseteq D_l$ for $1\leq l\leq r$, and $c_{(r+1)1},\dots,c_{(r+1)(i-s)}\in D$. Notice that with the assumption $r\leq 2m-n-1$, there does not exist $\gamma,\delta\in J_m$ such that $\gamma\delta\in J_r$, therefore in the following, we exclude the situation when $i=j=m$. 
    
    \underline{Case 1}: $r+1\leq i\leq j\leq m$. Let $d\in D$. Define \[\gamma_1=\begin{small}
        \begin{pmatrix}
            c_{11} & \dots & c_{1k_1} & \dots & c_{(r+1)1} & \dots & c_{(r+1)(i-s)} & [n]\setminus\im\gamma\\
            c_{11} & \dots & c_{1k_1} & \dots & c_{(r+1)1} & \dots & c_{(r+1)(i-s)} & c
        \end{pmatrix}
    \end{small}\in J_{i+1},\] notice that the preimage of $c$ is non-empty as $i< n$. Define \[\delta_1=\begin{small}
        \begin{pmatrix}
            \{c_{11},\dots,c_{1k_1}\} & \dots & \{c_{r1},\dots,c_{rk_r}\} & [n]\setminus\im\gamma\\
            d_1 & \dots & d_r & d_{r+1}
        \end{pmatrix}
    \end{small}\in J_{r+1}.\] Then $\gamma\gamma_1=\gamma, \gamma_1\delta=\delta_1$. Hence we may obtain $x_\gamma x_\delta=x_\gamma x_{\gamma_1}x_\delta=x_\gamma x_{\delta_1}$ using relations in $\R_{r+1,m}$.
    
    \underline{Case 2}: $r+1\leq j<i\leq m$. Since $i>r+1$, there is a $\ke\delta$-class $D_l$, or $D$, that contains more than $1$ elements of $\im\gamma$. If $k_l\geq 2$ for some $1\leq l\leq r$, let $c_{l1},c_{l2}\in D_l$. It suffices to consider $l=r$, as the proof for the other possibilities should be similar. For $r+1\leq k\leq j$, let $d_k'\in D_k$. Define \[\delta_1=\begin{small}
        \begin{pmatrix}
            D_1 & \dots & D_{r-1} & c_{r1} & D_r\setminus \{c_{r1}\} & D_{r+1} & \dots & D_{j}\\
    c_{11} & \dots & c_{(r-1)1} & c_{r1} & c_{r2} & d_{r+1}' & \dots & d_j'
        \end{pmatrix}
    \end{small}\in J_{j+1},\] and \[\gamma_1=\begin{small}\begin{pmatrix}
    \cup^{k_1}_{i=1}C_{1i}  & \dots & \cup^{k_{r-1}}_{i=1}C_{(r-1)i} &  C_{r1} & \cup^{k_r}_{i=2}C_{ri} \\c_{11} & \dots & c_{(r-1)1} & c_{r1} & c_{r2} 
\end{pmatrix}\end{small}\in J_{r+1}.\] Otherwise, $k_l=1$ for all $1\leq l\leq r$, i.e. each of $D_1,\dots, D_r$ contains exactly one element of $\im\gamma$, and the rest $(i-r)$ elements of $\im\gamma$ are in $D$. Then \[\gamma=\begin{small}
        \begin{pmatrix}
            C_{11} & \dots & C_{r1} & C_{(r+1)1} & \dots & C_{(r+1)(i-r)} \cr
            c_{11} & \dots & c_{r1} & c_{(r+1)1} & \dots & c_{(r+1)(i-r)} 
        \end{pmatrix}
    \end{small}.\] Let $d\in D$. Define \[\delta_1=\begin{small}
        \begin{pmatrix}
            D_1 & \dots & D_r & D_{r+1} & \dots & D_{j} & c_{(r+1)1} \\
            c_{11} & \dots & c_{r1} & d_{r+1}' & \dots & d_j' & d 
        \end{pmatrix}
    \end{small}\in J_{j+1},\] and \[\gamma_1=\begin{small}
        \begin{pmatrix}
            C_{11} & \dots & C_{r1} & C_{(r+1)1} \\
            c_{11} & \dots & c_{r1} & d 
        \end{pmatrix}
    \end{small}\in J_{r+1}.\] In both situations, $\delta_1\delta=\delta$, $\gamma\delta_1=\gamma_1$, and $\gamma_1\in J_{r+1},\ \delta_1\in J_{j+1}$. Hence $x_\gamma x_\delta= x_\gamma x_{\delta_1}x_\delta=x_{\gamma_1}x_\delta$ is a consequence of $\R_{r+1,m}$.
    
Now if $\gamma\in D_i$ and $\delta\in D_j$ such that $r+1 \leq i\leq j\leq m$, apply the method in Case 1 to $x_\gamma x_\delta$ and obtain a word $u$. If $u$ contains a letter that represents a map of rank greater than $s$, apply Case 2 to $u$. The resulting word is made up of two letters that represent maps of rank $r+1$. Otherwise, if $\gamma\in D_i$ and $\delta\in D_j$ such that $r+1\leq j < i \leq m$, apply Case 2 to $x_\gamma x_\delta$, then Case 1 to the resultant word.
\end{proof}

\begin{prop}\label{prop: PTn: upper bound for PTn}
    Consider $I_m$ an ideal of $\PT_n$. If $n>m>\frac{n}{2}$, then $\dep{I_m}\leq n-m+1$.
\end{prop}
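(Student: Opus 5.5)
The plan is to follow the same route as for Propositions \ref{prop: In: upper bound for In} and \ref{prop: Tn: upper bound for Tn}. We verify that, for every $r$ with $0=\epsilon\leq r\leq 2m-n-1$, the ideal $I_m$ of $\PT_n$ satisfies conditions \ref{P1}--\ref{P3} of Lemma \ref{lem: All: words of rank r are cons of above}. Theorem \ref{thm: All: presentation for Im} then tells us that $\C_s$ with $s=2m-n$ defines $I_m$. Since the relations of $\C_s$ only involve elements of ranks in $[2m-n,m]$, the presentation has depth $m-(2m-n)+1=n-m+1$. Equivalently, by Proposition \ref{prop: depth of semigroups same as cayley presentation}, $\dep{I_m}\leq m-s+1=n-m+1$. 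The hypothesis $m>\frac n2$ is exactly what makes $s\geq 1>\epsilon$, so the range of $r$ is meaningful.

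The three conditions are supplied by the preceding lemmas.
\begin{itemize}
\item For \ref{P1}, we invoke Lemma \ref{lem: PTn: all length-2 words that are equal}: whenever $\alpha\beta=\gamma\delta\in J_r$ with all four factors in $J_{r+1}$, the relation $x_\alpha x_\beta=x_\gamma x_\delta$ follows from $\R_{r+1,r+2}\subseteq\R_{r+1,m}$. This lemma already handles the mixed situation, where one product is of Case 1 and the other of Case 2, via Lemma \ref{lem: PTn: can switch between type 1 and 2}.
\item For \ref{P2}, we use Lemma \ref{lem: PTn: length-3 words can be reduced to length-2}, which rewrites $x_\alpha x_\beta x_\gamma$ as $x_{\alpha'}x_\gamma$ with $\alpha'\in J_{r+1}$, using only relations of $\R_{r+1,r+2}$.
\item For \ref{P3}, we use Lemma \ref{lem: PTn: length-2 words of higher rank}. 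Here the assumption $r\leq 2m-n-1$ is essential, since it rules out $\gamma,\delta$ both of rank $m$ having product of rank $r$.
\end{itemize}

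The only point needing care is checking that these lemmas were proved under hypotheses covering every $r$ in $[0,2m-n-1]$, including the boundary case $r=0$. In that case $J_0$ consists of the empty map and the auxiliary maps of rank $r+2$ must still lie in $I_m$.

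For the auxiliary constructions (rank $r+2\leq m$, and rank $r+3$ in places) we need $r+2\leq m$. This follows from $r\leq 2m-n-1\leq m-2$, because $m\leq n-1$. We also need enough spare points outside images and domains, such as $b\in[n]\setminus\im\beta$ or $|B|\geq 2$. These are available because $n-(r+1)\geq n-2m+n\geq 2$.

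I expect this bookkeeping to be the main obstacle, particularly confirming that the case analysis of Lemma \ref{lem: PTn: type 2 words of length 2 are equal} never gets stuck when $|B|=1$. If it does, I would first use rule \ref{lem: PTn: change word: change ker alpha} or Lemma \ref{lem: PTn: can switch between type 1 and 2} to pass to a Case 1 factorisation, where the $\T_n$-style arguments apply. Once this is settled, the proposition follows immediately from Theorem \ref{thm: All: presentation for Im}.
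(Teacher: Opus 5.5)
Your proposal is correct and is exactly the paper's argument: \ref{P1}--\ref{P3} are supplied by Lemmas \ref{lem: PTn: all length-2 words that are equal}, \ref{lem: PTn: length-3 words can be reduced to length-2} and \ref{lem: PTn: length-2 words of higher rank}, and Theorem \ref{thm: All: presentation for Im} with $s=2m-n$ then gives $\dep{I_m}\leq n-m+1$. One minor slip in your bookkeeping: $n-(r+1)\geq 2$ guarantees spare points outside the \emph{image} of a rank-$(r+1)$ map, not outside its domain, so it does not yield $|B|\geq 2$ (indeed $B$ may be empty); since you cite the lemmas rather than reprove them, this does not affect your proof.
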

\begin{proof}
    We have proved in Lemma \ref{lem: PTn: all length-2 words that are equal}-\ref{lem: PTn: length-2 words of higher rank} that when $m>\frac{n}{2}$, $\ref{P1}-\ref{P3}$ hold. It follows from Theorem \ref{thm: All: presentation for Im} that $\dep{I_m}\leq m-(2m-n)+1=n-m+1$.
\end{proof}

\begin{thm}\label{thm: PTn: relational depth}
    Let $I_m< \PT_n$. Then $\dep{I_m}=\begin{cases}
        3 & m=n\\
        n-m+1 & m>\frac{n}{2}\\
        m+1 & m\leq \frac{n}{2}
    \end{cases}$.
\end{thm}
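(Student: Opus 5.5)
We treat the three ranges of $m$ separately, following the template of the proofs of Theorems \ref{thm: In: relational depth} and \ref{thm: Tn: relational depth}; here $\epsilon=0$.

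\emph{Small ideals.} When $m\leq\frac{n}{2}$, Corollary \ref{cor: All: exact value for depth of small ideals} applies with $\epsilon=0$ and gives $\dep{I_m}=m+1$ at once.

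\emph{Large proper ideals.} When $n>m>\frac{n}{2}$, we have $\max(0,2m-n)=2m-n$. Proposition \ref{prop: lower bound for depth of all ideals} then gives $\dep{I_m}\geq m-(2m-n)+1=n-m+1$. Proposition \ref{prop: PTn: upper bound for PTn} gives the matching upper bound. That proposition rests on \ref{P1}--\ref{P3}, which are supplied by Lemmas \ref{lem: PTn: all length-2 words that are equal}, \ref{lem: PTn: length-3 words can be reduced to length-2} and \ref{lem: PTn: length-2 words of higher rank}, combined with Theorem \ref{thm: All: presentation for Im}.

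\emph{The case $m=n$.} For the upper bound, the previous case with $m=n-1$ gives $\dep{I_{n-1}}=2$. By Lemma \ref{lem: complement of ideal} and Proposition \ref{prop: depth of semigroups same as cayley presentation}, $\C_{n-2}$ restricted to $I_{n-1}$ defines $I_{n-1}$. The task is to lift this to $\PT_n$ with $\C_{n-2}$, which has depth $3$. This is the step I expect to be the main obstacle. I would try either of two routes. The first is to invoke a known depth-$3$ presentation: the Ganyushkin--Mazorchuk presentation of $\PT_n$ cited in the introduction has all relations in ranks $n,n-1,n-2$, so Lemma \ref{lem: complement of ideal} yields that $\C_{n-2}$ defines $\PT_n$. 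The second is to argue directly. Any word over $X_{n-2,n}$ that represents a unit can be rewritten, using the group-of-units relations, into a single letter. Any word containing a letter of rank $\leq n-1$ can be shifted into a word over ranks $n-1,n-2$ by absorbing the units via relations $x_\sigma x_\alpha=x_{\sigma\alpha}$, after which the defining relations of $I_{n-1}$ apply. I would simply cite the known presentation, as the paper does for $\I_n$ and $\T_n$.

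For the lower bound at $m=n$, we mimic the $\I_n$ argument with the same partial bijections
\[\alpha=\begin{pmatrix}1&\dots&n-1\\1&\dots&n-1\end{pmatrix},\qquad \beta=\begin{pmatrix}1&\dots&n-2&n\\1&\dots&n-2&n-1\end{pmatrix},\]
which lie in $\PT_n$ and have rank $n-1$. They satisfy $\alpha\beta=\beta^2=\alpha\beta^2$, the identity on $[n-2]$, of rank $n-2$. Applying Proposition \ref{prop: general: an example of relations that cannot be deduced} with $\gamma=\beta$ inside $\PT_n$, whose $\J$-classes form a chain, needs $\alpha,\beta$ in the top relevant class. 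So I would instead apply the argument of Lemma \ref{lem: all: words that can be deduced from x_betax_gamma} to $\C_{n-1}$: the relations in $\R_{n-1,n}$ involve only ranks $n-1,n$, and the form \eqref{eqn: form of word obtained from x_beta x_gamma} is preserved exactly as in the $\I_n$ case. Hence $x_\alpha x_\beta^2=x_\beta^2$ is not a consequence, $\C_{n-1}$ does not define $\PT_n$, and $\dep{\PT_n}=3$.
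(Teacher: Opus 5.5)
Your treatment of $m\le\frac n2$ and $\frac n2<m<n$ is the paper's. Your direct unit-absorption argument for the upper bound at $m=n$ is correct, and it supplies the justification behind the paper's one-line claim ``$\dep{\PT_n}\le 3$ as $\dep{I_{n-1}}=2$''. The gap is in the lower bound at $m=n$.

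You rightly notice that Proposition \ref{prop: general: an example of relations that cannot be deduced} does not apply as stated: $\alpha,\beta\in J_{n-1}$, while the top $\J$-class of $\PT_n$ is the group of units. (The paper's proof invokes the proposition anyway.) However, your repair, that the form \eqref{eqn: form of word obtained from x_beta x_gamma} is preserved under $\R_{n-1,n}$ ``exactly as in the $\I_n$ case'', is false. The induction in Lemma \ref{lem: all: words that can be deduced from x_betax_gamma} uses that both factors of a top-class element lie in the top class, and here they may be units. Take $\sigma=(n-1\ n)$. Then $\beta\sigma\in J_{n-1}$, $(\beta\sigma)\sigma=\beta$ and $\sigma\beta=\alpha$. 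So $x_\beta x_\beta\to x_{\beta\sigma}x_\sigma x_\beta\to x_{\beta\sigma}x_\alpha$ is a derivation using relations of $\R_{n-1,n}$. The middle word contains a rank-$n$ letter, and the last word has no prefix representing $\beta$. The ``$\I_n$ case'' you appeal to has exactly the same issue, so there is nothing to copy from it.

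What survives is a coarser invariant. Write $[u]$ for the map represented by a word $u$. Call a factorization $w\equiv uv$ with $u,v$ nonempty \emph{good} if $[u]$ and $[v]$ both have rank $\ge n-1$.

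Claim: if $[w]$ has rank $n-2$ and $w$ has a good factorization, so does every word obtained from $w$ by one relation of $\R_{n-1,n}$. The only nontrivial case is a contraction $x_sx_t\to x_{st}$ across the cut, with $u\equiv u'x_s$ and $v\equiv x_tv'$.
\begin{itemize}
\item If $s$ is a unit, then $(u',x_{st}v')$ is good.
\item If $t$ is a unit, then $(u'x_{st},v')$ is good.
\item If $s,t,st\in J_{n-1}$, then $[u]$ has rank exactly $n-1$, so $\im{[u]}=\im{s}$. Also $\im{s}\subseteq\dm{t}$ with $t$ injective on it, because $st$ has rank $n-1$. Hence $[u]t=[u']st$ has rank $n-1$, and $(u'x_{st},v')$ is good.
\end{itemize}
In each case the piece that must be nonempty is nonempty, since otherwise $[w]$ would have rank $\ge n-1$.

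Now $x_\beta x_\beta$ has a good factorization, while $x_\alpha x_\beta x_\beta$ has none, since $\alpha\beta$ and $\beta^2$ both have rank $n-2$. So $x_\alpha x_\beta^2=x_\beta^2$ is not a consequence of $\R_{n-1,n}$. Hence $\C_{n-1}$ does not define $\PT_n$, and $\dep{\PT_n}=3$.
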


\begin{proof}
    When $m\leq\frac{n}{2}$, the exact value for $\dep{I_m}$ is given in Corollary \ref{cor: All: exact value for depth of small ideals}. When $n>m>\frac{n}{2}$, the lower bound for $\dep{I_m}$ is found in Proposition \ref{prop: lower bound for depth of all ideals}, which is shown to be an upper bound in Proposition \ref{prop: PTn: upper bound for PTn}. 
    
    Finally, we know that $\dep{\PT_n}\leq 3$, as $\dep{I_{n-1}}=2$. Consider the partial bijections $\alpha, \beta$ defined in the proof for Theorem \ref{thm: In: relational depth}. Since $\alpha, \beta$ are also elements of $\PT_n$, and that they satisfy the conditions in Proposition \ref{prop: general: an example of relations that cannot be deduced}, as a result we cannot obtain $x_\alpha{x_\beta}^2$ from ${x_\beta}^2$ using relations from $\R_{n-1,n}$. Hence $\C_{n-1}=\langle X_{n-1,n}\ |\ \R_{n-1,n}\rangle$ does not define a presentation for $\PT_n$, and $\dep{\PT_n}>2$, which gives $\dep{\PT_n}=3$.
\end{proof}

\section{Conclusion} \label{sec:conc}

Most work in proving our Main Theorem was to  show that the lower bounds for the depth of an ideal of $\T_n, \I_n, \PT_n$ are also upper bounds. Now let us mention the relationship between the lower bounds and another interesting concept. 

Let $S$ be a finite semigroup whose $\J$-classes form a chain, namely $J_\epsilon < \dots <J_m$. Suppose that $S$ does not contain a group of units. We define the \textit{multiplication depth} of $S$ to be $m-r+1$, where $r$ is the smallest number such that the product of two elements in $J_m$ can land in $J_r$. Together with Proposition \ref{prop: general: an example of relations that cannot be deduced} and Lemma \ref{lem: all: words that can be deduced from x_betax_gamma}, the definition suggests that the multiplication depth is a lower bound for the relational depth. The main theorem can be rewritten to capture this.

\begin{cor}
    Let $I_m$ be a proper ideal of $S\in\{\T_n, \I_n, \PT_n\}$. Then the relational depth of $I_m$ is equal to the multiplication depth.
\end{cor}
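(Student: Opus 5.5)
The plan is to compute the multiplication depth of $I_m$ directly and compare it with the formula in the Main Theorem (equivalently Theorems \ref{thm: In: relational depth}, \ref{thm: Tn: relational depth}, \ref{thm: PTn: relational depth}). Since $I_m$ is proper, $m<n$. The relational depth is then $m-\max(\epsilon,2m-n)+1$. It therefore suffices to show that the smallest rank $r$ of a product $\alpha\beta$ with $\alpha,\beta\in J_m$ equals $\max(\epsilon,2m-n)$; the multiplication depth is then $m-r+1$, which matches. (The hypothesis that $S$ has no group of units holds, since the maximal $\J$-class $J_m$ of a proper ideal contains no identity.)

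The first step is the lower bound $\rk{\alpha\beta}\geq\max(\epsilon,2m-n)$. The bound $\rk{\alpha\beta}\ge\epsilon$ is trivial, since $\T_n$ has no rank-$0$ maps. For the bound $2m-n$, note that $\rk{\alpha\beta}$ is at least the number of $\ke\beta$-classes that meet $\im\alpha$. Since $\beta$ has $m$ kernel classes, they contain at least $m$ elements of $[n]$; hence at most $n-m$ elements of $\im\alpha$ can be "wasted". In the total case these are points merged into a class already hit, and in the partial case they are points outside $\dm\beta$. In either case, $\im\alpha$ meets at least $m-(n-m)=2m-n$ distinct kernel classes.

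The second step is attainment: exhibit $\alpha,\beta\in J_m$ with $\rk{\alpha\beta}=\max(\epsilon,2m-n)$. I would not redo this from scratch. The proof of Proposition \ref{prop: non-pre for I_m merged for all semigroups} already constructs, for any $r$ with $m>r>\max(2m-n,\epsilon)$, elements of $J_m$ whose product lies in $J_{r-1}$. Taking $r=\max(2m-n,\epsilon)+1$ gives a product of rank exactly $\max(2m-n,\epsilon)$. The boundary cases need a brief check:
\begin{itemize}
\item when $r=m$ the construction still works, or the product is trivially realised;
\item for $\T_n$ with $2m-n\le 1$, the constant-type collapse gives rank $1$;
\item for $\I_n$ and $\PT_n$ with $2m\le n$, choosing $\im\alpha$ disjoint from $\dm\beta$ gives rank $0$.
\end{itemize}
Alternatively, one can write the explicit maps directly, for instance in $\I_n$ taking $\alpha$ with image $[n-m+1,n]$ and $\beta$ with domain $[1,m]$.

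Both steps are routine counting. The only real care is in the first inequality, to treat total and partial maps uniformly. I do not expect a genuine obstacle. Finally, comparing $m-\max(\epsilon,2m-n)+1$ with the Main Theorem finishes the proof.
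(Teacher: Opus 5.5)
Your proposal is correct and follows essentially the paper's argument. The paper recalls the Sylvester-type inequality $\rk\alpha+\rk\beta\le\rk{\alpha\beta}+n$, which your counting proves, so products of two rank-$m$ elements have rank at least $\max(\epsilon,2m-n)$; it leaves attainment implicit in the constructions of Proposition~\ref{prop: non-pre for I_m merged for all semigroups} and compares with the Main Theorem, as you do. One small fix: for partial maps both kinds of \emph{waste} can occur at once, so state the count uniformly as "the $\ke\beta$-classes missing $\im\alpha$ are pairwise disjoint non-empty subsets of $[n]\setminus\im\alpha$, hence number at most $n-m$."
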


This follows easily upon recalling that
\[
\rank(\alpha) + \rank(\beta) \leq \rank(\alpha\beta) + n \quad \text{for any }\alpha,\beta\in\PT_n,
\]
the transformation version of the well know Sylvester rank inequality from linear algebra.

It is not true in general that the relational depth and the multiplication depth coincide. 

\begin{example}
    Let $I_m$ be a proper ideal of $\T_n$, where $m>\frac{n+1}{2}$, and $\dep{I_m}=m-r+1$, where $r=2m-n$. Define the Rees quotient semigroup $I_m'=I_m\setminus I_{r-1}$. Notice that the $\J$-classes of $I_m'$ are $\{0\}<J_{r}<J_{r+1}<\dots<J_m$. The multiplication depth of $I_m'$ is equal to that of $I_m$, and the reader may check that it is impossible for two transformations of rank $m$ to compose to one of rank less than or equal to $2m-n$. However, $\langle X_{r,m}\ |\ \R_{r,m}\rangle$ does not define $I_m'$, since $\langle X_{r,m}\ |\ \R_{r,m}\rangle=I_m$ and $I_m'$ is a different semigroup. Hence $\dep{I_m'}\neq\dep{I_m}$. In fact, the relational depth of $I_m'$ is the length of the chain of $\J$-classes, namely $\dep{I_m'}=m-r+2=n-m+2$.
\end{example}

There are some other families of semigroups whose $\J$-classes form a chain, for example, the partition monoid, and its subsemigroups, including Brauer monoid and Temperley--Lieb monoid, and the semigroups of linear transformations. We ask about the relational depth of the above semigroups, and whether they coincide with the multiplication depth.
It would also be of interest to investigate analogous problems for semigroups whose $\J$-classes form more complicated posets, not just chains. However, more thought is required in that case even to specify what one means by depth.

\bibliographystyle{plain}

\begin{thebibliography}{99}


\bibitem{Aizenstat58}
A.\ A\u{\i}zen\v{s}tat, Defining relations of finite symmetric semigroups, Mat. Sb. (N.S.) 45 (1958), 261--280.


\bibitem{CDEGZ}
S.\ Carson, I.\ Dolinka, J.\ East, V. Gould, R.\ Zenab, Product decompositions of semigroups induced by action pairs, Dissertationes Math. 587 (2023), 1--180. 

\bibitem{East06}
J.\ East, A presentation of the singular part of the symmetric inverse monoid, Comm. Algebra 34 (2006), 1671--1689.

\bibitem{East15}
J.\ East. A symmetrical presentation for the singular part of the symmetric inverse monoid, Algebra Universalis, 74 (2015), 207--228.

\bibitem{East10a}
J.\ East, A presentation for the singular part of the full transformation semigroup, Semigroup Forum 81 (2010), 357--379.

\bibitem{East13}
J.\ East. Defining relations for idempotent generators in finite full transformation semigroups, Semigroup Forum 86 (2013), 451--485.

\bibitem{East10b}
J.\ East. Presentations for singular subsemigroups of the partial transformation semigroup, Internat. J. Algebra Comput. 20 (2010), 1--25.

\bibitem{East14}
J.\ East. Defining relations for idempotent generators in finite partial transformation semigroups, Semigroup
Forum 89 (2014), 72--76.

\bibitem{CFTS}
O.\ Ganyushkin, V.\ Mazorchuk, Classical Finite Transformation Semigroups, Springer, London, 2008.

\bibitem{FST}
J.M.\ Howie, Fundamentals of Semigroup Theory, Clarendon Press, Oxford, 1995.

\bibitem{HM:IR}
J.M.\ Howie, R.B. McFadden, Idempotent rank in finite full transformation semigroups, Proc. Roy. Soc. Edinburgh 114 (1990), 161--167.


\bibitem{SCA}
G.\ Lallement, Semigroups and Combinatorial Applications, John Wiley $\&$ Sons, Inc., 1979.

\bibitem{Meakin93}
S.W.\ Margolis, J.C.\ Meakin, Free Inverse Monoid and Graph Immersions. International Journal of Algebra and Computation 3 (1993), 79--99.



\bibitem{MW:SP}
J.D.\ Mitchell, M.T.\ Whyte, Short presentation for transformation monoids, preprint, arXiv:2406.19294.

\bibitem{Popova61}
L.\ M.\ Popova, Defining relations in some semigroups of partial transformations of a finite set, Uchenye Zap. Leningrad. Gos. Ped. Inst.  218 (1961), 191--212.



\bibitem{NRthesis}
N.\ Ru\v{s}kuc, Semigroup Presentations, PhD thesis, University of St Andrews, 1995.

\bibitem{Sutov60}
\`E.\ G.\ \v{S}utov, Defining relations of finite semigroups of partial transformations, Dokl. Akad. Nauk SSSR 132 (1960), 1280--1282.



\bibitem{TGT}
G.C.\ Smith, O.M.\ Tabachnikova, Topics in Group Theory, Springer, London, 2000

\end{thebibliography}

\end{document}